\theoremstyle{plain}
\newtheorem{theorem}{Theorem}[section]
\theoremstyle{definition}
\newtheorem{definition}[theorem]{Definition}
\theoremstyle{definition}
\newtheorem{example}[theorem]{Example}
\theoremstyle{definition}
\theoremstyle{plain}
\newtheorem{proposition}[theorem]{Proposition}
\theoremstyle{plain}
\newtheorem{lemma}[theorem]{Lemma}
\theoremstyle{plain}
\newtheorem{corollary}[theorem]{Corollary}
\theoremstyle{plain}
\newtheorem{question}[theorem]{Question}
\theoremstyle{plain}
\newtheorem{claim}[theorem]{Claim}
\newcommand\cross{\times}
\newcommand{\MCG}{\operatorname{MCG}}
\newcommand{\Sym}{\operatorname{Sym}}
\newcommand{\Z}{\mathbf{Z}}
\newcommand{\id}{e}  
\newcommand{\s}[2]{\Sigma_{#1,#2}}
\newcommand{\co}{\operatorname{:}}
\newcommand{\Lk}{\operatorname{Lk}}
\renewcommand{\wr}{\operatorname{wr}}
\newcommand{\Jones}{\operatorname{Jones}}
\newcommand{\minus}{\hbox{-}} 
\begin{document}

\title{Intrinsic Symmetry Groups of Links with 8 and fewer crossings}
\date{Revised: \today}

\author{Michael Berglund}

\author{Jason Cantarella}

\author{Meredith Perrie Casey}

\author{Ellie Dannenberg}

\author{Whitney George}

\author{Aja Johnson}

\author{Amelia Kelly}

\author{Al LaPointe}

\author{Matt Mastin}


\author{Jason Parsley}

\author{Jacob Rooney}

\author{Rachel Whitaker}

\begin{abstract}
We present an elementary derivation of the ``intrinsic'' symmetry groups for knots and links of 8 or fewer crossings. The standard symmetry group for a link is the mapping class group $\MCG(S^3,L)$ or $\Sym(L)$ of the pair $(S^3,L)$. Elements in this symmetry group can (and often do) fix the link and act nontrivially only on its complement. We ignore such elements and focus on the ``intrinsic'' symmetry group of a link, defined to be the image $\Sigma(L)$ of the natural homomorphism $\MCG(S^3,L) \rightarrow \MCG(S^3) \cross \MCG(L)$. This different symmetry group, first defined by Whitten in 1969, records directly whether $L$ is isotopic to a link $L'$ obtained from $L$ by permuting components or reversing orientations. 

For hyperbolic links both $\Sym(L)$ and $\Sigma(L)$ can be obtained using the output of \texttt{SnapPea}, but this proof does not give any hints about how to actually construct isotopies realizing $\Sigma(L)$. We show that standard invariants are enough to rule out all the isotopies outside $\Sigma(L)$ for all links except  $7^2_6$, $8^2_{13}$ and $8^3_5$ where an additional construction is needed to use the Jones polynomial to rule out ``component exchange'' symmetries. On the other hand, we present explicit isotopies starting with the positions in Cerf's table of oriented links which generate $\Sigma(L)$ for each link in our table.  Our approach gives a constructive proof of the $\Sigma(L)$ groups.
\end{abstract}

\keywords{knot, symmetry group of knot, link symmetry, Whitten group}

\maketitle

{\small
\setcounter{tocdepth}{2}


}

\bigskip

\section{Introduction}
The symmetry group of a link $L$ is defined to be the mapping class group $\MCG(L)$ (or $\Sym(L)$) of the pair $(S^3,L)$. The study of this symmetry group is a classical topic in knot theory, and these groups have now been computed for prime knots and links in several ways. Kodama and Sakuma~\cite{MR1177431} used a method in Bonahon and Siebenmann~\cite{bs} to compute these groups for all but three of the knots of 10 and fewer crossings in 1992. In the same year, Weeks and Henry used the program SnapPea to compute the symmetry groups for hyperbolic knots and links of 9 and fewer crossings~\cite{MR1164115}. These efforts followed earlier tabulations of symmetry groups by Boileau and Zimmermann~\cite{MR891592}, who found symmetry groups for nonelliptic Montesinos links with 11 or fewer crossings. 

We consider a different group of symmetries of a link $L$ given by the image of the natural homomorphism
\begin{equation*}
\pi \co \Sym(L) = \MCG(S^3,L) \rightarrow \MCG(S^3) \cross \MCG(L).
\end{equation*}
Since these symmetries record an action on $L$ itself (and only record the orientation of the ambient $S^3$), we will call them ``intrinsic'' symmetries of $L$ to distinguish them from the standard symmetry group. 

Unlike the elements in the $\Sym$ group, which may be somewhat difficult to describe explicitly, each of the elements in $\Sigma(L)$ corresponds to an isotopy of $L$ which may exchange the position of some components, which may mirror crossings, and which may reverse orientations of some components. Neither the Boileau-Zimmerman or the Henry-Weeks-\texttt{SnapPea} method gives much insight into what those isotopies might look like. In addition, it is worth noting that \texttt{SnapPea} is a large and complicated computer program, and while its results are accurate for the links in our table, it is always worthwhile to have alternate proofs for results that depend essentially on nontrivial computer calculations.

In this spirit, the present paper presents an elementary and explicit derivation of the $\Sigma(L)$ groups for all links of 8 and fewer crossings. We rule out certain isotopies using elementary and polynomial invariants to provide an upper bound on the size of $\Sigma(L)$ for each link in our table and then present explicit isotopies generating $\Sigma(L)$ starting with the configurations of the link in Cerf's table of alternating oriented links~\cite{cerf} or (for nonalternating links) Doll and Hoste's table~\cite{MR1094946}. For three links in our table, $7^2_6$, $8^2_{13}$ and $8^3_5$, an additional construction is needed to rule out certain ``component exchange'' symmetries using satellites and the Jones polynomial. This shows that the polynomial invariants are powerful enough to compute $\Sigma(L)$ for these links. We give the first comprehensive list of $\Sigma(L)$ groups that we know of, though Hillman~\cite{MR867798} provides examples of various two-component links (including some split links) with symmetry groups equal to 12 different subgroups of $\Gamma_2$.

Why are the $\Sigma(L)$ groups interesting?  First, it is often more natural to consider the restricted group $\Sigma(L)$ than the generally larger $\Sym(L)$.  Sakuma~\cite{MR1006701} has shown that
a knot $K$ has a finite symmetry group if and only if $K$ is a hyperbolic knot, a torus knot, or a cable of a torus knot. Thus for many\footnote{Sakuma gives as an explicit example the untwisted double of a two-bridge knot.} knots, the group $\Sym(L)$ contains infinitely many elements which act nontrivially on the complement of $L$ but fix the link itself. We ignore such elements, which lie in the kernel of the natural homomorphism $\pi:\MCG(S^3,L) \rightarrow \MCG(S^3) \cross \MCG(L)$. In fact, even when $\MCG(S^3,L)$ is finite, we give various examples below where $\pi$ has nontrivial kernel. It is often difficult to describe an element of $\Sym(L)$ in $\ker \pi$ explicitly, but it is always simple to understand the exact meaning of the statement $\gamma \in \Sigma(L)$.

As an application, if one is interested in classifying knots and links up to oriented, labeled ambient isotopy, it is important to know the symmetry group $\Sigma(L)$ for each prime link type $L$, since links related by an element in $\Gamma(L)$ outside $\Sigma(L)$ are not (oriented, labeled) ambient isotopic. The number of different links related by an element of $\Gamma(L)$ to a given link of prime link type $L$ is given by the number of cosets of $\Sigma(L)$ in $\Gamma(L)$. If we count these cosets instead of prime link types, the number of actual knots and link types of a given crossing number is actually quite a bit larger than the usual table of prime knot and link types suggests. (See Table~\ref{tab:linktypes}.)

\begin{table}
\begin{tabular}{@{}rcccccccccc@{}}\toprule
&\multicolumn{9}{c} {Number of link types by components and crossings} \\
\cmidrule {2-11}
Crossings & 1-U & 1-OS & 2-U & 2-OS & 3-U & 3-OS & 4-U & 4-OS & All Links-U & All Links-OS \\
\midrule
0 & 1 &1& 1&1&  && && 1&1 \\
2 &  && 1&2&  && && 1&2 \\
3 & 1 &2 & &&  && && & \\
4 & 1 &1 &  1&4& &&&& 1&4\\
5 & 2 &4&   1&2& &&&& 1&2 \\
6 & 3 &5 &  3&10& 3&18&&& 6&44 \\
7 & 7 &14&  8 &38 & 1 &8&  &&  9&40 \\
8 & 21 &38 & 16 & 78& 10 & 200& 3& 120& 29& 398 \\
\bottomrule
\end{tabular}
\medskip
\caption[Number of isotopy classes for links, for both unoriented/unlabeled and oriented/labeled cases]{This table shows the number of distinct isotopy classes of links by crossing number and number of components in the link.  The columns labeled ``$n$-U'' count link types for $n$-component links in the usual way, where the components are unlabeled and unoriented, and the mirror image of a given link is considered to have the same link type, regardless of whether the two are ambient isotopic. The columns labeled ``$n$-OS'' give finer information, considering two $n$-component links to have the same type if and only if they are oriented, labeled ambient isotopic. As we can see, this stricter definition leads to a much larger collection of link types.
\label{tab:linktypes}
}
\end{table}

Second, $\Sigma(L)$ seems likely to be eventually relevant in applications. For instance, when studying DNA links, each loop of the link has generally has a unique sequence of base pairs which provide an orientation and an unambiguous labeling of each component of the link. In such a case, the question of whether two components in a link can be interchanged may prove to be of real significance.

Last, we are interested in the topic of tabulating composite knots and links. Since the connect sum of different symmetry versions of the same knot type can produce different knots (such as the square knot, which is the connect sum of a trefoil and its mirror image, and the granny knot, which is the connect sum of two trefoils with the same handedness), keeping track of the action of $\Sigma(L)$ is a crucial element in this calculation. We treat this topic in a forthcoming manuscript~\cite{composite}.

\section{The symmetry and intrinsic symmetry groups}

As we will describe below, the group $\MCG(S^3) \cross \MCG(L)$ was first studied by Whitten in 1969~\cite{MR0242146}, following ideas of Fox. They denoted this group $\Gamma(L)$ or $\Gamma_\mu$, where $\mu$ is the number of components of $L$. We can write this group as a semidirect product of $\Z_2$ groups encoding the orientation of each component of the link $L$ with the permutation group $S_\mu$ exchanging components of $L$ (cf. Definition~\ref{def:whittengroup}), finally crossed with another $\Z_2$ recording the orientation of $S^3$:
\begin{equation*}
\Gamma(L) = \Gamma_\mu = \Z_2 \cross (\Z_2^{\mu}\rtimes S_\mu).
\end{equation*}
It is clear that an element $\gamma = (\epsilon_0, \epsilon_1, \dots ,\epsilon_\mu, p) \in \Gamma(L)$ acts on $L$ to produce a new link $L^\gamma$. If $\epsilon_0 = +1$, then $L^\gamma$ and $L$ are the same as sets (but the components of $L$ have been renumbered and reoriented), while if $\epsilon_0 = -1$ the new link $L^\gamma$ is the mirror image of $L$ (again with renumbering and reorientation). We can then define the symmetry subgroup $\Sigma(L)$ by
\begin{equation*}
\gamma \in \Sigma(L) \iff \text{ there is an isotopy from } L \text { to } L^\gamma \text{ preserving component numbering and orientation.}
\end{equation*}

For knots, $\Sigma(L) < \Gamma_1 = \Z_2 \cross \Z_2$.  Here the five subgroups of $\Z_2 \cross \Z_2$ correspond to the standard descriptions of the possible symmetries for a knot, as shown in Table~\ref{tab:symmetrytypes}.

\begin{table}[ht]
\begin{tabular}{clc}
\toprule
Symmetry subgroup of $\Gamma_1$ & Name & Example(s)\\
\midrule
 $\{(1,1)\}$ & No symmetry  & $9_{32}, 9_{33}$ \\
$\{(1,1),(-1,1)\}$ & (+) amphichiral symmetry &  $12_{427}$ \\
$\{(1,1),(1,-1)\}$ & invertible\footnotemark[1]
symmetry & $3_1$ \\
$\{(1,1),(-1,-1)\}$ & (-) amphichiral\footnotemark[2]
symmetry  &  $8_{17}$ \\
$\Gamma_1$ & full symmetry &  $4_1$ \\
\bottomrule
\end{tabular}
\medskip
\caption[The 5 standard symmetry types for knots correspond to the 5 subgroups of the Whitten group $\Gamma_1$.]{The five standard symmetry types for knots correspond to the five subgroups of the Whitten group $\Gamma_1$.
\label{tab:symmetrytypes}
}
\end{table}
\footnotetext[1]{Conway~\cite{MR0258014} calls this \emph{reversible} symmetry.}
\footnotetext[2]{Conway~\cite{MR0258014} calls this \emph{invertible} symmetry.}
\addtocounter{footnote}{2}

For links, the situation is more interesting, as the group $\Gamma(L)$ is more complicated. In the case of two-component links, the group $\Gamma_2 = \Z_2 \cross (\Z_2 \cross \Z_2 \rtimes S_2)$ is a nonabelian 16 element group isomorphic to $\Z_2 \cross D_4$. The various subgroups of $\Gamma_2$ do not all have standard names, but we will call a link \emph{purely invertible} if $(1,-1,\dots,-1,e) \in \Sigma(L)$, and say that components $(i,j)$ have a \emph{pure exchange} symmetry if $(1,1,\dots,1,(ij)) \in \Sigma(L)$. For two-component links, we will say that $L$ has pure exchange symmetry if its two components have that symmetry. The question of which links have this symmetry goes back at least to Fox's 1962 problem list in knot theory~\cite[Problem 11]{MR0140100}\footnote{Actually, Fox was willing to call $L$ \emph{interchangeable} if $\Sigma(L)$ contained any element in the form $(\epsilon_0,\epsilon_1,\epsilon_2,(12))$, but we find it more useful to focus on the ``pure exchange'' symmetry.}. For example, $2^2_1$ (the Hopf link) has pure exchange symmetry while we will show that $7^2_4$ does not.

We similarly focus on the pure invertibility symmetry:  we show that 45 out of the 47 prime links with 8 crossings or less are purely invertible.  The two exceptions are $6^3_2$ (the Borromean rings) and $8^3_5$; we show that both of these are invertible using some nontrivial permutation, i.e., $(1,-1,-1,-1,p)\in\Sigma(L)$ for some $p\neq e$. (Whitten found examples of more complicated links which are not invertible even when a nontrivial permutation is allowed~\cite{MR0295331}.)

Increasing the number of components in a link greatly increases the number of possible types of symmetry.  Table~\ref{tab:conj} lists the number of subgroups of $\Gamma_\mu$; each different subgroup represents a different intrinsic symmetry group that a $\mu$-component link might have.  We note that if $\Sigma(L)$ is the symmetry subgroup of link $L$, then the symmetry subgroup of $L^\gamma$ is the conjugate subgroup $\Sigma(L^\gamma) = \gamma \Sigma(L) \gamma^{-1}$.  Therefore, it suffices to only examine the number of mutually nonconjugate subgroups of $\Gamma_\mu$ in order to specify all of the different intrinsic symmetry groups.  Table~\ref{tab:conj} also lists the number of conjugacy classes of subgroups of $\Gamma_\mu$, and the number of these which appear for prime links of 8 or fewer crossings.

\begin{table}[ht]
\begin{center}
\begin{tabular}{crrrr}
\toprule
&&& \# subgroups  & nonconjugate ones\\
$\mu$ & $\left|\Gamma_\mu\right|$ & \# subgroups & (up to conjugacy) & for $\leq 8$ crossings\\
\midrule
1 &      4        &       5    & 5 & 3\\
2  &    16       &       35 & 27   &  5                     \\
3  &     96      &       420 & 131 & 7                        \\
4  &     768    &     9417 &    994  & 3                        \\
5  &     7680  &      270131 &    6382 & 0                        \\
\bottomrule
\end{tabular}
\medskip
\caption[Number of subgroups of $\Gamma_\mu$; this equals the number of symmetry groups for a $\mu$-component link.] {The number of subgroups of $\Gamma_\mu$; each one represents a different intrinsic symmetry group possible for a $\mu$-component link.  The number of nonconjugate symmetries (the fourth column) is given by the number of conjugacy classes of subgroups..  This article computes the symmetry group for all prime links of 8 or fewer crossings; the last column summarizes our results.}
\label{tab:conj}
\end{center}
\end{table}

\section{Methods and Notation}

We started from the excellent table of oriented alternating links provided by Cerf~\cite{cerf}. Cerf considers the effect of reversing orientations of components of her links, but does not compute the effect of permutations. Our more detailed calculation does not depend on this information, so our paper also provides a check on Cerf's symmetry calculations. For alternating links, we have kept component numbers and orientations consistent with her table. For nonalternating links, we use component numbers and orientations consistent with those of Doll and Hoste~\cite{MR1094946}. Like Cerf, Doll and Hoste considered the effect of reversing the orientation of individual components of these links, but not the effect of permutations of components. We note that these component numbers and orientations are \emph{not} consistent with those used in~\texttt{SnapPea}. The component numbers and orientations in~\texttt{SnapPea} seem to have been chosen arbitrarily sometime in the $1980s$ by Joe Christy when he digitized the Rolfsen table~\cite{nmemail}.


To check our results against the \texttt{SnapPea} calculations, we used the Python interface provided by \texttt{SnapPy} to compute the image of $\pi:\MCG(S^3,L) \rightarrow \MCG(S^3) \cross \MCG(L)$. The results agreed\footnote{We had to redraw several links where the component numbering and orientation given by default in~\texttt{SnapPea} differed from our choice of numbering and orientation to make the groups agree. The default link data in~\texttt{SnapPea} results in several cases in subgroups conjugate to those we give below.} with the tables we give below, meaning that our results serve as an independent verification of \texttt{SnapPea}'s accuracy for these links. 

Finally, some notational comments:  we henceforth use the term `link' to refer to a prime, multicomponent link unless otherwise indicated.  We use the Rolfsen notation for links but provide the Thistlethwaite notation as well in our summary tables (Tables~\ref{tab:2bylink}, \ref{tab:3symm}, and \ref{tab:4symm}).

\section{The Whitten group}
We begin by giving the details of our construction of the Whitten group $\Gamma(L)$ and the symmetry group~$\Sigma(L)$. Consider operations on an oriented, labeled link $L$ with $\mu$ components. We may reverse the orientation of any of the components of $L$ or permute the components of $L$ by any element of the permutation group~$S_\mu$. However, these operations must interact with each as well: if we reverse component 3 and exchange components 3 and 5, we must decide whether the orientation is reversed before or after the permutation. Further, we can reverse the orientation on the ambient $S^3$ as well, a process which is clearly unaffected by the permutation. To formalize our choices, we follow~\cite{MR0242146} to introduce the Whitten group of a $\mu$-component link.
\begin{definition}
Consider the homomorphism given by
\begin{equation*}
\omega:S_\mu\longmapsto~\operatorname{Aut}(\mathbf{Z}_2^{\mu+1}),\hspace{20pt}p\longmapsto\omega(p)
\end{equation*}
where $\omega(p)$ is defined as
\begin{equation*}
\omega(p)(\epsilon_0,\epsilon_1,\epsilon_2...\epsilon_\mu)=(\epsilon_0,\epsilon_{p(1)},\epsilon_{p(2)}...\epsilon_{p(\mu)}).
\end{equation*}
\\For $\gamma=(\epsilon_0,\epsilon_1,...\epsilon_\mu, p),$ and $\gamma'=(\epsilon'_0,\epsilon'_1,...\epsilon'_\mu, q)\in \mathbf{Z}_2^{\mu+1}\rtimes_\omega S_\mu$, we define the \emph{Whitten group} $\Gamma_\mu$ as the semidirect product $\Gamma_\mu=\mathbf{Z}_2^{\mu+1}\rtimes_\omega S_\mu$
with the group operation
\begin{align*}
\gamma\ast\gamma'&=\left(\epsilon_0,\epsilon_1,\epsilon_2...\epsilon_\mu,p\right)\ast (\epsilon'_0,\epsilon'_1,\epsilon'_2...\epsilon'_\mu,q)\\
& =((\epsilon_0,\epsilon_1,\epsilon_2...\epsilon_\mu)\cdot\omega(p)(\epsilon'_0,\epsilon'_1,\epsilon'_2...\epsilon'_\mu),qp)\\
&=(\epsilon_0\epsilon'_0,\epsilon_1\epsilon'_{p(1)},\epsilon_2\epsilon'_{p(2)}...\epsilon_\mu\epsilon'_{p(\mu)},qp)
\end{align*}
We will also use the notation $\Gamma(L)$ to refer to the Whitten group $\Gamma_\mu$.
\label{def:whittengroup}
\end{definition}

\subsection{Link operations}
Given a link $L$ consisting of $\mu$ oriented knots in $S^3$, we may order the knots and write
\begin{equation*}
L=K_1\cup K_2\cup \cdots \cup K_\mu.
\end{equation*}
Consider the following operations on $L$:
\begin{enumerate}
\item Permuting the $K_i$.
\item Reversing the orientation of any set of $K_i$'s
\item Reversing the orientation on $S^3$ (mirroring $L$).
\end{enumerate}
Let $\gamma$ be a combination of any of the moves (1), (2), or (3). We think of $\gamma=(\epsilon_0,\epsilon_1,...\epsilon_\mu, p)$ as an element of the set $\mathbf{Z}_2^{\mu+1}\times S_\mu$ in the following way. Let
\begin{equation*}\epsilon_0=
\begin{cases}
-1, &\text{if $\gamma$ mirrors $L$}\\
+1, &\text{if $\gamma$ does not mirror $L$}
\end{cases}\end{equation*}
and
\begin{equation*}\epsilon_i=
\begin{cases}
-1, &\text{if $\gamma$ reverses the orientation of $K_{p(i)}$}\\
+1, &\text{if $\gamma$ does not reverse the orientation of $K_{p(i)}$}
\end{cases}\end{equation*}
Lastly, let $p\in S_\mu$ be the permutation of the $K_i$ associated to $\gamma$.  To be explicitly clear, permutation $p$ permutes the labels of the components; the component originally labeled $i$ will be labeled $p(i)$ after the action of $\gamma$.  

For each element, $\gamma$ in $\mathbf{Z}_2^{\mu+1}\times S_\mu$, we define
\begin{equation}
\label{eq:Lgamma}
L^\gamma=\gamma(L)=\epsilon_1K_{p(1)}^{(*)}\cup \epsilon_2K_{p(2)}^{(*)}\cup\cdot\cdot\cdot \cup \epsilon_\mu K_{p(\mu)}^{(*)} =\bigcup_{i=1}^{\mu}\epsilon_i K_{p(i)}^{(*)}
\end{equation}
\\where $-K_i$ is $K_i$ with orientation reversed, $K_i^*$ is the mirror image of $K_i$ and the $(*)$ appears above if and only if $\epsilon_0 = -1$. Note that the $i$th component of $\gamma(L)$ is $\epsilon_i K_{p(i)}^{(*)}$ the possibly reversed or mirrored $p(i)$th component of $L$.  Since we are applying $\epsilon_i$ instead of $\epsilon_{p(i)}$ to $K_{p(i)}$ we are taking the convention of first permuting and then reversing the appropriate components.

\begin{example}
Let $L=K_1\cup K_2\cup K_3$ and $\gamma=(1,1,-1,1,(123))$.  Then, $\gamma(L)=K_2\cup -K_3\cup K_1$.  
\end{example}

\begin{example}Let $L=K_1\cup K_2\cup K_3\cup K_4$ and $\gamma=(-1, 1,1,-1,-1, (14)(23))$.
Then, $\gamma(L)=(K_4^*\cup K_3^*\cup -K_2^* \cup -K_1^*)$.  Since we have reversed the orientation on $S^3$, note that $\gamma(L)$ will be the mirror image of $L$ as well.
\end{example}

We now confirm that this operation defines a group action of the Whitten group $\Gamma(L)$ on the set of links obtained from $L$ by such transformations.

\begin{proposition}
The Whitten group $\Gamma(L)$ is isomorphic to the group $\MCG(S^3) \cross \MCG(L)$.
\end{proposition}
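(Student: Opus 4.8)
The plan is to pin down both sides of the asserted isomorphism as concrete groups and then match them, essentially by transporting generators. First I would record the two mapping class groups. A self-homeomorphism of $S^3$ is orientation preserving or orientation reversing, and it is classical that these are exactly the two isotopy classes, so $\MCG(S^3) \cong \Z_2$ with generator the class of a reflection. For $L$, the relevant object is the underlying $1$-manifold, a disjoint union of $\mu$ circles: a self-homeomorphism permutes the circles by some $p \in S_\mu$ and on each circle is isotopic to the identity or to an orientation reversal, so reading off $(p; \epsilon_1, \dots, \epsilon_\mu)$ identifies $\MCG(L)$ with the signed permutation group $\Z_2^\mu \rtimes_\omega S_\mu$, where $\omega$ is the very coordinate-permutation action appearing in Definition~\ref{def:whittengroup} (the product is semidirect and not direct because conjugating ``reverse the $i$th circle'' by a permutation changes which circle is reversed).

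Granting these, $\MCG(S^3) \cross \MCG(L) \cong \Z_2 \cross (\Z_2^\mu \rtimes_\omega S_\mu)$. On the Whitten side, because $\omega(p)$ fixes the $0$th coordinate for every $p$, the $\Z_2$ living in the $\epsilon_0$ slot of $\Gamma_\mu = \Z_2^{\mu+1} \rtimes_\omega S_\mu$ is central and is complemented by the normal subgroup $\{\gamma : \epsilon_0 = +1\} \cong \Z_2^\mu \rtimes_\omega S_\mu$; hence $\Gamma_\mu \cong \Z_2 \cross (\Z_2^\mu \rtimes_\omega S_\mu)$ too. Explicitly, I would define $\Phi \colon \Gamma_\mu \to \MCG(S^3) \cross \MCG(L)$ by sending $\gamma = (\epsilon_0, \epsilon_1, \dots, \epsilon_\mu, p)$ to the mirror class determined by $\epsilon_0$ in the first factor and, in the second, to the mapping class of $L$ carrying the circle labeled $i$ to the circle labeled $p(i)$ and reversing its orientation precisely when $\epsilon_i = -1$. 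The substance of the proof is then the verification that $\Phi$ carries the product $\ast$ of Definition~\ref{def:whittengroup} to composition in $\MCG(S^3) \cross \MCG(L)$, together with the (easy) construction of a two-sided inverse; since $\Phi$ visibly hits each of the standard generating sets, bijectivity is immediate once the homomorphism property is in hand.

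The step I expect to be fussiest is exactly this convention-matching. The paper attaches $\epsilon_i$ to the component $K_{p(i)}$ and decrees ``permute first, then reverse''; this is the choice that makes the permutation part of $\gamma \ast \gamma'$ come out as $qp$ rather than $pq$ and makes $\omega$ (not $\omega$ post-composed with inversion) the structure map of the semidirect product, and one has to carry it faithfully through the composition to avoid landing in the ``opposite'' wreath product. I would do this by computing $L^{\gamma \ast \gamma'}$ and $(L^{\gamma'})^{\gamma}$ straight from~\eqref{eq:Lgamma} and checking they coincide; this both nails the homomorphism property for $\Phi$ and simultaneously proves that $\gamma \mapsto (L \mapsto L^\gamma)$ is an honest group action of $\Gamma(L)$, the assertion flagged just before the proposition. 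The only input that is not a finite check is the fact that orientation-preserving homeomorphisms of $S^3$ form a single isotopy class, i.e.\ $\MCG(S^3) \cong \Z_2$; this is standard and I would simply cite it rather than reprove it.
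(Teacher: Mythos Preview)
Your proposal is correct and follows essentially the same route as the paper: identify $\MCG(S^3)\cong\Z_2$ and $\MCG(L)\cong\Z_2^\mu\rtimes_\omega S_\mu$, note the obvious bijection with $\Gamma_\mu$, and then verify that $\ast$ matches composition by computing $L^{\gamma\ast\gamma'}$ and $\gamma(\gamma'(L))$ directly from~\eqref{eq:Lgamma} and checking they agree. Your extra remark that the $\epsilon_0$ coordinate is fixed by every $\omega(p)$, so the $\Z_2$ factor splits off as a genuine direct product, is a nice clarification the paper leaves implicit.
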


\begin{proof}
We know that $L$ is a disjoint union of $\mu$ copies of $S^1$ denoted $L = K_1 \sqcup \cdots \sqcup K_\mu$. Further, the mapping class groups of $S^1$ and $S^3$ are both $\Z_2$, where the elements $\pm 1$ correspond to orientation preserving and reversing diffeomorphisms of $S^1$ and $S^3$. In general, the mapping class group of $\mu$ disjoint copies of a space is the semidirect product of the individual mapping class groups with the permutation group $S_\mu$. This means that $\MCG(L) = (\Z_2)^{\mu} \rtimes S_\mu$ and the Whitten group $\Gamma(L)$ has a bijective map to $\MCG(S^3) \cross \MCG(L)$.

It remains to show that the group operation $*$ in the Whitten group maps to the group operation (composition of maps) in $\MCG(S^3) \cross \MCG(L)$. To do so, we introduce some notation.
Let $\gamma=(\epsilon_0,\epsilon_1,...\epsilon_\mu, p)$ and $\gamma'=(\epsilon'_0,\epsilon'_1,...\epsilon'_\mu, q)$. We must show
\begin{equation*}
L^{\gamma\ast\gamma'}=(\gamma \circ \gamma')(L)=\gamma(\gamma'(L))
\end{equation*}
where $\gamma\ast\gamma'$ is the operation of the Whitten Group, $\Gamma_\mu$.

Then,
\begin{equation*}
\gamma'(L)=\gamma'\left(\bigcup_{i=1}^{\mu}K_i\right)=\bigcup_{i=1}^{\mu} \epsilon'_iK_{q(i)}=\bigcup_{i=1}^{\mu}X_i,
\end{equation*}
where $X_i$ denotes the $i$-th component $\epsilon_i' K_{q(i)}$ of $\gamma'(L)$.
\begin{equation*}
\gamma(\gamma'(L))=\gamma\left(\bigcup_{i=1}^{\mu}\epsilon'_iK_{q(i)}\right) =\gamma\left(\bigcup_{i=1}^{\mu}X_i\right)=\bigcup_{j=1}^{\mu}\epsilon_i X_{p(i)} .
\end{equation*}
Note that $X_{p(i)}=\epsilon'_{p(i)}K_{q(p(i))}$, which implies
\begin{equation*}
\gamma(\gamma'(L))=\bigcup_{i=1}^{\mu}\epsilon_i (\epsilon'_{p(i)} K_{q(p(i))}).\end{equation*}
Now, $\gamma\ast\gamma'=(\epsilon_0\epsilon'_0,\epsilon_1\epsilon'_{p(1)},\epsilon_2\epsilon'_{p(2)}, ..., \epsilon_\mu\epsilon'_{p(\mu)},qp)$ and acts on $L$ as
\begin{align*}
L^{\gamma\ast\gamma'} &=\epsilon_1\epsilon'_{p(1)}K_{qp(1)}\, \cup \,  \epsilon_2\epsilon'_{p(2)}K_{qp(2)}\, \cup \ldots \cup \,  \epsilon_\mu\epsilon'_{p(\mu)}K_{qp(\mu)} \\
L^{\gamma\ast\gamma'} & =\bigcup_{i=1}^{\mu}\epsilon_{p(i)} (\epsilon'_i K_{qp(i)}) =\gamma(\gamma'(L))
\end{align*}

We have dropped the notation for mirroring throughout the proof, because the two links clearly agree in this regard.  The element $\gamma \ast \gamma'$ preserves the orientation of $S^3$ if and only if $\epsilon_0 \epsilon_0' =1$, i.e., if either both or neither of $\gamma$ and $\gamma'$ mirror $L$.
\end{proof}

We can now define the subgroup of $\Gamma(L)$ which corresponds to the symmetries of the link $L$.

\begin{definition}Given a link, $L$ and $\gamma\in \Gamma(L)$, we say that \emph{$L$ admits $\gamma$} when there exists an isotopy taking each component of $L$ to the corresponding component of $L^{\gamma}$ which respects the orientations of the components.  We define as the \emph{Whitten symmetry group} of $L$,
\begin{equation*}
\Sigma(L):=\{\gamma\in \Gamma(L) |~\text{$L$ admits $\gamma$}\}.
\end{equation*}
\end{definition}

The Whitten symmetry group $\Sigma(L)$ is a subgroup of $\Gamma_\mu$, and its left cosets represent the different isotopy classes of links $L^\gamma$ among all symmetries $\gamma$.  By counting the number of cosets, we determine the number of (labeled, oriented) isotopy classes of a particular prime link.  
%

Next, we provide a few examples of symmetry subgroups.  Recall that the first Whitten group $\Gamma_1 = \Z_2 \times \Z_2$ has order four and that $\Gamma_2 = \Z_2 \cross (\Z_2 \cross \Z_2 \rtimes S_2)$ is a nonabelian 16 element group.  

\begin{example}
Let $L=4_1$, the figure eight knot. Since $L\sim-L\sim L^*\sim -L^*$, we have $\Sigma(4_1)=\Gamma_1$, so the figure eight knot has \emph{full symmetry}. There is only one coset of $\Sigma(4_1)$ and hence only one isotopy class of $4_1$ knots.
\end{example}

\begin{example} Let $L=3_1$, a trefoil knot. It is well known that $L\sim-L$ and $L^*\sim-L^*$, but $L\nsim L^*$, so we have $\Sigma(3_1)=\{(1,1, \id), (1,-1, e)\}$. This means that the two cosets of $\Sigma(3_1)$ are $\{(1,1,\id), (1,-1, \id)\}$ and $\{(-1,-1, \id), (-1,1,\id)\}$, and there are two isotopy classes of $3_1$ knots.  A trefoil knot is thus \emph{invertible}.
\end{example}

\begin{example}
Let $L=7^2_5$, whose components are an unknot $K_1$ and a trefoil $K_2$.  In section~\ref{sec2a}, we determine all symmetry groups for two-component links, but we provide details for $7^2_5$ here. Since the components $K_1$ and $K_2$ are of different knot types, we conclude that no symmetry in $\Sigma(7^2_5)$ can contain the permutation $(12)$.  Since $K_2 \nsim K_2^*$, we cannot mirror $L$, i.e., the first entry of $\gamma \in \Sigma(7^2_5)$ cannot equal $-1$.  The linking number of $L$ is nonzero, so we can rule out the symmetries $(1,-1,1,e)$ and $(1,1,-1,e)$ by Lemma~\ref{lemma:selfwrithe}.  Last, $L$ is purely invertible, meaning isotopic to $-L=-K_1\cup-K_2$. Thus, $\Sigma(L)$ is the two element group $\s{2}{1}=\{(1,1,1,e), (1,-1,-1,e)\}$.  There are 8 cosets of this two element group in the 16 element group $\Gamma_2$, so there are 8 isotopy classes of $7^2_5$ links.  
\end{example}

We now prove
\begin{proposition}
\label{prop:image}
The Whitten symmetry group $\Sigma(L)$ is the image of $\Sym(L)$ under the map $\pi \co\, \Sym(L) = \MCG(S^3,L) \rightarrow \MCG(S^3) \cross \MCG(L) = \Gamma(L)$.
\end{proposition}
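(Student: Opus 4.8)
The plan is to prove the two inclusions $\operatorname{im}\pi\subseteq\Sigma(L)$ and $\Sigma(L)\subseteq\operatorname{im}\pi$ separately, using two standard facts about $S^3$: the isotopy extension theorem, which promotes an isotopy of the compact $1$-manifold $L$ to an ambient isotopy of $S^3$, and Cerf's theorem that $\pi_0\bigl(\operatorname{Diff}^+(S^3)\bigr)=1$, so that every orientation-preserving self-diffeomorphism of $S^3$ is ambient-isotopic to the identity. Throughout I use the identification $\MCG(S^3)\cross\MCG(L)\cong\Gamma(L)$ from the preceding proposition, under which $\pi$ sends the class of a diffeomorphism $f$ of the pair $(S^3,L)$ to the Whitten element $\gamma=(\epsilon_0,\epsilon_1,\dots,\epsilon_\mu,p)$ recording whether $f$ preserves the orientation of $S^3$ together with the permutation and component reversals effected by $f|_L$; concretely, $f$ carries $L$, as a labeled oriented link, onto the link $\bigcup_i\epsilon_iK_{p(i)}$ (which has the same underlying point set as $L$), while reversing the ambient orientation iff $\epsilon_0=-1$. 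I fix once and for all an orientation-reversing involution $\rho$ of $S^3$ used to form mirror images, so that $K_i^*=\rho(K_i)$ and, for $\epsilon_0=-1$, the $i$-th component of $L^\gamma$ is $\epsilon_i\rho(K_{p(i)})$.

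For $\operatorname{im}\pi\subseteq\Sigma(L)$: suppose $\gamma=\pi([f])$. If $\epsilon_0=+1$, then $f\in\operatorname{Diff}^+(S^3)$, so Cerf's theorem gives an ambient isotopy $F_t$ of $S^3$ with $F_0=\operatorname{id}$ and $F_1=f$; restricting $F_t$ to $L$ yields an isotopy carrying each component of $L$ to the corresponding component of $f(L)=L^\gamma$ and respecting orientations, which is exactly the condition that $L$ admits $\gamma$. If $\epsilon_0=-1$, apply the same reasoning to $\rho\circ f\in\operatorname{Diff}^+(S^3)$: an ambient isotopy from $\operatorname{id}$ to $\rho\circ f$ restricts on $L$ to an isotopy from $L$ to $\rho(f(L))$, and since the $i$-th component of $f(L)$ is $\epsilon_iK_{p(i)}$, the $i$-th component of $\rho(f(L))$ is $\epsilon_iK_{p(i)}^*$, i.e.\ the $i$-th component of $L^\gamma$. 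So $L$ admits $\gamma$ in this case as well.

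For $\Sigma(L)\subseteq\operatorname{im}\pi$: suppose $\gamma\in\Sigma(L)$, witnessed by an isotopy of embeddings $\iota_t\co L\hookrightarrow S^3$ with $\iota_0$ the inclusion and $\iota_1$ a label- and orientation-preserving parametrization of $L^\gamma$. By the isotopy extension theorem this extends to an ambient isotopy $h_t$ of $S^3$ with $h_0=\operatorname{id}$ and $h_t|_L=\iota_t$; in particular $h_1$ is orientation-preserving and $h_1(L)$ is the underlying point set of $L^\gamma$. If $\epsilon_0=+1$, that point set is $L$, so $h_1$ is a diffeomorphism of the pair $(S^3,L)$, and unwinding the identification shows $\pi([h_1])=\gamma$. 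If $\epsilon_0=-1$, that point set is $\rho(L)$, so $g:=\rho^{-1}\circ h_1$ is a diffeomorphism of the pair $(S^3,L)$; it reverses the orientation of $S^3$ and its restriction $\rho^{-1}\circ\iota_1$ to $L$ effects the permutation $p$ and the reversals $\epsilon_1,\dots,\epsilon_\mu$, so again $\pi([g])=\gamma$. In either case $\gamma\in\operatorname{im}\pi$.

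I expect the main obstacle to be purely organizational: carrying the orientation-of-$S^3$ datum $\epsilon_0$ through the argument cleanly, being consistent about the single fixed reflection $\rho$ used both in the definition of $L^\gamma$ and in the factorization ``orientation-reversing $=\rho\circ$ orientation-preserving,'' and checking that the isotopy class of $L^\gamma$ does not depend on that choice. The one genuinely nontrivial topological input is Cerf's theorem, and it is needed only for the inclusion $\operatorname{im}\pi\subseteq\Sigma(L)$; the reverse inclusion uses nothing beyond the isotopy extension theorem.
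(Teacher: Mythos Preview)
Your proof is correct and follows essentially the same approach as the paper's own argument: for $\operatorname{im}\pi\subseteq\Sigma(L)$ you use that an orientation-preserving diffeomorphism of $S^3$ is isotopic to the identity (you name this as Cerf's theorem; the paper just cites $\MCG(S^3)=\Z_2$), and for $\Sigma(L)\subseteq\operatorname{im}\pi$ you use the isotopy extension theorem (the paper says the isotopy ``generates'' a diffeomorphism without naming the tool). The only difference is that you are more explicit and careful about the underlying machinery and about bookkeeping the fixed reflection $\rho$, which is an improvement rather than a departure.
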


\begin{proof}
Given a map $f \co S^3 \rightarrow S^3 \in \MCG(S^3,L)$, we see that if $f$ is orientation-preserving on $S^3$, then it is homotopic to the identity on $S^3$ since $\MCG(S^3) = \Z_2$. This homotopy yields an ambient isotopy between $L$ and $f(L)$, proving that $f|_L = \pi(f) \in \Sigma(L)$. If $f$ is orientation-reversing on $S^3$, it is homotopic to a standard reflection $r$. Composing the homotopy with $r$ provides an ambient isotopy between $L$ and $r(f(L))$, proving that $\pi(f) \in \Sigma(L)$. This shows $\pi(\Sym(L)) \subset \Sigma(L)$.

Now suppose $g \in \Sigma(L)$. The isotopy from $L$ to $g(L)$ generates an orientation-preserving (since it is homotopic to the identity) diffeomorphism $f \co S^3 \rightarrow S^3$ which either fixes $L$ or takes $L$ to $rL$. In the first case, $f \in\Sym(L)$. In the second, the map $rf \in \Sym(L)$.
\end{proof}

\section{The linking matrix}
For each link $L$, our overall strategy will be to explicitly give isotopies for certain elements of the symmetry subgroup $\Sigma(L)$, generate the subgroup containing those elements, and then rule out the remainder of $\Gamma(L)$ using invariants. For three- and four-component links, a great deal of information about $\Sigma(L)$ can usually be obtained by considering the collection of pairwise linking numbers of the components of the link.

We recall a few definitions:

\begin{definition}
Given a $n$-component link, with components $L_1,~L_2,\dots,L_n$, we let the $n \times n$ \emph{linking matrix} of $L$ be the matrix $\Lk(L)$ so that $\Lk(L)_{ij} = \Lk(L)_{ji} = \Lk(L_i,L_j)$ and $\Lk(L)_{ii} = 0$ where $\Lk(L_i,L_j)$ is the linking number of $L_i$ and $L_j$. We let $\Lk(n)$ denote the set of $n\cross n$ symmetric, integer-valued matrices with zeros on the diagonal.
\end{definition}

The linking number can also be computed by counting the signed crossings of one knot over another.  
Among minimal crossing number diagrams of alternating links, the following three numbers are also useful link invariants:
\begin{definition}
The (overall) \emph{linking number} $\ell k(L)$ of $L$ is half the sum of the entries of the linking matrix. (This is half of the intercomponent signed crossings of $L$.) The \emph{writhe} $\wr(L)$ is the sum of all signed crossings of $L$. The \emph{self-writhe} $s(L)$ is the sum of the intracomponent signed crossings of $L$.  Clearly, $\wr = 2\ell k + s$.
\end{definition}

Murasugi and Thistlethwaite separately showed that writhe was an invariant of reduced alternating link diagrams \cite{MR898151, MR963633}.  Since linking number is an invariant for all links, self-writhe is also an invariant of reduced alternating diagrams.  We will utilize this invariance to rule out certain symmetries of links, cf. Lemma~\ref{lemma:selfwrithe}. 

The Whitten group $\Gamma_n$ acts on the set of linking matrices $\Lk(n)$. Further, for a given link $L$, the symmetry subgroup $\Sigma(L)$ must be a subgroup of the stabilizer of $\Lk(L)$ under this action. This means that it is worthwhile for us to understand this action and make a classification of linking matrices according to their orbit types. We start by writing down the action:

\begin{proposition}\label{prop:linkingmatrix}
The action of the Whitten group $\Gamma_n$ on $n$-component links gives rise to the following $\Gamma_n$ action on the set of $n \cross n$ linking matrices $\Lk(n)$:
\begin{equation*}
\Lk(\gamma(L))_{ij} = \epsilon_0 \epsilon_i \epsilon_j \Lk(L)_{p(i) p(j)}.
\end{equation*}
\end{proposition}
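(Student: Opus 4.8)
The plan is to compute $\Lk(L^\gamma)_{ij}$ directly from the explicit description of $L^\gamma$ in equation~\eqref{eq:Lgamma}, using two elementary ``sign rules'' for how the linking number of an ordered pair of curves transforms: (a) reversing the orientation of one of the two curves, and (b) reflecting the ambient $S^3$.

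First I would record the two sign rules. Since $\Lk(A,B)$ is the intersection number of $A$ with an oriented surface spanning $B$, reversing the orientation of either $A$ or $B$ negates that intersection number, so $\Lk(-A,B)=\Lk(A,-B)=-\Lk(A,B)$; in particular $\Lk(\epsilon_i A,\epsilon_j B)=\epsilon_i\epsilon_j\Lk(A,B)$ for signs $\epsilon_i,\epsilon_j$. For the mirror rule, an orientation-reversing diffeomorphism $r$ of $S^3$ carries an oriented spanning surface for $B$ to one for $r(B)=B^*$ while reversing all intersection signs, so $\Lk(A^*,B^*)=-\Lk(A,B)$; equivalently, reflection reverses the sign of every crossing in a diagram and hence negates the signed count of intercomponent crossings. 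The one point to stress is that mirroring is an operation on the ambient sphere, so it acts on both curves simultaneously --- this is why a single factor $\epsilon_0$, not $\epsilon_0^2$, appears in the final formula.

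Next I would substitute. By~\eqref{eq:Lgamma}, the $i$-th component of $L^\gamma$ is $\epsilon_i K_{p(i)}^{(*)}$, where the $(*)$ is present exactly when $\epsilon_0=-1$. Hence, for $i\ne j$,
\[
\Lk(L^\gamma)_{ij}=\Lk\bigl(\epsilon_i K_{p(i)}^{(*)},\ \epsilon_j K_{p(j)}^{(*)}\bigr)=\epsilon_i\epsilon_j\,\Lk\bigl(K_{p(i)}^{(*)},K_{p(j)}^{(*)}\bigr)
\]
by rule (a). If $\epsilon_0=+1$ the stars are absent and this equals $\epsilon_i\epsilon_j\Lk(L)_{p(i)p(j)}$; if $\epsilon_0=-1$, rule (b) supplies an extra factor $-1=\epsilon_0$. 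Either way $\Lk(L^\gamma)_{ij}=\epsilon_0\epsilon_i\epsilon_j\Lk(L)_{p(i)p(j)}$. For the diagonal, $\Lk(L^\gamma)_{ii}=0$ by definition, and since $p$ is a bijection the right-hand side is $\epsilon_0\Lk(L)_{p(i)p(i)}=0$ as well, so the formula holds for all $i,j$.

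Finally I would remark that this makes $\Lk(n)$ genuinely closed under the operation: symmetry and integrality are inherited from $\Lk(L)$ and the diagonal stays zero, so the displayed formula defines a map $\Gamma_n\times\Lk(n)\to\Lk(n)$; that it is a group action follows from the already-established action of $\Gamma(L)\cong\MCG(S^3)\times\MCG(L)$ on links (or can be checked directly from the same index bookkeeping used to verify $L^{\gamma\ast\gamma'}=\gamma(\gamma'(L))$). I do not anticipate a real obstacle here; the only care needed is with the relabeling convention --- that $p$ sends the label $i$ to $p(i)$, so the $i$-th component of $L^\gamma$ is the curve originally called $K_{p(i)}$ --- and with the observation that the ambient reflection contributes one shared sign $\epsilon_0$ rather than one per component.
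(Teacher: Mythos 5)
Your proposal is correct and follows essentially the same route as the paper: read off from equation~\eqref{eq:Lgamma} that the $i$-th component of $L^\gamma$ is $\epsilon_i K_{p(i)}^{(*)}$, then apply the facts that linking number is negated by reversing either curve's orientation or the ambient orientation of $S^3$. Your version simply spells out the sign rules (including the correct observation that the mirror contributes a single shared factor $\epsilon_0$) that the paper's one-line justification leaves implicit.
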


\begin{proof}
Equation~\eqref{eq:Lgamma} reminds us that
\begin{equation*}
\gamma(L)=\epsilon_1K_{p(1)}^{(*)}\cup \epsilon_2K_{p(2)}^{(*)}\cup\cdot\cdot\cdot \cup \epsilon_\mu K_{p(\mu)}^{(*)} =\bigcup_{i=1}^{\mu}\epsilon_i K_{p(i)}^{(*)}
\end{equation*}
This means that the $i$th component of $\gamma(L)$ is component $p(i)$ of $L$.  Recall that linking number is reversed by changing the orientation of either curve or the ambient $S^3$, which proves that we should multiply by $\epsilon_0 \epsilon_i \epsilon_j$ as claimed.
\end{proof}
\begin{corollary}\label{cor:3matrix}
If $\gamma = (\epsilon_0,\epsilon_1,\dots,\epsilon_3,p)$, let $\epsilon = \epsilon_0 \epsilon_1 \epsilon_2 \epsilon_3$. The action of $\Gamma_3$ on $\Lk(3)$ can be written
\begin{equation*}
\Lk(\gamma(L))_{ij} = \epsilon_k \epsilon \Lk(L)_{p(i)p(j)}.
\end{equation*}
where $\{i,j,k\} = \{1,2,3\}$ as sets.
\end{corollary}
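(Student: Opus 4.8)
The plan is to derive this directly from Proposition~\ref{prop:linkingmatrix}, which already gives $\Lk(\gamma(L))_{ij} = \epsilon_0\epsilon_i\epsilon_j\,\Lk(L)_{p(i)p(j)}$ for any number of components. The only thing to do is to rewrite the scalar $\epsilon_0\epsilon_i\epsilon_j$ in the special case $n=3$ in terms of the complementary index $k$ and the total sign $\epsilon = \epsilon_0\epsilon_1\epsilon_2\epsilon_3$.

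First I would use that each $\epsilon_m \in \{\pm 1\}$, so $\epsilon_m^2 = 1$. Then, for $\{i,j,k\} = \{1,2,3\}$,
\begin{equation*}
\epsilon_0\epsilon_i\epsilon_j = \epsilon_0\epsilon_i\epsilon_j\,\epsilon_k^2 = (\epsilon_0\epsilon_i\epsilon_j\epsilon_k)\,\epsilon_k = (\epsilon_0\epsilon_1\epsilon_2\epsilon_3)\,\epsilon_k = \epsilon\,\epsilon_k,
\end{equation*}
where the third equality is just that $\{i,j,k\}$ is a reordering of $\{1,2,3\}$, hence $\epsilon_i\epsilon_j\epsilon_k = \epsilon_1\epsilon_2\epsilon_3$, and the last equality is the definition of $\epsilon$. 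Substituting this into the formula of Proposition~\ref{prop:linkingmatrix} yields $\Lk(\gamma(L))_{ij} = \epsilon_k\epsilon\,\Lk(L)_{p(i)p(j)}$, which is the claim.

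There is essentially no obstacle here: the corollary is a cosmetic repackaging of Proposition~\ref{prop:linkingmatrix} specialized to three components, designed so that the dependence of each off-diagonal entry on the orientation signs is transparent (the $(i,j)$ entry is scaled by $\pm\epsilon_k$, with the global sign $\epsilon$ coming from mirroring together with reversing all three components). The only point to check is that the identity does not depend on which of the three pairs $(i,j)$ one selects, and the displayed computation makes exactly that explicit.
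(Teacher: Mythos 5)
Your proof is correct and is exactly the intended argument: the paper states this as an immediate consequence of Proposition~\ref{prop:linkingmatrix}, and your computation $\epsilon_0\epsilon_i\epsilon_j = (\epsilon_0\epsilon_i\epsilon_j\epsilon_k)\epsilon_k = \epsilon\,\epsilon_k$ using $\epsilon_k^2=1$ is precisely the one-line reindexing that justifies it. Nothing is missing.
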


In principle, this description of the action provides all the information one needs to compute orbits and stabilizers for any given matrix (for instance, by computer). However, that brute force approach doesn't yield much insight into the structure of the problem. We now develop enough theory to understand the situation without computer assistance in the case of three-component links.

\subsection{Linking matrix for three-component links}

We first observe that there is a bijection between $\Lk(3)$ and $\Z^3$ given by
\begin{equation}
\label{eq:correspondance}
\left(
\begin{array}{lll}
0 & z_3 & z_2 \\
z_3 & 0 & z_1 \\
z_2 & z_1 & 0
\end{array}
\right) \leftrightarrow (z_1, z_2, z_3)
\end{equation}
We would like to understand the action of $\Gamma_3$ on $\Lk(3)$ by reducing it to the natural action of the simpler group $(\Z_2)^3 \rtimes S_3$ on $\Z^3$.

\begin{proposition}
\label{prop:fhom}
The action of $\Gamma_3$ on $\Lk(3)$ descends to the natural action of $(\Z_2)^3 \rtimes S_3$ on $\Z^3$ via the surjective homomorphism $\Gamma_3 \rightarrow (\Z_2)^3 \rtimes S_3$ defined by 
\begin{equation*}
f: (\epsilon_0,\epsilon_1,\epsilon_2,\epsilon_3,p) \mapsto (\epsilon_1 \epsilon,\epsilon_2 \epsilon,\epsilon_3 \epsilon, p)
\end{equation*}
where
\begin{equation*}
\epsilon = \epsilon_0 \epsilon_1 \epsilon_2 \epsilon_3.
\end{equation*}
\end{proposition}

\begin{proof}
We first check that $f(\gamma \ast \gamma') = f(\gamma) \ast f(\gamma')$. Now $\gamma\ast\gamma'=(\epsilon_0\epsilon'_0,\epsilon_1\epsilon'_{p(1)},\epsilon_2\epsilon'_{p(2)}...\epsilon_\mu\epsilon'_{p(\mu)},qp)$. This means that
\begin{align*}
f(\gamma \ast \gamma') &= (\epsilon_1 \epsilon'_{p(1)} \epsilon_0 \epsilon'_0 \epsilon_1 \epsilon'_{p(1)} \epsilon_2 \epsilon'_{p(2)} \epsilon_3 \epsilon'_{p(3)}, \dots, qp) \\
&= (\epsilon_1 \epsilon'_{p(1)} \epsilon \epsilon', \epsilon_2 \epsilon'_{p(2)} \epsilon \epsilon',
\epsilon_3 \epsilon'_{p(3)} \epsilon \epsilon',qp),
\end{align*}
since $\epsilon'_{p(1)}\epsilon'_{p(2)}\epsilon'_{p(3)} = \epsilon'_1\epsilon'_2\epsilon'_3$ for any permutation $p$. But
\begin{align*}
f(\gamma) \ast f(\gamma') &= (\epsilon_1 \epsilon,\dots,\epsilon_3 \epsilon,p) \ast (\epsilon'_1 \epsilon', \dots, \epsilon'_3 \epsilon',q) \\
&= (\epsilon_1 \epsilon \, \epsilon'_{p(1)} \epsilon', \dots, \epsilon_3 \epsilon \, \epsilon'_{p(3)} \epsilon',qp).
\end{align*}
This proves that $f$ is a homomorphism.
In order to show that $f$ is surjective we will compute the kernel. 
Consider the image  $(\delta_1, \delta_2, \delta_3, p) = f(\epsilon_0,\epsilon_1,\epsilon_2,\epsilon_3,p)$.  Then the product $\delta_1 \delta_2 \delta_3 = \epsilon_0$ since $\delta_1 \delta_2 \delta_3 = \epsilon_1 \epsilon \, \epsilon_2 \epsilon \, \epsilon_3 \epsilon = \epsilon_0^3 \epsilon_1^4 \epsilon_2^4 \epsilon_3^4 = \epsilon_0$.

Note that direct computation shows that the preimage of a general element $(\delta_1, \delta_2, \delta_3, p)$ is given by $(\delta_1 \delta_2 \delta_3, \delta_1, \delta_2, \delta_3, p)$ and $(\delta_1 \delta_2 \delta_3, -\delta_1, -\delta_2, -\delta_3, p)$.
So suppose that $f(\epsilon_0,\epsilon_1,\epsilon_2,\epsilon_3,p) = (1,1,1,e)$.   It is clear that $p = e$ and $\epsilon_0=1$.  Since $\epsilon_1 \epsilon = \epsilon_2 \epsilon = \epsilon_3 \epsilon = 1$, if $\epsilon_1 = 1$, then $\epsilon_2 = \epsilon_3 = 1$ and $\gamma = (1,1,1,1,e)$.  Likewise, if $\epsilon_1 = -1$, then $\epsilon_2 = \epsilon_3 = -1$ as well, and $\gamma = (1,-1,-1,-1,e)$.

Since $\Gamma_3$ is a group of order 96 and the kernel of $f$ has order 2, the image of $f$ has order 48. Since the target group $(\Z_2)^3 \rtimes S_3$ also has order 48, we conclude that $f$ is surjective, as claimed. 

 By Corollary~\ref{cor:3matrix}, the $\Gamma_3$ action on $\Lk(3)$ maps each entry $z_k = \Lk(L)_{ij}$ in the linking matrix to $\epsilon \epsilon_k \Lk(L)_{p(i)p(j)}=\epsilon \epsilon_k \, z_{p(k)}$, i.e.,
\begin{equation} \label{eq:actionproof}
\gamma * \left(
\begin{array}{ccc}
0 & z_3 & z_2 \\
z_3 & 0 & z_1 \\
z_2 & z_1 & 0
\end{array}
\right) = 
\left(
\begin{array}{ccc}
0 & \epsilon \epsilon_3 \, z_{p(3)} & \epsilon \epsilon_2 \, z_{p(2)} \\
\epsilon \epsilon_3 \, z_{p(3)} & 0 & \epsilon \epsilon_1 \, z_{p(1)} \\
\epsilon \epsilon_2 \, z_{p(2)} & \epsilon \epsilon_1  \, z_{p(1)} & 0
\end{array}
\right).
\end{equation}

By the definition of $f$, the natural action of $(\Z_2)^3 \rtimes S_3$ on $\Z^3$ is
\begin{equation*}
f(\gamma) * (z_1, z_2, z_3) = (\epsilon_1 \epsilon \, z_{p(1)}, \, \epsilon_2 \epsilon \, z_{p(2)}, \, \epsilon_3 \epsilon \, z_{p(3)}).
\end{equation*}
This triple corresponds precisely to the new linking matrix \eqref{eq:actionproof} obtained from the $\Gamma_3$ action, so we have shown the two actions correspond.
\end{proof}

We are now in a position to classify $3 \cross 3$ linking matrices according to their orbit types, and compute their stabilizers in $\Gamma_3$. The stabilizer of a linking matrix $A \in \Lk(3)$ as a subgroup of $\Gamma(3)$ under the group action of Proposition~\ref{prop:linkingmatrix} is the preimage under the homomorphism $f$ of Proposition~\ref{prop:fhom} of the stabilizer of the corresponding triple $(z_1, z_2, z_3) \in \Z^3$ under the natural action of $(\Z_2)^3 \rtimes S_3$ on $\Z^3$. Since the kernel of $f$ has order $2$, stabilizers in $\Gamma_3$ are twice the size of the corresponding stabilizers in $(\Z_2)^3 \rtimes S_3$.

There are 10 orbit types of triples $(z_1, z_2, z_3)$ under this action. To list the orbit types, we write a representative triple in terms of variables $a$, $b$, and $c$ which are assumed to be integers with distinct nonzero magnitudes. To list the stabilizers, we either give the group explicitly as a subgroup of $(\Z_2)^3 \rtimes S_3$ or provide a list of generators in the form $\left< g_1, g_2, \dots, g_n \right>$.  One of these groups, $S(a, a,-a)$, is more complicated and is described below.


\begin{table}[ht]
\begin{tabular}{llllll}
\toprule
$(z_1,z_2,z_3)$ & Stabilizer in $(\Z_2)^3 \rtimes S_3$ & Stab. in $\Gamma_3$ & Order\\
\midrule
$(0,0,0)$ & $(\Z_2)^3 \rtimes S_3$ & $\Gamma_3$ & 96 \\
$(a,0,0)$ & $(\{+1\} \cross \Z_2 \cross \Z_2) \rtimes \{e,(23)\}$ & $D_4 \cross \Z_2$ & 16 \\
$(a,-a,0)$ & $\left< (1,1,-1,e), (-1,-1,1,(12)) \right>$ & $(\Z_2)^3$ & 8 \\
$(a,a,0)$ & $(\{+1\} \cross \{+1\} \cross \Z_2 ) \rtimes \{e,(12)\}$ & $(\Z_2)^3$ & 8 \\
$(a,b,0)$ & $(\{+1\} \cross \{+1\} \cross \Z_2) \rtimes \{e\}$ & $D_2$ & 4 \\
$(a,a,-a)$ & $S(a,a,-a)$ & $\Z_2 \times S_3$ & 12 \\
$(a,a,a)$ & $(\{+1\} \cross \{+1\} \cross \{+1\}) \rtimes S_3$ & $\Z_2 \times S_3$ & 12 \\
$(a,b,-b)$ & $\left< (1,-1,-1,(23)) \right>$ & $D_2$ & 4 \\
$(a,b,b)$ & $(\{+1\}\cross \{+1\} \cross \{+1\}) \rtimes \{e,(23)\}$ & $D_2$ & 4 \\
$(a,b,c)$ & $\left\{ (1,1,1,e) \right\}$ & $D_1$ & 2 \\
\bottomrule
\end{tabular}
\smallskip
\caption[Stabilizers for linking matrices in the group $(\Z_2)^3 \rtimes S_3$]{This table gives stabilizers for triples in $\Z^3$ in the group $(\Z_2)^3 \rtimes S_3$ under the natural action of this group on $\Z^3$. This list of examples covers all the orbit types of this action. As we have shown above, the preimages of these stabilizers in $\Gamma_3$ are the stabilizers of the corresponding linking matrices in $\Lk(3)$. For convenience, the order of these preimages are given in the right-hand column of the table.
\label{tab:stabs}}
\end{table}

The group $S(a,a,-a)$ is a $6$ element group isomorphic to $S_3$ (or $D_3$) given by
\begin{equation*}
S(a,a,-a) = \left\{
\begin{array}{lll}
(1,1,1,e) & (1,1,1,(12)) & (1,-1,-1,(23)) \\
(-1,1,-1,(13)) & (1,-1,-1,(123)) & (-1,1,-1,(132))
\end{array} \right\}
\end{equation*}

Using the preimage formula in the proof of Proposition~\ref{prop:fhom}, it is easy to compute the stabilizer of a given linking matrix in $\Gamma_3$ directly from the table above; we simply conjugate by a permutation to bring the corresponding triple into one of the forms above and then apply the preimage formula.

We can now draw some amusing conclusions which might not be obvious otherwise, such as:

\begin{lemma}
If $L$ is a three-component link and any element of $\Sigma(L)$ reverses orientation on $S^3$ then at least one pair of components of $L$ has linking number zero.
\end{lemma}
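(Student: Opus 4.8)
The plan is to argue by contradiction, transport the hypothesis through the homomorphism $f$ of Proposition~\ref{prop:fhom}, and finish with a one-line computation on the signs of the pairwise linking numbers. Suppose some $\gamma = (\epsilon_0,\epsilon_1,\epsilon_2,\epsilon_3,p) \in \Sigma(L)$ has $\epsilon_0 = -1$, and assume for contradiction that no pair of components of $L$ has linking number zero; equivalently, writing $(z_1,z_2,z_3) \leftrightarrow \Lk(L)$ under the correspondence \eqref{eq:correspondance}, every $z_i \neq 0$. Since $\Sigma(L)$ is contained in the stabilizer of $\Lk(L)$ in $\Gamma_3$, and since (by the proposition matching the two actions) the $\Gamma_3$-action on $\Lk(3)$ corresponds under $f$ to the natural action of $(\Z_2)^3 \rtimes S_3$ on $\Z^3$, the image $f(\gamma) = (\delta_1,\delta_2,\delta_3,p)$, with $\delta_i = \epsilon_i\epsilon_0\epsilon_1\epsilon_2\epsilon_3$, lies in the stabilizer of $(z_1,z_2,z_3)$.

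Next I would unwind that stabilizer condition. Using the formula for the natural action, fixing $(z_1,z_2,z_3)$ means $z_i = \delta_i\, z_{p(i)}$ for $i = 1,2,3$. Taking absolute values gives $|z_i| = |z_{p(i)}|$, and because every $z_i$ is nonzero we may solve for the signs: $\delta_i = \operatorname{sgn}(z_i)\operatorname{sgn}(z_{p(i)})$. Multiplying over $i$, and using that $p$ is a bijection of $\{1,2,3\}$,
\begin{equation*}
\delta_1\delta_2\delta_3 = \prod_{i=1}^{3}\operatorname{sgn}(z_i)\operatorname{sgn}(z_{p(i)}) = \Bigl(\prod_{i=1}^{3}\operatorname{sgn}(z_i)\Bigr)^{2} = 1.
\end{equation*}
But the proof of Proposition~\ref{prop:fhom} records that $\delta_1\delta_2\delta_3 = \epsilon_0$; hence $\epsilon_0 = 1$, contradicting $\epsilon_0 = -1$. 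Therefore some $z_i = 0$, i.e., some pair of components of $L$ has linking number zero.

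As a cross-check, the same conclusion can be read directly off Table~\ref{tab:stabs}: among the orbit types with all entries nonzero --- namely $(a,a,-a)$, $(a,a,a)$, $(a,b,-b)$, $(a,b,b)$, and $(a,b,c)$ --- every element of the listed stabilizer in $(\Z_2)^3 \rtimes S_3$ has $\delta_1\delta_2\delta_3 = +1$, so its preimage in $\Gamma_3$ has $\epsilon_0 = +1$ and cannot reverse the orientation of $S^3$. I do not expect a genuine obstacle here; the only thing requiring care is the bookkeeping --- getting the direction of the permutation in the natural action right and tracking the sign conventions through $f$ --- since Corollary~\ref{cor:3matrix} and Proposition~\ref{prop:fhom} have already done all of the structural work.
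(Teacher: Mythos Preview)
Your proof is correct and follows essentially the same approach as the paper: both push the orientation-reversing element through the homomorphism $f$, use the identity $\delta_1\delta_2\delta_3 = \epsilon_0$ from the proof of Proposition~\ref{prop:fhom}, and then argue that stabilizing a triple with all nonzero entries forces $\delta_1\delta_2\delta_3 = 1$. Your sign computation $\delta_i = \operatorname{sgn}(z_i)\operatorname{sgn}(z_{p(i)})$ and the product over the bijection $p$ is a slightly more explicit version of the paper's informal remark that ``there must be an even number of sign changes unless some $z_i = 0$.''
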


\begin{proof}
The stabilizer of $\Lk(L)$ includes an element of the form $(-1,\epsilon_1,\epsilon_2,\epsilon_3, p)$ if and only if some element $(\delta_1,\delta_2,\delta_3,p)$ in the stabilizer of the corresponding triple $(z_1,z_2,z_3)$ has $\delta_1 \delta_2 \delta_3 = -1$ since we showed in the proof of Proposition~\ref{prop:fhom} that $\epsilon_0$ equaled $\delta_1 \delta_2 \delta_3$.  

A negative $\delta_i$ will switch the sign of the linking number $z_{p(i)}$; to stabilize the triple $(z_1,z_2,z_3)$ there must be an even number of sign changes unless some $z_i=0$.  Hence, if $L$ has some mirror symmetry, i.e., one with $\epsilon_0=-1$, then $\delta_1 \delta_2 \delta_3 = -1$ which produces an odd number of sign changes, so some linking number $z_i=0$.
\end{proof}

\begin{example} \label{ex:731stab}
We will see that the linking matrix for $7^3_1$ has corresponding triple in the form 
$(a,a, -a)$. This means that the stabilizer of this linking matrix is  
a group of order 12 isomorphic to $\Z_2 \cross S_3$ in $\Gamma_3$ conjugate to the preimage of the stabilizer $S(a,a,-a)$.  Using the preimage formula of Proposition~\ref{prop:fhom}, we can explicitly compute 
\begin{equation*}
f^{-1}(S(a,a,-a)) = \left\{
\begin{array}{lll}
(1,1,1,1,e) & (1,1,1,1,(12)) & (1,1,-1,-1,(23)) \\
(1,-1,-1,-1,e) & (1,-1,-1,-1,(12)) & (1,-1,1,1,(23) \\
(1,-1,1,-1,(13)) & (1,1,-1,-1,(123)) & (1,-1,1,-1,(132)) \\
(1,1,-1,1,(13)) & (1,-1,1,1,(123)) & (1,1,-1,1,(132))
\end{array}
\right\}.
\label{eq:731stab}
\end{equation*}

Conjugating this subgroup by $(1,1,1,1,(13))$, we obtain the stabilizer of $\Lk(7^3_1)$:
\begin{equation*}
\left\{
\begin{array}{lll}
(1,1,1,1,e) & (1,1,1,1,(23)) & (1,-1,-1,1,(12)) \\
(1,-1,-1,-1,e) & (1,-1,-1,-1,(23)) & (1,1,1,-1,(12) \\
(1,-1,1,-1,(13)) & (1,-1,-1,1,(132)) & (1,-1,1,-1,(123)) \\
(1,1,-1,1,(13)) & (1,1,1,-1,(132)) & (1,1,-1,1,(123))
\end{array}
\right\}
\end{equation*}
We know that $\Sigma(7^3_1)$ is a subgroup of this stabilizer; actually, it equals the stabilizer, which we show in section~\ref{claim:731}.
\end{example}

\medskip
\subsection{Linking matrix for four-component links}
For four-component links, we will need to develop a different observation.  There are only three prime four-component links with 8 or fewer crossings, and fortunately they all possess a particular type of linking matrix.  While the situation seems too complicated to make a full analysis of the $S_4$ action on the 6 nonzero elements of a general $4 \cross 4$ linking matrix, it is relatively simple to come up with a theory which covers our cases.  We first give a correspondence between certain $4 \cross 4$ linking matrices and elements of $\Z^4$:
\begin{equation}
\left(
\begin{array}{cccc}
0 & z_1 & z_3 & 0 \\
z_1 & 0 & 0 & z_2 \\
z_3 & 0 & 0 & z_4 \\
0   & z_2 & z_4 & 0
\end{array}
\right)
\leftrightarrow
(z_1,z_2,z_3,z_4)
\label{eq:gamma4tolink4}
\end{equation}
Equivalently, we let
\begin{equation*}
z_1 = A_{12}, \quad z_2 = A_{24}, \quad z_3 = A_{31}, \quad z_4 = A_{43}.
\end{equation*}
If we let $\Gamma_4$ act on the matrix $A_{ij}$ as usual, matrices in this form are fixed by the subgroup with permutations in an 8 element subgroup of $S_4$ isomorphic to $D_4$ which we will call $G_0$. 

\begin{proposition}
\label{prop:gamma4}
Let $G_0$ denote the subgroup $\{e,(14),(23),(14)(23),(12)(34),(13)(24),(1243),(1342)\}$ (isomorphic to $D_4$) of $S_4$. The action of the subgroup $G = \{ (\epsilon_0, \epsilon_1, \dots, \epsilon_4,p) | p \in G_0 \} < \Gamma_4$ on $\Lk(4)$ descends to the natural action of $(\Z_2)^4 \rtimes S_4$ on $\Z^4$ via the homomorphism
\begin{equation*}
f: (\epsilon_0, \epsilon_1, \dots, \epsilon_4,p) \mapsto (\epsilon_0 \epsilon_1 \epsilon_2, ~\epsilon_0 \epsilon_2 \epsilon_4, ~\epsilon_0 \epsilon_1 \epsilon_3, ~\epsilon_0 \epsilon_3 \epsilon_4, ~f_0(p)).
\end{equation*}
where $f_0:G_0 \rightarrow G_0$ is defined as
\begin{equation*}
\begin{array}{llll}
e \mapsto e & \quad (14) \mapsto (12)(34) \quad & \quad (13)(24) \mapsto (14) \quad & (1243) \mapsto (1243) \\
 & \quad (23) \mapsto (13)(24) & \quad (12)(34) \mapsto (23) & (14)(23) \mapsto (14)(23) \\
 &  & & (1342) \mapsto (1342)
\end{array}
\end{equation*}
\end{proposition} 

\begin{proof}
A series of easy but lengthy direct computations show that this homomorphism has a kernel of order 4 given by
\begin{equation*}
\operatorname{ker}(f) = \{(1,1,1,1,1,e), (1,-1,-1,-1,-1,e), (-1,-1,1,1,-1,e), (-1,1,-1,-1,1,e)\},
\end{equation*}
and the image of $f$ in $(\Z_2)^4 \rtimes S_4$ is $\{ (\delta_1,\delta_2,\delta_3,\delta_4,f_0(p)): ~\delta_4 = \delta_1\delta_2\delta_3, p \in G_0 \}$.  The preimage of such an element is the 4 element set $\{(\epsilon_1\epsilon_2\delta_1,  ~\epsilon_1, ~\epsilon_2, ~\epsilon_2 \delta_1 \delta_3,  ~\epsilon_1 \delta_1\delta_2, ~p)\}$, where $\epsilon_1$ and $\epsilon_2$ are arbitrary.

To show that the action of $G$ on $\Lk(4)$ descends to the natural action of $(\Z_2)^4 \rtimes S_4$ on $\Z^4$, i.e.
\begin{equation}
\gamma * \left(
\begin{array}{cccc}
0 & z_1 & z_3 & 0 \\
z_1 & 0 & 0 & z_2 \\
z_3 & 0 & 0 & z_4 \\
0   & z_2 & z_4 & 0
\end{array}
\right)
\leftrightarrow
f(\gamma) * (z_1,z_2,z_3,z_4)
\label{eq:gamma4tolink4action}
\end{equation}
we need only check that
\begin{equation*}
z_{f_0(p)(1)} = A_{p(1)p(2)}, \quad z_{f_0(p)(2)} = A_{p(2)p(4)}, \quad z_{f_0(p)(3)} = A_{p(3)p(1)}, \quad z_{f_0(p)(4)} = A_{p(4)p(3)}
\end{equation*}
for all $p \in G_0$. This is another straightforward, if lengthy, computation.
\end{proof}

We now need to find stabilizers for a few carefully chosen linking matrix types.

\begin{lemma}
\label{lem:stab4}
The stabilizers in $f(G) < (\Z_2)^4 \rtimes S_4$ of $(a,a,a,a)$, $(a,a,-a,a)$, and $(a,-a,-a,a)$ are all 8 element groups isomorphic to $D_4$.  These produce 32-element stabilizers for the corresponding linking matrices in $\Gamma_4$ isomorphic to $\Z_2 \cross \Z_2 \cross D_4$. The individual stabilizers in $f(G)$ are in the form $(\delta_1, \dots, \delta_4, p)$ where the $\delta_i$ are determined uniquely by $p$. For $(a,a,a,a)$, the $\delta_i = 1$. In the other two cases, the pattern of signs is more intricate. We give the subgroups explicitly below in Table~\ref{tab:4stabs}.

\medskip

\begin{table}[ht]
\begin{center}
\begin{tabular}{lp{4.5in}} \toprule
$(z_1,z_2,z_3,z_4)$ \quad & \qquad Stabilizer subgroup $S(z_1,z_2,z_3,z_4)$ of $f(G)< (\Z_2)^4 \rtimes S_4$ \\ 
\midrule
$(a,a,a,a)$  & $(\{+1\} \cross \{+1\} \cross \{+1\} \cross \{+1\})\rtimes G_0$ \\
\midrule 
$(a,a,-a,a)$ & $\begin{array}{lll}
(1,1,1,1,e) & (1,1,-1,-1,(12)(34)) & (1,1,-1,-1,(1243))   \\
 (1,1,1,1,(14))  & (-1,1,-1,1,(13)(24)) & (-1,1,-1,1,(1342)) \\
(1,-1,-1,1,(23)) \hspace*{0.5em} &   (1,-1,-1,1,(14)(23)) & 
\end{array}$ \\ 
\midrule
$(a,-a,-a,a)$ & $\begin{array}{lll}
(1,1,1,1,e) & (-1,-1,-1,-1,(12)(34)) & (-1,-1,-1,-1,(1243))   \\
(1,1,1,1,(14))    & (-1,-1,-1,-1,(13)(24)) &  (-1,-1,-1,-1,(1342)) \\
 (1,1,1,1,(23)) \hspace*{0.5em}  & (1,1,1,1,(14)(23))                 &
\end{array}
$\\
\bottomrule
\end{tabular}
\end{center}
\caption{Stabilizer subgroups for certain four-component links
\label{tab:4stabs}}
\end{table}
\end{lemma}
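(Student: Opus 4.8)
The plan is to handle the three quadruples $(a,a,a,a)$, $(a,a,-a,a)$, and $(a,-a,-a,a)$ in one stroke, using the single feature they share: all four entries have the common nonzero magnitude $|a|$.

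First I would compute the stabilizer of such a quadruple $z=(z_1,z_2,z_3,z_4)$ in the full group $(\Z_2)^4\rtimes S_4$ and then intersect with $f(G)$. Because $|z_{p(i)}|=|z_i|=|a|\neq 0$ for every permutation $p$ and every $i$, the equation $\delta_i z_{p(i)}=z_i$ defining membership of $(\delta,p)$ in the stabilizer forces $\delta_i=\operatorname{sgn}(z_i)\operatorname{sgn}(z_{p(i)})$; so for \emph{each} $p$ there is exactly one sign vector $\delta=\delta(p)$ with $(\delta(p),p)$ in the stabilizer. Moreover $\delta(p)_1\delta(p)_2\delta(p)_3\delta(p)_4=\bigl(\operatorname{sgn}(z_1)\operatorname{sgn}(z_2)\operatorname{sgn}(z_3)\operatorname{sgn}(z_4)\bigr)^2=1$, i.e.\ $\delta(p)_4=\delta(p)_1\delta(p)_2\delta(p)_3$, so $(\delta(p),p)$ automatically lies in the image of $f$ described in Proposition~\ref{prop:gamma4}. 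Intersecting with $f(G)$ just restricts $p$ to run over $f_0(G_0)=G_0$, so the $f(G)$-stabilizer is the eight-element set $\{(\delta(p),p):p\in G_0\}$; since the projection $(\Z_2)^4\rtimes S_4\to S_4$ is a homomorphism restricting to a bijection of this set onto $G_0$, the stabilizer is isomorphic to $G_0\cong D_4$. Evaluating $\delta(p)$ on the eight elements of $G_0$ produces the three sign patterns of Table~\ref{tab:4stabs}: when all $\operatorname{sgn}(z_i)$ agree (the $(a,a,a,a)$ case) every $\delta(p)=(1,1,1,1)$ and the stabilizer is $(\{+1\}^4)\rtimes G_0$; in the other two cases the pattern is read off from $\operatorname{sgn}(z_i)\operatorname{sgn}(z_{p(i)})$.

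Next I would pass to $\Gamma_4$. Let $M$ be the linking matrix corresponding to $z$ under \eqref{eq:gamma4tolink4}. If $\gamma=(\epsilon_0,\dots,\epsilon_4,p)\in\Gamma_4$ fixes $M$ then by Proposition~\ref{prop:linkingmatrix} its permutation $p$ must preserve the set of positions where $M$ is nonzero; since $a\neq 0$ this set is the edge set of the $4$-cycle $1$--$2$--$4$--$3$--$1$ on the labels, whose graph automorphism group is precisely $G_0$, so $p\in G_0$ and $\gamma\in G$. Then by \eqref{eq:gamma4tolink4action}, $\gamma$ fixes $M$ if and only if $f(\gamma)$ fixes $z$; hence the stabilizer of $M$ in $\Gamma_4$ is $f^{-1}(S(z))$, which has order $|\ker f|\cdot|S(z)|=4\cdot 8=32$. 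Finally, using the explicit preimage formula of Proposition~\ref{prop:gamma4} I would list all $32$ elements, exhibit a subgroup isomorphic to $D_4$ that maps isomorphically onto $S(z)$ under $f$ (hence meets the normal subgroup $\ker f\cong\Z_2\cross\Z_2$ trivially), and then analyze the resulting extension of $D_4$ by $\ker f$ to identify the isomorphism type and to read off the entrywise descriptions in Table~\ref{tab:4stabs}.

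The main obstacle is that last step: the short exact sequence $1\to\ker f\to f^{-1}(S(z))\to S(z)\to 1$ does not split as a direct product for free, so the decomposition and the precise action of $D_4$ on $\ker f$ must be pinned down by hand, and the sign bookkeeping needed to fill in Table~\ref{tab:4stabs}, particularly the intertwined patterns in the $(a,a,-a,a)$ and $(a,-a,-a,a)$ rows, is the genuinely fiddly part. Everything before it reduces to two clean observations: on a quadruple of constant nonzero magnitude the permutation determines the sign vector, and only the automorphisms of the $4$-cycle can permute the components of $M$ at all.
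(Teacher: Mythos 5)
Your argument for the core of the lemma is correct and, in fact, supplies a proof where the paper offers none (the lemma is stated in the paper without proof, as a direct computation). The key observation --- that on a quadruple of constant nonzero magnitude the equation $\delta_i z_{p(i)}=z_i$ forces $\delta_i=\operatorname{sgn}(z_i)\operatorname{sgn}(z_{p(i)})$, so each permutation admits exactly one compatible sign vector, automatically satisfying $\delta_1\delta_2\delta_3\delta_4=1$ --- does reproduce every entry of Table~\ref{tab:4stabs} (I checked all sixteen nontrivial rows), and the projection-to-$S_4$ argument correctly identifies each stabilizer in $f(G)$ as a copy of $G_0\cong D_4$. Your graph-automorphism argument that any $\gamma\in\Gamma_4$ fixing the matrix must have permutation part in $\operatorname{Aut}(C_4)=G_0$, hence lies in $G$, is also exactly the right way to see that the $\Gamma_4$-stabilizer is $f^{-1}(S(z))$ of order $4\cdot 8=32$.

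The one step you defer --- identifying the $32$-element preimage as $\Z_2\cross\Z_2\cross D_4$ --- is where you should be careful, because that identification cannot be completed as stated: the lemma's claimed isomorphism type appears to be wrong. The sequence $1\to\ker f\to f^{-1}(S(z))\to S(z)\to 1$ does split (e.g.\ $\{(1,1,1,1,1,p):p\in G_0\}$ is a complement in the $(a,a,a,a)$ case), but only as a semidirect product: conjugation by any element whose permutation is $(12)(34)$ permutes the coordinates of the kernel element $k=(-1,-1,1,1,-1,e)$ into $(-1,1,-1,-1,1,e)=kz$, where $z=(1,-1,-1,-1,-1,e)$. Hence $\ker f$ is not central, $z$ lies in the commutator subgroup, and together with $(1,1,1,1,1,(14)(23))$ this gives $|[H,H]|\geq 4$ and $|Z(H)|=4$; by contrast $\Z_2\cross\Z_2\cross D_4$ has commutator subgroup of order $2$ and center of order $8$. (This does not affect the parts of the lemma actually used downstream --- the table, the order $32$, and the order-$16$ orientation-preserving subgroups $\{(1,c,c,c,c,p)\}\cong\Z_2\cross D_4$ --- but it means the ``fiddly part'' of your plan should end by correcting the stated isomorphism type to the appropriate nonabelian extension $(\Z_2\cross\Z_2)\rtimes D_4$ rather than confirming a direct product.)
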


As before, we can draw some conclusions about links from this theory which we might not have noticed otherwise. For example,
\begin{corollary}
If $L$ is a four-component link with linking matrix in the form of~\eqref{eq:gamma4tolink4}, then no element of $\Sigma(L)$ exchanges components 1 and 3 without also exchanging components 2 and 4.
\end{corollary}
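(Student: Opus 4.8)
The plan is to exploit the fact, established above, that $\Sigma(L)$ is always contained in the stabilizer of the linking matrix $\Lk(L)$ under the $\Gamma_4$-action of Proposition~\ref{prop:linkingmatrix}, and then to pin down exactly which permutations can occur as the permutation part of a stabilizing element.

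First I would observe that the $\Gamma_4$-action on a linking matrix only rescales each entry by a sign and permutes the entries according to $p$: by Proposition~\ref{prop:linkingmatrix}, $\Lk(\gamma(L))_{ij} = \epsilon_0\epsilon_i\epsilon_j \Lk(L)_{p(i)p(j)}$. Hence if $\gamma \in \Sigma(L)$, so that $\gamma$ stabilizes $\Lk(L)$, then $|\Lk(L)_{ij}| = |\Lk(L)_{p(i)p(j)}|$ for all $i,j$; in particular $p$ must carry the set of index pairs $\{i,j\}$ with $\Lk(L)_{ij}=0$ onto itself. For a link $L$ whose linking matrix has the form~\eqref{eq:gamma4tolink4} with the four entries $z_1,z_2,z_3,z_4$ nonzero (which is the situation for all three four-component links treated here), the pairs with nonzero linking number are $\{1,2\}$, $\{2,4\}$, $\{3,4\}$, $\{1,3\}$, which form the $4$-cycle $1 - 2 - 4 - 3 - 1$, while $\{1,4\}$ and $\{2,3\}$ are its diagonals. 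Thus $p$ must be an automorphism of this $4$-cycle, i.e. $p$ lies in the dihedral group $G_0$ of order $8$ identified just before Proposition~\ref{prop:gamma4} as precisely the set of permutations that fix matrices of this form.

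It then remains to inspect the eight elements of $G_0 = \{e,(14),(23),(14)(23),(12)(34),(13)(24),(1243),(1342)\}$ and check that the only one sending $1\mapsto 3$ and $3\mapsto 1$ is $(13)(24)$, which also sends $2\mapsto 4$ and $4\mapsto 2$; the other element of $G_0$ moving $1$ to $3$, namely $(1342)$, sends $3\mapsto 4$ and so does not exchange $1$ and $3$ at all. Consequently, any $\gamma\in\Sigma(L)$ whose permutation part exchanges components $1$ and $3$ must have permutation part $(13)(24)$, which also exchanges components $2$ and $4$. This is the assertion of the corollary.

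I expect no serious obstacle; the only point requiring care is the non-degeneracy hypothesis on the linking matrix, since if some $z_i$ vanished the nonzero-pair graph would be smaller and could admit extra automorphisms. But for the three four-component links with $8$ or fewer crossings all four entries are genuinely nonzero — indeed Lemma~\ref{lem:stab4} handles exactly the forms $(a,a,a,a)$, $(a,a,-a,a)$, $(a,-a,-a,a)$ with $a\neq 0$ — so the argument applies to all the links under consideration.
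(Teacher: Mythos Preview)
Your argument is correct and is essentially the natural unpacking of what the paper leaves implicit: the paper states the corollary with no proof beyond ``we can draw some conclusions from this theory,'' and your route---$\Sigma(L)$ sits inside the stabilizer of $\Lk(L)$, the stabilizing permutation must preserve the zero/nonzero pattern of the matrix~\eqref{eq:gamma4tolink4}, hence lies in the dihedral group $G_0$, and the only element of $G_0$ swapping $1$ and $3$ is $(13)(24)$---is exactly the argument the surrounding discussion (the sentence preceding Proposition~\ref{prop:gamma4} together with Lemma~\ref{lem:stab4}) is pointing at.

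You are also right to flag the nondegeneracy issue, and in fact you are being more careful than the paper. The corollary as stated applies to \emph{any} matrix of the form~\eqref{eq:gamma4tolink4}, but if all $z_i=0$ the linking matrix is zero, its stabilizer is all of $\Gamma_4$, and the permutation $(13)$ is not excluded by the linking-matrix argument. So the corollary, read literally, needs the hypothesis that the $z_i$ are nonzero; as you note, this holds for all three four-component links of at most $8$ crossings, which is the scope the authors have in mind.
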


\medskip

\section{The Satellite Lemma}

We begin with two definitions.  
\begin{definition}
A link $L$ is \emph{invertible} if reversing the orientation of all of its components produces a link isotopic to $L$, i.e., if $(1, -1, \ldots, -1, p) \in \Sigma(L)$ for some permutation $p$.  If the trivial permutation suffices, we call $L$ \emph{purely invertible}.
\end{definition}

\begin{definition}  Let $L$ be a link with $\mu$ components.  If swapping the $i$th and $j$th components produces a link isotopic to $L$, i.e., if the element $\gamma = (1, \ldots, 1, (ij))\in \Sigma(L)$, then components $(i,j)$ have a \emph{pure exchange symmetry}.
\end{definition}

We note that if components $K_i$ and $K_j$ are not of the same knot type, then it is impossible for them to have a pure exchange symmetry. We also note that if a link admits all pure exchange symmetries, the terms \emph{invertible} and \emph{purely invertible} are equivalent.



The most difficult part of our work below will be in ruling out pure exchange symmetries. So far, we have two (crude) tools; we can rule out pure exchange when the two components have different knot types or when the pure exchange does not preserve the linking matrix. In a few cases, these tools will not be enough and we will need the following:

\begin{lemma}
\label{lem:satellitelemma}
Suppose that $L(K,i)$ is a satellite of $L$ constructed by replacing component $i$ with a knot or link $K$. Then $L$ cannot have a pure exchange symmetry exchanging components $i$ and $j$ unless $L(K,i)$ and $L(K,j)$ are isotopic.
\end{lemma}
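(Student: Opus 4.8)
The plan is to argue contrapositively and functorially: a pure exchange symmetry of $L$ exchanging components $i$ and $j$ should be ``transportable'' along the satellite construction, forcing $L(K,i) \cong L(K,j)$. First I would make precise what the satellite operation means: choosing a tubular neighborhood $N_i$ of the $i$th component $K_i$ of $L$, together with a framing (a product structure $N_i \cong S^1 \times D^2$), and a pattern link $P \subset S^1 \times D^2$ representing $K$, one forms $L(K,i)$ by deleting the interior of $N_i$ from the ambient $S^3$ and gluing in $S^1 \times D^2$ containing $P$ (so that the meridian and longitude match the chosen framing), then taking the union with the other components $K_\ell$, $\ell \neq i$. The key point is that, as an unoriented/oriented link type, $L(K,i)$ depends only on the isotopy class of $L$ (with a choice of framing convention, e.g. the $0$-framing, which is natural and preserved by ambient isotopy), so the construction is well-defined on isotopy classes.

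The main step is the following transport argument. Suppose $\gamma = (1,1,\dots,1,(ij)) \in \Sigma(L)$. By definition, there is an ambient isotopy $h_t \co S^3 \to S^3$ with $h_0 = \mathrm{id}$ and $h_1(L) = L^\gamma$, where $h_1$ carries $K_i$ to $K_j$ (as oriented curves), carries $K_j$ to $K_i$, and fixes each other component $K_\ell$ setwise with orientation. Since $h_1$ is an ambient diffeomorphism of $S^3$ (orientation-preserving, as $\epsilon_0 = +1$), it carries a tubular neighborhood $N_i$ of $K_i$ to a tubular neighborhood of $K_j$, and because $h_1$ is orientation-preserving on both $S^3$ and on the core curves, it respects the $0$-framing: it carries the $0$-framing of $K_i$ to the $0$-framing of $K_j$ (the $0$-framing is characterized homologically, hence preserved by orientation-preserving ambient diffeomorphisms). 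Now build the satellite $L(K,i)$ using $N_i$; apply $h_1$; the image is exactly a copy of $K$ glued into a neighborhood of $K_j$ with the $0$-framing, together with the other components $h_1(K_\ell) = K_{\ell}$ — that is, $h_1(L(K,i)) = L(K,j)$ up to an isotopy supported near $K_j$ adjusting the gluing diffeomorphism within its isotopy class. Hence $L(K,i)$ and $L(K,j)$ are ambient isotopic, which is the contrapositive of the claim.

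The hard part will be the bookkeeping around framings and orientations: one must verify that the pure exchange symmetry, which a priori only knows about the isotopy class of the core link $L$, actually induces an isotopy of the satellites rather than merely a homeomorphism that permutes the two patterns in an uncontrolled way. Two subtleties deserve care. First, the ambient isotopy $h_t$ need not carry $N_i$ to $N_j$ ``on the nose,'' but since all tubular neighborhoods of a fixed knot are ambient isotopic rel the core, we may postcompose with a further isotopy so that $h_1(N_i) = N_j$ as framed neighborhoods. Second, one should check that reembedding the pattern $P$ is unaffected: any two framing-preserving identifications $N_i \cong S^1 \times D^2$ differ by a diffeomorphism of $S^1 \times D^2$ isotopic to the identity (once the longitude is pinned down by the $0$-framing, the residual ambiguity is a rotation of the $D^2$ factor, which is isotopically trivial), so the resulting satellites agree up to isotopy. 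I would also remark that the statement and proof are insensitive to whether $K$ is a knot or a link and to whether one works in the oriented/labeled category, since $h_1$ respects all the relevant data; this is worth stating since the lemma will be applied to rule out pure exchanges by exhibiting a pattern $K$ for which $L(K,i)$ and $L(K,j)$ are distinguished by a computable invariant (e.g. a polynomial or hyperbolic volume).
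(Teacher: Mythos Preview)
Your proposal is correct and follows essentially the same approach as the paper: the ambient isotopy realizing the pure exchange carries a tubular neighborhood of $K_i$ (with the pattern $K$ inside it) to a tubular neighborhood of $K_j$, yielding an isotopy $L(K,i)\sim L(K,j)$. The paper's proof is two sentences and leaves the framing and tubular-neighborhood bookkeeping implicit; your version spells out exactly these points (well-definedness of the satellite, preservation of the $0$-framing under an orientation-preserving ambient diffeomorphism, uniqueness of tubular neighborhoods up to isotopy rel core), which is more than the paper does but entirely in the same spirit.
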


\begin{proof}
Such a pure exchange would carry an oriented solid tube around $L_i$ to a corresponding oriented solid tube around $L_j$. If we imagine $K$ embedded in this tube, this generates an isotopy between~$L(K,i)$ and~$L(K,j)$.
\end{proof}

The point of this lemma is that we can often distinguish $L(K,i)$ and $L(K,j)$ using classical invariants which are insensitive to the original labeling of the link. This seems like a general technique, and it would be interesting to explore this topic further.

\section{Two-component links}

This section records the symmetry group $\Sigma(L)$ for all prime two-component links with eight or fewer crossings; there are 30 such links to consider.  Our results are summarized in section~\ref{sec:2results}, which names and lists the symmetry groups which appear (see Table~\ref{tab:2name}).  We count how frequently each group appears by crossing number in Table~\ref{tab:2symm}.  The symmetry group for each link is listed in Tables \ref{tab:2bygroup} and \ref{tab:2bylink}, by group and by link, respectively.  Proofs of these assertions appear in section~\ref{sec2a} 

\subsection{Symmetry names and results}\label{sec:2results}

The Whitten group $\Gamma_2 = \Z_2 \cross (\Z_2 \cross \Z_2 \rtimes S_2)$ of all possible symmetries for two-component links is a nonabelian 16 element group  isomorphic to $\Z_2 \times D_4$.  The symmetry group $\Sigma(L)$ of a given link must form a subgroup of $\Gamma_2$.  There are 27 mutually nonconjugate subgroups of $\Gamma_2$; of these possibilities, only seven are realized as the symmetry subgroup of a prime link with 9 or fewer crossings (see Table~\ref{tab:2name}).  An eighth appears as the symmetry subgroup of a 10-crossing link.

\begin{question}
Do all 27 nonconjugate subgroups of $\Gamma_2$ appear as the symmetry group of some (possibly composite, split) link?  Of some prime, non-split link?
\end{question}

Hillmann~\cite{MR867798} provided examples for a few of these symmetry subgroups, but some of his examples were split links.  Here are the groups we found among links with 8 or fewer crossings.

 \begin{table}[ht]
\begin{center}
\begin{tabular}{lccc}
\toprule
\emph{Symmetry name} & \; \emph{Notation} \; & \emph{Subgroup of} & \; \emph{Isomorphic to} \; \\
&&  $\Gamma_2 = \Z_2 \cross (\Z_2 \cross \Z_2 \rtimes S_2)$  & \\
\midrule
No symmetry & $\{ \id \}$ & $\{ (1,1,1,\id )\}$ & $\{ \id \}$ \\
\midrule
Purely Invertible & $\s{2}{1}$ & $\{1\} \times \Delta(\Z_2 \times \Z_2) \times \{ \id \}$ & $\Z_2 \cong D_1$ \\
\midrule
Invertible with pure exchange &  $\s{4}{1}$ & $\{1\} \times (\Delta(\Z_2 \times \Z_2) \rtimes S_2 )$ & $\Z_2 \times \Z_2 \cong D_2$\\
\midrule
Individually invertible & $\s{4}{2}$ & $\{1\} \times \Z_2 \times \Z_2\times \{ \id \}$ & $\Z_2 \times \Z_2 \cong D_2$ \\
\midrule
Even number of operations & $\s{4}{3}$ & $\{ (\epsilon_0, \epsilon_1, \epsilon_2, \id ) : \epsilon_0  \epsilon_1 \epsilon_2 = 1\} $ & $\Z_2 \times \Z_2 \cong D_2$\\
\midrule
Full orientation-preserving & $\s{8}{1}$ & $\{1\}\times (\Z_2 \times \Z_2\rtimes S_2) $ & $D_4$\\
\midrule
Even ops \& pure exchange &  $\s{8}{2}$ & $\{ (\epsilon_0, \epsilon_1, \epsilon_2, p ) : \epsilon_0  \epsilon_1 \epsilon_2 = 1\}$  & $D_4$\\
\midrule
No exchanges & $\s{8}{3}$ & $\Z_2 \times \Z_2 \times \Z_2\times \{ \id \} $ & $\Z_2 \times \Z_2 \times \Z_2 $\\
\midrule
Full symmetry & $\Gamma_2 $ & $\Gamma_2$ & $\Z_2 \times D_4$ \\
\bottomrule
\end{tabular}
\smallskip
\caption{Certain Whitten symmetry groups of two-component links (subgroups of $\Gamma_2$). $\Delta(\Z_2 \times \Z_2)$ refers to the diagonal subgroup of $\Z_2 \times \Z_2$}
\label{tab:2name}
\end{center}
\end{table}

The first seven nontrivial groups in Table~\ref{tab:2name} are realized as the symmetry group of a link with nine or fewer crossings, while $\s{8}{3}$ appears to be the symmetry group of a 10-crossing link.  We know of no nontrivial links with full symmetry but speculate that they exist.

The next table records the frequency of each group.

\begin{table}[ht]
\begin{center}
\begin{tabular}{cccccccccc}
\toprule
& 2-component & \multicolumn{8}{c}{Link symmetry group} \\
\cmidrule{3-10}
Crossings & links count& $\s{2}{1}$  & $\s{4}{1}$ & $\s{4}{2}$ & $\s{4}{3}$
&$ \s{8}{1}$ & $\s{8}{2}$ & $\s{8}{3}$ & Full\\
\midrule
0a & 1 & &&&&&&& 1 \\
\addlinespace[0.5em]
2a & 1 & &&&&& 1 && \\
\addlinespace[0.25em]
4a & 1 & & 1&&&&&& \\
\addlinespace[0.25em]
5a & 1 & &&&& 1 &&& \\
\addlinespace[0.25em]
6a & 3 & & 2 &&&&1 &&\\
\addlinespace[0.25em]
7a & 6 & 1& 2 & 2  && 1 &&& \\
7n & 2 & 1 && 1 &&&&& \\
\addlinespace[0.25em]
8a & 14 & 3& 7&3 && &1&& \\
8n & 2 & 1 &&1& & &&&  \\
\midrule
Total & 31 & 6 & 12 & 7 && 2 & 3 & & 1 \\
\midrule
\multicolumn{2}{c}{First example} & $7^2_5$   &   $4^2_1$  & $7^2_4$   & $9^2_{61}$ &   $5^2_1$   & $2^2_1$  & -- & $0^2_1$ \\
\bottomrule
\end{tabular}
\end{center}
\vspace*{0.5em}
\caption{Whitten symmetry groups, by crossing number and by alternation}
\label{tab:2symm}
\end{table}
\bigskip

\newpage

Next, Table~\ref{tab:2bygroup} lists the prime two-component links of eight or fewer crossings by symmetry group.  

\bigskip

\begin{table}[ht]
\begin{center}
\begin{tabular}{lcc}
\toprule
\emph{Symmetry name} & \emph{Notation} & \emph{Prime links} \\
\midrule
No symmetry & $\{ \id \}$ &  none\\
\midrule
Invertible & $\s{2}{1}$ & $7^2_5, \, 7^2_7, \,8^2_9, \,8^2_{11}, \,8^2_{14}, \, 8^2_{16}$ \\
\midrule
Invertible with pure exchange &  $\s{4}{1}$ & $4^2_1, \,6^2_1, \,6^2_3, \, 7^2_1,  \, 7^2_2, \, 8^2_1, \, 8^2_2, \, 8^2_3, \, 8^2_4, \, 8^2_5, \, 8^2_6, \, 8^2_7$\\
\midrule
Individually invertible & $\s{4}{2}$ & $7^2_4, \, 7^2_6, \, 7^2_8,  \, 8^2_{10}, \, 8^2_{12}, \,8^2_{13}, \, 8^2_{15}$ \\
\midrule
Even number of operations & $\s{4}{3}$ & none\\
\midrule
Full orientation-preserving & $\s{8}{1}$ & $5^2_1, \, 7^2_3 $\\
\midrule
Even operations with pure exchange &  $\s{8}{2}$ & $2^2_1, \, 6^2_2, \, 8^2_8 $\\
\midrule
No exchanges & $\s{8}{3}$ & none\\
\midrule
Full symmetry & $\Gamma_2 $ & $0^2_1$ \\
\bottomrule
\end{tabular}
\smallskip
\caption{List of Whitten symmetry groups, and links possessing that symmetry} 
\label{tab:2bygroup}
\end{center}
\end{table}

We conclude our tables of results by listing each two-component link and its corresponding Whitten symmetry group:

\begin{table}[ht]
\begin{center}
\renewcommand{\arraystretch}{1.3}
\begin{tabular}{cccccccc}\toprule
Link&Symmetry Group & \hspace*{0.2in} & Link&Symmetry Group & \hspace*{0.2in} & Link &Symmetry Group\\
\cmidrule{1-2} \cmidrule{4-5} \cmidrule{7-8}
$ 2^2_1\; (2a1)$&$\Sigma_{8,2}$ && $7^2_5\; (7a2)$&$\Sigma_{2,1}$ && $8^2_7\; (8a8)$&$\Sigma_{4,1}$\\
\cmidrule{1-2} \cmidrule{4-5} \cmidrule{7-8}
$ 4^2_1\; (4a1)$&$\Sigma_{4,1}$ && $7^2_6\; (7a1)$&$\Sigma_{4,2}$ && $8^2_8\; (8a9)$&$\Sigma_{8,2}$\\
\cmidrule{1-2} \cmidrule{4-5} \cmidrule{7-8}
$5^2_1\; (5a1)$&$\Sigma_{8,1}$ && $7^2_7\; (7n1)$&$\Sigma_{2,1}$ && $8^2_9\; (8a3) $&$\Sigma_{2,1}$\\
\cmidrule{1-2} \cmidrule{4-5} \cmidrule{7-8}
$6^2_1\; (6a3)$&$\Sigma_{4,1}$ && $7^2_8\; (7n2)$&$\Sigma_{4,2}$ && $8^2_{10}\; (8a2)$&$\Sigma_{4,2}$ \\
\cmidrule{1-2} \cmidrule{4-5} \cmidrule{7-8}
$6^2_2\; (6a2)$&$\Sigma_{8,2}$ && $8^2_1\; (8a14)$&$\Sigma_{4,1}$ && $8^2_{11}\; (8a5)$&$\Sigma_{2,1}$\\
\cmidrule{1-2} \cmidrule{4-5} \cmidrule{7-8}
$6^2_3\; (6a1)$&$\Sigma_{4,1}$ && $8^2_2\; (8a12)$&$\Sigma_{4,1}$ && $8^2_{12}\; (8a4)$&$\Sigma_{4,2}$\\
\cmidrule{1-2} \cmidrule{4-5} \cmidrule{7-8}
$7^2_1\; (7a6)$&$\Sigma_{4,1}$ && $8^2_3\; (8a11)$&$\Sigma_{4,1}$ && $8^2_{13}\; (8a1)$&$\Sigma_{4,2}$ \\
\cmidrule{1-2} \cmidrule{4-5} \cmidrule{7-8}
$7^2_2\; (7a5)$&$\Sigma_{4,1}$ && $8^2_4\; (8a13)$&$\Sigma_{4,1}$ &&  $8^2_{14}\; (8a7)$&$\Sigma_{2,1}$\\
\cmidrule{1-2} \cmidrule{4-5} \cmidrule{7-8}
$7^2_3\; (7a4)$&$\Sigma_{8,1}$ && $8^2_5\; (8a10)$&$\Sigma_{4,1}$ && $8^2_{15}\; (8n2)$&$\Sigma_{4,2}$ \\
\cmidrule{1-2} \cmidrule{4-5} \cmidrule{7-8}
$7^2_4\; (7a3)$&$\Sigma_{4,2}$ && $8^2_6\; (8a6)$&$\Sigma_{4,1}$ && $8^2_{16}\; (8n1)$&$\Sigma_{2,1}$ \\
\bottomrule
\end{tabular}
\smallskip
\caption{The Whitten symmetry group for each two-component link}
\label{tab:2bylink}
\end{center}
\end{table}

\newpage
\begin{theorem} \label{thm:2comp}
The symmetry groups for all prime two-component links up to 8 crossings are as listed in Table~\ref{tab:2bylink}.
\end{theorem}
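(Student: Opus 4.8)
The plan is to verify the table entry for each of the 30 prime two-component links individually (or 31 including the Hopf link $2^2_1$), following the general strategy already laid out in the introduction: for each link $L$, exhibit explicit isotopies realizing a generating set of symmetries, thereby showing $\Sigma(L)$ contains a certain subgroup, and then invoke invariants to rule out all remaining elements of $\Gamma_2$, pinning $\Sigma(L)$ down exactly. Since $\Gamma_2$ has only 16 elements and only the 8 (conjugacy classes of) subgroups in Table~\ref{tab:2name} ever occur, in practice one only needs, for each link, to decide membership of a handful of specific elements: the pure inversion $(1,-1,-1,e)$, the mirror $(-1,1,1,e)$, the individual component reversals $(1,-1,1,e)$ and $(1,1,-1,e)$, the pure exchange $(1,1,1,(12))$, and a few products of these.

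First I would organize the links by which coarse invariants immediately restrict $\Sigma(L)$. The two components $K_1,K_2$ of $L$ each have a knot type, so if these differ no element with permutation $(12)$ can lie in $\Sigma(L)$; the linking number $\lk(L)$ and the linking matrix restrict which sign/permutation combinations survive via Proposition~\ref{prop:linkingmatrix} (for $\mu=2$, $\Lk(\gamma(L))_{12} = \epsilon_0\epsilon_1\epsilon_2 \Lk(L)_{p(1)p(2)}$); and for reduced alternating diagrams the self-writhe $s(L)$ and writhe are invariants (Lemma~\ref{lemma:selfwrithe}), which rule out individual component reversals and mirrors in many cases. Chirality of the components (e.g.\ a trefoil summand) rules out $\epsilon_0=-1$. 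This invariant bookkeeping disposes of the ``upper bound'' $\Sigma(L)\subseteq H$ for essentially every link on the list; the pattern is exactly as in the worked example of $7^2_5$ in the excerpt.

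Next I would supply the ``lower bound'' $\Sigma(L)\supseteq H$ by drawing, for each link, the isotopies realizing the claimed generators. Every link on the list except $6^3_2$-type anomalies is purely invertible, so $(1,-1,-1,e)$ must be realized for all 30 links; this is typically an obvious rotation of the standard diagram. The links with $\Sigma = \s{4}{1}, \s{8}{1}, \s{8}{2}$, or $\Gamma_2$ additionally need a pure exchange isotopy $(1,1,1,(12))$ realized by a visible symmetry of the diagram (a rotation swapping the two components), and the amphichiral ones ($\s{8}{1},\s{8}{2},\s{8}{3},\Gamma_2$) need a mirror-realizing isotopy. These are finitely many concrete pictures; I would present them diagram-by-diagram, grouping links that share a diagrammatic symmetry.

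The main obstacle is the handful of links — $7^2_4$ is the flagged example — where the crude tools (different knot types, linking matrix obstruction) are insufficient to rule out a pure exchange, yet no such exchange exists. There I would invoke the Satellite Lemma (Lemma~\ref{lem:satellitelemma}): replace one component by a suitable knot $K$ and show $L(K,1)\not\simeq L(K,2)$ using a classical invariant (e.g.\ the Jones or Alexander polynomial of the resulting link, or of a further satellite) that is insensitive to component labeling. Identifying, for each stubborn link, a choice of $K$ and an invariant that actually distinguishes $L(K,1)$ from $L(K,2)$ is the delicate step and the part requiring genuine computation rather than routine checking. Once every link's $\Sigma(L)$ is squeezed between matching lower and upper bounds, Table~\ref{tab:2bylink} follows, and Proposition~\ref{prop:image} guarantees these coincide with the images of the classical symmetry groups, consistent with the \texttt{SnapPea} cross-check described in the introduction.
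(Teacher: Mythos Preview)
Your overall strategy is exactly the paper's: establish lower bounds on $\Sigma(L)$ by explicit isotopies (pure invertibility for all 30 links, pure exchange for 17 of them, and a few additional generators), establish upper bounds via linking number, self-writhe, component knot types, and Jones/Conway/HOMFLYPT polynomials, and organize the argument by the five symmetry groups that actually occur. The paper proceeds precisely this way in Section~\ref{sec2a}.

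Two concrete slips to fix. First, $7^2_4$ is \emph{not} a Satellite Lemma case: its two components have different knot types, so pure exchange is ruled out immediately, and the paper handles it via Lemma~\ref{lemma:knottype}. The links that genuinely require Lemma~\ref{lem:satellitelemma} are $7^2_6$ and $8^2_{13}$, whose components are both unknots and whose linking matrices do not obstruct the exchange; there the paper cables one component with a trefoil and distinguishes the two resulting satellites by Jones polynomial. Second, $\s{8}{1}$ contains \emph{no} mirror elements (it is $\{1\}\times(\Z_2\times\Z_2\rtimes S_2)$), so for $5^2_1$ and $7^2_3$ you do not need a mirror-realizing isotopy but rather an isotopy realizing a single-component reversal such as $(1,1,-1,e)$. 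The mirror-realizing isotopies are needed only for the $\s{8}{2}$ links $2^2_1$, $6^2_2$, $8^2_8$. (Also, the aside about $6^3_2$ is irrelevant here---that is a three-component link; all 30 two-component links in this range are purely invertible.)
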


The proof of this theorem is divided into five cases, based on the five symmetry groups that appear in Table~\ref{tab:2bylink}; these proofs are found in section~\ref{sec2a}.  Many of our arguments generalize to various families of links.  As this paper focuses on these first examples, we ask for the reader's understanding when we eschew the most general argument in favor of a simpler, more expedient one.

\bigskip

\subsection{Proofs for two-component links} \label{sec2a}

Below, we attempt to provide a general framework for determining symmetry groups for two-component links.  Since $\Sigma(L)$ is a subgroup of $\Gamma_2$, the order of the symmetry group must divide $|\Gamma_2 | = 16$.  Our strategy begins by exhibiting certain symmetries via explicit isotopies.  With these in hand, we next use various techniques to rule out some symmetries until we can finally determine the symmetry group $\Sigma(L)$.  These techniques generally involve using some link invariant to show $L^\gamma \nsim L$.  Among link invariants, the linking number and self-writhe (for alternating links) are easily applied since they count signed crossings; we also use polynomials and other methods.  

We focus on the 30 prime links with eight or fewer crossings.  Our first results indicate which of these 30 links have either a \emph{pure invertibility} or a \emph{pure exchange} symmetries, which we prove explicitly by exhibiting isotopies.  Recall that a link is purely invertible if reversing all components' orientations produces an isotopic link; a link has pure exchange symmetry if swapping its two components is an isotopy.

\begin{lemma}\label{lemma:pe}
Via the isotopies exhibited in Figures~\ref{fig:pe-y}, \ref{fig:pe-z}, and \ref{fig:721pe}-\ref{fig:twisttype} in Appendices~\ref{app:rotate-pe} and \ref{app:pe}, the following 17 links have pure exchange symmetry:
 \begin{equation} \label{eq:pe}
2^2_1,~~4_1^2, ~~5_1^2, ~~6_1^2, ~~6_2^2,~~6_3^2,~~ 7_1^2, ~~7_2^2, ~~ 7_3^2,~~ 8_1^2,~~ 8_2^2, ~~8_3^2, ~~8^2_4,~~8^2_5,~~8^2_6,~~8^2_7, ~~8^2_8
\end{equation}
i.e., $(1,1,1,(12))$ belongs to the symmetry group of each of these links.
\end{lemma}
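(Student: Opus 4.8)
The plan is to establish each of the 17 pure exchange symmetries by exhibiting an explicit ambient isotopy of $S^3$ carrying the link $L$ to $L^{(1,1,1,(12))}$ — that is, a motion that swaps the two components while preserving each component's orientation. Since $(1,1,1,(12)) \in \Sigma(L)$ is precisely the statement that such an orientation-respecting isotopy exists, producing the pictures \emph{is} the proof; the work is in finding, for each link, a good diagram in which the swap becomes visibly a rigid motion or a short sequence of Reidemeister moves. I would organize the 17 links into a small number of families according to the type of symmetric diagram they admit, so that one isotopy argument covers several links at once.

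The first and largest family consists of links possessing a diagram with a $\pi$-rotational symmetry about an axis in the plane of the page (or perpendicular to it) that interchanges the two components; for these the isotopy is simply the ambient rotation, and I would present one representative figure together with the observation that the rotation sends each oriented component to the other oriented component. The figures cited — Figures~\ref{fig:pe-y}, \ref{fig:pe-z} in Appendix~\ref{app:rotate-pe} — handle the ``rotation'' cases, and Figures~\ref{fig:721pe}--\ref{fig:twisttype} in Appendix~\ref{app:pe} handle the remaining links, including the ``twist type'' links $8^2_1,\dots,8^2_7$ built from a common pattern of two interlinked twist regions, where one checks that sliding one twist region past the other realizes the component swap. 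For each link I would verify three things: that the diagram drawn is genuinely the standard diagram of the named link (matching Cerf's or Doll--Hoste's orientations and component labels), that the claimed symmetry of the diagram is an actual symmetry (every crossing maps to a crossing of the same sign and over/under type, or is corrected by an explicit Reidemeister sequence), and that orientations are respected rather than reversed, so that we land in $(1,1,1,(12))$ and not $(1,-1,-1,(12))$ or similar.

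The main obstacle is not conceptual but presentational and bookkeeping-heavy: there is no uniform trick, so each of the 17 links requires its own symmetric diagram, and several of them (the nonalternating $7^2_7,7^2_8$ are \emph{not} in the list, but the alternating $8$-crossing links that are) do not present their exchange symmetry in the most commonly drawn diagram, forcing a preliminary isotopy to a symmetric form before the swap is manifest. Getting the orientations consistent with the reference tables throughout — so that the exhibited symmetry is exactly pure exchange and not pure exchange composed with an inversion — is the delicate part, since a careless choice of orientation on the redrawn diagram can change which element of $\Gamma_2$ one has realized. I would therefore treat the appendix figures as the heart of the argument and, in the body, merely assert the existence of the isotopies with a pointer to those figures, exactly as the lemma statement does.
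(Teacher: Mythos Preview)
Your proposal is correct and matches the paper's approach exactly: the lemma is proved entirely by the referenced figures, with the rotation cases (about the $y$- and $z$-axes) handling $2^2_1$, $4^2_1$, $6^2_1$, $6^2_2$, $8^2_1$, $8^2_4$, $8^2_5$, and the remaining links handled by explicit isotopy sequences in Appendix~\ref{app:pe}. One small correction: the ``twist type'' figure in the paper covers $5^2_1$, $6^2_3$, $7^2_3$, and $8^2_6$ (links sharing a common two-component twisted form), not the range $8^2_1,\dots,8^2_7$ as you wrote; the other $8$-crossing links each get their own individual isotopy figure.
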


As we determine symmetry groups, we will establish that the remaining 13 links in consideration do not have pure exchange symmetry.  
 
Cerf~\cite{cerf} states that all prime, alternating two-component links of 8 or fewer crossings are invertible, though this may involve exchanging components.  Via the isotopies exhibited in Figures~\ref{fig:pi-y} and \ref{fig:622pi}-\ref{fig:8215pi} of Appendices~\ref{app:rotate-pi} and \ref{app:pi}, we extend Cerf's result to non-alternating links, and we show that the invertibility is pure (i.e., without exchanging components).  To obtain invertibility for $7^2_8$, combine the results of Figures~\ref{fig:rotate-y} and \ref{fig:rotate-z}, which show that each of its components can be individually inverted.

\begin{lemma}
\label{lemma:pi}
All 30 prime two-component links with eight or fewer crossings are purely invertible.
\end{lemma}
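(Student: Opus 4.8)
The plan is to proceed link-by-link, exhibiting an explicit isotopy realizing pure invertibility $(1,-1,-1,e)$ for each of the 30 prime two-component links with 8 or fewer crossings. By Proposition~\ref{prop:image}, it suffices in each case to produce an ambient isotopy of $S^3$ carrying $L$ to $-L = -K_1 \cup -K_2$ that respects component labels; equivalently, one displays a sequence of Reidemeister moves taking a standard diagram of $L$ to the diagram obtained by reversing both arrows. The bulk of the work is therefore diagrammatic and is deferred to the figures in Appendices~\ref{app:rotate-pi} and \ref{app:pi}.

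First I would dispose of the alternating links. Cerf~\cite{cerf} has already established that all prime alternating two-component links of 8 or fewer crossings are invertible, possibly after exchanging components. Combining this with Lemma~\ref{lemma:pe}, which supplies pure exchange symmetry $(1,1,1,(12))$ for the 17 links listed in~\eqref{eq:pe} (a list that includes every alternating link whose invertibility, per Cerf, required an exchange), we get pure invertibility for free: if $(1,-1,-1,(12)) \in \Sigma(L)$ and $(1,1,1,(12)) \in \Sigma(L)$, then their product $(1,-1,-1,(12)) \ast (1,1,1,(12)) = (1,-1,-1,e) \in \Sigma(L)$. For the alternating links whose invertibility in Cerf's table is already pure, nothing further is needed. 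This reduces the problem to checking the non-alternating links $7^2_7, 7^2_8, 8^2_{15}, 8^2_{16}$ and supplying explicit pictures for the alternating cases where we want a self-contained argument independent of Cerf (which also serves as the promised check on Cerf's computations).

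Next I would handle the non-alternating links directly by exhibiting isotopies: Figures~\ref{fig:622pi}--\ref{fig:8215pi} give these for $7^2_7$, $8^2_{15}$, $8^2_{16}$ (and re-derive several alternating cases), while for $7^2_8$ one argues componentwise — Figures~\ref{fig:rotate-y} and~\ref{fig:rotate-z} show that each of the two components can be individually inverted, i.e., both $(1,-1,1,e)$ and $(1,1,-1,e)$ lie in $\Sigma(7^2_8)$, and their product is $(1,-1,-1,e)$. The general recipe behind most of the figures is the ``rotate'' trick of Appendix~\ref{app:rotate-pi}: one finds an axis about which a $180^\circ$ rotation of $S^3$ carries the diagram to itself with both components' orientations reversed; when such a symmetric diagram exists (Figure~\ref{fig:pi-y}), pure invertibility is immediate.

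The main obstacle is purely practical rather than conceptual: there is no single invariant-based or structural argument that covers all 30 links, so the proof is a catalogue of 30 individual isotopies, and the real content is the correctness of each diagrammatic manipulation in the appendix figures. The hardest individual cases will be the ones with no symmetric diagram readily available — where the rotation trick fails and one must grind out a genuine sequence of Reidemeister moves — and $7^2_8$, which forces the componentwise workaround because no isotopy reversing both components simultaneously is evident from its standard diagram. Verifying that each figure's move sequence is legitimate (no illegal strand passings, correct crossing signs throughout) is where care is required; the underlying topology, once the pictures are granted, is immediate from Proposition~\ref{prop:image}.
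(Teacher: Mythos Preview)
Your proposal is essentially the paper's approach: a link-by-link catalogue of explicit isotopies (many via the $180^\circ$ rotation trick, the rest via longer Reidemeister sequences), with the componentwise argument for $7^2_8$ handled exactly as the paper does. One caution: the Cerf-based ``shortcut'' you sketch in your first paragraph is not in the paper and does not quite close --- you assert parenthetically that the pure-exchange list~\eqref{eq:pe} contains every alternating link for which Cerf's invertibility required an exchange, but this claim is not something you can read off from the paper's lemmas and would require independent inspection of Cerf's tables; since you ultimately fall back on the full catalogue of 30 isotopies anyway, the detour is harmless but should be dropped or flagged as heuristic.
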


\medskip

We note that the pure exchange and pure invertibility symmetries, corresponding to Whitten elements $(1,1,1,(12))$ and $(1,-1,-1,\id)$, respectively, generate the subgroup $\s{4}{1}$ of $\Gamma_2$.  This implies our first result about link symmetry groups.

\begin{lemma}
Any two-component link, such as those listed in \eqref{eq:pe}, that has both pure exchange symmetry and (pure) invertibility, must have $\Sigma_{4,1}$ as a subgroup of its symmetry group $\Sigma(L)$.
\end{lemma}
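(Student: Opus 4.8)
The statement to prove is essentially a group-theoretic triviality once the two generating symmetries are in hand, so the plan is to verify the subgroup generated by $(1,1,1,(12))$ and $(1,-1,-1,\id)$ is exactly $\s{4}{1}$, and then invoke the fact that $\Sigma(L)$ is a subgroup of $\Gamma_2$.

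\medskip

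The plan is as follows. First I would recall that $\Sigma(L)$ is a genuine subgroup of $\Gamma_2$ (this follows from Proposition~\ref{prop:image}, since it is the image of the group homomorphism $\pi$), so in particular $\Sigma(L)$ is closed under the Whitten product $\ast$. Hence, if both $a = (1,1,1,(12))$ and $b = (1,-1,-1,\id)$ lie in $\Sigma(L)$ — which is exactly the hypothesis, combining the given pure exchange and pure invertibility symmetries — then the entire subgroup $\langle a, b\rangle$ generated by them is contained in $\Sigma(L)$. Second, I would compute $\langle a, b\rangle$ explicitly using the multiplication rule of Definition~\ref{def:whittengroup}. One checks $a \ast a = (1,1,1,\id)$ and $b \ast b = (1,1,1,\id)$, so both generators are involutions; and then $a \ast b = (1, 1\cdot(-1)_{(12)(1)}, 1\cdot(-1)_{(12)(2)}, (12)) = (1,-1,-1,(12))$, while $b \ast a = (1,-1,-1,(12))$ as well, so $a$ and $b$ commute. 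Therefore $\langle a,b\rangle = \{(1,1,1,\id),\ (1,1,1,(12)),\ (1,-1,-1,\id),\ (1,-1,-1,(12))\}$, a Klein four-group, which is precisely the subgroup $\s{4}{1} = \{1\}\times(\Delta(\Z_2\times\Z_2)\rtimes S_2)$ named in Table~\ref{tab:2name}. Third, I would conclude: since $\s{4}{1} = \langle a,b\rangle \subseteq \Sigma(L)$, the link $L$ has $\s{4}{1}$ as a subgroup of its symmetry group, as claimed.

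\medskip

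There is essentially no obstacle here — the only thing requiring care is getting the semidirect-product multiplication right (remembering that in $\gamma \ast \gamma'$ one applies $\omega(p)$ to the $\epsilon'$-coordinates before multiplying, i.e. the product's $i$th orientation entry is $\epsilon_i \epsilon'_{p(i)}$). A mild sanity check worth including is that $\s{4}{1}$ is indeed a well-defined subgroup: since $(12)$ fixes the diagonal $\Delta(\Z_2\times\Z_2)$ under $\omega$, the semidirect product there is in fact a direct product $\Z_2\times\Z_2$, consistent with the computed Klein four-group. Everything else is immediate from the subgroup-closure of $\Sigma(L)$ established earlier in the excerpt.
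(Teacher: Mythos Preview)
Your proposal is correct and follows exactly the paper's approach: the paper simply notes in the sentence preceding the lemma that the pure exchange and pure invertibility elements $(1,1,1,(12))$ and $(1,-1,-1,\id)$ generate $\s{4}{1}$, and states the lemma without further proof. Your explicit verification of the group generated by these two elements is just a more detailed version of the same argument.
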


By examining signed crossings of a link, we calculate its linking number and self-writhe; if one of these is nonzero, we may rule out some symmetries.

\begin{lemma}  \label{lemma:selfwrithe}
\begin{enumerate}
\item If the linking number $\Lk(L)\neq 0$, then $\Sigma(L) < \s{8}{2}$.  
\item For $L$ alternating, if the self-writhe $s(L) \neq 0$, then $\Sigma(L) < \s{8}{1}$.
\item For $L$ alternating, if $\Lk(L)\neq 0$ and $s(L) \neq 0$, then $\Sigma(L) < \s{4}{1}$.
\end{enumerate}
\end{lemma}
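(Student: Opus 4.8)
The plan is to translate each hypothesis into an algebraic constraint on $\Sigma(L)$ coming from the invariance of linking number and self-writhe under isotopy, then read off the resulting subgroup using Proposition~\ref{prop:linkingmatrix}. First recall that $\Sigma(L)$ must stabilize the linking matrix $\Lk(L)$, and for an alternating link it must also stabilize the self-writhe $s(L)$ (invariance of writhe for reduced alternating diagrams, plus invariance of $\ell k$, gives invariance of $s$). For a two-component link the linking matrix is determined by the single integer $z = \Lk(K_1,K_2)$, and by Proposition~\ref{prop:linkingmatrix} an element $\gamma = (\epsilon_0,\epsilon_1,\epsilon_2,p) \in \Gamma_2$ sends $z$ to $\epsilon_0 \epsilon_1 \epsilon_2 \, z$ (the permutation $p$ fixes the off-diagonal entry since there is only one). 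Thus if $z \neq 0$, every $\gamma \in \Sigma(L)$ satisfies $\epsilon_0 \epsilon_1 \epsilon_2 = 1$; the set of such $\gamma$ is precisely $\s{8}{2} = \{(\epsilon_0,\epsilon_1,\epsilon_2,p) : \epsilon_0\epsilon_1\epsilon_2 = 1\}$ as listed in Table~\ref{tab:2name}, proving part (1).

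For part (2), I would argue that mirroring $L$ (i.e., $\epsilon_0 = -1$) negates every signed crossing, hence negates $s(L)$; since the permutation and the component-orientation reversals permute and possibly flip intracomponent crossings but never change their signs (reversing the orientation of a single component reverses both strands at each self-crossing of that component, leaving the sign unchanged), $s(L)$ is multiplied exactly by $\epsilon_0$. So if $s(L) \neq 0$ then every $\gamma \in \Sigma(L)$ has $\epsilon_0 = 1$, and the subgroup of $\Gamma_2$ with $\epsilon_0 = 1$ is $\s{8}{1} = \{1\} \times (\Z_2 \times \Z_2 \rtimes S_2)$, giving part (2). Part (3) is then immediate by intersecting: $\Sigma(L) < \s{8}{1} \cap \s{8}{2} = \{(1,\epsilon_1,\epsilon_2,p) : \epsilon_1\epsilon_2 = 1\}$, and one checks directly that this eight... — actually this is the four-element subgroup $\{(1,1,1,e),(1,-1,-1,e),(1,1,1,(12)),(1,-1,-1,(12))\} = \s{4}{1}$, the group generated by pure inversion and pure exchange recorded just above Lemma~\ref{lemma:selfwrithe}.

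The main obstacle, and the only place requiring genuine care rather than bookkeeping, is the claim in part (2) that the self-writhe transforms under $\gamma$ simply by multiplication by $\epsilon_0$ — in particular that reversing the orientation of a proper subset of the components does not change $s(L)$. This needs the standard observation that the local sign of a crossing between two strands is unchanged when both strands are reversed, and that a self-crossing of component $K_i$ involves two strands of $K_i$, both of which get reversed together when $\epsilon_i = -1$; meanwhile intracomponent crossings are carried to intracomponent crossings by any permutation $p$, so the sum $s(L)$ is genuinely permutation-invariant and orientation-of-components-invariant, and only the global mirror $\epsilon_0$ flips it. Once this is established the rest is the elementary group theory above, and I would present parts (1)--(3) in exactly that order, citing Proposition~\ref{prop:linkingmatrix} and the Murasugi--Thistlethwaite invariance of writhe where needed.
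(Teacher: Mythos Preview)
Your proposal is correct and follows essentially the same approach as the paper: both arguments track how $\gamma$ acts on the linking number (multiplication by $\epsilon_0\epsilon_1\epsilon_2$) and on the self-writhe (multiplication by $\epsilon_0$), then read off the stabilizing subgroups $\s{8}{2}$, $\s{8}{1}$, and their intersection $\s{4}{1}$. Your treatment of part~(2) is in fact slightly more explicit than the paper's, since you spell out why reversing a single component leaves each self-crossing sign unchanged (both strands at such a crossing are reversed together), whereas the paper simply asserts that $s(L)$ is fixed by component inversion and exchange.
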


\begin{proof}
Consider the effect of each symmetry operation upon linking number (see  Proposition~\ref{prop:linkingmatrix}):  mirroring a link or inverting one of its components will swap the sign of the linking number, while exchanging its components fixes the linking number.  As for the self-writhe of a link, it is fixed by inverting any component or exchanging the two components; however, mirroring a link swaps the sign of $s(L)$.

Thus, the elements of $\Gamma_2$ that will swap the sign of a linking number are precisely those of the form $\gamma = (\epsilon_0, \epsilon_1, \epsilon_2, p)$ with $\epsilon:= \epsilon_0 \epsilon_1\epsilon_2 = -1$.   If the linking number $\Lk(L)$ is nonzero, these cannot possibly produce a link $L^\gamma$ isotopic to the original link $L$, so these eight elements are not part of $\Sigma(L)$.  The remaining eight symmetry elements form $\s{8}{2}= \{ (\epsilon_0, \epsilon_1, \epsilon_2, p ) : \epsilon_0  \epsilon_1 \epsilon_2 = 1\}$, which proves the first assertion.

Self-writhe is an invariant of reduced diagrams of alternating links, and any symmetry operation which mirrors the link will swap the sign of $s(L)$.  If the self-writhe $s(L)$ is nonzero, then no element which mirrors, i.e., $(-1, \epsilon_1, \epsilon_2, p)$, can lie in $\Sigma(L)$.  The remaining elements form $\s{8}{1}=\{1\}\times (\Z_2 \times \Z_2\rtimes S_2) $.

The last assertion follows as an immediate corollary of the first two.  If both hypotheses are satisfied, then $\Sigma(L) \subset \s{8}{1} \cap \s{8}{2} = \s{4}{1}$.
\end{proof}

\begin{lemma} \label{lemma:knottype}
Let $L$ be a two-component link. 
\begin{enumerate}
\item If $L$ is purely invertible, then $\s{2}{1}<\Sigma(L)$.
\item If the components of $L$ are different knot types, then $\Sigma(L) < \s{8}{3}$.
\item If both hypotheses above are true, and
\begin{enumerate}
\item if $lk(L) \neq 0$, then $\Sigma(L)$ is either $\s{2}{1}$ or $\s{4}{3}$.
\item if $L$ is alternating and $s(L) \neq 0$, then $\Sigma(L)$ is either $\s{2}{1}$ or $\s{4}{2}$.
\end{enumerate}
\end{enumerate}
\end{lemma}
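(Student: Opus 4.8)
The statement is purely a matter of locating $\Sigma(L)$ inside the subgroup lattice of $\Gamma_2\cong\Z_2\times D_4$, using the explicit descriptions in Table~\ref{tab:2name} together with Lemma~\ref{lemma:selfwrithe} and Lemma~\ref{lemma:pi}. I would first establish (1) and (2) as two ``bracketing'' inclusions and then, in (3), intersect them with the constraints coming from the linking number and the self-writhe.

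For (1), note that $\s{2}{1}=\{(1,1,1,\id),(1,-1,-1,\id)\}$ is exactly the cyclic group generated by $(1,-1,-1,\id)$, so it suffices to know that $(1,-1,-1,\id)\in\Sigma(L)$. This is precisely pure invertibility, which Lemma~\ref{lemma:pi} supplies for every two-component link with at most $8$ crossings. The only delicate point in the whole lemma lives here: the word ``invertible'' in our definition permits a permutation to accompany the reversal, so a priori one only knows $(1,-1,-1,p)\in\Sigma(L)$ for some $p$. This subtlety evaporates under hypothesis (2): if $K_1$ and $K_2$ are of different knot types, an element carrying the transposition $(12)$ would send the first component to $\epsilon_1 K_2^{(*)}$ and hence force $K_1$ and $K_2$ to be the same unoriented knot, a contradiction; so $p$ must be trivial and invertibility is automatically pure.

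For (2), the same observation shows a bit more: no $\gamma\in\Sigma(L)$ can carry the transposition $(12)$ once the components have different knot types. Hence $\Sigma(L)$ lies in the subgroup of $\Gamma_2$ consisting of elements with trivial permutation, which is precisely $\s{8}{3}=\Z_2\times\Z_2\times\Z_2\times\{\id\}$.

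For (3), I would combine $\s{2}{1}\le\Sigma(L)\le\s{8}{3}$ with Lemma~\ref{lemma:selfwrithe}. In case (a), $\Lk(L)\neq 0$ gives $\Sigma(L)\le\s{8}{2}$ by Lemma~\ref{lemma:selfwrithe}(1); reading off the definitions, $\s{8}{3}\cap\s{8}{2}$ is the set of elements with trivial permutation and $\epsilon_0\epsilon_1\epsilon_2=1$, i.e.\ $\s{4}{3}$, so $\s{2}{1}\le\Sigma(L)\le\s{4}{3}$, and since $|\s{2}{1}|=2$ and $|\s{4}{3}|=4$ there is nothing strictly between them, forcing $\Sigma(L)\in\{\s{2}{1},\s{4}{3}\}$. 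In case (b), $L$ alternating with $s(L)\neq 0$ gives $\Sigma(L)\le\s{8}{1}$ by Lemma~\ref{lemma:selfwrithe}(2); here $\s{8}{3}\cap\s{8}{1}=\{1\}\times\Z_2\times\Z_2\times\{\id\}=\s{4}{2}$, so $\s{2}{1}\le\Sigma(L)\le\s{4}{2}$, and again the index-$2$ gap forces $\Sigma(L)\in\{\s{2}{1},\s{4}{2}\}$. Beyond the single subtlety flagged in (1), the entire argument is routine bookkeeping with the group $\Gamma_2$ and the named subgroups of Table~\ref{tab:2name}.
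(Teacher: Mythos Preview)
Your proposal is correct and follows essentially the same route as the paper: part (2) rules out the transposition, and parts (3a) and (3b) come from intersecting $\s{8}{3}$ with the subgroups $\s{8}{2}$ and $\s{8}{1}$ supplied by Lemma~\ref{lemma:selfwrithe}, then using the index-$2$ gap between $\s{2}{1}$ and $\s{4}{3}$ (resp.\ $\s{4}{2}$). You are in fact a bit more careful than the paper in (1): the paper simply writes ``immediate, since invertible symmetry generates $\s{2}{1}$,'' whereas you correctly flag that \emph{invertible} (as defined in the paper) allows a permutation, and you observe that hypothesis (2) forces $p=\id$, so the subtlety is irrelevant for (3). One small comment: invoking Lemma~\ref{lemma:pi} to supply pure invertibility restricts (1) to the links actually under study rather than proving it for arbitrary two-component links; but since every application of (1) in the paper is through (3), your resolution via (2) is the cleaner fix.
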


\begin{proof}  The first assertion is immediate, since the purely invertible symmetry generates $\s{2}{1}$.  

If the components of $L$ have different knot types, then no exchange symmetries are permissible; the permutation $p=(12)$ never appears in $\Sigma(L)$.  Hence the symmetry group $\Sigma(L)$ is contained in the `No exchanges' group $\s{8}{3}$.

Combining these two results with the previous lemma proves the third assertion.  If the linking number is nonzero and the components of $L$ have different knot types, then $\s{2}{1} < \Sigma(L) < \s{8}{2} \cap \s{8}{3} = \s{4}{3}$.  If $L$ is also purely invertible, then $\s{2}{1} < \Sigma(L) < \s{4}{3}$.  This implies that the order of $\Sigma(L)$ equals 2 or 4, so it is either $\s{2}{1}$ or  $\s{4}{3}$.

If instead self-writhe is nonzero and the first two hypotheses hold, then $\s{2}{1} < \Sigma(L) < \s{8}{1} \cap \s{8}{3} = \s{4}{2}$.  This implies that the order of $\Sigma(L)$ equals 2 or 4, so it is either $\s{2}{1}$ or $\s{4}{2}$.
\end{proof}

With these five lemmas in hand, we are now prepared to begin proving Theorem~\ref{thm:2comp}, which we treat by each symmetry group.

\subsubsection{Links with symmetry group $\s{2}{1}$}

\begin{claim}
Links $7^2_5, \; 7^2_7,  \; 8^2_9, \; 8^2_{11}, \; 8^2_{14},$ and $8^2_{16}$ have symmetry group $\s{2}{1}$.
\end{claim}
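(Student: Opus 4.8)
The plan is to squeeze $\Sigma(L)$ between $\s{2}{1}$ (which it must contain) and itself, handling all six links by one uniform argument: exhibit pure invertibility for the lower bound, then rule out everything else using the linking number together with a chirality observation.

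For the lower bound, Lemma~\ref{lemma:pi} already supplies that each of $7^2_5,\,7^2_7,\,8^2_9,\,8^2_{11},\,8^2_{14},\,8^2_{16}$ is purely invertible, so $(1,-1,-1,\id)\in\Sigma(L)$ and hence $\s{2}{1}\leq\Sigma(L)$.

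For the upper bound, I would first record, for each of the six links, three pieces of data: (i) its two components are knots of \emph{distinct} types; (ii) at least one of those components is \emph{chiral}; and (iii) the linking number $\Lk(L)$ is \emph{nonzero}. For the alternating links these follow from Cerf's table~\cite{cerf} together with a reduced diagram; for the non-alternating links $7^2_7$ and $8^2_{16}$ one reads the component knot types off the Doll--Hoste diagrams~\cite{MR1094946}. (For $7^2_5$ this is exactly the data behind the worked example given earlier: an unknot, a trefoil, and nonzero linking number.) Granting (i)--(iii), the link $L$ is invertible, has distinct component types, and has $\Lk(L)\neq 0$, so Lemma~\ref{lemma:knottype}(3a) forces $\Sigma(L)$ to be either $\s{2}{1}$ or $\s{4}{3}$. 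To eliminate $\s{4}{3}=\{(1,1,1,\id),(1,-1,-1,\id),(-1,-1,1,\id),(-1,1,-1,\id)\}$, observe that it contains elements with $\epsilon_0=-1$, which mirror $L$; a chiral component cannot be carried to its mirror image, and no component exchange is available to compensate since the components have different knot types, so $L\nsim L^{*}$ and $\s{4}{3}\not\leq\Sigma(L)$. Hence $\Sigma(L)=\s{2}{1}$ for all six links.

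The hard part is the per-link verification of (i)--(iii): routine from the tables and reduced diagrams, but it must be carried out with care, especially for the non-alternating $7^2_7$ and $8^2_{16}$, whose component knot types are less transparent. Two fallbacks cover the cases where an input might fail. If $\Lk(L)=0$ for an alternating link, one replaces (iii) by a nonzero self-writhe and invokes Lemma~\ref{lemma:knottype}(3b) to reduce to $\s{2}{1}$ or $\s{4}{2}$, then excludes $\s{4}{2}$ by showing that $L$ is not isotopic to the link obtained by reversing a single component --- using the multivariable Alexander polynomial or the Satellite Lemma (Lemma~\ref{lem:satellitelemma}) applied to that component. If some component happens to be amphichiral, one instead shows $L\nsim L^{*}$ directly (e.g.\ via the HOMFLY or Kauffman polynomial) to remove the mirror elements of $\s{4}{3}$.
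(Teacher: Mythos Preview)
Your setup matches the paper exactly: pure invertibility gives the lower bound $\s{2}{1}\le\Sigma(L)$, and distinct component types together with nonzero linking number force $\Sigma(L)\in\{\s{2}{1},\s{4}{3}\}$ via Lemma~\ref{lemma:knottype}(3a).

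Your elimination of $\s{4}{3}$ differs from the paper and, as written, contains a gap. You argue that the mirror elements of $\s{4}{3}$ send a chiral component to its mirror image, hence $L\nsim L^{*}$. But the mirror elements of $\s{4}{3}$ are $(-1,-1,1,\id)$ and $(-1,1,-1,\id)$, each of which \emph{also reverses one component}; they send $K_i$ to $\pm K_i^{*}$, not simply to $K_i^{*}$. Chirality of $K_i$ only gives $K_i\nsim K_i^{*}$; to exclude $(-1,-1,1,\id)$ you need $K_1\nsim -K_1^{*}$, i.e.\ that $K_1$ is not $(-)$-amphichiral. Moreover the conclusion you actually state, $L\nsim L^{*}$, corresponds to $(-1,1,1,\id)\notin\Sigma(L)$, and that element is not in $\s{4}{3}$ at all, so it would not rule out $\s{4}{3}$ even if established. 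The repair is easy in practice---the chiral components occurring here are small invertible knots such as the trefoil, so $-K^{*}\sim K^{*}\nsim K$---but it must be stated and checked for each link.

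For comparison, the paper does not argue via component chirality. It splits the six links into two groups: for the alternating links $7^2_5,\,8^2_{11},\,8^2_{14}$ it observes that the self-writhe is nonzero, invokes Lemma~\ref{lemma:knottype}(3b) as well, and concludes $\Sigma(L)\in\{\s{2}{1},\s{4}{2}\}\cap\{\s{2}{1},\s{4}{3}\}=\{\s{2}{1}\}$; for the remaining three it computes the Jones polynomials of $L$ and $L^{(-1,-1,1,\id)}$ directly and shows they differ. Your component-type route, once the orientation-reversal issue is addressed, is a legitimate and more uniform alternative, trading a polynomial computation for a per-link verification of the component knot types and their full (neither $(+)$- nor $(-)$-) amphichirality.
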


\begin{proof}
All of these links are purely invertible, so $\s{2}{1} \subset \Sigma(L)$.  Also, all of them have components of different knot types and nonzero linking numbers; thus by Lemma~\ref{lemma:knottype}, their symmetry groups are either $\s{2}{1}$ or $\s{4}{3}$.  

Three of the alternating links $(7^2_5, \; 8^2_{11}, \; 8^2_{14})$  have nonzero self-writhe, so we apply Lemma~\ref{lemma:knottype} again.  We conclude that they have only the purely invertible symmetry, and $\s{2}{1}$ is their symmetry group.

For the remaining three links in this case $\left( 7^2_7,  \; 8^2_{11}, \; 8^2_{16}\right)$, consider the action of the Whitten element $\gamma = (-1, -1, 1, \id)$.  We consider the Jones polynomials of $L$ and $L^\gamma$.  They are unequal, as demonstrated below, which implies $L^\gamma$ is not isotopic to $L$.  Thus $\Sigma(L) \neq \s{4}{3}$, so it must be $\s{2}{1}$. 

\begin{align*}
\Jones(7^2_7) = & \, z^{-15/2} - z^{-13/2} - z^{-9/2} - z^{-5/2} \\
\Jones\left((7^2_7)^\gamma \right) = & \,-{z^{-7/2}} -{z^{-3/2}} -\sqrt{z} +z^{3/2} \\
& \\
\Jones(8^2_{11}) = & \,-z^{9/2}+3 z^{7/2}-\frac{1}{z^{7/2}}-4 z^{5/2}+\frac{1}{z^{5/2}}+5 z^{3/2}-\frac{4}{z^{3/2}}-5 \sqrt{z}+\frac{4}{\sqrt{z}}\\
\Jones\left((8^2_{11})^\gamma \right) = & \,-\frac{4}{z^{9/2}}+\frac{1}{z^{7/2}}-\frac{1}{z^{5/2}}-\frac{1}{z^{21/2}}+\frac{3}{z^{19/2}}-\frac{4}{z^{17/2}}+\frac{5}{z^{15/2}}-\frac{5}{z^{13/2}}+\frac{4}{z^
   {11/2}} \\
& \\
\Jones(8^2_{16}) = & \,\frac{2}{z^{9/2}}-\frac{2}{z^{7/2}}+\frac{2}{z^{5/2}}-\frac{2}{z^{3/2}}-\frac{2}{z^{11/2}}-\sqrt{z}+\frac{1}{\sqrt{z}} \\
 \Jones\left((8^2_{16})^\gamma \right) = & \,-\frac{2}{z^{9/2}}+\frac{2}{z^{7/2}}-\frac{2}{z^{5/2}}+\frac{2}{z^{3/2}}-\frac{1}{z^{13/2}}+\frac{1}{z^{11/2}}-\frac{2}{\sqrt{z}} \qedhere
\end{align*}
\end{proof}

\smallskip

\subsubsection{Links with symmetry group $\s{4}{1}$}

\begin{claim}
\label{s41claim}
Links $4^2_1,~6^2_1,~8^2_1,~8^2_2,$ and $8^2_4$ have symmetry group $\s{4}{1}$.
\end{claim}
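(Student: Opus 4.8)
The plan is to apply the framework of Lemmas~\ref{lemma:pe}--\ref{lemma:knottype} to pin down $\Sigma(L)$ from above and below, and then handle the one surviving ambiguity by a direct isotopy argument or a satellite computation. All five links $4^2_1, 6^2_1, 8^2_1, 8^2_2, 8^2_4$ appear in the list~\eqref{eq:pe}, so each has pure exchange symmetry $(1,1,1,(12))$, and by Lemma~\ref{lemma:pi} each is purely invertible; hence $\s{4}{1}\subset\Sigma(L)$ for all of them. The goal is therefore to show $\Sigma(L)=\s{4}{1}$ exactly, i.e.\ to rule out every subgroup of $\Gamma_2$ that properly contains $\s{4}{1}$ (up to conjugacy, $\s{8}{1}$, $\s{8}{2}$, and $\Gamma_2$ itself).

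The main tool for the upper bound is Lemma~\ref{lemma:selfwrithe}. These links are alternating (they have Rolfsen/Thistlethwaite alternating names $4a1, 6a3, 8a14, 8a12, 8a13$), so I would read off from a reduced alternating diagram that each has nonzero linking number $\lk(L)$ \emph{and} nonzero self-writhe $s(L)$; by part (3) of Lemma~\ref{lemma:selfwrithe} this forces $\Sigma(L)<\s{4}{1}$, and combined with $\s{4}{1}\subset\Sigma(L)$ we get equality. The key step is thus the bookkeeping of signed crossings: for each of the five diagrams, separate intercomponent from intracomponent crossings, check $\lk\neq 0$ and $s\neq 0$. For $4^2_1$ (the $(2,4)$-torus link) both quantities are manifestly $\pm 2$; for $6^2_1$ and the three $8$-crossing links one inspects the standard alternating diagrams from Cerf's table.

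The step I expect to be the main obstacle is verifying that \emph{both} invariants are genuinely nonzero in every case, since if some link in this family turned out to have $s(L)=0$ or $\lk(L)=0$ the clean argument would collapse and one would be left distinguishing $L$ from $L^\gamma$ (with $\gamma$ a mirror or a linking-number-reversing element) by the Jones polynomial, exactly as in the $\s{2}{1}$ case above, or by the Satellite Lemma~\ref{lem:satellitelemma}. I would guard against this by computing $\lk$ and $s$ explicitly and, where a polynomial check is needed as backup, tabulating $\Jones(L)$ against $\Jones(L^\gamma)$ for the generator $\gamma=(-1,1,1,\id)$ (or $(-1,-1,1,\id)$) of a putative larger symmetry group. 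In the generic case, however, the argument is short: exhibit the $\s{4}{1}$ symmetries via the figures, quote nonvanishing of $\lk$ and $s$, and invoke Lemma~\ref{lemma:selfwrithe}(3) to conclude $\Sigma(L)=\s{4}{1}$.
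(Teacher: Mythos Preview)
Your lower bound $\s{4}{1}\subset\Sigma(L)$ is correct and matches the paper. The upper bound argument, however, has a genuine gap: your primary route via Lemma~\ref{lemma:selfwrithe}(3) fails for \emph{all five} links in this claim, not just as an edge case. The links $4^2_1$, $6^2_1$, $8^2_1$ are the $(2,4)$, $(2,6)$, $(2,8)$ torus links, whose reduced alternating diagrams have \emph{only} intercomponent crossings, so $s(L)=0$ for each; the same holds for $8^2_2$ and $8^2_4$. Your assertion that for $4^2_1$ ``both quantities are manifestly $\pm 2$'' is wrong: $\lk=\pm 2$ but $s=0$. The paper in fact splits the $\s{4}{1}$ links into two separate claims precisely along the line $s=0$ versus $s\neq 0$; this claim contains exactly the $s=0$ cases.

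With $s(L)=0$ and $\lk(L)\neq 0$, Lemma~\ref{lemma:selfwrithe}(1) gives only $\s{4}{1}\subset\Sigma(L)\subset\s{8}{2}$, and one must rule out the index-2 overgroup $\s{8}{2}$ by a polynomial computation. The paper does this with the Conway polynomial applied to $\gamma=(-1,-1,1,\id)$ (a generator of $\s{8}{2}$ over $\s{4}{1}$), tabulating $\operatorname{Conway}(L)$ against $\operatorname{Conway}(L^\gamma)$ and observing they differ in every case. Your fallback plan (Jones polynomial against a mirror-type $\gamma$) would also work, but it is the main argument here, not a contingency.
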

\begin{proof}
All five of these links appear in our list \eqref{eq:pe} of pure exchange symmetry links; also, they all are purely invertible and have nonzero linking numbers.  Lemmas~\ref{lemma:pe} and \ref{lemma:selfwrithe} imply that $\s{4}{1} < \Sigma(L) < \s{8}{2}$.  

For each link, the Conway polynomials differ for $L$ and $L^\gamma$, where $\gamma=(-1,-1,1, \id) \in \s{8}{2}$.  Thus each link cannot have $\s{8}{2}$ as its symmetry group and must therefore have $\Sigma(L)=\s{4}{1}$.  We display the Conway polynomials in Table~\ref{s41table} below.
\end{proof}

\renewcommand{\arraystretch}{1.1}
\begin{center}
\begin{table}[ht]
\begin{tabular}{ccc}
\toprule
Link $L$ & Conway$(L)$ & Conway$(L^\gamma)$ \\
\midrule
$4^2_1$ & $2z$ & $z^3+2z$ \\
$6^2_1$ & $-z^5-4z^3-3z$ & $-3z$ \\
$8^2_1$ & $-z^7-6z^5-10z^3-4z$ & $-4z$ \\
 $8^2_2$ & $2z^5+7z^3+4z$ & $3z^3+4z$ \\
$8^2_4$ & $4z^3+4z$ & $2z^5+6z^3+4z$ \\
\bottomrule
\end{tabular}
\smallskip
\caption{Conway polynomials for Claim~\ref{s41claim}}
\label{s41table}
\end{table}
\end{center}
\renewcommand{\arraystretch}{1}

\begin{claim} 
Links  $6^2_3,~7^2_1,~7^2_2,~8^2_3,~8^2_5,~8^2_6,$ and $8^2_7$ have symmetry group $\s{4}{1}$.
\end{claim}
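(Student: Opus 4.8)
The plan is to follow exactly the template established in the preceding claim (Claim~\ref{s41claim}). First I would observe that all seven links $6^2_3, 7^2_1, 7^2_2, 8^2_3, 8^2_5, 8^2_6, 8^2_7$ appear in the list~\eqref{eq:pe} of Lemma~\ref{lemma:pe}, so each admits the pure exchange symmetry $(1,1,1,(12))$; and by Lemma~\ref{lemma:pi} each is purely invertible, so each admits $(1,-1,-1,\id)$. Since these two elements generate $\s{4}{1}$, we get $\s{4}{1} < \Sigma(L)$ for each link. Next I would check from the link tables (Cerf's data for the alternating ones) that each of these links has nonzero linking number; then Lemma~\ref{lemma:selfwrithe}(1) gives $\Sigma(L) < \s{8}{2}$. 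Combining, $\s{4}{1} < \Sigma(L) < \s{8}{2}$, and since $[\s{8}{2}:\s{4}{1}] = 2$, the symmetry group is either $\s{4}{1}$ or all of $\s{8}{2}$.

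To rule out $\s{8}{2}$, it suffices to exhibit a single element of $\s{8}{2} \setminus \s{4}{1}$ that is not admitted. The natural candidate, as in Claim~\ref{s41claim}, is $\gamma = (-1,-1,1,\id)$: this lies in $\s{8}{2}$ (since $\epsilon_0\epsilon_1\epsilon_2 = 1$) but not in $\s{4}{1}$ (which contains no mirroring element). I would compute the Conway polynomial (or, where Conway fails to distinguish, the Jones or HOMFLY polynomial) of $L$ and of $L^\gamma$ for each of the seven links and display the results in a table in the same style as Table~\ref{s41table}. Because $\gamma$ involves mirroring, $L^\gamma$ is (a reorientation of) the mirror image of $L$, so these polynomial computations amount to comparing a link with its mirror — a computation well within reach of standard software and already carried out for the analogous claim. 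Once the polynomials differ, $L^\gamma \nsim L$, so $\gamma \notin \Sigma(L)$, hence $\Sigma(L) \neq \s{8}{2}$ and therefore $\Sigma(L) = \s{4}{1}$, as asserted.

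The only real obstacle is the possibility that for one or more of these seven links the Conway polynomial happens to be invariant under this mirroring-plus-reorientation operation (Conway polynomials are not sensitive to chirality in general, only to it indirectly through the diagram), in which case the table entries would coincide and the argument would stall. The remedy is the one already used elsewhere in the paper: fall back on the Jones polynomial (which detects chirality) or, failing that, the two-variable HOMFLY polynomial, and display that instead. I do not expect this to be necessary for all seven — for most, the Conway polynomial of the link differs from that of $L^\gamma$ because reversing one component changes the linking number sign and hence the polynomial — but I would verify each case individually and substitute a stronger invariant wherever needed. No new conceptual input is required; the work is entirely the bookkeeping of seven polynomial pairs.
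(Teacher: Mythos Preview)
Your approach is correct and would succeed, but it misses the point of why the paper separates these seven links from the five in Claim~\ref{s41claim}. All twelve links with symmetry group $\s{4}{1}$ are purely invertible, admit pure exchange, and have nonzero linking number; what distinguishes the present seven is that they are alternating with \emph{nonzero self-writhe}. This lets the paper invoke part (3) of Lemma~\ref{lemma:selfwrithe} directly: nonzero linking number and nonzero self-writhe together force $\Sigma(L) < \s{4}{1}$, so combined with $\s{4}{1} < \Sigma(L)$ the proof is a single sentence with no polynomial computations at all.

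Your route---bounding above by $\s{8}{2}$ via linking number alone, then eliminating the extra coset with a Conway or Jones polynomial---is exactly what the paper does for the five links in Claim~\ref{s41claim}, and it would work here too. But it trades one lookup (the self-writhe from a minimal alternating diagram) for seven polynomial comparisons, and you yourself flag the risk that Conway might fail and a fallback to Jones or HOMFLY would be needed. The paper's organization is designed precisely to avoid that: the split between the two claims is the split between self-writhe zero (polynomial argument required) and self-writhe nonzero (Lemma~\ref{lemma:selfwrithe}(3) suffices).
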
 

\begin{proof}
These links have both pure exchange and pure invertibility symmetries; they also have nonzero self-writhes and linking numbers.  By Lemmas~\ref{lemma:pe} and \ref{lemma:selfwrithe}, their symmetry group must be $\s{4}{1}$.  
\end{proof}


\subsubsection{Links with symmetry group $\s{4}{2}$}
\begin{claim} 
Links  $7^2_4$ and $8^2_{12}$ have symmetry group $\s{4}{2}$.
\end{claim}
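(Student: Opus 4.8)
The claim asserts that $7^2_4$ and $8^2_{12}$ have symmetry group exactly $\s{4}{2}$ (individually invertible, but no exchange, no mirror). The plan is to squeeze $\Sigma(L)$ between $\s{2}{1}$ below and $\s{4}{2}$ above, then exhibit one extra symmetry to push it up to all of $\s{4}{2}$. First I would invoke Lemma~\ref{lemma:pi}, which tells us both links are purely invertible, so $\s{2}{1} < \Sigma(L)$. Next I would check the components: both of these links have components of different knot types (one is an unknot, the other a nontrivial knot — this is recorded in the tables from which we are working), so no permutation $(12)$ can appear in $\Sigma(L)$, i.e.\ $\Sigma(L) < \s{8}{3}$ by Lemma~\ref{lemma:knottype}(2). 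Both links are alternating with nonzero self-writhe, so Lemma~\ref{lemma:knottype}(3b) applies and pins down $\Sigma(L)$ to be either $\s{2}{1}$ or $\s{4}{2}$.

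**The distinguishing step.** To conclude $\Sigma(L) = \s{4}{2}$ rather than $\s{2}{1}$, I need to produce the Whitten element that lies in $\s{4}{2} \setminus \s{2}{1}$, namely $(1,-1,1,\id)$ (reverse only the first component) — equivalently $(1,1,-1,\id)$, since once we have $(1,-1,1,\id)$ and the purely invertible $(1,-1,-1,\id)$, their product gives $(1,1,-1,\id)$ and the four elements $\{(1,1,1,\id),(1,-1,1,\id),(1,1,-1,\id),(1,-1,-1,\id)\}$ are exactly $\s{4}{2}$. So it suffices to exhibit an isotopy realizing the reversal of a single component. For $7^2_4$ this is exactly the content promised by the combination of Figures~\ref{fig:rotate-y} and~\ref{fig:rotate-z} (the same construction used for $7^2_8$ in the text preceding Lemma~\ref{lemma:pi}): one shows each component can be individually inverted, and "component $i$ individually invertible" together with "pure invertibility" already generates $\s{4}{2}$. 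I would cite those figures for $7^2_4$ and exhibit the analogous explicit isotopy for $8^2_{12}$ (or note it follows from the same mechanism — e.g.\ a component which is an unknotted circle bounding an embedded disk disjoint from a suitable portion of the other component can always be flipped).

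**Main obstacle.** The genuinely nontrivial input is the explicit isotopy that reverses a single component — the "individually invertible" claim. Everything else (the bounding lemmas, the self-writhe/knot-type arguments) is bookkeeping via the already-established Lemmas~\ref{lemma:pi}, \ref{lemma:selfwrithe}, and~\ref{lemma:knottype}. So I expect the write-up to lean on an appendix figure for the single-component reversal, exactly as the paper does elsewhere; the risk is only in making sure the figure's isotopy genuinely preserves the labeling and orientation of the \emph{other} component while reversing the chosen one. Once that isotopy is in hand, the argument closes immediately: $\s{2}{1} \subsetneq \Sigma(L)$, $\Sigma(L) \subseteq \s{4}{2}$, and $\Sigma(L)$ contains an element outside $\s{2}{1}$, forcing $\Sigma(L) = \s{4}{2}$ by the order constraint ($|\s{4}{2}| = 4$, $|\s{2}{1}| = 2$, and these are the only options).
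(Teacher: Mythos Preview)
Your logical structure is exactly the paper's: invoke pure invertibility (Lemma~\ref{lemma:pi}) to get $\s{2}{1} \leq \Sigma(L)$, use different component knot types together with alternating and nonzero self-writhe (Lemma~\ref{lemma:knottype}(3b)) to force $\Sigma(L) \in \{\s{2}{1},\s{4}{2}\}$, and then exhibit the single-component inversion $(1,-1,1,\id)$ to finish.

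The one concrete error is in your citation of the distinguishing isotopy. Figures~\ref{fig:rotate-y} and~\ref{fig:rotate-z} handle $7^2_8$, not $7^2_4$; neither $7^2_4$ nor $8^2_{12}$ appears among the links whose single-component reversal is realized by a mere axis rotation. The paper instead supplies dedicated multi-step isotopies, Figures~\ref{724invert1} and~\ref{8212invert1}, each explicitly exhibiting $\gamma=(1,-1,1,e)$. So your parenthetical suggestion that the isotopy ``follows from the same mechanism'' as a simple rotation or disk-flip is not what actually happens here --- for these two links the required isotopy is more involved and must be drawn out. Fix the figure references and your proof matches the paper's.
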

\begin{proof}
These links are purely invertible, comprised of different knot types, and have self-writhe $s(L) \neq 0$; thus, Lemma~\ref{lemma:selfwrithe} implies $\s{2}{1} < \Sigma(L) <  \s{4}{2}$.  

Figures \ref{724invert1} and \ref{8212invert1} exhibit an isotopies which shows that $(-1,-1,1,\id)\in \Sigma(L)$ for each link, which means $\s{4}{2}$ is their symmetry group.
\end{proof}

\begin{claim} 
Links $7^2_8, 8^2_{10},$ and $8^2_{15}$ have symmetry group $\s{4}{2}$.
\end{claim}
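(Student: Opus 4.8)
The plan is to pin down $\Sigma(L)$ by squeezing it between the groups $\s{4}{2}$ and $\s{8}{3}$ and then eliminating the gap. For the lower bound, recall from Lemma~\ref{lemma:pi} that each of $7^2_8$, $8^2_{10}$, $8^2_{15}$ is purely invertible, so $(1,-1,-1,\id)\in\Sigma(L)$; moreover, for $7^2_8$ the isotopies of Figures~\ref{fig:rotate-y} and \ref{fig:rotate-z} invert its two components individually, giving $(1,-1,1,\id),(1,1,-1,\id)\in\Sigma(7^2_8)$, and for $8^2_{10}$ and $8^2_{15}$ I would exhibit (or cite from the appendices) a single isotopy realizing the individual inversion $(1,-1,1,\id)$. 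Since $(1,-1,1,\id)$ together with $(1,-1,-1,\id)$ generates $\s{4}{2}=\{1\}\times\Z_2\times\Z_2\times\{\id\}$, this yields $\s{4}{2}<\Sigma(L)$ in all three cases.

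For the upper bound, one checks from the diagrams that the two components of each of these links are knots of different types (so that, exactly as in the earlier $\s{4}{2}$ cases, no permutation $(12)$ can occur); by Lemma~\ref{lemma:knottype}(2) this forces $\Sigma(L)<\s{8}{3}$. Because $\s{4}{2}$ has index $2$ in $\s{8}{3}$ and the nontrivial coset consists entirely of orientation-reversing (mirror) elements, $\Sigma(L)$ must be either $\s{4}{2}$ or $\s{8}{3}$, so it suffices to exclude a single mirror element. For the alternating link $8^2_{10}$ this is immediate: verifying $s(8^2_{10})\neq 0$ from a reduced alternating diagram, Lemma~\ref{lemma:selfwrithe}(2) gives $\Sigma(L)<\s{8}{1}$, and $\s{8}{1}\cap\s{8}{3}=\s{4}{2}$. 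For the non-alternating links $7^2_8$ and $8^2_{15}$ the self-writhe is no longer a diagram invariant, so instead I would compare $L$ with its mirror $L^{(-1,1,1,\id)}$ using a chirality-sensitive polynomial — the Jones polynomial, or the HOMFLY polynomial should $\Jones$ happen to be palindromic — and check that $\Jones(L)$ is not invariant under inverting the variable. This rules out the mirror coset and forces $\Sigma(L)=\s{4}{2}$.

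The main obstacle is the non-alternating pair $7^2_8$ and $8^2_{15}$: there we lose the self-writhe shortcut, so excluding mirror symmetry rests on a genuine polynomial computation coming out the right way, and the inclusion $\s{4}{2}<\Sigma(L)$ must be secured by exhibiting the individual-component inversions explicitly rather than by quoting a lemma. Pure exchange symmetry, which is generally the delicate one to rule out, costs us nothing here, since it is obstructed outright by the distinctness of the two component knot types.
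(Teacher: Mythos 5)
Your overall strategy matches the paper's: sandwich $\Sigma(L)$ between $\s{4}{2}$ (via pure invertibility plus an explicit single-component inversion, which the paper supplies in Figures~\ref{fig:rotate-y}, \ref{8210invert1}, and \ref{8n2a}) and $\s{8}{3}$ (via distinct component knot types), then kill the mirror coset. Your index-two observation — that $\s{4}{2}\le\Sigma(L)<\s{8}{3}$ forces $\Sigma(L)\in\{\s{4}{2},\s{8}{3}\}$, so excluding one element of the form $(-1,\epsilon_1,\epsilon_2,\id)$ finishes the job — is a slightly cleaner packaging of the paper's ``proper subgroup of an order-8 group has order at most 4,'' and comparing $\Jones(L)$ with $\Jones(L^{(-1,1,1,\id)})$ is exactly what the paper does for all three links. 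The one step I would not let stand as written is the self-writhe shortcut for $8^2_{10}$: you assert $s(8^2_{10})\neq 0$ without computing it, and the paper's own organization is evidence against you — in the immediately preceding claim the authors handle the alternating links $7^2_4$ and $8^2_{12}$ precisely by the nonzero-self-writhe argument, yet they deliberately group $8^2_{10}$ with the non-alternating links and rule out its mirror by the Jones polynomial instead, which strongly suggests $s(8^2_{10})=0$ and that Lemma~\ref{lemma:selfwrithe}(2) gives you nothing there. This is not fatal, since the Jones comparison you already invoke for $7^2_8$ and $8^2_{15}$ works verbatim for $8^2_{10}$ (the paper records both polynomials and they differ), but you should either verify the self-writhe or default to the polynomial argument for all three.
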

\begin{proof}
These links are purely invertible and comprised of different knot types; thus, Lemma~\ref{lemma:selfwrithe} implies $\s{2}{1} < \Sigma(L) < \s{8}{3}$.  

First, we use the Jones polynomial to rule out mirror symmetry, i.e., the element $\gamma=(-1,1,1,\id) \in \s{8}{3}$ does not lie in $\Sigma(L)$.  That means order of the subgroup $\Sigma(L)$ is between 2 and 7; hence it is either a 2 or 4 element subgroup.  Here are the Jones polynomials:

\begin{align*}
\Jones(7^2_8) = & -\frac{1}{z^{9/2}}+\frac{1}{z^{7/2}}-\frac{2}{z^{5/2}}+\frac{1}{z^{3/2}}+\frac{1}{z^{11/2}}-\frac{2}{\sqrt{z}} \\
\Jones\left((7^2_8)^\gamma \right) = & -z^{9/2}+z^{7/2}-2 z^{5/2}+z^{3/2}+z^{11/2}-2 \sqrt{z} \\
& \\
\Jones(8^2_{10}) = & -z^{9/2}+3 z^{7/2}-\frac{1}{z^{7/2}}-5 z^{5/2}+\frac{2}{z^{5/2}}+5 z^{3/2}-\frac{4}{z^{3/2}}-6 \sqrt{z}+\frac{5}{\sqrt{z}} \\
\Jones\left((8^2_{10})^\gamma \right) = & -\frac{1}{z^{9/2}}-z^{7/2}+\frac{3}{z^{7/2}}+2 z^{5/2}-\frac{5}{z^{5/2}}-4 z^{3/2}+\frac{5}{z^{3/2}}+5 \sqrt{z}-\frac{6}{\sqrt{z}} \\
& \\
\Jones(8^2_{15}) = & -\frac{1}{z^{7/2}}-z^{5/2}+\frac{1}{z^{5/2}}+z^{3/2}-\frac{1}{z^{3/2}}-2 \sqrt{z}+\frac{1}{\sqrt{z}} \\
\Jones\left((8^2_{15})^\gamma \right) = & -z^{7/2}+z^{5/2}-\frac{1}{z^{5/2}}-z^{3/2}+\frac{1}{z^{3/2}}+\sqrt{z}-\frac{2}{\sqrt{z}}
\end{align*}

Next, for each link we depict an isotopy which reverses the orientation of just one component, i.e., we show either $\gamma=(1,-1,1,\id)$ or $\gamma=(1,1,-1,\id)$ lies in $\Sigma(L)$.  This means $\s{4}{2}$ is the symmetry group for these three links. 

Figures~\ref{fig:rotate-y}, \ref{8210invert1}, and \ref{8n2a} show these isotopies for $7^2_8$, $8^2_{10}$, and $8^2_{15}$, respectively.
\end{proof}

\begin{claim} 
Links  $7^2_6$ and $8^2_{13}$ have symmetry group $\s{4}{2}$.
\end{claim}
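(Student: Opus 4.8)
The plan is to follow the same template used for $7^2_4$ and $8^2_{12}$ in the previous claim, since $7^2_6$ and $8^2_{13}$ are likewise alternating links whose two components are knots of distinct types. First I would assemble the four standard pieces of input. (i) Both links are purely invertible by Lemma~\ref{lemma:pi}, so $\s{2}{1} < \Sigma(L)$ by Lemma~\ref{lemma:knottype}(1). (ii) The two components of each link are of different knot types — I would identify them explicitly from the diagrams (typically an unknot together with a nontrivial knot) — so no pure exchange is possible and $\Sigma(L) < \s{8}{3}$ by Lemma~\ref{lemma:knottype}(2). (iii) Each link is alternating, and reading a reduced alternating diagram one finds nonzero self-writhe $s(L)$, so by Lemma~\ref{lemma:selfwrithe}(2) no mirroring symmetry survives and $\Sigma(L) < \s{8}{1}$. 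Combining (ii) and (iii) gives $\Sigma(L) < \s{8}{1} \cap \s{8}{3} = \s{4}{2}$, so together with (i) we get $\s{2}{1} < \Sigma(L) < \s{4}{2}$, hence $\Sigma(L)$ is either $\s{2}{1}$ or $\s{4}{2}$; this is exactly the conclusion of Lemma~\ref{lemma:knottype}(3)(b).

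Second, to decide between these two possibilities I would exhibit an explicit ambient isotopy that reverses the orientation of a single component of $L$ while fixing the component numbering and the other component's orientation — that is, showing $(1,-1,1,\id) \in \Sigma(L)$ (or $(1,1,-1,\id)$, whichever is convenient for the diagram). Since $(1,-1,1,\id)$ together with the pure-invertibility element $(1,-1,-1,\id)$ generates $\s{4}{2}$, this forces $\Sigma(L) = \s{4}{2}$. Concretely, I would supply a figure for $7^2_6$ and a figure for $8^2_{13}$ displaying the sequence of Reidemeister moves (or a rigid rotation of the diagram, if available) realizing the single-component inversion, in the style of Figures~\ref{724invert1} and~\ref{8212invert1}.

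The main obstacle is this last step: there is no invariant-based shortcut for verifying membership in $\Sigma(L)$, so the single-component inversion must be realized by an honest, explicit isotopy, and producing a correct, legible sequence of moves (especially for $8^2_{13}$) is the delicate part. A secondary point requiring care is the numerical input to the first step — one must confirm that the self-writhe of each reduced alternating diagram is genuinely nonzero (equivalently, since the linking number vanishes here, that the writhe of the diagram is nonzero) and that the two components really are inequivalent knots; if either check failed, the bound $\Sigma(L) < \s{4}{2}$ would break, and one would fall back on ruling out the mirror by the Jones polynomial, as was done for $7^2_8$, $8^2_{10}$, and $8^2_{15}$.
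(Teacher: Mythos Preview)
Your argument has a genuine gap at step~(ii): for $7^2_6$ and $8^2_{13}$ the two components are \emph{not} of different knot types --- both components of each link are unknots. This is precisely why the paper treats these two links in a separate claim from $7^2_4$ and $8^2_{12}$. Consequently Lemma~\ref{lemma:knottype}(2) does not apply, you cannot conclude $\Sigma(L) < \s{8}{3}$, and your chain $\s{2}{1} < \Sigma(L) < \s{4}{2}$ collapses to $\s{2}{1} < \Sigma(L) < \s{8}{1}$ (from pure invertibility and nonzero self-writhe alone). Exhibiting the single-component inversion $(1,-1,1,\id)$ then only gives $\s{4}{2} \le \Sigma(L) \le \s{8}{1}$, which does not decide the question.

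The missing ingredient is a way to rule out the pure exchange $(1,1,1,(12))$ when both components are unknots. The paper does this via the Satellite Lemma (Lemma~\ref{lem:satellitelemma}): it builds a satellite of each component and shows the resulting links have different Jones polynomials, so no isotopy can swap the two components. Once $(1,1,1,(12)) \notin \Sigma(L)$ is established, $\Sigma(L)$ is a proper subgroup of $\s{8}{1}$, hence of order at most $4$, and your isotopy then pins it down as $\s{4}{2}$. Your fallback remark about ``ruling out the mirror by the Jones polynomial'' would not help here either: the mirror is already excluded by the self-writhe, and the Jones polynomial of $L$ itself cannot detect a pure exchange.
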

\begin{proof}
These links are purely invertible and have nonzero self-writhe; thus Lemma~\ref{lemma:selfwrithe} implies $\s{2}{1} < \Sigma(L) <  \s{8}{1}$.  

We take the satellites, $L_1,~L_2$ of the first and second component of $L$, for $L\in\{7^2_6,~8^2_{13}\}$, and compute the Jones polynomial for each.  

{\small
\begin{align*}
\Jones\left(\left(8^2_{13}\right)_1\right) & =1 -\frac{1}{z^6} + \frac{2}{z^5} - \frac{3}{z^4} + 		   \frac{4}{z^3} - \frac{2}{z^2} + \frac{1}{z} - 2 z + 4 z^2 - 2 z^3 + 3 z^4 - z^5 \\
\Jones\left(\left(8^2_{13}\right)_2\right) & = 1 + \frac{1}{z^{11}} - \frac{2}{z^{10}} + \frac{2}{z^8} -  \frac{2}{z^7} + \frac{1}{z^6} + \frac{1}{z^5} - \frac{2}{z^4} + \frac{2}		{z^3} - \frac{1}{z^2} + \frac{1}{z} - 2 z + 4 z^2 - z^3 + 2 z^5 - z^6 \\
& \\
\Jones\left(\left(7^2_6\right)_1\right) & =2 + 1/z^{10} - 2/z^9 + 1/z^8 - 1/z^6 + 2/z^5 - 1/z^4 + 2/z^3 - z + z^2 + z^3 - z^4 \\
\Jones\left(\left(7^2_6\right)_2\right) & =3 + 1/z^7 - 2/z^6 + 2/z^5 - 2/z^4 + 2/z^3 + 1/z^2 - 2 z + 2 z^2 - z^3
\end{align*}
}

Since the Jones polynomials the two different satellites are not equal for either link $L$, we have that $L$ is not isotopic to $L_\gamma$ for $\gamma=(1,1,1,(12))$ by  Lemma~\ref{lem:satellitelemma}.  Thus, $(1,1,1,(12))\notin\Sigma(L)$.

\begin{figure}[ht]
\begin{center}
\scalebox{0.65}{\includegraphics{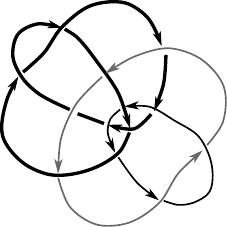}}
\hspace{1in}\scalebox{0.65}{\includegraphics{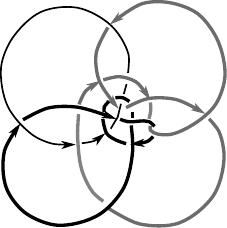}}

\caption{Gray knot is satellite for the first and second component of $7^2_6$, respectively}\label{726sat}
\end{center}
\end{figure}

\begin{figure}[ht]
\begin{center}
\scalebox{0.65}{\includegraphics{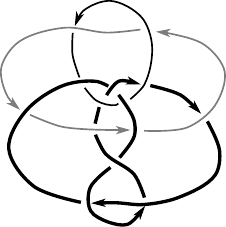}}
\hspace{1in}\scalebox{0.65}{\includegraphics{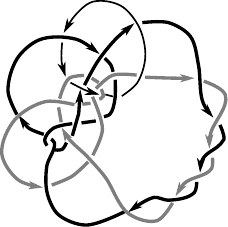}}

\caption{Gray knot is satellite for the first and second component of $8^2_{13}$, respectively}\label{8213sat}
\end{center}
\end{figure}

Figures \ref{726invert1} and \ref{8213invert1} exhibit isotopies that show $(1,-1,1,e)\in \Sigma(L)$ for both of these links.  Therefore, we conclude $\Sigma(L)=\Sigma_{4,2}$ for these two links. 
\end{proof}

\subsubsection{Links with symmetry group $\s{8}{1}$}
\begin{claim}
Links $5^2_1$ and $7^2_3$ have symmetry group $\s{8}{1}$.
\end{claim}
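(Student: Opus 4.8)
The plan is to pin down $\Sigma(L)$ for $L\in\{5^2_1,7^2_3\}$ by establishing the two inclusions $\s{8}{1}\subseteq\Sigma(L)\subseteq\s{8}{1}$, which I would handle by completely different means.

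For the upper bound I would invoke Lemma~\ref{lemma:selfwrithe}(2), which reduces everything to checking that the self-writhe $s(L)$ is nonzero. Here this needs no real computation: both links are alternating with an \emph{odd} crossing number (five and seven), and in any reduced alternating diagram the signed intercomponent crossings sum to $2\lk(L)$, so the number of intercomponent crossings is even; hence the number of intracomponent crossings is odd, and $s(L)$ — a sum of an odd number of $\pm1$'s, and an invariant of the (alternating) link — is odd, in particular nonzero. Lemma~\ref{lemma:selfwrithe}(2) then gives $\Sigma(L)<\s{8}{1}$. (If one wanted a more concrete argument, one could instead read $s(L)$ off the explicit reduced alternating diagram, or compare the Jones polynomials of $L$ and $L^{(-1,1,1,\id)}$ to rule out mirroring directly.)

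For the lower bound I need generators of $\s{8}{1}$. Both $5^2_1$ and $7^2_3$ occur in the list~\eqref{eq:pe}, so Lemma~\ref{lemma:pe} already supplies the pure exchange $(1,1,1,(12))\in\Sigma(L)$. I would then exhibit one further element — the inversion $(1,-1,1,\id)$ of a single component — by an explicit ambient isotopy, of the same flavor as those used above for the other individually invertible links (and presumably recorded in an appendix figure). A short computation in $\Gamma_2$ finishes it: $\langle(1,1,1,(12)),(1,-1,1,\id)\rangle=\s{8}{1}$, since the product of these two generators is $(1,1,-1,(12))$, composing again with the exchange gives $(1,1,-1,\id)$, hence $(1,-1,-1,\id)$, hence all of $\{(1,\pm1,\pm1,\id)\}$, and multiplying these by $(1,1,1,(12))$ sweeps out the remaining four elements. (As a sanity check, this route also recovers the pure invertibility guaranteed by Lemma~\ref{lemma:pi}.) Combining the two bounds gives $\Sigma(L)=\s{8}{1}$.

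The parity argument and the $\Gamma_2$ arithmetic are both immediate, so the only genuine work — and the step I expect to be the main obstacle — is producing the \emph{explicit} single-component inversion isotopy for each of $5^2_1$ and $7^2_3$: one must actually trace a sequence of Reidemeister moves carrying $L$ to $L^{(1,-1,1,\id)}$, which is the hands-on part rather than anything that follows formally from the lemmas already in place.
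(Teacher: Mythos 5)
Your proposal is correct and follows essentially the same route as the paper: the upper bound comes from nonzero self-writhe via Lemma~\ref{lemma:selfwrithe}, and the lower bound from the pure exchange isotopy together with an explicit single-component inversion (which the paper supplies in Figures~\ref{fig:rotate-z} and~\ref{fig:rotate-y}), the only cosmetic difference being that the paper passes through $\s{4}{1}$ using pure invertibility and then notes the index-two jump, while you generate $\s{8}{1}$ directly from the exchange and the single inversion. Your parity observation that $s(L)$ is odd for an odd-crossing alternating two-component link is a nice touch the paper doesn't bother with, but it doesn't change the argument.
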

\begin{proof}
These two links have the pure exchange and pure invertibility symmetries, and their self-writhes are nonzero; thus Lemma~\ref{lemma:selfwrithe} implies $\s{4}{1} < \Sigma(L) <  \s{8}{1}$.  We show, in figures~\ref{fig:rotate-z} and \ref{fig:rotate-y}, that each symmetry group includes either $(1, 1, -1, \id)$ or $(1,-1,1, \id)$, neither of which is an element of $\s{4}{1}$.  Therefore, we conclude $\Sigma(L)=\Sigma_{8,1}$ for these two links. 
\end{proof}

\subsubsection{Links with symmetry group $\s{8}{2}$}
\begin{claim}
Links  $2^2_1, \; 6^2_2,$ and $8^2_8$ have symmetry group $\s{8}{2}$.
\end{claim}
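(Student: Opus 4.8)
The plan is to trap $\Sigma(L)$ between $\s{4}{1}$ and $\s{8}{2}$ for each of the three links and then produce one extra symmetry to force the larger group. For the lower bound: each of $2^2_1$, $6^2_2$, and $8^2_8$ appears in the list~\eqref{eq:pe}, hence admits the pure exchange symmetry $(1,1,1,(12))$ by Lemma~\ref{lemma:pe}, and each is purely invertible, hence admits $(1,-1,-1,\id)$ by Lemma~\ref{lemma:pi}; these two elements generate $\s{4}{1}$, so $\s{4}{1}\le\Sigma(L)$. For the upper bound: each of these links has nonzero linking number, so Lemma~\ref{lemma:selfwrithe}(1) gives $\Sigma(L)\le\s{8}{2}$. (Their self-writhes vanish, so Lemma~\ref{lemma:selfwrithe}(2) yields no additional constraint.)

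It then remains only to show $\Sigma(L)\neq\s{4}{1}$, i.e., that $\Sigma(L)$ contains some element with $\epsilon_0=-1$. The element I would target is $\gamma=(-1,-1,1,\id)$, which lies in $\s{8}{2}\setminus\s{4}{1}$. For $2^2_1$ this requires no picture: by Proposition~\ref{prop:linkingmatrix} the link $L^\gamma$ has linking number $\epsilon_0\epsilon_1\epsilon_2\,\Lk(L)=\Lk(L)$, so $L^\gamma$ is a two-crossing link with the same (nonzero) linking number as the Hopf link and is therefore the Hopf link again, isotopic to $L$ respecting component labels and orientations; thus $\gamma\in\Sigma(2^2_1)$. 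For $6^2_2$ and $8^2_8$ I would instead exhibit explicit ambient isotopies carrying $L$ to $L^\gamma$, to be drawn as figures in the appendix alongside the pure-exchange and pure-invertibility isotopies.

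With these ingredients the conclusion is immediate: $\s{4}{1}$ is an order-$4$ subgroup of $\Sigma(L)$, $\Sigma(L)$ is a subgroup of the order-$8$ group $\s{8}{2}$, and $\Sigma(L)$ strictly contains $\s{4}{1}$, so $|\Sigma(L)|=8$ by Lagrange and hence $\Sigma(L)=\s{8}{2}$. The main obstacle is the construction of the isotopies realizing $\gamma$ for $6^2_2$ and $8^2_8$: since $\gamma$ preserves both the linking number and the self-writhe, none of the invariants on hand can rule $\gamma$ out, so there is no shortcut and one genuinely has to find and draw the ambient isotopy; the $2^2_1$ case, by contrast, follows for free from the uniqueness of the Hopf link.
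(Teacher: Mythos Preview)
Your overall architecture matches the paper's exactly: sandwich $\s{4}{1}\le\Sigma(L)\le\s{8}{2}$ via Lemmas~\ref{lemma:pe}, \ref{lemma:pi}, and~\ref{lemma:selfwrithe}(1), then exhibit one element of $\s{8}{2}\setminus\s{4}{1}$. The paper targets $(-1,1,-1,\id)$ rather than your $(-1,-1,1,\id)$, but either works, and for $6^2_2$ and $8^2_8$ both you and the paper simply defer to explicit isotopy figures.

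The one genuine gap is your shortcut for $2^2_1$. You argue that $L^\gamma$ is ``a two-crossing link with the same (nonzero) linking number as the Hopf link and is therefore \ldots\ isotopic to $L$ respecting component labels and orientations.'' But this last implication \emph{is} the statement $\s{8}{2}\le\Sigma(2^2_1)$: saying that any labeled, oriented Hopf link is determined up to isotopy by its linking number is exactly saying that every $\gamma$ with $\epsilon_0\epsilon_1\epsilon_2=1$ lies in $\Sigma(2^2_1)$. So the argument is circular. What you actually know at this point is only that $L^\gamma$ has the same \emph{unoriented, unlabeled} type as $L$ (there being a unique two-crossing link), together with $\Lk(L^\gamma)=\Lk(L)$; nothing you have established lets you upgrade that to a labeled, oriented isotopy without assuming the conclusion. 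The paper does not attempt any such shortcut: it supplies a (very short) explicit isotopy for $2^2_1$ just as for the other two links.
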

\begin{proof}
These three links have the pure exchange and pure invertibility symmetries, and their linking numbers are nonzero; thus Lemma~\ref{lemma:selfwrithe} implies $\s{4}{1} < \Sigma(L) <  \s{8}{2}$.  

Figures~\ref{221mi}, \ref{622mi}, and \ref{828a} display the isotopies which show $(-1,1,-1,e)$ lies in the symmetry group for each of these three links.  Since this element is not in $\s{4}{1}$, we may conclude all three links have symmetry group $\Sigma(L)=\Sigma_{8,2}$.
\end{proof}

\medskip
\section{Three-component links}

There are 14 three-component links with 8 or fewer crossings.  In this section, we determine the symmetry group for each one; Table~\ref{tab:3symm} summarizes the results.  We obtain 11 different symmetry subgroups inside $\Gamma_3$, which represent 7 different conjugacy classes of subgroups (out of the 131 possible).  

For each link, our first task is to calculate the linking matrix.  Then, we utilize Table~\ref{tab:stabs} to determine the stabilizer of this matrix within $\Gamma_3$;  we know that the symmetry group $\Sigma(L)$ must be a subgroup of this stabilizer.  From there, we proceed by ruling out other elements using polynomial invariants and by exhibiting isotopies to show that certain symmetries do lie in $\Sigma(L)$ until we can discern the symmetry group.   


Here are the results, listed in terms of generators for each group.  We use the following notation for common group elements:
\begin{itemize}
\item PI, for pure invertibility, i.e., the element $(1,-1,-1,-1,e)$
\item PE, for having all pure exchanges, i.e., all elements $(1,1,1,1,p)$ where $p \in S_3$
\end{itemize}

\begin{table}[ht]
\begin{center}
\begin{tabular}{cccl}
\toprule
Link(s) & $|\Sigma(L)|$ & $\Sigma$ isomorphic to & Generators  \\
\midrule
$8^3_1 \, (8a18), \; 8^3_8\, (8n4)$ & 4 & $\Z_2 \times \Z_2 \cong D_2$ & PI, $(1,1,-1,-1,(23))$ \\
\addlinespace[0.1em]
$8^3_2\, (8a17), \;8^3_7 \, (8n3)$ & 4 & $\Z_2 \times \Z_2 \cong D_2$ & PI, $(1,1,1,1,(23))$ \\
\addlinespace[0.1em]
$8^3_{10} \, (8n6) $ & 4 &  $\Z_2 \times \Z_2 \cong D_2$ & PI, $(1,1,1,1,(12))$ \\
\addlinespace[0.1em]
$8^3_{4} \, (8a20)$ & 4 & $\Z_2 \times \Z_2 \cong D_2$  & PI, $(-1,1,1,1,(12))$ \\
\addlinespace[0.1em]
$8^3_{5}\, (8a16)$ & 4 & $\Z_2 \times \Z_2 \cong D_2$ & $(1,1,-1,-1,e), ~(1,-1,-1,-1,(23))$ \\
\addlinespace[0.1em]
\midrule
\addlinespace[0.2em]
$8^3_{6} \, (8a19)$ & 8 & $(\Z_2)^3$& PI, $(-1,1,-1,-1,e),~(1,1,-1,-1,(23))$ \\
\addlinespace[0.1em]
$8^3_{9} \, (8n5)$ & 8 & $(\Z_2)^3$ & PI, $(1,1,1,1,(23)), ~(1,1,-1,-1,e)$ \\
\addlinespace[0.1em]
\midrule
\addlinespace[0.2em]
$6^3_{1}\, (6a5), \; 8^3_3 \, (8a15)$ & 12 & $D_6$  & PI, PE \\
\addlinespace[0.1em]
$6^3_{3}\, (6n1)$ & 12 & $D_6$ & $(1,1,1,1,(12)), (1,-1,1,1,(123))$ \\
\addlinespace[0.1em]
$7^3_{1}\, (7a7)$ & 12 & $D_6$ & $(1,1,1,1,(23)),  (1,1,-1,1,(123))$ \\
\addlinespace[0.1em]
\midrule
\addlinespace[0.2em]
$6^3_{2}\, (6a4)$ & 48 & $\Z_2 \times \left((\Z_2)^2 \rtimes S_3 \right)$ & 
	\begin{tabular}{ll}
	$(-1,1,1,1,e)$ & $(1,1,1,1,(123))$ \\
	$(1,-1,1,1,(12))$ &  \\
	\end{tabular} \\
\addlinespace[0.1em]
\bottomrule
\end{tabular}
\end{center}
\vspace*{0.5em}
\caption[Three-component link symmetry groups, by crossing number.]{Three-component link symmetry groups, by crossing number.  The first three groups are all conjugate to each other; also, all 12 element groups above are conjugate to each other.}
\label{tab:3symm}
\end{table}

We note that all but two of these links are purely invertible, even though PI might not be part of a minimal set of generators.  Neither the Borromean rings $\left(6^3_2\right)$ or the link $8^3_5$ are purely invertible.  Both of these links are, however, invertible.  The Borromean rings can be inverted using any odd permutation $p$, i.e., they admit the symmetry $\gamma=(1,-1,-1,-1,p)$; the link $8^3_5$ is invertible using $p=(23)$.

\begin{claim}
The subgroup $\Sigma(6^3_1) < \Gamma_3$ is the $12$ element group isomorphic to $D_6$ generated by pure exchanges and pure invertibility.
\end{claim}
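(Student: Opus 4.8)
The plan is to bracket $\Sigma(6^3_1)$ between an upper bound extracted from the linking matrix and a lower bound obtained by displaying explicit isotopies. First I would compute the linking matrix of $6^3_1$ from a diagram; with the labeling and orientations fixed by Cerf's table I expect all three pairwise linking numbers to equal a common nonzero value $a$ (indeed, the very fact that the claimed generators include \emph{all} pure exchanges forces the $(a,a,a)$ pattern rather than the other order-$12$ type $(a,a,-a)$, whose stabilizer does not contain every $(1,1,1,1,p)$). So under the bijection \eqref{eq:correspondance}, $\Lk(6^3_1)$ corresponds to the triple $(a,a,a)$. By Proposition~\ref{prop:linkingmatrix}, $\Sigma(6^3_1)$ lies in the stabilizer of $\Lk(6^3_1)$ in $\Gamma_3$. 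From Table~\ref{tab:stabs} the stabilizer of $(a,a,a)$ inside $(\Z_2)^3\rtimes S_3$ is $(\{+1\}\times\{+1\}\times\{+1\})\rtimes S_3$, so by the preimage formula of Proposition~\ref{prop:fhom} its preimage in $\Gamma_3$ is the twelve-element set
\begin{equation*}
\{(1,1,1,1,p) : p\in S_3\}\ \cup\ \{(1,-1,-1,-1,p) : p\in S_3\}.
\end{equation*}
A one-line computation with the group law of Definition~\ref{def:whittengroup} gives $(1,1,1,1,p)\ast(1,-1,-1,-1,e)=(1,-1,-1,-1,p)$, so this stabilizer is exactly the subgroup generated by all pure exchanges $\mathrm{PE}=\{(1,1,1,1,p):p\in S_3\}$ together with pure invertibility $\mathrm{PI}=(1,-1,-1,-1,e)$; abstractly it is $\Z_2\times S_3\cong D_6$. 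This establishes $\Sigma(6^3_1)\le\langle\mathrm{PE},\mathrm{PI}\rangle$.

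For the reverse inclusion I would produce isotopies realizing $\mathrm{PE}$ and $\mathrm{PI}$. Pure invertibility $(1,-1,-1,-1,e)\in\Sigma(6^3_1)$ is witnessed by a single isotopy simultaneously reversing the orientations of all three components. For the pure exchanges, since $S_3$ is generated by two transpositions it suffices to exhibit isotopies realizing $(1,1,1,1,(12))$ and $(1,1,1,1,(23))$ — equivalently, one transposition together with the three-cycle $(1,1,1,1,(123))$. These are read off a symmetric planar diagram of $6^3_1$: a suitable rotation of the diagram permutes the three components cyclically, and a reflection interchanges two of them while fixing the third, each move preserving component labels and orientations as demanded by the definition of ``$L$ admits $\gamma$''. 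Together these give $\langle\mathrm{PE},\mathrm{PI}\rangle\le\Sigma(6^3_1)$, and combined with the upper bound we conclude $\Sigma(6^3_1)=\langle\mathrm{PE},\mathrm{PI}\rangle\cong D_6$.

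The upper bound is routine bookkeeping with the linking-matrix machinery already developed, once the linking numbers of $6^3_1$ are in hand. The main obstacle is the lower bound: one must exhibit honest ambient isotopies of $S^3$ that carry out a cyclic permutation (and a transposition) of the components, and separately the simultaneous reversal of all three orientations, and check in each case that labels and orientations agree with those of $L^\gamma$. In practice this is done by drawing $6^3_1$ as symmetrically as possible and tracking the requisite Reidemeister moves; those diagrams are the real content of the argument, and everything else is forced.
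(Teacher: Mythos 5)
Your proposal is correct and follows essentially the same route as the paper: bound $\Sigma(6^3_1)$ above by the order-$12$ stabilizer of the $(a,a,a)$-type linking matrix via Table~\ref{tab:stabs} and the preimage formula of Proposition~\ref{prop:fhom}, then realize pure invertibility and two generating exchanges by explicit isotopies (the paper exhibits $(1,1,1,1,(123))$ by a $z$-axis rotation and $(1,1,1,1,(23))$ in Figure~\ref{631(23)}). One small caution: describing the transposition as ``a reflection'' of the diagram is misleading, since an honest reflection reverses the orientation of $S^3$ and would give $\epsilon_0=-1$; the paper realizes the $(23)$ exchange by a genuine ambient isotopy rather than a diagram reflection.
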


\begin{proof}
The linking matrix for $6^3_1$ is
\begin{center}\begin{tabular}{c|ccc}
&1&2&3\\ \hline
1&0&-1&-1\\
2&-1&0&-1\\
3&-1&-1&0\\
\end{tabular}\end{center}
which is in the standard form $(a,a,a)$. We know that $\Sigma(6^3_1)$ is a subgroup of the stabilizer of this matrix under the action of $\Gamma_3$ on linking matrices. Consulting Table~\ref{tab:stabs}, we see that this stabilizer is the group in the claim. We must now show that all these elements are in the group. Figures \ref{fig:pe-z} and \ref{631(23)} show that
\begin{equation*}
(1,1,1,1,(123)),~(1,1,1,1,(23))\in\Sigma(6^3_1)
\end{equation*}
Since any 3-cycle and 2-cycle generate $S_3$, we have the rest of the pure exchanges as well.
Figure~\ref{631invert} shows that this link is purely invertible, completing the proof.
\end{proof}


\begin{claim}
The subgroup $\Sigma(6^3_2) < \Gamma_3$ is the 48 element group where $(\epsilon_0, \epsilon_1, \epsilon_2, \epsilon_3, p)$ is in the group if either 
\begin{itemize}
\item[(a)] $\epsilon_1 \epsilon_2\epsilon_3 = 1$ and $p$ is an even permutation, or
\item[(b)] $\epsilon_1 \epsilon_2\epsilon_3 = -1$ and $p$ is odd.
\end{itemize}
\end{claim}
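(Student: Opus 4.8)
The plan is to trap $\Sigma(6^3_2)$ between the $48$-element set $H$ of the claim and itself: a lower bound from explicit isotopies, and an upper bound from an invariant that kills everything outside $H$. I would first check that conditions (a)--(b) describe an index-two subgroup. Define $\phi\co\Gamma_3\to\Z_2$ by $\phi(\epsilon_0,\epsilon_1,\epsilon_2,\epsilon_3,p)=\epsilon_1\epsilon_2\epsilon_3\operatorname{sgn}(p)$; using the group law of Definition~\ref{def:whittengroup}, the identity $\epsilon'_{p(1)}\epsilon'_{p(2)}\epsilon'_{p(3)}=\epsilon'_1\epsilon'_2\epsilon'_3$ valid for every $p$, and multiplicativity of $\operatorname{sgn}$, one sees that $\phi$ is a homomorphism and that $H=\ker\phi$, so $|H|=48$. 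Observe that $\epsilon_0$ is unconstrained in $H$; this is consistent with the linking matrix of $6^3_2$ being identically zero, so that by Table~\ref{tab:stabs} (orbit type $(0,0,0)$) its stabilizer in $\Gamma_3$ is all of $\Gamma_3$ and imposes no restriction.

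For the upper bound $\Sigma(6^3_2)\subset H$, I would use Milnor's triple linking number $\bar\mu_{123}$, which equals $\pm1$ for the Borromean rings. Under a Whitten move $\gamma=(\epsilon_0,\epsilon_1,\epsilon_2,\epsilon_3,p)$ it transforms by $\bar\mu_{123}(L^\gamma)=\epsilon_1\epsilon_2\epsilon_3\operatorname{sgn}(p)\,\bar\mu_{123}(L)$: reversing a single component negates it, permuting components by $p$ multiplies it by $\operatorname{sgn}(p)$ (total antisymmetry in its three indices), and mirroring fixes it (a Milnor invariant of odd length is mirror-insensitive). Since $\bar\mu_{123}(6^3_2)\neq0$, any $\gamma\in\Sigma(6^3_2)$ forces $\epsilon_1\epsilon_2\epsilon_3\operatorname{sgn}(p)=1$, i.e.\ $\gamma\in\ker\phi=H$. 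In particular this already confirms the surrounding remark that $6^3_2$ is not purely invertible, since $(1,-1,-1,-1,e)\notin H$.

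For the lower bound I would exhibit isotopies realizing the three generators of Table~\ref{tab:3symm}: $(-1,1,1,1,e)$, from the strong amphichirality of the symmetric Borromean diagram; $(1,1,1,1,(123))$, from its order-three rotational symmetry; and $(1,-1,1,1,(12))$, from a reflection of that diagram interchanging two rings and reversing one. A short group computation then shows these three generate $H$: $(-1,1,1,1,e)$ supplies the free $\epsilon_0$; the $3$-cycle $(1,1,1,1,(123))$ gives $A_3$; $(1,-1,1,1,(12))^2=(1,-1,-1,1,e)$ together with its $A_3$-conjugates gives the subgroup $\{\epsilon_1\epsilon_2\epsilon_3=1\}$ of $(\Z_2)^3$; and $(1,-1,1,1,(12))$ itself is an odd permutation with $\epsilon_1\epsilon_2\epsilon_3=-1$, filling out $H\cap\left((\Z_2)^3\rtimes S_3\right)$. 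With both bounds in hand, $\Sigma(6^3_2)=H$, and in particular $6^3_2$ is invertible using any odd permutation.

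I expect the upper bound to be the main obstacle: everything hinges on getting the transformation law of $\bar\mu_{123}$ exactly right, since the sign bookkeeping distinguishing single-component reversals, permutations, and mirrors is precisely what singles out this index-two subgroup, and on the classical fact that $\bar\mu_{123}(6^3_2)\neq0$. By contrast, the amphichirality and rotational isotopies needed for the lower bound, though they require honest pictures, are routine given the symmetry of the standard diagram.
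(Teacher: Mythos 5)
Your proposal is correct, and your lower bound (explicit isotopies generating the $48$-element group $H$) matches the paper's strategy, though you work from the generators listed in Table~\ref{tab:3symm} while the paper uses the slightly different elements $(1,-1,1,-1,(132))$, $(-1,-1,1,1,(13))$, $(-1,1,1,1,e)$ realized in its figures; either triple generates $H$. The upper bound is where you genuinely diverge. The paper observes that $H$ has index $2$ in $\Gamma_3$, so once $H\subset\Sigma(6^3_2)$ the only possibilities are $H$ or all of $\Gamma_3$, and it excludes the latter by citing Montesinos's theorem that the Borromean rings are not purely invertible, i.e.\ that $(1,-1,-1,-1,e)\notin\Sigma(6^3_2)$. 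You instead invoke Milnor's $\bar\mu_{123}$, whose transformation law $\bar\mu_{123}(L^\gamma)=\epsilon_1\epsilon_2\epsilon_3\operatorname{sgn}(p)\,\bar\mu_{123}(L)$ exhibits $H=\ker\phi$ as precisely the stabilizer of a nonzero value; since $\bar\mu_{123}(6^3_2)=\pm1$, this kills all $48$ outside elements at once and recovers non-pure-invertibility as a corollary rather than a citation. Your route is more self-contained and more explanatory --- it says \emph{why} the answer is this particular index-two subgroup --- at the cost of needing the sign conventions for $\bar\mu_{123}$ (total antisymmetry in the indices, sign change under a single component reversal, invariance under mirroring for odd length) to be exactly right; the mirror-invariance claim, the most error-prone of the three, is independently confirmed by the paper's isotopy realizing $(-1,1,1,1,e)$, since a sign flip there would contradict $\bar\mu_{123}\neq0$. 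The paper's route is shorter but leans on an external result and on the accident that $H$ has index $2$ in $\Gamma_3$, neither of which your argument requires.
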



\begin{proof}
Figures \ref{fig:rotate-z}, \ref{632_1}, and \ref{632_5} tell us that $\Sigma(6^3_2)$ contains the elements
\begin{equation*}
(1,-1,1,-1,(132)), ~(-1,-1,1,1,(13)), ~(-1,1,1,1,e),
\end{equation*}
which clearly obey the rules in the claim. In fact, they generate a group of $48$ such elements.  Since the order of $\Sigma(6^3_1)$ must divide $|\Gamma_3| = 96$, it is either these 48 elements or it is all of $\Gamma_3$.  But in \cite{MR0380802}, Montesinos proves that $6^3_2$ is not purely invertible.  Thus, $(1,-1,-1,-1,e)$ cannot be in $\Sigma(6^3_2)$, which completes the proof.
\end{proof}

\begin{claim}
The subgroup $\Sigma(6^3_3) < \Gamma_3$ is the 12 element group 
isomorphic to $D_6$ given by $f^{-1}(S(a,a,-a))$.
\end{claim}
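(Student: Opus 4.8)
The plan is to follow the template used for the other three-component links in this section: squeeze $\Sigma(6^3_3)$ between an upper bound coming from the linking matrix and a lower bound coming from explicit isotopies, and check that the two coincide. Unlike the $8^3_i$ cases, there is no ``ruling out'' step, since every element of the relevant stabilizer will turn out to be realized by an isotopy. First I would compute the linking matrix of $6^3_3$ from a diagram in the Doll--Hoste component numbering; I expect it to be in the standard form whose associated triple under the correspondence~\eqref{eq:correspondance} is $(a,a,-a)$ for some nonzero integer $a$ (concretely, two of the three pairwise linking numbers agree and the third has the opposite sign, with $a=\pm1$). This is precisely the case in which the stabilizer is literally $S(a,a,-a)$ rather than one of its conjugates, exactly as in Example~\ref{ex:731stab}. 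By Proposition~\ref{prop:linkingmatrix} together with the discussion following Table~\ref{tab:stabs}, the stabilizer of this linking matrix under the $\Gamma_3$-action is $f^{-1}(S(a,a,-a))$, a $12$-element subgroup of $\Gamma_3$ isomorphic to $\Z_2\times S_3\cong D_6$, whose twelve elements are written out explicitly in Example~\ref{ex:731stab}. Since $\Sigma(L)$ is always contained in the stabilizer of $\Lk(L)$, we obtain $\Sigma(6^3_3)\subseteq f^{-1}(S(a,a,-a))$.

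For the reverse inclusion I would exhibit explicit isotopies (in accompanying figures) realizing the two Whitten elements recorded for $6^3_3$ in Table~\ref{tab:3symm}: the pure exchange $(1,1,1,1,(12))$ and the orientation-reversing cyclic symmetry $\gamma=(1,-1,1,1,(123))$. A short computation in $\Gamma_3$ then finishes the argument: $\gamma$ has order $6$ (indeed $\gamma^3=(1,-1,-1,-1,e)$, pure invertibility), and the permutation part of every power of $\gamma$ is a $3$-cycle or the identity, so $(1,1,1,1,(12))$ is not a power of $\gamma$; hence the subgroup generated by these two elements properly contains a cyclic group of order $6$. Being a subgroup of the $12$-element group $f^{-1}(S(a,a,-a))$, it must equal it, and therefore $\Sigma(6^3_3)=f^{-1}(S(a,a,-a))$. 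In particular $6^3_3$ is purely invertible, consistent with the note after Table~\ref{tab:3symm}. If drawing the order-six symmetry directly proves awkward, an equally serviceable generating set is $\{(1,-1,-1,-1,e),\ (1,1,1,1,(12)),\ (1,1,-1,-1,(23))\}$ --- pure invertibility together with two transposition-type symmetries --- which also generates the full $D_6$.

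The linking-matrix computation and the group-theoretic bookkeeping are routine given the machinery already in place. The main obstacle is the geometric one: constructing and clearly presenting the isotopies that realize the chosen generators, especially the (up-to-orientation) order-three symmetry, because $6^3_3$ is non-alternating and no linking-number or polynomial argument can help here. As in the earlier cases, the safest route is to redraw $6^3_3$ in a position exhibiting visible three-fold symmetry, so that a rotation of the diagram implements the cyclic relabeling, and then to track carefully how the three component orientations transform under that rotation, in order to confirm that the resulting Whitten element is exactly $(1,-1,1,1,(123))$ and not some other lift.
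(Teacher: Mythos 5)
Your proposal is correct and follows essentially the same argument as the paper: explicit isotopies realize two elements generating the full $12$-element group, while the linking matrix (with triple of type $(a,a,-a)$) and Table~\ref{tab:stabs} bound $|\Sigma(6^3_3)|$ above by $12$, forcing equality with $f^{-1}(S(a,a,-a))$. The only difference is the choice of generators — the paper's figures realize $(1,1,-1,1,(132))$ and $(1,-1,-1,-1,(12))$ rather than the pair from Table~\ref{tab:3symm} — which is immaterial since both pairs generate the same group.
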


\begin{proof}
Figures~ \ref{6n1_2} and \ref{6n1_3} imply that $\Sigma(6^3_3)$ contains
\begin{equation*}
\{(1,1,-1,1,(132)),~(1,-1,-1,-1,(12))\}.
\end{equation*}
These three elements generate the 12 element group of the claim.  Now the linking matrix for $6^3_3$ is
\begin{center}\begin{tabular}{c|ccc}
&1&2&3\\ \hline
1&0&1&-1\\
2&1&0&-1\\
3&-1&-1&0\\
\end{tabular}\end{center}
 which is in the standard form $(a,a,-a)$.  Consulting Table~\ref{tab:stabs}, we see that this means $|\Sigma(6^3_3)|$ divides 12, the order of the stabilizer.  Since we already have 12 elements in the symmetry group, it must equal the stabilizer, which completes the proof.
\end{proof}

\begin{claim} \label{claim:731}
The subgroup $\Sigma(7^3_1) < \Gamma_3$ is the 12 element subgroup isomorphic to $D_6$ that is conjugate to $f^{-1}(S(a,a,-a))$ by $(1,1,1,1,(13))$.
\end{claim}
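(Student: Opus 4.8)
Everything needed for this claim has in fact been set up already. By the linking-matrix analysis of Example~\ref{ex:731stab}, the linking matrix of $7^3_1$ lies in the orbit type whose representative triple is $(a,a,-a)$, and the stabilizer of $\Lk(7^3_1)$ in $\Gamma_3$ --- which, as noted there, must contain $\Sigma(7^3_1)$ --- is the $12$-element group, isomorphic to $\Z_2 \cross S_3 \cong D_6$, obtained by conjugating $f^{-1}(S(a,a,-a))$ by $(1,1,1,1,(13))$. Consequently $|\Sigma(7^3_1)|$ divides $12$, and the only thing left to establish is that this containment is an equality; no invariant-based ruling-out is required here.

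The plan is therefore purely constructive: exhibit explicit isotopies of $7^3_1$ realizing the two Whitten elements named as generators in Table~\ref{tab:3symm}, namely the pure exchange $h=(1,1,1,1,(23))$ of the second and third components and the orientation-twisted $3$-cycle $g=(1,1,-1,1,(123))$. Both of these elements lie in the stabilizer listed explicitly in Example~\ref{ex:731stab}, so the ambient stabilizer cannot obstruct them. For $h$ one wants a diagram of $7^3_1$ carrying an order-two symmetry interchanging two components and fixing the third together with all orientations; for $g$ one chases a sequence of Reidemeister moves, most conveniently realized as a cyclic rotation of a suitably drawn diagram followed by a local move reversing the orientation of a single component. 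Drawing and checking these two figures is the one substantive step.

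With the two isotopies in hand, a short computation with the multiplication rule of Definition~\ref{def:whittengroup} finishes the argument. One finds $g^2=(1,-1,-1,1,(132))$ and $g^3=(1,-1,-1,-1,\id)$ --- that is, pure invertibility --- so $g$ has order $6$; also $h^2=\id$ and $hgh^{-1}=(1,1,1,-1,(132))=g^{-1}$, so $\langle g,h\rangle$ is dihedral of order $12$. Since $\langle g,h\rangle \subseteq \Sigma(7^3_1) \subseteq$ (stabilizer of $\Lk(7^3_1)$) and the last group already has order $12$, all three coincide, proving that $\Sigma(7^3_1)$ is exactly the conjugate of $f^{-1}(S(a,a,-a))$ by $(1,1,1,1,(13))$ displayed in Example~\ref{ex:731stab}.

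The only real obstacle, as elsewhere in this paper, is producing the concrete isotopy realizing the orientation-reversing $3$-cycle $g$: cyclically permuting the three components while simultaneously reversing one of them typically requires a genuinely three-dimensional move rather than an obvious planar symmetry of a link diagram. The remaining ingredients --- the linking-matrix reduction, membership in the stabilizer, and the final order count --- are routine given the machinery developed earlier.
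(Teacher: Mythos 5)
Your proposal is correct and follows essentially the same route as the paper: reduce to the $12$-element stabilizer of $\Lk(7^3_1)$ via the orbit type $(a,a,-a)$ and Example~\ref{ex:731stab}, then exhibit isotopies realizing exactly the two elements $(1,1,1,1,(23))$ and $(1,1,-1,1,(123))$ and observe they generate a group of order $12$. The only difference is that you spell out the dihedral relations $g^3=\mathrm{PI}$, $hgh^{-1}=g^{-1}$ explicitly, which the paper leaves implicit.
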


\begin{proof}
The linking matrix for $7^3_1$ is
\begin{center}\begin{tabular}{c|ccc}
&1&2&3\\ \hline
1&0&-1&-1\\
2&-1&0&1\\
3&-1&1&0\\
\end{tabular}\end{center}
which corresponds to the triple (1,-1, -1) and has the same orbit type as $(a, a, -a)$.  Thus, the stabilizer of this linking matrix is a 12 element group conjugate to the stabilizer $f^{-1}(S(a,a,-a))$ by $(1,1,1,1,(13))$.  Figures~\ref{731+-+(132)} and \ref{731+++(23)} show that
\begin{equation*}
(1,1,-1,1,(123)), ~(1,1,1,1,(23))\in\Sigma(7^3_1).
\end{equation*}
These elements generate a 12 element group, so this stabilizer is the entire symmetry group of $7^3_1$, as claimed. We note that this stabilizer was worked out explicitly as Example~\ref{ex:731stab}.
\end{proof}

%


\begin{claim}
The subgroup $\Sigma(8^3_1) < \Gamma_3$ is the 4 element group isomorphic to $D_2$ generated by pure invertibility and $(1,1,-1,-1,(23))$.
\end{claim}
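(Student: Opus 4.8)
The plan is to follow the template used for the other three-component links in this section: pin down the linking matrix, read off its stabilizer in $\Gamma_3$ from Table~\ref{tab:stabs}, and then show that stabilizer is actually attained. First I would compute the linking matrix of $8^3_1$ directly from a minimal diagram (consistent with the paper's chosen component labels) by counting signed intercomponent crossings. I expect it to fall into the orbit type $(a,b,-b)$ of Table~\ref{tab:stabs} — that is, with $\Lk_{12} = -\Lk_{13}$ and $\Lk_{23}$ of a distinct nonzero magnitude — because the proposed generator $(1,1,-1,-1,(23))$ forces the relation $\Lk_{12}=-\Lk_{13}$ via Corollary~\ref{cor:3matrix} (equivalently, via the natural action through the homomorphism $f$ of Proposition~\ref{prop:fhom} on the triple associated to the matrix by \eqref{eq:correspondance}), and the clean $D_2$ answer requires the remaining linking number to be nonzero and of a different magnitude. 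Consulting Table~\ref{tab:stabs}, the stabilizer of such a matrix in $\Gamma_3$ is a $4$-element group isomorphic to $D_2$; applying the preimage formula of Proposition~\ref{prop:fhom} (after conjugating by a permutation, if needed, to bring the triple into the standard form) I would write it out as $\{(1,1,1,1,e),\,(1,-1,-1,-1,e),\,(1,1,-1,-1,(23)),\,(1,-1,1,1,(23))\}$ and note that this is exactly the group generated by pure invertibility and $(1,1,-1,-1,(23))$. Since $\Sigma(8^3_1)$ must be a subgroup of the stabilizer of $\Lk(8^3_1)$, this shows $\Sigma(8^3_1)$ is contained in the $4$-element group of the claim.

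It then remains only to realize both generators by explicit isotopies. Pure invertibility is already handled: $8^3_1$ is one of the three-component links of at most $8$ crossings that are purely invertible (the only exceptions being $6^3_2$ and $8^3_5$), so $(1,-1,-1,-1,e)\in\Sigma(8^3_1)$ is witnessed by the corresponding invertibility figure. For the element $(1,1,-1,-1,(23))$ — swap components $2$ and $3$ while reversing the orientation of each — I would exhibit an explicit ambient isotopy of a diagram of $8^3_1$ onto $L^\gamma$ that respects the component numbering and orientations; typically this comes from a symmetric projection realized by a $\pi$-rotation about a suitable axis, possibly followed by Reidemeister moves. Once both generators lie in $\Sigma(8^3_1)$, the symmetry group contains the whole $4$-element stabilizer and, being contained in it, equals it; this also confirms $\Sigma(8^3_1)\cong D_2$.

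The hard part will be the last isotopy: unlike pure invertibility or pure exchange, the combined swap-and-reverse on components $2$ and $3$ is not a standard axis-rotation picture, and one must be careful that the isotopy respects the labeling and orientation conventions, in particular the ``permute first, then reverse'' order of operations fixed in Definition~\ref{def:whittengroup}. A secondary point is to record the linking matrix computation precisely: if, contrary to expectation, the matrix were of a more degenerate orbit type (for instance $(a,b,0)$ or $(a,a,-a)$), the stabilizer would be larger and extra elements would have to be ruled out with polynomial invariants, as in the neighbouring claims — but I expect the $(a,b,-b)$ type, so no such argument should be needed and the only real work is the single isotopy together with the linking-number count.
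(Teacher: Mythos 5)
Your proposal is correct and follows essentially the same route as the paper: compute the linking matrix (triple $(2,1,-1)$, orbit type $(a,b,-b)$), read off the order-$4$ stabilizer $\{(1,1,1,1,e),(1,-1,-1,-1,e),(1,1,-1,-1,(23)),(1,-1,1,1,(23))\}$ from Table~\ref{tab:stabs} via the preimage formula, and then realize two generators by isotopy. The only cosmetic difference is that the paper exhibits $(1,-1,1,1,(23))$ by a figure and obtains $(1,1,-1,-1,(23))$ as its product with pure invertibility, rather than isotoping the latter element directly.
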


\begin{proof}
The linking matrix for $8^3_1$ is
\begin{center}\begin{tabular}{c|ccc}
&1&2&3\\ \hline
1&0&-1&1\\
2&-1&0&2\\
3&1&2&0\\
\end{tabular}\end{center}
which is in the standard form $(a,b,-b)$. Consulting Table~\ref{tab:stabs}, we see that the stabilizer of this linking matrix in $\Gamma_3$ has order 4. But Figures \ref{fig:pi-y} and \ref{831(23)} show stabilizer elements
\begin{equation*}
(1,-1,-1,-1,~e), ~(1,-1,1,1,(23))\in\Sigma(8^3_1),
\end{equation*}
which means that we also have $(1,1,-1,-1,(23))\in\Sigma(8^3_1)$.  Therefore these three elements, plus the identity, must form the symmetry group $\Sigma(8^3_1)$ .
\end{proof}

\begin{claim}
The subgroup $\Sigma(8^3_2) < \Gamma_3$ is the 4 element group isomorphic to $D_2$ generated by the pure exchange $(1,1,1,1,(23))$ and pure invertibility.
\end{claim}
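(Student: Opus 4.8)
I would prove this in parallel with the preceding claim for $8^3_1$. The first step is to compute the linking matrix of $8^3_2$ directly from a minimal diagram, counting signed intercomponent crossings with the component labels and orientations used throughout the paper (Cerf's conventions, since $8^3_2$ is alternating). The key feature the computation will reveal is that $\Lk(8^3_2)_{12} = \Lk(8^3_2)_{13}$, so that the associated triple of~\eqref{eq:correspondance}, namely $(z_1,z_2,z_3) = (\Lk(8^3_2)_{23}, \Lk(8^3_2)_{13}, \Lk(8^3_2)_{12})$, has $z_2 = z_3$ and hence is in the standard form $(a,b,b)$. Assuming (as I expect) that $a$ and $b$ have distinct nonzero magnitudes, Table~\ref{tab:stabs} tells us that the stabilizer of this linking matrix under the $\Gamma_3$-action of Proposition~\ref{prop:linkingmatrix} is a $4$-element group; running the generator $(1,1,1,(23))$ of the quotient stabilizer $(\{+1\}\cross\{+1\}\cross\{+1\})\rtimes\{e,(23)\}$ through the preimage formula of Proposition~\ref{prop:fhom} identifies this stabilizer explicitly as $\{(1,1,1,1,e),\,(1,-1,-1,-1,e),\,(1,1,1,1,(23)),\,(1,-1,-1,-1,(23))\}$. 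Since $\Sigma(8^3_2)$ must be a subgroup of this stabilizer, it then suffices to realize the two generators named in the claim.

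The second step is to exhibit explicit isotopies, presented in figures, for the pure exchange $(1,1,1,1,(23))$ (swapping the second and third components, which is consistent with the linking matrix precisely because $\Lk_{12}=\Lk_{13}$) and for pure invertibility $(1,-1,-1,-1,e)$ (reversing all three component orientations simultaneously). The third step is a short group-theoretic check: $(1,-1,-1,-1,e)$ is the nontrivial element of the kernel of $f$ and is easily verified to be central in $\Gamma_3$, so it commutes with $(1,1,1,1,(23))$; their product $(1,1,1,1,(23))\ast(1,-1,-1,-1,e) = (1,-1,-1,-1,(23))$ has order $2$, and so the two generators generate a subgroup isomorphic to $\Z_2\times\Z_2\cong D_2$ of order $4$. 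This exhausts the order-$4$ stabilizer, forcing $\Sigma(8^3_2)$ to equal it, which is the claim.

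In the expected situation there is essentially no obstacle: once the linking matrix is in hand the upper bound is free from Table~\ref{tab:stabs}, and the lower bound comes from two pictures plus a one-line multiplication. The only place genuine work could be hiding is if the computed linking matrix turns out to have a vanishing pairwise linking number (say $\Lk_{23}=0$), in which case the triple has the form $(0,b,b)$, the stabilizer of $\Lk(8^3_2)$ in $\Gamma_3$ is strictly larger than four elements, and one would additionally have to show $L^\gamma \nsim L$ for the surplus stabilizer elements. That would be handled, as in several adjacent claims, by comparing Jones or Conway polynomials, and that polynomial comparison would then be the crux of the argument.
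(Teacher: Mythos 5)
Your proposal is correct and follows essentially the same route as the paper: compute the linking matrix of $8^3_2$ (which is $(-2,-1,-1)$ in the triple notation, of type $(a,b,b)$ with both entries nonzero and of distinct magnitudes, so your cautionary branch does not arise), read off the order-$4$ stabilizer from Table~\ref{tab:stabs}, and realize two generators by explicit isotopy. The only cosmetic difference is that the paper's figures realize $(1,-1,-1,-1,(23))$ and $(1,1,1,1,(23))$ rather than pure invertibility and pure exchange directly, but these pairs generate the same $4$-element subgroup.
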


\begin{proof}
The linking matrix for $8^3_2$ is
\begin{center}\begin{tabular}{c|ccc}
&1&2&3\\ \hline
1&0&-1&-1\\
2&-1&0&-2\\
3&-1&-2&0\\
\end{tabular}\end{center}
which is in the standard form $(a,b,b)$.  Consulting Table~\ref{tab:stabs}, we see that the stabilizer of $\Lk(8^3_2)$ has order 4.  Figures \ref{fig:rotate-y} and \ref{832(23)} show
\begin{equation*}
(1,-1,-1,-1,(23)), ~(1,1,1,1,(23)) \in\Sigma(8^3_2)
\end{equation*}
and these generate the 4 element subgroup of the claim, which equals the stabilizer.
\end{proof}

\begin{claim}
The subgroup $\Sigma(8^3_3) < \Gamma_3$ is the 12 element group isomorphic to $D_6$ generated by pure exchanges and pure invertibility.
\end{claim}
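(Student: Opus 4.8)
The plan is to reprise the argument used for $6^3_1$. First I would read off the linking matrix of $8^3_3$ from its reduced alternating diagram ($8a15$) by counting signed intercomponent crossings. I expect the three pairwise linking numbers to coincide, so that the matrix is in the standard form $(a,a,a)$: this is forced by the conclusion, since $(a,a,a)$ is the only orbit type in Table~\ref{tab:stabs} whose stabilizer in $\Gamma_3$ is a $12$-element group isomorphic to $\Z_2 \times S_3 \cong D_6$. One then checks directly that this stabilizer is exactly the subgroup generated by PI and PE: the $S_3$ factor is generated by the pure exchanges $(1,1,1,1,p)$, and pure invertibility $(1,-1,-1,-1,e)$ supplies the central $\Z_2$. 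Since $\Sigma(8^3_3)$ must be a subgroup of the stabilizer of $\Lk(8^3_3)$ (cf.\ Proposition~\ref{prop:linkingmatrix} and the remark following it), this already gives $\Sigma(8^3_3) \leq \langle \mathrm{PI}, \mathrm{PE}\rangle$.

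It remains to prove the reverse inclusion by realizing all $12$ elements. Since any transposition together with any $3$-cycle generates $S_3$, it suffices to exhibit three isotopies of $8^3_3$: one realizing a pure transposition, say $(1,1,1,1,(23))$; one realizing a pure $3$-cycle, say $(1,1,1,1,(123))$; and one realizing pure invertibility $(1,-1,-1,-1,e)$. The first two generate all of PE, and adjoining PI then yields the full $12$-element stabilizer, so combined with the containment above we conclude $\Sigma(8^3_3) = \langle \mathrm{PI}, \mathrm{PE}\rangle$, the $D_6$ of the claim.

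The main obstacle is the explicit construction of these isotopies from a diagram of $8^3_3$. The pure-exchange moves are the delicate part: I would look for a symmetric spatial presentation of the link in which a rotation of order $3$ visibly permutes the components cyclically while fixing each component's orientation, together with a separate presentation making one transposition manifest — much as is available for $6^3_1$. The pure-invertibility isotopy is usually easier, typically realized by rotating the diagram by $\pi$ and then cleaning up with Reidemeister moves. Once suitably symmetric diagrams are in hand, the verifications are routine and would be recorded in the accompanying figures.
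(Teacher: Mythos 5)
Your proposal is correct and follows essentially the same line as the paper: compute the linking matrix, recognize the $(a,a,a)$ orbit type with its $12$-element stabilizer, and then exhibit isotopies generating all of PE together with PI. The only cosmetic difference is the choice of $S_3$-generators (you propose a transposition plus a $3$-cycle, as for $6^3_1$; the paper uses the two transpositions $(12)$ and $(13)$), which changes nothing.
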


\begin{proof}
The linking matrix for $8^3_3$ is
\begin{center}\begin{tabular}{c|ccc}
&1&2&3\\ \hline
1&0&-1&-1\\
2&-1&0&-1\\
3&-1&-1&0\\
\end{tabular}\end{center}
which is in the form $(a,a,a)$. Consulting Table~\ref{tab:stabs}, we see that the stabilizer of $\Lk(8^3_3)$ has order 12, and hence $|\Sigma(8^3_3)|$ divides 12. Now Figures \ref{833(12)} and \ref{833(13)} show that
\begin{equation*}
 (1,1,1,1,(12)),~(1,1,1,1,(13))\in\Sigma(8^3_3).
\end{equation*}
Since the cycles $(12)$ and $(13)$ generate all of $S_3$, we know that all $6$ of the pure exchanges are in $\Sigma(8^3_3)$.  Figure~\ref{833invert} shows that $8^3_3$ is purely invertible as well, completing the proof.  

Finally, we note that we have encountered this symmetry group before, as $\Sigma(6^3_1)=\Sigma(8^3_3)$.
\end{proof}

\begin{claim}
The subgroup $\Sigma(8^3_4) < \Gamma(3)$ is the 4 element group isomorphic to $D_2$ generated by pure invertibility and $(-1,1,1,1,(12))$.
\end{claim}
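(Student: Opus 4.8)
The plan is to follow the template used for the other three-component links in this section: determine the linking matrix, read off its $\Gamma_3$-stabilizer from Table~\ref{tab:stabs}, produce enough symmetries by explicit isotopies to generate the claimed subgroup, and then rule out the remaining stabilizer elements with a polynomial invariant.

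First I would read the linking matrix of $8^3_4$ off a reduced alternating diagram. Under the correspondence~\eqref{eq:correspondance} it reduces to a triple of type $(a,-a,0)$ with $a\neq 0$ --- equivalently $\Lk(K_1,K_2)=0$ and $\Lk(K_1,K_3)=-\Lk(K_2,K_3)\neq 0$. This is forced: the claimed generator $(-1,1,1,1,(12))$ has image $(-1,-1,-1,(12))$ under the homomorphism $f$ of Proposition~\ref{prop:fhom}, and among the nonzero orbit types of Table~\ref{tab:stabs} only $(a,-a,0)$ has that element in its stabilizer. By Table~\ref{tab:stabs} the stabilizer of this matrix in $\Gamma_3$ is the $8$-element group isomorphic to $(\Z_2)^3$, which (via the preimage formula of Proposition~\ref{prop:fhom}) consists of $G=\{(1,1,1,1,e),\,(1,-1,-1,-1,e),\,(-1,1,1,1,(12)),\,(-1,-1,-1,-1,(12))\}$ together with $(-1,1,1,-1,e)$, $(-1,-1,-1,1,e)$, $(1,1,1,-1,(12))$, and $(1,-1,-1,1,(12))$. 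So $\Sigma(8^3_4)$ is a subgroup of this group and $|\Sigma(8^3_4)|$ divides $8$.

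Second, I would exhibit two explicit isotopies, one realizing pure invertibility $(1,-1,-1,-1,e)$ and one realizing the mirror-and-exchange $(-1,1,1,1,(12))$ of components $1$ and $2$. A short $\Gamma_3$-computation shows these two elements commute, each square to the identity, and generate exactly the group $G\cong\Z_2\times\Z_2\cong D_2$ of the claim; hence $G\leq\Sigma(8^3_4)$, and it remains only to show $\Sigma(8^3_4)=G$, i.e.\ that none of the other four stabilizer elements is admitted.

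The observation that makes this a single computation is that those four elements form the coset $(-1,1,1,-1,e)\cdot G$, so it suffices to show $(-1,1,1,-1,e)\notin\Sigma(8^3_4)$ --- that is, that $8^3_4$ is not isotopic to the mirror image of $8^3_4$ with the orientation of $K_3$ reversed. Since $\Lk(K_1,K_3)+\Lk(K_2,K_3)=0$, reversing $K_3$ leaves the Jones polynomial unchanged, so this candidate link has Jones polynomial $V_{8^3_4}(t^{-1})$; thus it is enough to display $V_{8^3_4}(t)$ and observe that it is not invariant under $t\mapsto t^{-1}$. (Self-writhe is of no use here, since $8^3_4$ necessarily has $s=0$ in order to admit the mirror-and-exchange element of $G$.) This finishes the identification $\Sigma(8^3_4)=G$. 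The only place real care is needed is the $\Gamma_3$ bookkeeping --- checking the coset claim and the element orders --- together with the chirality computation; should the Jones polynomial unexpectedly fail to distinguish the two links, the HOMFLY or Kauffman polynomial would do.
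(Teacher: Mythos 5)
Your skeleton is exactly the paper's: read off the linking matrix (type $(a,-a,0)$, whose $\Gamma_3$-stabilizer from Table~\ref{tab:stabs} is the 8-element group you list), exhibit isotopies for pure invertibility and $(-1,1,1,1,(12))$ to get the 4-element group $G$, and then observe that ruling out the single element $(-1,1,1,-1,e)$ (equivalently, any element of its coset $(-1,1,1,-1,e)\cdot G$) forces $\Sigma(8^3_4)=G$. All of that bookkeeping is correct and matches the paper.

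The genuine problem is the ``single computation'' you propose for that last step. Your reduction is right as far as it goes: since the total linking number of $K_3$ with the rest is $2+(-2)=0$, reversing $K_3$ leaves the Jones polynomial unchanged, and mirroring sends $V(t)$ to $V(t^{-1})$, so the transformed link has Jones polynomial $V_{8^3_4}(t^{-1})$. But the check you then rely on fails for this link: $V_{8^3_4}$ \emph{is} invariant under $t\mapsto t^{-1}$. One can see this from the paper's own data: the HOMFLYPT polynomial of $8^3_4$ displayed in the proof is symmetric under $a\mapsto a^{-1}$, so HOMFLYPT does not detect the chirality of $8^3_4$, and hence neither does its Jones specialization; combined with your (correct) reversal observation, the Jones polynomial is literally blind to the element $(-1,1,1,-1,e)$. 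This is presumably why the paper, which uses Jones for the analogous step on $8^3_5$, $8^3_9$, $8^3_{10}$, switches to HOMFLYPT here: it computes the HOMFLYPT polynomial of the transformed link $(8^3_4)^{(-1,1,1,-1,e)}$ \emph{directly from a diagram} and finds it differs from that of $8^3_4$. Note also that your fallback ``HOMFLY or Kauffman would do'' cannot be run through the same shortcut: HOMFLYPT has no simple formula under reversal of a single component (that sensitivity is exactly what saves the argument), so the fallback is not a minor contingency but the essential content of the step, and it requires an honest computation of the polynomial of the reversed-and-mirrored link rather than a palindromicity check on the original.
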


\begin{proof}
The linking matrix for $8^3_4$ is
\begin{center}\begin{tabular}{c|ccc}
&1&2&3\\ \hline
1&0&0&2\\
2&0&0&-2\\
3&2&-2&0\\
\end{tabular}\end{center}
which is in the standard form $(a,-a,0)$. Consulting Table~\ref{tab:stabs}, we see that the stabilizer of $\Lk(8^3_4)$ is the 8 element group $f^{-1}\left(S(a,-a,0)\right)$, which is generated by $(-1,1,1,-1,e), ~(-1,1,1,1,(12))$, and pure invertibility.  Hence, 
$|\Sigma(8^3_4)|$ divides 8.  Figures~\ref{fig:pi-y} and \ref{834} show that the latter two of these generators, namely pure invertibility and $(-1,1,1,1,(12))$, are in $\Sigma(8^3_4)$.

To finish the proof, we now show that the third stabilizer generator $(-1,1,1,-1,e)$ does not lie in the symmetry group.  Applying it to $8^3_4$, we get a link with HOMFLYPT polynomial
\begin{equation*}
a^4+\frac{1}{a^4}-2 a^2 z^2+\frac{a^2}{z^2}-\frac{2 z^2}{a^2}+\frac{1}{a^2 z^2}+z^4-\frac{2}{z^2}-2.
\end{equation*}
However, the base $8^3_4$ has HOMFLYPT polynomial
\begin{equation*}
a^2 z^4+\frac{z^4}{a^2}+3 a^2 z^2+\frac{3 z^2}{a^2}+\frac{a^2}{z^2}+\frac{1}{a^2 z^2}+4 a^2+\frac{4}{a^2}-z^6-5 z^4-10 z^2-\frac{2}{z^2}-8.
\end{equation*}
This means that $(-1,1,1,-1,e) \notin \Sigma(8^3_4)$ and hence that $\Sigma(8^3_4)$ is generated by $(-1,1,1,1,(12))$ and pure invertibility, as claimed.
\end{proof}

\begin{claim}
The symmetry group $\Sigma(8^3_5) < \Gamma_3$ is the four element group isomorphic to $D_2$ generated by $(1,1,-1,-1,e)$ and $(1,-1,1,1,(23))$.
\end{claim}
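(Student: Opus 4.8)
The plan is to follow the three-component link template: determine the stabilizer of the linking matrix, realize a subgroup of $\Sigma(8^3_5)$ by explicit isotopies, and eliminate everything else with polynomial invariants.

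First I would read the linking matrix $\Lk(8^3_5)$ off a diagram. The two generators in the statement must belong to $\Sigma(8^3_5)$ and hence to the stabilizer of $\Lk(8^3_5)$; by Proposition~\ref{prop:linkingmatrix}, an element of the form $(1,1,-1,-1,e)$ can stabilize only a linking matrix with $\Lk_{12}=\Lk_{13}=0$, so the triple associated to $8^3_5$ via~\eqref{eq:correspondance} should be of type $(a,0,0)$ for some nonzero $a$ (after relabeling). Consulting Table~\ref{tab:stabs}, the stabilizer in $\Gamma_3$ of such a linking matrix is an order-$16$ group isomorphic to $D_4\times\Z_2$ — the preimage under the homomorphism $f$ of Proposition~\ref{prop:fhom} of the group $(\{+1\}\times\Z_2\times\Z_2)\rtimes\{e,(23)\}$ — and $\Sigma(8^3_5)$ is a subgroup of it.

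Next I would exhibit isotopies (with reference to figures) realizing $(1,1,-1,-1,e)$ and the ``inversion through the transposition $(23)$'' element $(1,-1,-1,-1,(23))$, the latter being exactly the assertion that $8^3_5$ is invertible using $p=(23)$. Their product in $\Gamma_3$ is $(1,-1,1,1,(23))$, so together with the identity these four elements are precisely the group $D_2$ of the claim; thus $D_2\le\Sigma(8^3_5)\le\operatorname{Stab}(\Lk(8^3_5))$. It then remains to rule out the twelve elements of the order-$16$ stabilizer not in $D_2$. Since $D_2$ has index $4$ in the stabilizer, $\Sigma(8^3_5)$ is a union of left $D_2$-cosets, and a coset $gD_2$ fails to lie in $\Sigma(8^3_5)$ as soon as a single one of its elements does; so it suffices to discard one representative from each of the three nontrivial cosets. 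For the coset of pure invertibility $(1,-1,-1,-1,e)$ I would argue that components $2$ and $3$ admit no pure exchange — either they are knots of different types, or the Satellite Lemma (Lemma~\ref{lem:satellitelemma}) separates $8^3_5(K,2)$ from $8^3_5(K,3)$ by a classical invariant — and then use the identity $(1,-1,-1,-1,(23))\ast(1,1,1,1,(23))=(1,-1,-1,-1,e)$ to conclude $(1,-1,-1,-1,e)\notin\Sigma(8^3_5)$. The remaining two cosets are represented by mirroring elements such as $(-1,1,1,-1,e)$ and $(-1,-1,-1,1,e)$; for each, I would compute the HOMFLYPT (or Jones) polynomial of $8^3_5$ and of $(8^3_5)^\gamma$ and check that they differ, exactly as in the argument for $8^3_4$. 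These eliminations leave $\Sigma(8^3_5)=D_2$.

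The hard part will be ruling out pure invertibility: the Conway, Jones, and HOMFLYPT polynomials are all insensitive to reversing the orientation of every component, so the polynomial computations that dispatch the mirroring cosets say nothing here, and one is forced onto the Satellite Lemma (or, as for $6^3_2$ via Montesinos~\cite{MR0380802}, a more delicate argument) to kill the pure exchange of components $2$ and $3$. A lesser difficulty is drawing the explicit isotopy for $(1,-1,-1,-1,(23))$, and confirming that the HOMFLYPT polynomial genuinely distinguishes each mirrored link $(8^3_5)^\gamma$ from $8^3_5$; were some $(8^3_5)^\gamma$ to share its HOMFLYPT polynomial with $8^3_5$, a finer invariant would be needed.
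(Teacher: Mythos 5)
Your proposal is correct and follows essentially the same route as the paper: the $(a,0,0)$ linking matrix gives the order-$16$ stabilizer, explicit isotopies realize the $D_2$, the Jones/HOMFLYPT polynomial kills the mirroring elements, and the Satellite Lemma kills the pure exchange $(1,1,1,1,(23))$, which (being in the same $D_2$-coset as $(1,-1,-1,-1,e)$) also disposes of pure invertibility. Your coset-by-coset bookkeeping is a slightly tidier packaging of the paper's counting argument (``any proper subgroup of the remaining $8$ elements has order at most $4$''), but it is the same proof.
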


\begin{proof}
Figures \ref{835} and \ref{835b} imply that the four element group given above is a subgroup of $\Sigma(8^3_5)$. Now the linking matrix for $8^3_5$ is
\begin{center}
\begin{tabular}{c|ccc}
&1&2&3\\ \hline
1&0&0&0\\
2&0&0&-1\\
3&0&-1&0\\
\end{tabular}
\end{center}
which is in the standard form $(a,0,0)$.  This means that the symmetry group must be a subgroup of the 16 element preimage of $\{ \{+1\} \cross \Z_2 \cross \Z_2 \rtimes \{ e, (23) \}$ in~$\Gamma_3$. Computing this preimage, we observe next that if we apply any of the group elements
\begin{equation*}
\begin{array}{llll}
(-1,-1,-1,1,(23)) & (-1,-1,1,-1,(23)) & (-1,1,-1,1,(23)) & (-1,1,1,-1,(23))\\
(-1,-1,-1,1,e) & (-1,-1,1,-1,e) & (-1,1,-1,1,e) & (-1,1,1,-1,e)
\end{array}
\end{equation*}
in this preimage to our link, we get a link with Jones polynomial
\begin{equation*}
-\frac{1}{z^5}+\frac{3}{z^4}-z^3-\frac{3}{z^3}+3 z^2+\frac{6}{z^2}-4
   z-\frac{5}{z}+6
\end{equation*}
while the base link has Jones polynomial
\begin{equation*}
 -\frac{1}{z^6}+\frac{3}{z^5}-\frac{4}{z^4}+\frac{6}{z^3}-z^2-\frac{5}{z^
   2}+3 z+\frac{6}{z}-3.
\end{equation*}
This rules out those 8 elements, leaving us with a subgroup of 8 possible elements remaining. We claim that among these, the pure exchange $(1,1,1,1,(23))$ is actually ruled out. This claim completes the proof, since any proper subgroup of the 8 element group of remaining elements must have order at most 4, demonstrating that the four element subgroup of $\Sigma(8^3_5)$ generated by the isotopies in Figures~\ref{835} and~\ref{835b} must be the entire group.

Detecting that $(1,1,1,1,(23))$ is ruled out will require us to use the Satellite Lemma (\ref{lem:satellitelemma}). Figure~\ref{835sat} gives us two satellites of $8^3_5$:   one which replaces the second component with a Hopf link and one which replaces the third component with a Hopf link. By the lemma, if the pure exchange is in the symmetry group, these satellites must be isotopic. But taking the Jones polynomial for each, we get the following polynomials:
\begin{equation*}
-\frac{3}{z^{9/2}}+\frac{1}{z^{7/2}}-\frac{2}{z^{5/2}}+z^{3/2}-\frac{1}{z^{3/2}}+\frac{1}{z^{21/2}}
   -\frac{2}{z^{19/2}}+\frac{1}{z^{17/2}}-\frac{3}{z^{13/2}}-2 \sqrt{z}+\frac{1}{\sqrt{z}}
\end{equation*}
and
\begin{equation*}
-\frac{2}{z^{9/2}}+z^{7/2}+\frac{1}{z^{7/2}}-2
   z^{5/2}-\frac{3}{z^{5/2}}+z^{3/2}+\frac{1}{z^{3/2}}+\frac{1}{z^{17/2}}-\frac{2}{z^{15/2}}+\frac{
   1}{z^{13/2}}-2 \sqrt{z}-\frac{3}{\sqrt{z}}.
\end{equation*}
Therefore, they are not isotopic and hence $(1,1,1,1,(23))\notin\Sigma(8^3_5)$, which completes the proof.
\end{proof}

\begin{figure}[ht]
\begin{center}
\scalebox{0.85}{\includegraphics{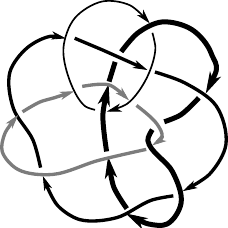}}
\hspace{1in}\scalebox{0.85}{\includegraphics{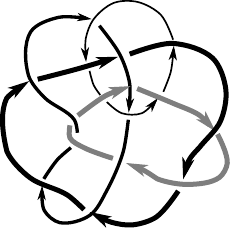}}
\caption[Two satellite links, each with companion link $8^3_5$.]{Two satellite links, each with companion link $8^3_5$. In the left link, component 2 of $8^3_5$ has been replaced by a Hopf link, while in the right link, component 3 of $8^3_5$ has been replaced by a Hopf link. Following Lemma~\ref{lem:satellitelemma}, if the pure exchange symmetry $(1,1,1,1,(23)) \in \Sigma(8^3_5)$, these links must be isotopic. Computing their Jones polynomials shows that they are not.}
\label{835sat}
\end{center}
\end{figure}

\begin{claim}
The subgroup $\Sigma(8^3_6) < \Gamma_3$ is the 8 element group isomorphic to $(\Z_2)^3$ which is generated by
\begin{equation*}
(1,-1,-1,-1,e), ~(1,-1,1,1,(23)),~\text{and\ } (-1,1,-1,-1,e).
\end{equation*}
\end{claim}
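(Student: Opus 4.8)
The plan is to sandwich $\Sigma(8^3_6)$ between an upper bound coming from its linking matrix and a lower bound coming from explicit isotopies, exactly as in the preceding claims, and to check that the two bounds meet at an $8$-element group.

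For the upper bound, I would first read off the linking matrix of $8^3_6$ from a diagram. Since the proposed generating set contains the mirror element $(-1,1,-1,-1,e)$, the three-component mirror lemma forces at least one pairwise linking number to be zero; combined with the constraint imposed by $(1,1,-1,-1,(23)) = (1,-1,-1,-1,e)\ast(1,-1,1,1,(23))$, I expect the matrix to correspond under the bijection~\eqref{eq:correspondance} to a triple of orbit type $(a,-a,0)$. Table~\ref{tab:stabs} then tells us that the stabilizer of such a linking matrix in $\Gamma_3$ is an $8$-element group isomorphic to $(\Z_2)^3$, and since $\Sigma(8^3_6)$ is always contained in the stabilizer of $\Lk(8^3_6)$, this bounds $|\Sigma(8^3_6)|$ above by $8$.

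For the lower bound, I would exhibit explicit isotopies (to be displayed in figures) realizing the three generators $g_1 = (1,-1,-1,-1,e)$, $g_2 = (1,-1,1,1,(23))$, and $g_3 = (-1,1,-1,-1,e)$; here $g_1$ is just the pure invertibility of $8^3_6$. A brief computation with the group operation $\ast$ of Definition~\ref{def:whittengroup} shows that $g_1$, $g_2$, $g_3$ are commuting involutions, hence generate a quotient of $(\Z_2)^3$, and listing the eight products $e, g_1, g_2, g_3, g_1g_2, g_1g_3, g_2g_3, g_1g_2g_3$ and checking they are pairwise distinct shows the generated subgroup has order exactly $8$ and is isomorphic to $(\Z_2)^3$. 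That subgroup is contained in $\Sigma(8^3_6)$, which in turn is contained in the order-$8$ stabilizer found above, so all three coincide, proving the claim.

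The hard part will be producing the isotopies, especially the one realizing the mirror symmetry $(-1,1,-1,-1,e)$ and the one realizing the reverse-and-exchange symmetry $(1,-1,1,1,(23))$; everything after that is the routine bookkeeping in $\Gamma_3$ sketched above. I note that, unlike the situation for $8^3_4$ and $8^3_5$, no polynomial invariant is needed to trim the stabilizer down to $\Sigma(8^3_6)$: the stabilizer already has order $8$, so exhibiting any eight elements of $\Sigma(8^3_6)$ lying in it settles the matter.
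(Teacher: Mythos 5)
Your proposal is correct and follows essentially the same route as the paper: exhibit isotopies realizing the three generators (pure invertibility, $(1,-1,1,1,(23))$, and the mirror element $(-1,1,-1,-1,e)$), compute the linking matrix of $8^3_6$ (which corresponds to the triple $(0,-1,1)$, of orbit type $(a,-a,0)$), and observe from Table~\ref{tab:stabs} that the resulting $8$-element stabilizer is exactly the group generated. The paper's proof is just a terser version of your sandwich argument, with the same conclusion that no polynomial invariants are needed.
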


\begin{proof}
Figures \ref{fig:pi-y}, \ref{836}, and \ref{836a}, respectively, imply that the three generators above lie in the symmetry group $\Sigma(8^3_6)$.  Now the linking matrix for $8^3_6$ is
\begin{center}\begin{tabular}{c|ccc}
&1&2&3\\ \hline
1&0&1&-1\\
2&1&0&0\\
3&-1&0&0\\
\end{tabular}\end{center}
which corresponds to the triple $(0,-1,1)$ and has the same orbit type as $(a,-a,0)$. Consulting Table~\ref{tab:stabs}, we see that the stabilizer of the linking matrix is precisely the group above, and hence equals $\Sigma(8^3_6)$, which completes the proof.
\end{proof}


\begin{claim}
The subgroup $\Sigma(8^3_7) < \Gamma_3$ is the 4 element group isomorphic to $D_2$ generated by the pure exchange $(1,1,1,1,(23))$ and pure invertibility.
\end{claim}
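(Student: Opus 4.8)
The proof will follow the standard template for three-component links used above. First I would compute the linking matrix of $8^3_7$ directly from a minimal-crossing diagram by counting signed intercomponent crossings. Since Table~\ref{tab:3symm} pairs $8^3_7$ with $8^3_2$, I expect the resulting matrix to have corresponding triple of the form $(a,b,b)$ (with $a,b$ integers of distinct nonzero magnitude); consulting Table~\ref{tab:stabs}, the stabilizer of such a linking matrix in $\Gamma_3$ is then a four-element group isomorphic to $D_2$, namely $\{(1,1,1,1,e),\,(1,-1,-1,-1,e),\,(1,1,1,1,(23)),\,(1,-1,-1,-1,(23))\}$, which is exactly the group generated by pure invertibility and the pure exchange $(1,1,1,1,(23))$. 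Because $\Sigma(8^3_7)$ is always a subgroup of the stabilizer of $\Lk(8^3_7)$, this already forces $|\Sigma(8^3_7)|$ to divide $4$; and one checks via Corollary~\ref{cor:3matrix} that both $(1,-1,-1,-1,e)$ and $(1,1,1,1,(23))$ do indeed fix an $(a,b,b)$-type matrix, so the linking data presents no obstruction to either generator.

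The second and only substantive step is to exhibit explicit isotopies realizing these two generators: one showing that $8^3_7$ is purely invertible, i.e. $(1,-1,-1,-1,e)\in\Sigma(8^3_7)$, and one realizing the pure exchange of the two components carrying equal pairwise linking number, i.e. $(1,1,1,1,(23))\in\Sigma(8^3_7)$. These would be displayed as figures in the appendix, in the same style as those produced for $8^3_2$. Since these two elements generate the full four-element group described above, and that group already equals the stabilizer of $\Lk(8^3_7)$, we would conclude that $\Sigma(8^3_7)$ is exactly this copy of $D_2$ — with no polynomial invariant needed to rule anything out.

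The main obstacle I anticipate is purely pictorial: producing a clean, verifiable sequence of Reidemeister moves (or exhibiting a rigid symmetry of a suitably drawn diagram) that swaps the two congruent components of $8^3_7$ while preserving all orientations. Should the linking matrix unexpectedly turn out to have a larger stabilizer — for example if it were of type $(a,b,0)$, $(a,-a,0)$, or $(a,0,0)$ — the plan would have to be augmented by ruling out the surplus stabilizer elements, comparing $L$ with $L^\gamma$ via the Jones, Conway, or HOMFLYPT polynomial for each such $\gamma$, or, failing that, invoking the Satellite Lemma~\ref{lem:satellitelemma}. But the grouping of $8^3_7$ with $8^3_2$ in Table~\ref{tab:3symm} strongly indicates the clean four-element case, and I would proceed on that basis.
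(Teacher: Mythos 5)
Your proposal is correct and follows essentially the same route as the paper: compute the linking matrix, identify it as type $(a,b,b)$ so that Table~\ref{tab:stabs} caps $|\Sigma(8^3_7)|$ at $4$, and exhibit two isotopies generating that four-element group. The only (immaterial) difference is the choice of generators realized by explicit isotopy --- the paper shows $(1,-1,-1,-1,(23))$ (a rotation about the $y$-axis) together with the pure exchange $(1,1,1,1,(23))$, whose product gives pure invertibility, rather than exhibiting $(1,-1,-1,-1,e)$ directly.
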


\begin{proof}
Figures \ref{fig:rotate-y} and \ref{8n3_2} show that $(1,-1,-1,-1,(23),~(1,1,1,1,(23))\in\Sigma(8^3_7)$, which implies that the subgroup above is contained in $\Sigma(8^3_7)$. Now the linking matrix for $8^3_7$ is
\begin{center}\begin{tabular}{c|ccc}
&1&2&3\\ \hline
1&0&1&1\\
2&1&0&-2\\
3&1&-2&0\\
\end{tabular}
\end{center}
which is in the standard form $(a,b,b)$. Consulting Table~\ref{tab:stabs}, we see that $|\Sigma(8^3_7)|$ divides 4. Since we already have 4 elements in the group, this completes the proof.

Finally, we note that we have encountered this symmetry group before, as $\Sigma(8^3_2)=\Sigma(8^3_7)$.
\end{proof}

\begin{claim}
The subgroup $\Sigma(8^3_8) < \Gamma_3$ is the four element subgroup isomorphic to $D_2$ generated by pure invertibility and $(1,1,-1,-1,(23))$.
\end{claim}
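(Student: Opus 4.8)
The proof will follow the same template as the preceding claims in this section. First I would read off the linking matrix of $8^3_8$ from a minimal diagram by counting signed intercomponent crossings; I expect it to be (after at most a relabeling of components) in the standard form $(a,b,-b)$, so that under the correspondence~\eqref{eq:correspondance} its associated triple has two entries of equal magnitude and opposite sign together with one entry of distinct magnitude, arranged so that $(23)$ is the relevant stabilizing permutation. (If instead it only becomes this form after a nontrivial relabeling, one simply conjugates everything below by the corresponding permutation element of $\Gamma_3$.)

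Given the linking-matrix type, Table~\ref{tab:stabs} tells us that the stabilizer of such a matrix in $\Gamma_3$ is a four-element group isomorphic to $D_2$. Applying the preimage formula of Proposition~\ref{prop:fhom} to the stabilizer generator $(1,-1,-1,(23))$ of the corresponding triple in $(\Z_2)^3 \rtimes S_3$, this stabilizer works out to be exactly
\[
\{(1,1,1,1,e),\ (1,-1,-1,-1,e),\ (1,1,-1,-1,(23)),\ (1,-1,1,1,(23))\},
\]
which is precisely the group generated by pure invertibility and $(1,1,-1,-1,(23))$ named in the claim. By Proposition~\ref{prop:linkingmatrix}, $\Sigma(8^3_8)$ is a subgroup of this stabilizer, so $|\Sigma(8^3_8)|$ divides $4$.

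It then remains to realize this stabilizer as a group of genuine symmetries. As recorded in the remarks following Table~\ref{tab:3symm}, $8^3_8$ is purely invertible, so $(1,-1,-1,-1,e) \in \Sigma(8^3_8)$, and I would give this isotopy explicitly in a figure. A second figure would exhibit an ambient isotopy realizing $(1,-1,1,1,(23))$, that is, one interchanging components $2$ and $3$ while reversing the orientation of component $1$. Composing these two elements yields $(1,1,-1,-1,(23))$, so together with the identity all four elements above lie in $\Sigma(8^3_8)$; combined with the divisibility bound this forces equality. I would also remark that this recovers the same symmetry subgroup already obtained for $8^3_1$.

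The main obstacle is constructing that second isotopy: producing an explicit sequence of diagram moves on a nonalternating $8$-crossing link that simultaneously swaps two components and reverses the third is the only genuinely nonroutine step, since the linking-matrix computation and the bookkeeping via Table~\ref{tab:stabs} and Proposition~\ref{prop:fhom} are mechanical. If that particular isotopy is awkward to draw, an equally good route is to realize $(1,1,-1,-1,(23))$ directly; either element, together with pure invertibility, generates the claimed group, and the divisibility argument closes the proof in the same way.
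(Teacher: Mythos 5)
Your proposal is correct and follows essentially the same route as the paper: compute the linking matrix, identify it as type $(a,b,-b)$ with order-$4$ stabilizer in $\Gamma_3$, and exhibit two isotopies generating that stabilizer. The only cosmetic difference is the choice of generators realized by explicit isotopy — the paper shows $(1,-1,1,1,(23))$ and $(1,1,-1,-1,(23))$ directly (whose product is pure invertibility), rather than pure invertibility plus one of them — which is the alternative you already note is equally good.
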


\begin{proof}
Figures \ref{8n4_1} and \ref{8n4_2} show that $(1,-1,1,1,(23),~(1,1,-1,-1,(23))\in\Sigma(8^3_8)$, which implies that $\Sigma(8^3_8)$ contains the claimed group. Now the linking matrix for $8^3_8$ is
\begin{center}\begin{tabular}{c|ccc}
&1&2&3\\ \hline
1&0&1&-1\\
2&1&0&-2\\
3&-1&-2&0\\
\end{tabular}
\end{center}
which is in the standard form $(a,b,-b)$. Consulting Table~\ref{tab:stabs}, we see that $|\Sigma(8^3_8)|$ divides 4. This completes the proof, since we already have 4 elements in the subgroup.

Finally, we note that we have encountered this symmetry group before, as $\Sigma(8^3_1)=\Sigma(8^3_8)$.
\end{proof}

\begin{claim}
The subgroup $\Sigma(8^3_9) < \Gamma_3$ is the 8 element group isomorphic to $(\Z_2)^3$ generated by
pure invertibility, the pure exchange $(1,1,1,1,(23))$ and $(1,-1,1,1,e)$.
\end{claim}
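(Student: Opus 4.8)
The plan is to follow the template used for the earlier three‑component links: exhibit the claimed generators by explicit isotopies, pin down the linking matrix and read off its $\Gamma_3$‑stabilizer from Table~\ref{tab:stabs}, and then eliminate the leftover elements of that stabilizer with a polynomial invariant.

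First I would cite the figures that construct isotopies realizing pure invertibility (the generic pure‑inversion move of Figure~\ref{fig:pi-y}), the pure exchange $(1,1,1,1,(23))$, and $(1,-1,1,1,e)$, placing all three of these Whitten elements in $\Sigma(8^3_9)$. A short check then shows that they generate an $8$‑element group $H\cong(\Z_2)^3$ inside $\Sigma(8^3_9)$: each generator has order $2$, pure invertibility is the ``all minus'' element and hence commutes with every element fixing the ambient orientation, and $(1,1,1,1,(23))$ commutes with $(1,-1,1,1,e)$ because reversing $K_1$ is unaffected by interchanging $K_2$ and $K_3$.

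Next I would compute the linking matrix of $8^3_9$. By Proposition~\ref{prop:linkingmatrix}, membership of $(1,-1,1,1,e)$ in $\Sigma(8^3_9)$ already forces $\Lk_{12}=\Lk_{13}=0$, so the only possibly nonzero pair of entries is $a:=\Lk_{23}=\Lk_{32}$; a direct count of intercomponent crossings confirms $a\neq 0$, so the linking matrix is of standard form $(a,0,0)$. Table~\ref{tab:stabs} then says its stabilizer in $\Gamma_3$ is the $16$‑element group $D_4\times\Z_2$. Since $H$ has index $2$ in this stabilizer and $H\le\Sigma(8^3_9)\le(\text{stabilizer})$, the subgroup $\Sigma(8^3_9)$ is either $H$ or the whole $16$‑element stabilizer.

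The one remaining point — and the main obstacle — is to rule out the larger option. Every stabilizer element outside $H$ reverses the ambient orientation ($\epsilon_0=-1$), so it suffices to show that one such element, for instance $\gamma=(-1,1,1,-1,e)$, does not lie in $\Sigma(8^3_9)$: if $\Sigma(8^3_9)$ were the full stabilizer it would contain $\gamma$, so $\gamma\notin\Sigma(8^3_9)$ forces $\Sigma(8^3_9)=H$. I would establish $\gamma\notin\Sigma(8^3_9)$ by computing a chirality‑sensitive polynomial invariant — the Jones or HOMFLYPT polynomial — of $8^3_9$ and of $(8^3_9)^\gamma$ and observing that they disagree, exactly as was done for $8^3_4$ and $8^3_5$. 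This yields $\Sigma(8^3_9)=H$, the claimed $(\Z_2)^3$. (Should the crossing count instead give $\Lk_{23}=0$, so that the linking matrix vanishes and its stabilizer is all of $\Gamma_3$, one would have to kill a representative of each nontrivial coset of $H$ by a similar polynomial argument; but the generator $(1,-1,1,1,e)$ is consistent with $\Lk_{23}\neq 0$, which is the expected case.)
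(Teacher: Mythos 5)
Your proposal is correct and follows essentially the same route as the paper: exhibit the three generators by isotopy, bound $\Sigma(8^3_9)$ by the $16$-element stabilizer of the linking matrix of type $(a,0,0)$ from Table~\ref{tab:stabs}, and kill the orientation-reversing half with the Jones polynomial. Your observation that $H$ has index $2$ in the stabilizer, so that ruling out a single element such as $(-1,1,1,-1,e)$ suffices, is a small streamlining of the paper's step of checking that all eight $\epsilon_0=-1$ elements yield the same (wrong) Jones polynomial, but it is not a different argument.
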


\begin{proof}
Figures \ref{fig:rotate-y}, \ref{8n5_2}, and \ref{8n5_3} show that $(1,-1,-1,-1,(23),~(1,1,1,1,(23)),~(1,-1,1,1,e)\in\Sigma(8^3_9)$, which implies that the 8 element subgroup generated by these elements is a subgroup of $\Sigma(8^3_9)$. Now the linking matrix for this link is
\begin{center}\begin{tabular}{c|ccc}
&1&2&3\\ \hline
1&0&0&0\\
2&0&0&2\\
3&0&2&0\\
\end{tabular}
\end{center}
which is in the standard form $(a,0,0)$. Consulting Table~\ref{tab:stabs}, we see that $|\Sigma(8^3_9)|$ divides 16.  Working out these 16 elements as in the case of $8^3_5$, we see that if we apply any of the elements
\begin{equation*}
\begin{array}{llll}
(-1,-1,-1,1,(23)) & (-1,-1,1,-1,(23)) & (-1,1,-1,1,(23)) & (-1,1,1,-1,(23))\\
(-1,-1,-1,1,e) & (-1,-1,1,-1,e) & (-1,1,-1,1,e) & (-1,1,1,-1,e)
\end{array}
\end{equation*}
to $8^3_9$, we get a link with Jones polynomial
\begin{equation*}
2 z^5-2 z^4+4 z^3-2 z^2+3 z+\frac{1}{z}-2,
\end{equation*}
while the Jones polynomial of the base $8^3_9$ link is
\begin{equation*}
z^7-2 z^6+3 z^5-2 z^4+4 z^3-2 z^2+2 z.
\end{equation*}
This leaves only the subgroup claimed. We note that while $\Lk(8^3_5)$ and $\Lk(8^3_9)$ have the same stabilizer, the symmetry group $\Sigma(8^3_5)$ is a proper subgroup of $\Sigma(8^3_9)$.
\end{proof}


\begin{claim}
The subgroup $\Sigma(8^3_{10}) < \Gamma_3$ is the 4 element group isomorphic to $D_2$ generated by pure invertibility and the pure exchange $(1,1,1,1,(12))$.
\end{claim}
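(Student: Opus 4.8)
The plan is to follow the template used for the other three-component links in this section: determine the linking matrix of $8^3_{10}$, read off its stabilizer in $\Gamma_3$ from Table~\ref{tab:stabs} (which contains $\Sigma(8^3_{10})$), and then show that stabilizer is entirely realized by exhibiting explicit isotopies. First I would compute the linking matrix from a minimal diagram. Since the asserted symmetry group contains the pure exchange $(1,1,1,1,(12))$, the $(1,3)$ and $(2,3)$ entries of the matrix must agree, so the corresponding triple has the form $(a,a,b)$; I expect $a$ and $b$ to be nonzero of distinct magnitude, which puts the matrix in the orbit type recorded as $(a,b,b)$ in Table~\ref{tab:stabs}. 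Consulting that table, the stabilizer of $\Lk(8^3_{10})$ in $\Gamma_3$ is then a four-element group isomorphic to $D_2$; a short check using the preimage formula of Proposition~\ref{prop:fhom} identifies it as $\{(1,1,1,1,e),\,(1,-1,-1,-1,e),\,(1,1,1,1,(12)),\,(1,-1,-1,-1,(12))\}$, which is exactly the group generated by pure invertibility and $(1,1,1,1,(12))$.

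It then remains to realize the two generators. For pure invertibility I would exhibit (or cite, from the collection of invertibility isotopies in Appendix~\ref{app:pi}) an isotopy realizing $(1,-1,-1,-1,e)\in\Sigma(8^3_{10})$, as was done for the neighboring links $8^3_1$ and $8^3_6$. For the pure exchange I would give a figure displaying an ambient isotopy that interchanges components $1$ and $2$ while fixing component $3$ and preserving all orientations, proving $(1,1,1,1,(12))\in\Sigma(8^3_{10})$; this is of the same flavor as the pure exchange isotopies in Appendix~\ref{app:pe} and Figures~\ref{fig:pe-y} and~\ref{fig:pe-z}. Because these two elements already generate the whole order-four stabilizer, and $\Sigma(8^3_{10})$ sits between the group they generate and that stabilizer, the two coincide, which is the claim.

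The genuine obstacle here is geometric rather than algebraic: constructing and verifying the explicit pure exchange isotopy, which for three-component links is always the most delicate move (it is precisely the difficulty that forced the Satellite Lemma for $8^3_5$). If instead the linking matrix turns out to have a zero entry, so that the triple is of type $(a,a,0)$ and the stabilizer has order eight, then one further pair of sign-change elements survives the linking-matrix obstruction; in that event I would eliminate them exactly as for $8^3_4$ and $8^3_9$, by computing the HOMFLYPT or Jones polynomial of the resulting link and checking that it differs from that of $8^3_{10}$. In either case the argument closes once the list of candidates has been pared down to the four-element group in the statement.
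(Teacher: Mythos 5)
Your proposal is correct and follows the paper's proof, but it is your contingency branch, not your primary one, that actually applies: components $1$ and $2$ of $8^3_{10}$ have linking number zero, so the corresponding triple is $(2,2,0)$, of orbit type $(a,a,0)$, and the stabilizer in $\Gamma_3$ has order $8$ rather than $4$. The paper then does exactly what you describe for that case — it exhibits the pure inversion and pure exchange isotopies and eliminates the four remaining ($S^3$-orientation-reversing) stabilizer elements by checking that they change the Jones polynomial — so the argument closes as you outline.
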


\begin{proof}
Figures~\ref{fig:pi-y} and \ref{8n6_2} show that $(1,-1,-1,-1,e)$ and $(1,1,1,1,(12))$ are in
$\Sigma(8^3_{10})$. Now the linking matrix for $8^3_{10}$ is
\begin{center}\begin{tabular}{c|ccc}
&1&2&3\\ \hline
1&0&0&2\\
2&0&0&2\\
3&2&2&0\\
\end{tabular}\end{center}
which has the standard form $(a,a,0)$.  Consulting Table~\ref{tab:stabs}, we see that this matrix has an 8 element stabilizer in $\Gamma_3$ given by the inverse image of $\left<(1,1,1,(12)),(1,1,-1,e)\right>$ under the map $f$. Now we observe that if we apply any of the four elements
\begin{equation*}
\begin{array}{llll}
(-1,-1,-1,1,(12)) & (-1,1,1,-1,(12)) & (-1,-1,-1,1,e) & (-1,1,1,-1,e)
\end{array}
\end{equation*}
in this stabilizer to $8^3_{10}$, we get a link with Jones polynomial
\begin{equation*}
z^{10}+z^6+z^5+z^3,
\end{equation*}
while the base $8^3_{10}$ link has Jones polynomial
\begin{equation*}
z^9+z^7+z^6+z^2.
\end{equation*}
This rules out all but the four element subgroup above, completing the proof.
\end{proof}

\bigskip
\section{Isotopies for four-component links}

There are three prime four-component links with 8 crossings. They are quite similar in appearance, with only some crossing changes distinguishing them. Their symmetry computations are made somewhat more difficult by the fact that we are working in the 768 element group $\Gamma_4$. All three of these links are composed of four unknots linked together so that component $1$ is linked to $2$ and $3$, and $2$ and $3$ are linked to $4$.

Here are the symmetry groups for these links, listed in terms of generators for each group.  Again, we denote the purely invertible symmetry, i.e., element $(1,-1,-1,-1,-1,e)$, by PI; all three links admit this symmetry. 

\begin{table}[ht]
\begin{center}
\begin{tabular}{cccl}
\toprule
Link(s) & $|\Sigma(L)|$ & $\Sigma(L)$ isomorphic to &  Generators  \\
\midrule
$8^4_1 \; (8a21)$ & 16 & $\Z_2 \times D_4$ &  PI, $(1,1,1,1,1,(23)), ~(1,1,1,1,1,(1243))$\\
\addlinespace[0.4em]
$8^4_2 \; (8n7)$ & 16 & $D_8$ &  $(1,1,1,1,1,(13)(24)), ~(1,1,1,-1,1,(1243))$\\
\addlinespace[0.6em]
$8^4_3 \; (8n8)$ & 32 & $\Z_2 \times \Z_2 \times D_4$ &  \begin{tabular}{l}
		PI, $(-1,1,1,1,1,(23)),$ \\
		$(-1,1,1,1,1,(1243)), ~(-1,-1,1,1,-1,e)$\\
		\end{tabular} \\
\bottomrule
\end{tabular}
\end{center}
\vspace*{0.5em}
\caption{Four-component link symmetry groups, by crossing number}
\label{tab:4symm}
\end{table}

\vspace*{-0.2in}
Our approach mimics the one used for three-component links.  After calculating the linking matrix, we utilize Table~\ref{tab:4stabs} to determine the stabilizer of this matrix within $\Gamma_4$;  we know that the symmetry group $\Sigma(L)$ must be a subgroup of this stabilizer.  Next, we show some elements are in the symmetry group by exhibiting isotopies and rule others out using polynomial invariants until we can discern the symmetry group.


\begin{claim} The symmetry subgroup for $8^4_1$ is the 16 element group isomorphic to $\Z_2 \cross D_4$ given by the $S^3$-orientation-preserving elements of the inverse image $f^{-1}(S(a,a,a,a))$.
\end{claim}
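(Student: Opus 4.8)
The plan is to bound $\Sigma(8^4_1)$ from above via its linking matrix, realize the claimed $16$ elements by explicit isotopies, and then cut the stabilizer down by one coset using a chirality-sensitive polynomial.

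First I would compute the linking matrix of $8^4_1$. Since all three four-component links under consideration have component $1$ linked to $2$ and $3$ and components $2$ and $3$ linked to $4$, this matrix has the shape of~\eqref{eq:gamma4tolink4}, and one checks directly that its four parameters $z_1,z_2,z_3,z_4$ are all equal, so the corresponding quadruple is $(a,a,a,a)$. By Lemma~\ref{lem:stab4}, the stabilizer of this linking matrix in $\Gamma_4$ is the $32$-element group $f^{-1}(S(a,a,a,a))$, isomorphic to $\Z_2 \times \Z_2 \times D_4$, and $\Sigma(8^4_1)$ is a subgroup of it. Among these $32$ elements, those with $\epsilon_0 = +1$ form an index-two subgroup $H$ (the orientation of $S^3$ is multiplicative), and since $\ker f$ already contains the orientation-reversing element $(-1,-1,1,1,-1,e)$, the nontrivial coset of $H$ is precisely the set of $16$ orientation-reversing elements of the stabilizer. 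Counting $|H|$: each of the $8$ permutation classes of $S(a,a,a,a)$ pulls back to $4$ elements of $\Gamma_4$, two of which are orientation-preserving, so $|H| = 16$; a splitting $G_0 \cong D_4 \hookrightarrow H$ is given by the pure exchanges $(1,1,1,1,1,p)$, $p \in G_0$, while $\mathrm{PI}=(1,-1,-1,-1,-1,e)$ is a central involution lying outside this copy, so $H \cong \Z_2 \times D_4$.

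Next I would realize $H$. It suffices to exhibit isotopies for $\mathrm{PI}$, the pure exchange $(1,1,1,1,1,(23))$, and the pure exchange $(1,1,1,1,1,(1243))$, shown in the figures below. One checks via Proposition~\ref{prop:gamma4} that all three land in $f^{-1}(S(a,a,a,a))$, and since $\langle (23),(1243)\rangle = G_0$ while $\mathrm{PI}$ is a central involution outside the $D_4$ copy, these three elements generate all $16$ elements of $H$. Thus $H \le \Sigma(8^4_1) \le f^{-1}(S(a,a,a,a))$, so $\Sigma(8^4_1)$ is either $H$ or the full $32$-element stabilizer. To exclude the latter I would compute a chirality-sensitive invariant --- the Jones or HOMFLYPT polynomial --- for $8^4_1$ and for $L^\gamma$ with $\gamma = (-1,-1,1,1,-1,e)$, a representative of the nontrivial coset, and observe that the two polynomials disagree; hence $\gamma \notin \Sigma(8^4_1)$ and $\Sigma(8^4_1) = H$.

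The hard part will be the bookkeeping in the middle step rather than any deep topology: one must verify that the three exhibited isotopies really generate the full $16$-element group and not a proper subgroup, which reduces to the group-theoretic facts that $\langle (23),(1243)\rangle = G_0$, that each exhibited generator maps under $f$ into $S(a,a,a,a)$, and that $\ker f$ meets $H$ in exactly $\{e,\mathrm{PI}\}$. Everything else is the routine computation of one linking matrix and one pair of polynomials.
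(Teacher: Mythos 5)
Your proposal is correct and follows essentially the same route as the paper: bound $\Sigma(8^4_1)$ by the $32$-element stabilizer of the $(a,a,a,a)$ linking matrix via Lemma~\ref{lem:stab4}, realize the $16$ orientation-preserving elements by isotopies for pure invertibility and two pure exchanges generating $G_0$, and kill the orientation-reversing coset with a Jones polynomial computation. Your generators $(1,1,1,1,1,(23))$ and $(1,1,1,1,1,(1243))$ differ from the paper's $(1,-1,-1,-1,-1,(23))$ and $(1,1,1,1,1,(1342))$ only by multiplication by PI and inversion, and your observation that one representative of the nontrivial coset suffices for the polynomial check is a harmless streamlining of the paper's verification over all $16$ elements.
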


\begin{proof}
The linking matrix for $8^4_1$ is
\begin{equation*}
\begin{array}{c|cccc}
 & 1 & 2 & 3 & 4 \\
 \hline
 1 & 0 & 1 & 1 & 0 \\
 2 & 1 & 0 & 0 & 1 \\
 3 & 1 & 0 & 0 & 1 \\
 4 & 0 & 1 & 1 & 0
\end{array}
\end{equation*}
which corresponds to the standard form $(a,a,a,a)$. By Lemma~\ref{lem:stab4}, $\Sigma(8^4_1)$ must be a subgroup of the 32 element stabilizer $f^{-1}(S(a,a,a,a))$ of the linking matrix. 

Figures~\ref{841invert}, \ref{841_2}, and \ref{fig:pe-z} show that pure invertibility and$(1,-1,-1,-1,-1,(23))$ and $(1,1,1,1,1,(1342))$ are all in $\Sigma(8^4_1)$.  Together, these generate the 16 element group of the claim.  

We must show that the 16 $S^3$-orientation-reversing elements of the stabilizer, (i.e., elements of the form $(-1,\epsilon_1, \dots, \epsilon_4,p)$) are not in the symmetry group.  If we apply any of these 16 elements to the base link, we obtain a link with Jones polynomial
\begin{equation*}
-5 z^{9/2}+z^{7/2}-z^{5/2}-z^{21/2}+3 z^{19/2}-6 z^{17/2}+4 z^{15/2}-7 z^{13/2}+4 z^{11/2}.
\end{equation*}
But the Jones polynomial of the base $8^4_1$ is
\begin{equation*}
4 z^{9/2}-6 z^{7/2}+3 z^{5/2}-z^{3/2}-z^{19/2}+z^{17/2}-5 z^{15/2}+4 z^{13/2}-7 z^{11/2},
\end{equation*}
This rules out all 16 of these remaining elements, which proves the claim.
\end{proof}


\begin{claim} The symmetry subgroup for $8^4_2$ is the 16 element group isomorphic to $D_8$ given by the $S^3$-orientation-preserving elements of $f^{-1}(S(a,a,-a,a))$.
\end{claim}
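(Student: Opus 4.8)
The plan is to mimic the argument just given for $8^4_1$.

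\emph{Step 1 (linking matrix and stabilizer).} I would first compute $\Lk(8^4_2)$ by counting signed intercomponent crossings in a diagram; since $8^4_2$ differs from $8^4_1$ by a crossing change, I expect $\Lk(8^4_2)$ to be in the standard form of~\eqref{eq:gamma4tolink4} with corresponding quadruple of shape $(a,a,-a,a)$ (with $a=\pm1$). By Lemma~\ref{lem:stab4}, $\Sigma(8^4_2)$ is then a subgroup of the $32$-element stabilizer $H=f^{-1}(S(a,a,-a,a))<\Gamma_4$, whose $S^3$-orientation-preserving part $H^{+}=\{\gamma\in H:\epsilon_0=1\}$ has index $2$ in $H$ and hence order $16$.

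\emph{Step 2 (identify $H^{+}$ as $D_8$, generated by the two listed symmetries).} Using the group law of Definition~\ref{def:whittengroup}, one checks that $\beta=(1,1,1,-1,1,(1243))$ satisfies $\beta^{2}=(1,1,1,-1,-1,(14)(23))$ and $\beta^{4}=(1,-1,-1,-1,-1,e)=\mathrm{PI}\neq e$, so $\beta$ has order $8$; that $\alpha=(1,1,1,1,1,(13)(24))$ has order $2$; that $\alpha\notin\langle\beta\rangle$, since at the permutation level $\langle\beta\rangle=\{e,(1243),(14)(23),(1342)\}$ does not contain $(13)(24)$; and that $\alpha\beta\alpha^{-1}=\beta^{-1}=(1,-1,1,1,1,(1342))$. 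Hence $\langle\alpha,\beta\rangle$ is a copy of $D_8$ of order $16$. Since $\epsilon_0=1$ is multiplicative in $\Gamma_4$ and holds for both generators, $\langle\alpha,\beta\rangle\subseteq H^{+}$; comparing orders gives $\langle\alpha,\beta\rangle=H^{+}\cong D_8$.

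\emph{Step 3 (realize the generators and kill the mirror coset).} I would exhibit explicit isotopies of $8^4_2$ realizing $\alpha$ and $\beta$ — drawn as diagrams, in the style of the figures used for $8^4_1$ — which shows $H^{+}\subseteq\Sigma(8^4_2)\subseteq H$. Since $[H:H^{+}]=2$ and $H^{+}\subseteq\Sigma(8^4_2)\subseteq H$, the group $\Sigma(8^4_2)$ is either $H^{+}$ or all of $H$, so it suffices to produce one $S^3$-orientation-reversing element $\gamma_0\in H$ with $(8^4_2)^{\gamma_0}\nsim 8^4_2$. (This in fact excludes every orientation-reversing element of $H$ at once: they form the coset $\gamma_0 H^{+}$, which is contained in the single left coset $\gamma_0\,\Sigma(8^4_2)$, and links indexed by one left coset of $\Sigma$ are mutually isotopic, so each is isotopic to $(8^4_2)^{\gamma_0}$.) To obtain such a $\gamma_0$ I would translate an orientation-reversing generator of $S(a,a,-a,a)$ — read off Table~\ref{tab:4stabs} — back to $\Gamma_4$ via the preimage formula of Proposition~\ref{prop:gamma4}, then compute the Jones polynomials of $(8^4_2)^{\gamma_0}$ and of $8^4_2$ and observe that they differ. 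This yields $\Sigma(8^4_2)=H^{+}\cong D_8$.

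The main obstacle is Step 3, and specifically the isotopy realizing $\beta$: it must simultaneously reverse the orientation of one component and cyclically permute the four components by a $4$-cycle, so the explicit sequence of moves is genuinely intricate to find and to present. By comparison the group-theoretic bookkeeping in Step 2 (the order of $\beta$, the dihedral relation, the coset count) is short, and the Jones-polynomial comparison in Step 3 is a mechanical computation.
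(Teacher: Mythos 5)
Your proposal is correct and follows essentially the same route as the paper: compute the linking matrix, identify it with the standard form $(a,a,-a,a)$ to get the $32$-element stabilizer via Lemma~\ref{lem:stab4}, exhibit isotopies generating the $16$-element orientation-preserving part, and use a Jones-polynomial comparison to exclude the orientation-reversing half. The only minor differences are that the paper realizes the generators $(1,1,1,-1,1,(14))$ and $(1,1,1,1,1,(13)(24))$ by isotopy (avoiding the $4$-cycle isotopy you flag as the hard step, though your pair $\alpha,\beta$ matches the generators listed in Table~\ref{tab:4symm}), and it checks that all $16$ mirror elements yield the same distinguishing Jones polynomial rather than invoking your index-two coset shortcut.
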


\begin{proof}
The linking matrix for $8^4_2$ is
\begin{equation*}
\begin{array}{c|cccc}
 & 1 & 2 & 3 & 4 \\
 \hline
1 & 0 & 1 & -1 & 0 \\
2 &  1 & 0 & 0 & 1 \\
3 & -1 & 0 & 0 & 1 \\
4 & 0 & 1 & 1 & 0
\end{array}
\end{equation*}
which corresponds to the standard form $(a,a,-a,a)$ (remember that the ordering of elements given by~\eqref{eq:gamma4tolink4} is not obvious).  By Lemma~\ref{lem:stab4} the stabilizer of this linking matrix in $\Gamma_4$ is a 32 element subgroup isomorphic to $\Z_2 \cross D_8$. 

Figures~\ref{842_2} and \ref{842_3} show that  $(1,1,1,-1,1,(14))$ and $(1,1,1,1,1,(13)(24))$ are in $\Sigma(8^4_1)$.  Together, these generate the 16 element group of the claim.  

We must show that the 16 $S^3$-orientation-reversing elements of the stabilizer, (i.e., elements of the form $(-1,\epsilon_1, \dots, \epsilon_4,p)$) are not in the symmetry group.  If we apply any of these 16 elements to the base link, we obtain a link with Jones polynomial
\begin{equation*}
2 z^{9/2}-4 z^{7/2}+z^{5/2}-4 z^{3/2}-3 z^{11/2}+\sqrt{z}-\frac{1}{\sqrt{z}},
\end{equation*}
while the base link has Jones polynomial
\begin{equation*}
-4 z^{9/2}+z^{7/2}-4 z^{5/2}+2 z^{3/2}-z^{13/2}+z^{11/2}-3 \sqrt{z}.
\end{equation*}
This rules out these 16 remaining elements, so the claim is proven.
\end{proof}


\begin{claim} The symmetry subgroup for $8^4_3$ is the 32 element group isomorphic to $\Z_2 \cross \Z_2 \cross D_4$ given by $f^{-1}(S(a,-a,-a,a))$.
%
\end{claim}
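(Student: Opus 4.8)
The plan is to follow the template of the two preceding four-component claims, but with one essential structural difference: for $8^4_3$ the symmetry group turns out to be the \emph{entire} stabilizer of the linking matrix, so there is nothing to rule out with polynomial invariants. Instead, \emph{every} element of the stabilizer must be exhibited as an honest labeled, oriented isotopy.

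First I would read off the linking matrix of $8^4_3$ from its diagram and check that, under the (non-obvious) reindexing of~\eqref{eq:gamma4tolink4}, it has the standard form $(a,-a,-a,a)$. Lemma~\ref{lem:stab4} then identifies the stabilizer of this matrix in $\Gamma_4$ as the $32$-element group $f^{-1}(S(a,-a,-a,a))$, isomorphic to $\Z_2 \cross \Z_2 \cross D_4$; since $\Sigma(L)$ is always contained in the stabilizer of $\Lk(L)$, this gives the upper bound $\Sigma(8^4_3) < f^{-1}(S(a,-a,-a,a))$.

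All of the remaining work is the lower bound. I would present isotopy figures realizing the four generators listed in Table~\ref{tab:4symm}: pure invertibility $(1,-1,-1,-1,-1,e)$, the mirror-with-transposition symmetry $(-1,1,1,1,1,(23))$, the mirror-with-$4$-cycle symmetry $(-1,1,1,1,1,(1243))$, and $(-1,-1,1,1,-1,e)$. A short finite-group check then closes the argument: under $f$, the last element and pure invertibility both land in $\ker f$, and their product is the remaining nontrivial kernel element $(-1,1,-1,-1,1,e)$, so these two generate the order-$4$ kernel; meanwhile $f(-1,1,1,1,1,(23)) = (-1,-1,-1,-1,(13)(24))$ and $f(-1,1,1,1,1,(1243)) = (-1,-1,-1,-1,(1243))$, and since a $4$-cycle and a reflection generate $G_0 \cong D_4$ — and by Lemma~\ref{lem:stab4} the signs in $S(a,-a,-a,a)$ are determined by the permutation — these two images generate all of $S(a,-a,-a,a)$. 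Hence the four listed elements generate all $32$ elements of $f^{-1}(S(a,-a,-a,a))$, and $\Sigma(8^4_3)$ equals the stabilizer.

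The main obstacle is the isotopy construction. Because the claim asserts that all $16$ orientation-reversing elements of the stabilizer are symmetries, we cannot lean on Jones-polynomial obstructions the way the $8^4_1$ and $8^4_2$ arguments did; in particular the two mirror-involving generators must be realized by explicit ambient isotopies carrying the mirror image of $8^4_3$ back to the original link with its component labels and orientations intact, while keeping careful track of where each component goes under the accompanying permutation. Once those (and the two orientation-preserving generators, which are comparatively routine) are drawn, the verification that they generate a group of order exactly $32$ is purely mechanical, since it is automatically bounded above by the $32$-element stabilizer.
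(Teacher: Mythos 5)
Your proposal matches the paper's proof: compute the linking matrix, identify its standard form as $(a,-a,-a,a)$ so that Lemma~\ref{lem:stab4} bounds $\Sigma(8^4_3)$ above by the $32$-element stabilizer, and then exhibit isotopies for pure invertibility, $(-1,1,1,1,1,(23))$, $(-1,-1,1,1,-1,e)$, and a mirror-with-$4$-cycle element (the paper uses $(1342)$, the inverse of your $(1243)$, which generates the same subgroup) to show these generate the entire stabilizer. Your explicit verification that the four generators produce all $32$ elements is a welcome elaboration of a step the paper merely asserts, but the argument is the same.
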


\begin{proof}
The linking matrix for $8^4_3$ is
\begin{equation*}
\begin{array}{c|cccc}
 & 1 & 2 & 3 & 4 \\
 \hline
1 & 0 & 1 & -1 & 0 \\
2 & 1 & 0 & 0 & -1 \\
3 & -1 & 0 & 0 & 1 \\
4 & 0 & -1 & 1 & 0
\end{array}
\end{equation*}
which corresponds to the standard form $(a,-a,-a,a)$.  By Lemma~\ref{lem:stab4} the stabilizer of this linking matrix in $\Gamma_4$ is a 32 element subgroup isomorphic to $\Z_2 \cross \Z_2 \cross D_4$.  This group is generated by the isotopies in Figures~\ref{843invert}, \ref{843_1}, \ref{843_2}, and \ref{843_3}, which show that pure invertibility, along with elements
\begin{equation*}
(-1,1,1,1,1,(23)),~~(-1,-1,1,1,-1,\id),~~(-1,1,1,1,1,(1342)) \in \Sigma(8^4_3).
\end{equation*}
\end{proof}

\section{Comparison of intrinsic symmetry groups with ordinary symmetry groups for links}

We now compare our results on intrinsic symmetry groups to the existing literature on symmetry groups for links. Henry and Weeks~\cite{MR1164115,MR1241189} report $\Sym(L)$ groups for hyperbolic links up to 9 crossings, while Boileau and Zimmerman~\cite{MR891592} computed $\Sym(L)$ groups for nonelliptic Montesinos links with up to 11 crossings, and Bonahon and Siebenmann computed $\Sym(L)$ for the Borromean rings link ($6^3_2$) as an example of their methods in~\cite[Theorem 16.18]{bs}.

Comparing all this data with ours, we see that
\begin{lemma}
Among all links of 8 and fewer crossings with known $\Sym(L)$ groups, the Whitten symmetry group $\Sigma(L)$ is not isomorphic to $\Sym(L)$ only for the links in Table~\ref{tab:noteq}.
\renewcommand{\arraystretch}{1.2}
\begin{table}[ht]
\begin{center}
\begin{tabular}{ccccc} 
\toprule
Link & $\Sigma(L)$ & $|\Sigma(L)|$ & $\Sym(L)$ & $|\Sym(L)|$ \\ 
\midrule
$6^2_3$ & $D_2$ & $4$ & $(\Z_2)^3$ & $8$ \\
$8^2_5$ &  $\Z_2$ & $2$ & $D_2$ & $4$ \\
$8^2_4$ & $D_2$ & $4$ & $(\Z_2)^3$ & $8$ \\
$8^2_6$ & $D_2$ & $4$ & $(\Z_2)^3$ & $8$ \\
$8^2_7$ & $D_2$ & $4$ & $(\Z_2)^3$ & $8$ \\
$8^2_9$ &  $\Z_2$ & $2$ & $D_2$ & $4$ \\
$8^2_{11}$  & $\Z_2$ & $2$ & $D_2$ & $4$ \\
$8^2_{14}$  & $\Z_2$ & $2$ & $D_2$ & $4$ \\
$8^2_{16}$  & $\Z_2$ & $2$ & $(\Z_2)^3$ & $8$ \\
$8^3_4$ & $D_2$ & $4$  & $D_4$ & $8$ \\
\bottomrule
\end{tabular}
\caption{Among links with 8 or fewer crossings with known $\Sym(L)$ groups, precisely 10 have $\Sigma(L) \ncong \Sym(L)$. \label{tab:noteq}}
\end{center}
\end{table}
\end{lemma}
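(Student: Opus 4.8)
The plan is to leverage Proposition~\ref{prop:image}: the natural map $\pi\co \Sym(L)=\MCG(S^3,L)\to\Gamma(L)$ has image exactly $\Sigma(L)$, so for every link $L$ there is a surjective homomorphism $\Sym(L)\twoheadrightarrow\Sigma(L)$ with kernel $\ker\pi$. Hence $\Sigma(L)\cong\Sym(L)$ precisely when $\ker\pi$ is trivial; in particular, whenever $\Sym(L)$ is finite with $|\Sym(L)|=|\Sigma(L)|$ the surjection $\pi$ is automatically an isomorphism, and whenever $|\Sym(L)|\neq|\Sigma(L)|$ the two groups cannot be isomorphic. The lemma therefore reduces to a finite bookkeeping task: for each prime link of at most $8$ crossings whose ordinary symmetry group is recorded in the literature, compare $|\Sym(L)|$ against the value $|\Sigma(L)|$ we have already computed in Tables~\ref{tab:2bylink}, \ref{tab:3symm}, and \ref{tab:4symm}, and check that the orders agree except on the ten links of Table~\ref{tab:noteq}.

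First I would assemble the $\Sym(L)$ data from the three relevant sources: Henry--Weeks~\cite{MR1164115,MR1241189} for the hyperbolic links of $8$ or fewer crossings, Bonahon--Siebenmann~\cite[Thm.~16.18]{bs} for the Borromean rings $6^3_2$, and Boileau--Zimmermann~\cite{MR891592} for the remaining nonhyperbolic links of at most $8$ crossings (torus links and other Montesinos links), all of which have finite symmetry groups as reported there. For each link I would tabulate $\Sym(L)$ up to isomorphism together with its order, placed alongside our $\Sigma(L)$. Carrying out the comparison shows that $|\Sym(L)|=|\Sigma(L)|$ in every case except the ten entries of Table~\ref{tab:noteq}, where $|\Sym(L)|=2\,|\Sigma(L)|$; there $\ker\pi\cong\Z_2$, so $\Sym(L)$ possesses a nontrivial symmetry of the pair $(S^3,L)$ acting trivially on $L$ itself, and $\Sym(L)\not\cong\Sigma(L)$ already on grounds of order. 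For every link not in the table the orders coincide, so $\pi$ is an isomorphism, which completes both directions of the statement.

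The main obstacle is not conceptual but a matter of matching conventions carefully. The component labelings and orientations used here (following Cerf for alternating links and Doll--Hoste otherwise) are \emph{not} those built into \texttt{SnapPea}, and since $\Sym(L)$ and $\Sigma(L)$ are only being compared up to isomorphism this does not affect the orders — but one must still be certain that each literature entry refers to the same link type, and in the handful of cases where we pass through \texttt{SnapPea}'s tables one must redraw the link as noted in the introduction. A secondary, more delicate point, not needed for the statement but worth including for completeness, is to exhibit the nontrivial element of $\ker\pi$ explicitly for each of the ten exceptional links; as remarked earlier such an element acts nontrivially only on $S^3\setminus L$ and can be hard to describe by hand, though for these links it can usually be recognized as a combinatorial symmetry of the canonical ideal triangulation of the complement produced by \texttt{SnapPea}.
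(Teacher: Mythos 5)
Your proposal is correct and follows essentially the same route as the paper, which simply compares the tabulated $\Sigma(L)$ values against the $\Sym(L)$ groups from Henry--Weeks, Boileau--Zimmermann, and Bonahon--Siebenmann; the paper likewise invokes Proposition~\ref{prop:image} (just after the lemma) to note that $\Sigma(L)$ is a quotient of $\Sym(L)$, which is exactly your reduction of the isomorphism question to a comparison of orders.
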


Our results provide some data on symmetry groups of torus links as well.
\begin{lemma}
For the $(2,4)$, $(2,6)$ and $(2,8)$ torus links ($4^2_1$, $6^2_1$, and $8^2_1$), we have $\Sym(L) \simeq \Sigma(L) \simeq D_2$. For the Hopf link, the $(2,2)$ torus link, we know that $\Sym(L) \simeq \Sigma(L) \simeq (\Z_2)^3$.
\end{lemma}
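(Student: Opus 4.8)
The plan is to deal with the three torus links and the Hopf link separately, using that the $\Sigma$-side of each assertion is already in hand. For $L\in\{4^2_1,6^2_1,8^2_1\}$ --- the $(2,4)$, $(2,6)$, $(2,8)$ torus links --- Theorem~\ref{thm:2comp} and Table~\ref{tab:2bylink} give $\Sigma(L)=\s{4}{1}\cong\Z_2\times\Z_2\cong D_2$, and since every element of $\s{4}{1}$ has $\epsilon_0=+1$ none of these links is amphichiral, so $\Sym(L)=\Sym^+(L)$ and it suffices to identify $\Sym(L)$. I would do this from the Seifert-fibered structure of the exterior: writing $L=T(2,2n)$ with $n\in\{2,3,4\}$, the exterior $E(L)$ fibers over an annulus carrying one exceptional fiber of order $n$, a sufficiently large Seifert-fibered space whose fibration is unique up to isotopy, so every self-homeomorphism is isotopic to a fiber-preserving one. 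The fiber-preserving mapping class group surjects onto the orientation-preserving mapping class group of the base annulus-with-one-cone-point --- the order-two group swapping the two boundary circles --- with kernel generated by the fiber-reversing involution and the vertical Dehn twists; the latter die in $\MCG(S^3,L)$ once the meridional fillings $N(L_i)$ are restored, each being a Dehn twist about a curve bounding a disk in some $N(L_i)$. Carrying the meridian slopes through this bookkeeping leaves exactly the four-element group generated by the pure exchange and the pure inversion, so $\Sym(L)\cong D_2\cong\Sigma(L)$ --- consistent with $4^2_1,6^2_1,8^2_1$ not appearing in Table~\ref{tab:noteq}.

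For the Hopf link $2^2_1$ the exterior is $T^2\times I$ and, by Table~\ref{tab:2bylink}, $\Sigma(2^2_1)=\s{8}{2}$, whose orientation-preserving subgroup $\Sigma^+(2^2_1)=\{(1,\epsilon_1,\epsilon_2,p):\epsilon_1\epsilon_2=1\}$ is again a copy of $D_2$. I would realize its four orientation-preserving symmetries explicitly as diffeomorphisms of $(S^3,2^2_1)$: the identity, the swap $(z,w)\mapsto(w,z)$ (the pure exchange), the conjugation $(z,w)\mapsto(\bar z,\bar w)$, which preserves the orientation of $S^3$ while reversing both components (the pure inversion), and their product; by Proposition~\ref{prop:image} these map onto $\Sigma^+(2^2_1)$. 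To see they exhaust $\Sym^+(2^2_1)$, note that any class in $\ker\pi$ is represented by a diffeomorphism equal to the identity near $L$, hence restricting to an element of $\pi_0\operatorname{Diff}(T^2\times I\ \text{rel}\ \partial)\cong\Z^2$, generated by the two shears of the central torus in the meridian directions; once a solid torus $N(L_i)$ is reglued, such a shear becomes a Dehn twist about a disk-bounding curve in $N(L_i)$ and so is isotopic to the identity. Hence $\ker(\pi|_{\Sym^+})=1$ and $\Sym^+(2^2_1)\cong\Sigma^+(2^2_1)\cong D_2$.

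The main obstacle --- and the reason the statement claims only the orientation-preserving part of $\Sym(2^2_1)$ --- is the orientation-reversing part. Orientation-reversing symmetries certainly exist, e.g.\ $(z,w)\mapsto(\bar z,w)$, and under $\pi$ they map onto the orientation-reversing coset of $\s{8}{2}$; what is missing is the corresponding injectivity statement, equivalently the analogue of the $\pi_0\operatorname{Diff}(T^2\times I)$ analysis above for orientation-reversing diffeomorphisms. The difficulty is that $\operatorname{Diff}(T^2)$ has an infinite component group and $T^2\times I$ admits its Seifert fibration nonuniquely, so ruling out a ``hidden'' orientation-reversing self-homeomorphism of the complement that fixes the link is delicate; we leave this open, noting it is irrelevant to the computation of $\Sigma(2^2_1)$.
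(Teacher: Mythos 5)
Your overall strategy is genuinely different from the paper's: the paper does not analyze the link exteriors directly, but quotes Goldsmith's computation of the motion groups $M(S^3,L_{(np,nq)})$, notes that $\Sym^+$ is either that motion group or an index-two quotient of it, observes that in every case this forces $\Sym^+ \simeq D_2$, and then kills the orientation-reversing part of $\Sym$ for $4^2_1, 6^2_1, 8^2_1$ exactly as you do, by noting that an element of $\Sym\setminus\Sym^+$ would map to an element of $\Sigma(L)$ with $\epsilon_0=-1$, which does not exist; for the Hopf link this last step is unavailable because $\Sigma(2^2_1)=\s{8}{2}$ does contain mirror symmetries, which is precisely why the lemma leaves the orientation-reversing part open. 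Your Seifert-fibered/Waldhausen route could in principle replace the appeal to Goldsmith, and your identification of the exterior of $T(2,2n)$ as a Seifert fibration over an annulus with one exceptional fiber of order $n$ is correct, but the central bookkeeping step is where your write-up has real gaps rather than just compression.

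Concretely: (1) the vertical Dehn twists are twists along vertical annuli whose boundary curves on $\partial N(L_i)$ have the \emph{fiber} slope (the Seifert framing, $\lambda + n\mu$ in standard coordinates, since the two components have linking number $n$), not the meridian; such a curve does not bound a disk in $N(L_i)$, so your stated reason that these twists ``die'' after filling is wrong. The correct statement is that such a twist does not even carry meridians to meridians, hence never represents an element of $\MCG(S^3,L)$ at all -- and making that distinction is exactly the ``carrying the meridian slopes through the bookkeeping'' that you assert rather than prove. (2) Your exact-sequence picture is internally inconsistent about orientations: the pure inversion $(1,-1,-1,e)$ is realized by a fiber-preserving map that reverses the fiber orientation and covers an orientation-\emph{reversing} map of the base orbifold, so it lies neither in your claimed image (the orientation-preserving base mapping class group) nor in your claimed kernel; meanwhile a fiber-reversing map covering the identity of the base is orientation-reversing on the exterior and has to be excluded by the meridian/$\Sigma$ argument, not absorbed into the kernel. (3) For the Hopf link, the normalization ``equal to the identity near $L$'' needs an argument controlling twisting of the tubular neighborhood, and of the two generators of $\pi_0\operatorname{Diff}(T^2\times I\ \mathrm{rel}\ \partial)\cong\Z^2$ only the meridian-direction shear becomes a twist about a disk-bounding curve; the longitude-direction shear is killed by an isotopy rotating the reglued solid torus about its core, not because its twisting curve bounds a disk. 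Each of these points is repairable, but as written the proposal asserts the very counts ($\Sym^+\cong D_2$, $\ker(\pi|_{\Sym^+})=1$) that the paper instead imports from Goldsmith's theorem.
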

\begin{proof}
Goldsmith computed~\cite{MR672923} the ``motion groups'' $M(S^3,L_{(np,nq)})$ for torus knots and links in $S^3$. Combining her Corollary 1.13 and Theorem 3.7, we see that for the $(np,nq)$ torus link $L_{(np,nq)}$, the subgroup $\Sym^+$ of orientation preserving symmetries is homeomorphic to $M(S^3,L_{(np,nq)})$ if $p+q$ is odd, and an index two quotient group of $M(S^3,L_{(np,nq)})$ if $p+q$ is even. The motion group itself is either $D_2$ or an 8-element quaternion group. But in either case $\Sym^+ \simeq D_2$. 

Now the motion group by itself does not provide any information about the orientation reversing elements in $\Sym$. However, any such element in $\Sym \setminus \Sym^+$ would map to an element in $\Sigma(L_{(np,nq)})$ which reversed orientation on $S^3$. Since we have already shown that there are no such elements in $\Sigma(L_{(np,nq)})$ for $n=1$, $p=1$, $q = 2, 3, 4$, we see that for these links $\Sym = \Sym^+ = \Sigma$. For the Hopf link such an orientation reversing element does exist in $\Sym(L)$. So, $\Sym(L)$ is a $Z_2$ extension of $D_2$ and thus is isomorphic to $(\Z_2)^3$
\end{proof}

By Proposition~\ref{prop:image}, the group $\Sigma(L)$ is the image of $\Sym(L)$ under a homomorphism, and hence a quotient group of $\Sym(L)$. Further, if $\Sigma(L)$ has only orientation-preserving elements (on $S^3$) then $\Sym(L)$ does as well. Thus we know something about the $\Sym(L)$ groups of all our links; Table~\ref{tab:nonhyperbolic} summarizes the new information provided by our approach.
\renewcommand{\arraystretch}{1.2}
\begin{table}[ht]
\begin{tabular}{lcc} \toprule
Link       & $|\Sym(L)|$ divisible by & $\Sym(L)$ has quotient\\ 
\midrule
$6^3_1$    & 12 & $D_6$ \\
$6^3_3$    & 12 & $D_6$ \\
$8^3_6$    & 8 & $(\Z_2)^3$ \\
$8^3_7$    & 4 & $D_2$ \\
$8^3_8$    & 4 & $D_2$  \\
$8^3_9$    & 8 & $(\Z_2)^3$ \\
$8^3_{10}$ & 4 & $D_2$ \\
\bottomrule
\end{tabular}
\smallskip
\caption[New results about $\Sym(L)$ symmetry groups]{New results about $\Sym(L)$ symmetry groups:  we know from Proposition~\ref{prop:image} that $\Sigma(L)$ is a quotient group of $\Sym(L)$. Among the links for which we have computed $\Sigma(L)$ are a collection of links for which we can find no information in the literature on $\Sym(L)$. Our results imply that the unknown $\Sym(L)$ groups listed above must have a certain quotient.  \label{tab:nonhyperbolic}}
\end{table}
\renewcommand{\arraystretch}{1}

\section{Future directions}
We have now presented explicit computations of the Whitten symmetry groups for all links with 8 and fewer crossings. In all the cases we studied, the most difficult part of the computation was obtaining explicit isotopies to generate the symmetry group; ruling out the remaining elements of $\Gamma_n$ was generally done by the application of classical invariants. The most difficult of these cases required us to use the ``satellite lemma'' and study the classical invariants of satellites of our original link.

While we have presented conventional proofs of all our results, we used computer methods extensively in determining the right line of attack for each link -- our method was to use the Mathematica package \texttt{KnotTheory} to systematically apply all possible Whitten group elements to each link and then check the knot types of the components, the linking matrix, the Jones polynomial, and the HOMFLYPT polynomial in an attempt to distinguish the new link from the original one. We then checked the computer calculations by hand. This automated method clearly cannot compute $\Sigma(L)$, but it does provide a subgroup $\Sigma'(L)$ of $\Gamma(L)$ which is known to contain $\Sigma(L)$. While we do not currently intend to generate isotopies for links with higher crossing number, we intend to present our computationally-obtained $\Sigma'(L)$ groups for 9, 10, and 11 crossing links in a future publication.

Some of the most natural questions about the Whitten symmetry groups remain unanswered by this type of explicit enumeration: which groups can arise as Whitten groups? Does every subgroup of a given $\Gamma_n$ arise as a $\Sigma(L)$? We have observed 6 different subgroups of $\Gamma_2$, 11 different subgroups of $\Gamma_3$, and $3$ different subgroups of $\Gamma_4$ so far. This subject is certainly worth further exploration: can one generate a carefully chosen link with a given symmetry group?

\section{Acknowledgements}
We are grateful to all the members of the UGA Geometry VIGRE group who contributed their time and effort over many years towards this project. In particular, we'd like to acknowledge the contributions of graduate students Ted Ashton, Yang Liu, Steve Lane, Laura Nunley,  Gregory Schmidt, Jae-Ho Shin, and Joe Tenini and undergraduate students Daniel Cellucci, James Dabbs, Alex Moore, and Emmanuel Obi. We also had a series of helpful conversations with Erica Flapan.  Jeremy Rouse assisted with our Magma computations of subgroups of $\Gamma_\mu$.  We are also grateful to the anonymous referee, who made a number of helpful suggestions as we revised the paper. Our work was supported by the UGA VIGRE grants DMS-07-38586 and DMS-00-89927 and by the UGA REU site grant DMS-06-49242.

\bibliographystyle{amsalpha}
\bibliography{drl,symmetrypaper}

\newpage
\appendix

\section{Guide to link isotopy figures} \label{app:guide}
The Appendices contain figures representing the 101 isotopies presented in our paper; they are organized as follows.
\begin{itemize}
\item Appendix~\ref{app:guide} explains the moves depicted in these isotopies.
\item Appendix~\ref{app:rotate} exhibits 41 isotopies which can be found by simply rotating about one axis.
\item Appendix~\ref{app:2isotopy} contains the remaining isotopies for 2-component links
\item Appendix~\ref{app:3isotopy} contains the remaining isotopies for 3-component links
\item Appendix~\ref{app:4isotopy} contains the remaining isotopies for 4-component links
\end{itemize}

\medskip
For coordinates on our figures, we assume the diagrams appear in the $xy$-plane with the $z$-axis coming out of the page.
With this orientation, Figure~\ref{fig:rotations} shows symbols for various rotations around lines.  We will also denote moves during an isotopy with arrows, as shown in Figure~\ref{fig:arrowmoves}.

\medskip
\begin{figure}[ht]
\begin{center} 
\hfill
\begin{tabular}{c}
\includegraphics[width=0.7in]{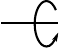} \\
$x$-axis
\end{tabular}
\hfill
\begin{tabular}{c}
\includegraphics[height=0.7in]{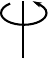} \\
$y$-axis
\end{tabular}
\hfill
\begin{tabular}{c}
\includegraphics[height=0.65in]{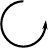} \\
$z$-axis
\end{tabular}
\hfill
\begin{tabular}{c}
\includegraphics[height=0.75in]{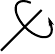} \\
line $y=x$
\end{tabular}
\hfill
\begin{tabular}{c}
\includegraphics[height=0.75in]{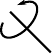} \\
line $y=-x$
\end{tabular}
\hfill
\end{center}
\caption{Rotation by $\pi$ radians around various axes is denoted by the symbols above.\label{fig:rotations}}
\end{figure}

\bigskip
\begin{figure}[ht]
\begin{center}
\begin{tabular}{c}
\includegraphics[width=3.4in]{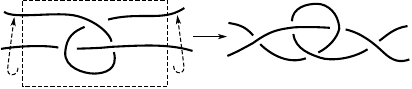} 
\end{tabular}
\hspace{0.25in}
\begin{tabular}{c}
\includegraphics[width=2.5in]{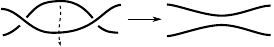}
\end{tabular}
\end{center}
\caption[Examples of moves in isotopy diagrams]{The left figure shows a ``flip'' move. In this move, we take the portion of the link in the dotted box and flip it in the
direction indicated by the curved arrows, resulting in the diagram shown on the right. Arrows denote transitions between stages
of the isotopy. The right figure shows a simpler transformation. Here the arrow shows a portion of the link. Whether this portion 
moves over or under intervening portions of the link should be clear from the next drawing. 
\label{fig:arrowmoves}}
\end{figure}
In all of our diagrams, the component numbered $1$ is drawn with a thin (1 pt) line, while other components are denoted by
thicker (2, 3, or 4 pt lines). We also number the components explicitly to prevent any confusion on this point. As usual, arrows 
denote orientation on the link components.


\newpage
\section{Isotopy Figures found by rotations}  \label{app:rotate}

In this section, we display the simplest type of isotopies:  mere rotation of a link about a coordinate axis.  Such a rotation suffices for 41 of the 101 isotopies we present in this paper.  

We display these in three subsections.  The first covers the pure exchange isotopies, which only permute the components; these correspond to Whitten group elements $\gamma=(1,1,\ldots,1,p)$.   The second subsection covers the pure invertibility isotopies, which only invert each components and correspond to $\gamma= (1,-1,-1, \ldots,-1,e)$.  The third subsection contains 10 other isotopies found by rotating the figure.

\bigskip

\subsection{Pure exchange isotopies found by rotation} \label{app:rotate-pe}

\begin{center}
\begin{figure}[ht]
\begin{tabular}[c]{m{1.45in}m{1.45in}m{1.45in}m{1.45in}}
\begin{center}
\fbox{\includegraphics[scale=3]{y_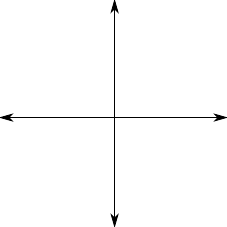}}
\end{center}
&
\begin{overpic}[scale=0.9]{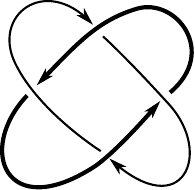}
\put(12,80){1}
\put(103,80){2}
\end{overpic}

\begin{center}
$4^2_1$
\end{center}
&
\begin{overpic}[scale=0.9]{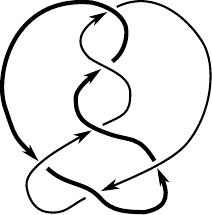}
\put(90,92){1}
\put(3,92){2}
\end{overpic}

\begin{center}
$6^2_2$
\end{center}
&
\begin{overpic}[scale=0.9]{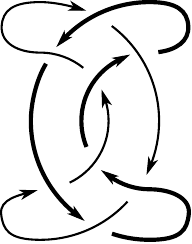}
\put(5,59){1}
\put(83,80){2}
\end{overpic}

\begin{center}
$8^2_4$
\end{center}
\\
&&&\\
\begin{center}
\begin{overpic}[scale=0.9]{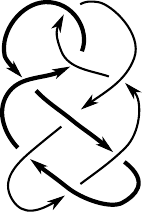}
\put(68,80){1}
\put(-9,80){2}
\end{overpic}

$8^2_5$
\end{center}
&
&
&
\end{tabular}
\caption{Pure exchange isotopies found by rotation about the $y$-axis.} \label{fig:pe-y}
\end{figure}
\end{center}

\newpage

\begin{center}
\begin{figure}[ht]
\begin{tabular}[c]{m{1.45in}m{1.45in}m{1.45in}m{1.45in}}
\begin{center}
\fbox{\includegraphics[scale=3]{z_axis.pdf}}
\end{center}
&
\begin{overpic}[scale=0.9]{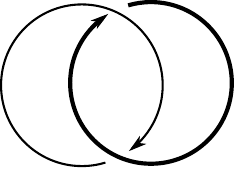}
\put(14,72){1}
\put(80,72){2}
\end{overpic}

\begin{center}
$2^2_1$
\end{center}
&
\begin{overpic}[scale=0.9]{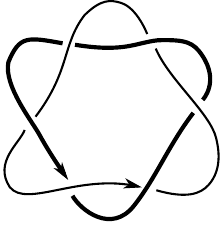}
\put(65,95){1}
\put(10,87){2}
\end{overpic}

\begin{center}
$6^2_1$
\end{center}
&
\begin{center}
\begin{overpic}[scale=0.9]{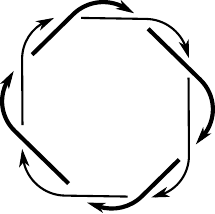}
\put(5,85){1}
\put(47,104){2}
\end{overpic}

$8^2_1$
\end{center}
\\
&&&\\
\begin{overpic}[scale=0.9]{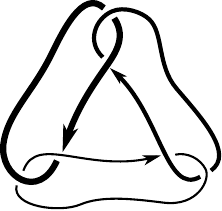}
\put(71,86){1}
\put(11,65){2}
\put(55,-6){3}
\end{overpic}

& 
\begin{overpic}[scale=0.78]{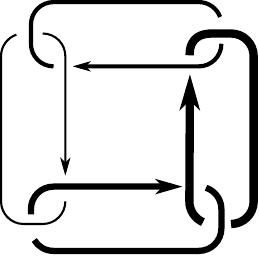}
\put(-6,50){1}
\put(50,101){2}
\put(50,6){3}
\put(102,50){4}
\end{overpic}
&& \\
\begin{center}
$6^3_1, \, (123)$
\end{center}
&
\begin{center}
$8^4_1, \,  (1342)$
\end{center}
&
&
\end{tabular}
\caption[Pure exchange isotopies found by through rotation about the $z$-axis]{Pure exchange isotopies found by rotation about the $z$-axis. In the case of links with more than two components, all possible exchanges are listed underneath the respective figures.} \label{fig:pe-z}
\end{figure}
\end{center}

\newpage

\subsection{Pure invertibility isotopies found by rotation} \label{app:rotate-pi}

\begin{center}
\begin{table}[ht]
\begin{tabular}[c]{m{1.45in}m{1.45in}m{1.45in}m{1.45in}}

\fbox{\includegraphics[scale=3]{y_axis.pdf}}
&
\begin{center}
\begin{overpic}[scale=0.9]{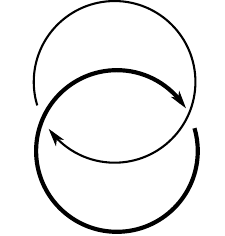}
\put(11,90){1}
\put(82,5){2}
\end{overpic}

$2^2_1$
\end{center}
&
\begin{overpic}[scale=0.9]{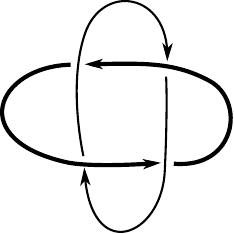}
\put(28,89){1}
\put(87,20){2}
\end{overpic}

\begin{center}
$4^2_1$
\end{center}
&
\begin{center}
\begin{overpic}[scale=0.9]{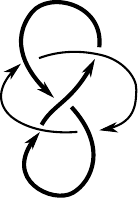}
\put(-10,53){1}
\put(55,85){2}
\end{overpic}

$5^2_1$
\end{center}
\\
&&&\\

\begin{overpic}[scale=0.9]{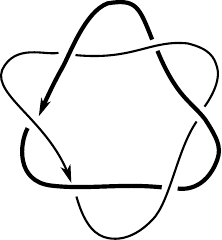}
\put(10,83){1}
\put(65,92){2}
\end{overpic}

\begin{center}
$6^2_1$
\end{center}
&
\begin{overpic}[scale=0.9]{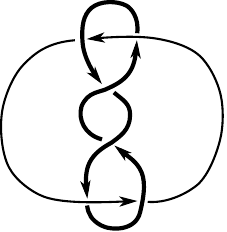}
\put(10,80){1}
\put(63,92){2}
\end{overpic}

\begin{center}
$6^2_3$
\end{center}
&
\begin{overpic}[scale=0.9]{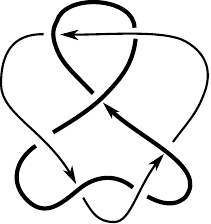}
\put(-2,43){1}
\put(56,99){2}
\end{overpic}

\begin{center}
$7^2_1$
\end{center}
&
\begin{overpic}[scale=0.9]{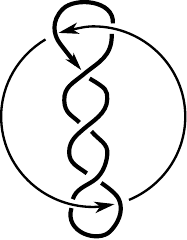}
\put(5,80){1}
\put(50,60){2}
\end{overpic}

\begin{center}
$7^2_3$
\end{center}
\\
&&&\\

\begin{overpic}[scale=0.9]{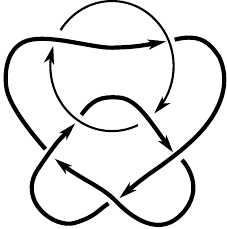}
\put(40,102){1}
\put(85,87){2}
\end{overpic}

\begin{center}
$7^2_4$
\end{center}
&
\begin{overpic}[scale=0.9]{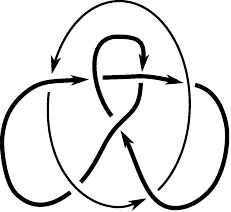}
\put(30,90){1}
\put(97,53){2}
\end{overpic}

\begin{center}
$7^2_5$
\end{center}
&
\begin{overpic}[scale=0.9]{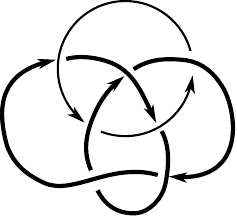}
\put(40,93){1}
\put(8,8){2}
\end{overpic}

\begin{center}
$7^2_6$
\end{center}
&
\begin{overpic}[scale=0.9]{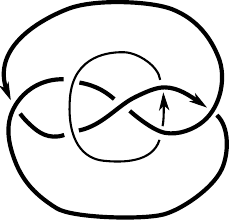}
\put(50,76){1}
\put(85,92){2}
\end{overpic}

\begin{center}
$7^2_7$
\end{center}
\\
&&&\\
\begin{overpic}[scale=0.9]{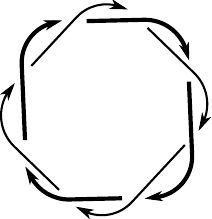}
\put(45,102){1}
\put(80,90){2}
\end{overpic}

\begin{center}
$8^2_1$
\end{center}
&
\begin{overpic}[scale=0.9]{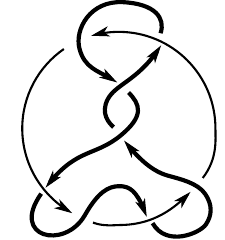}
\put(84,75){1}
\put(50,102){2}
\end{overpic}

\begin{center}
$8^2_3$
\end{center}
&
\begin{overpic}[scale=0.9]{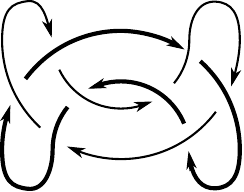}
\put(10,82){1}
\put(50,70){2}
\end{overpic}

\begin{center}
$8^2_4$
\end{center}
&
\begin{overpic}[scale=0.9]{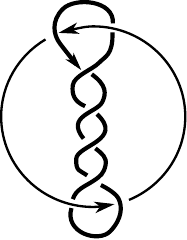}
\put(15,93){1}
\put(81,47){2}
\end{overpic}

\begin{center}
$8^2_6$
\end{center}

\end{tabular}
\end{table}
\end{center}

\newpage

\begin{center}
\begin{figure}[ht]
\begin{tabular}[c]{m{1.45in}m{1.45in}m{1.45in}m{1.45in}}

\begin{overpic}[scale=0.9]{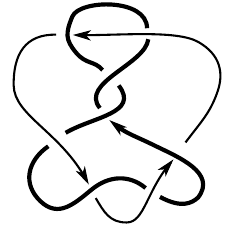}
\put(8,86){1}
\put(5,5){2}
\end{overpic}

\begin{center}
$8^2_7$
\end{center}
&
\begin{overpic}[scale=0.9]{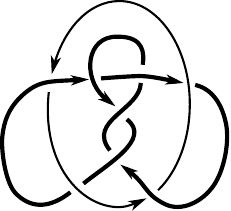}
\put(70,86){1}
\put(99,49){2}
\end{overpic}

\begin{center}
$8^2_9$
\end{center}
&
\begin{overpic}[scale=0.9]{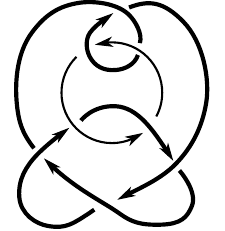}
\put(20,63){1}
\put(10,92){2}
\end{overpic}

\begin{center}
$8^2_{11}$
\end{center}
&
\begin{overpic}[scale=0.9]{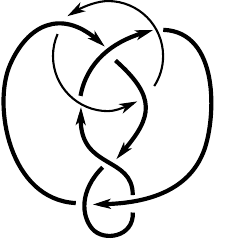}
\put(50,102){1}
\put(85,85){2}
\end{overpic}

\begin{center}
$8^2_{13}$
\end{center}
\\
&&&\\
\begin{overpic}[scale=0.9]{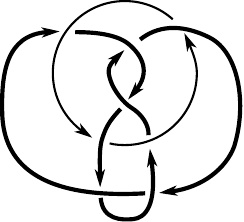}
\put(39,93){1}
\put(89,80){2}
\end{overpic}

\begin{center}
$8^2_{14}$
\end{center}
&
\begin{overpic}[scale=0.9]{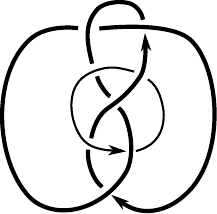}
\put(23,52){1}
\put(80,90){2}
\end{overpic}

\begin{center}
$8^2_{16}$
\end{center}
&
\begin{overpic}[scale=0.9]{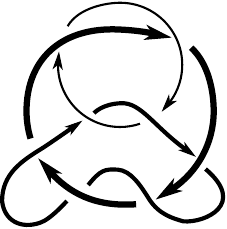}
\put(50,102){1}
\put(-5,23){2}
\put(94,76){3}
\end{overpic}

\begin{center}
$8^3_1$
\end{center}
&
\begin{center}
\begin{overpic}[scale=0.9]{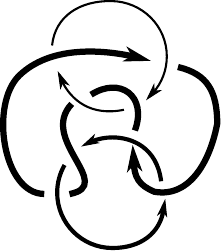}
\put(59,97){1}
\put(69,3){2}
\put(87,70){3}
\end{overpic}

$8^3_4$
\end{center}
\\
&&&\\
\begin{overpic}[scale=0.9]{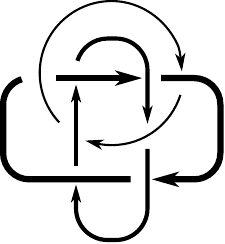}
\put(72,93){1}
\put(61,1){2}
\put(4,16){3}
\end{overpic}
&
\begin{center}
\begin{overpic}[scale=0.9]{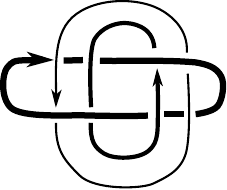}
\put(55,87){1}
\put(52,17){2}
\put(99,58){3}
\end{overpic}
\end{center}
&&\\

\begin{center}
$8^3_6$
\end{center}
&\begin{center}
$8^3_{10}$
\end{center}
&
\end{tabular}
\caption{Pure inversion isotopies found by rotation about the $y$-axis.} \label{fig:pi-y}
\end{figure}
\end{center}

\newpage
\subsection{Other isotopies found by rotation}

\begin{center}
\begin{figure}[ht]
\begin{tabular}[c]{m{1.45in}m{1.45in}m{1.45in}m{1.45in}}
\begin{center}
\fbox{\includegraphics[scale=3]{y_axis.pdf}}
\end{center}
&
\begin{overpic}[scale=1.1]{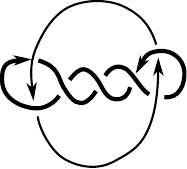}
\put(15,80){1}
\put(50,58){2}
\end{overpic}

\begin{center}
$7^2_3, \, (1,-1,1,e)$
\end{center}
&
\begin{overpic}[scale=0.95]{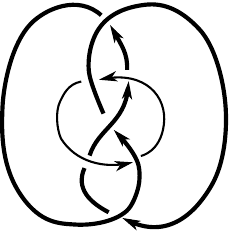}
\put(17,50){1}
\put(5,90){2}
\end{overpic}

\begin{center}
$7^2_8, \, (1,-1,1,e)$
\end{center}
&
\begin{center}
\begin{overpic}[scale=0.9]{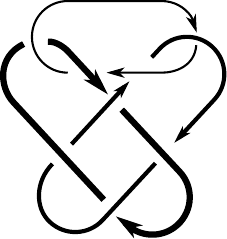}
\put(48,89){1}
\put(99,70){2}
\put(-7,70){3}
\end{overpic}

$8^3_2, \, (1,\minus 1\minus 1,\minus 1,(23))$
\end{center}
\\
&&&\\

\begin{overpic}[scale=0.9]{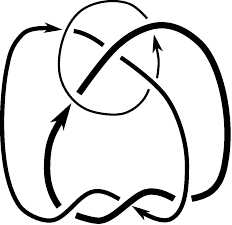}
\put(45,101){1}
\put(10,90){2}
\put(80,90){3}
\end{overpic}

$8^3_7, \, (1,\minus 1,\minus 1,\minus 1,(23))$
&

\begin{overpic}[scale=0.9]{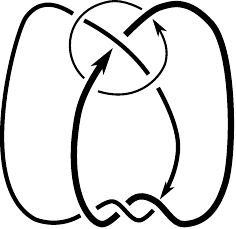}
\put(45,101){1}
\put(5,95){2}
\put(91,95){3}
\end{overpic}

$8^3_9, \, (1,\minus 1,\minus 1,\minus 1,(23))$
&&\\
\end{tabular}
\caption[Whitten group elements effected through rotation about the $y$-axis]{Whitten group elements effected through rotation about the $y$-axis. Group elements are listed under their respective figure.} \label{fig:rotate-y}
\end{figure}
\end{center}

\vspace*{0.5in}
\begin{center}
\begin{figure}[ht]
\begin{tabular}[c]{m{1.45in}m{1.45in}m{1.45in}m{1.45in}}

\fbox{\includegraphics[scale=3]{z_axis.pdf}}
&
\begin{center}
\begin{overpic}[scale=0.9]{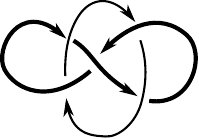}
\put(50,73){1}
\put(8,12){2}
\end{overpic}
\end{center}

&
\begin{overpic}[scale=0.9]{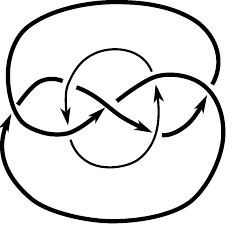}
\put(47,15){1}
\put(87,89){2}
\end{overpic}

&
\begin{overpic}[scale=0.9]{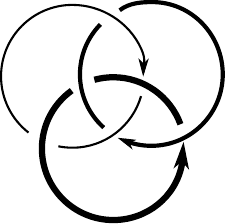}
\put(5,92){1}
\put(96,86){2}
\put(82,10){3}
\end{overpic}
\\
&
\begin{center}
$5^2_1, \, (1,1,-1,e)$
\end{center}
&
\begin{center}
$7^2_8, \, (1,1,-1,e)$
\end{center}
&
\begin{center}
$6^3_2,$ {\small $(1,-1,1,-1,(132))$}
\end{center}
\end{tabular}
\caption[Whitten group elements effected through rotation about the $z$-axis]{Whitten group elements effected through rotation about the $z$-axis. Group elements are listed under their respective figures.} \label{fig:rotate-z}
\end{figure}
\end{center}

\newpage
\section{Isotopy Figures for two-component links} \label{app:2isotopy}

This appendix contains all of the isotopy figures for two-component links that are not contained in Appendix~\ref{app:rotate}.  Section~\ref{app:pe} contains the pure exchange isotopies.  Section~\ref{app:pi} contains the pure exchange isotopies.  

\vspace*{0.5in}

\begin{figure}[ht]
\begin{center}
\begin{overpic}{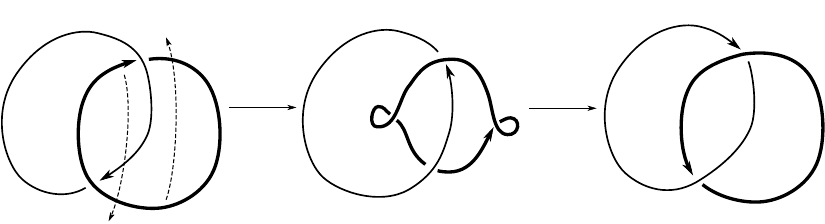}
\put(5,23){1}
\put(22,21){2}
\put(40,22){1}
\put(57,20){2}
\put(80,25){1}
\put(95,21){2}
\end{overpic}
\caption{$(2^2_1)^\gamma,~~\gamma=(-1,1,-1,e)$}
\label{221mi}
\end{center}
\end{figure}

\vspace*{1in}
\begin{figure}[ht]
\begin{center}
\begin{overpic}{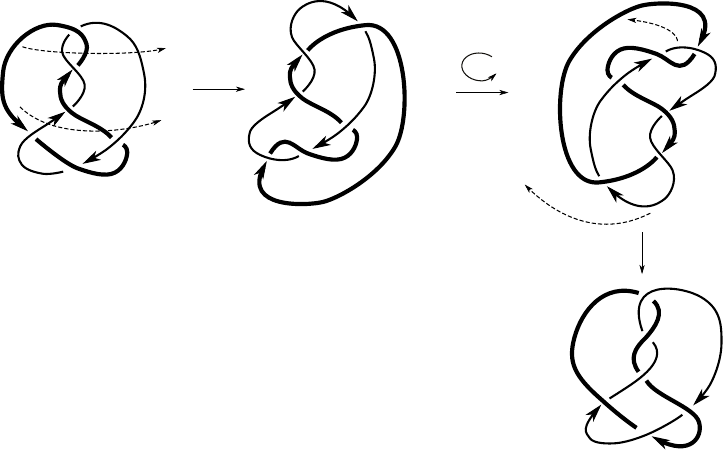}
\put(15,60){1}
\put(5,60){2}
\put(33,46){1}
\put(54,58){2}
\put(100,50){1}
\put(79,58){2}
\put(95,23){1}
\put(85,23){2}
\end{overpic}
\caption{$(6^2_2)^\gamma,~~\gamma=(-1,1,-1,e)$}
\label{622mi}
\end{center}
\end{figure}

\newpage
\begin{figure}[ht]
\begin{center}
\begin{overpic}{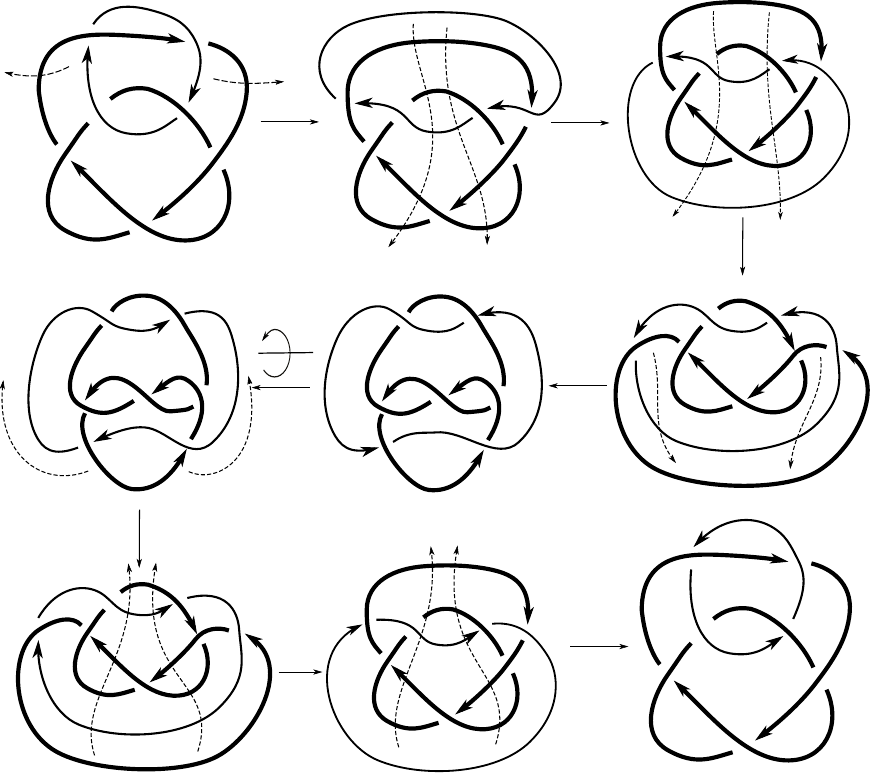}
\put(15,89){1}
\put(27,84){2}
\put(50,89){1}
\put(61,65){2}
\put(71,80){1}
\put(85,90){2}
\put(76,55){1}
\put(87,55){2}
\put(42,55){1}
\put(50,56){2}
\put(7,55){1}
\put(15,56){2}
\put(22,21){1}
\put(31,14){2}
\put(39,18){1}
\put(45,24){2}
\put(90,28){1}
\put(97,23){2}
\end{overpic}
\caption{$(7^2_4)^\gamma,~~\gamma=(1,-1,1,e)$}
\label{724invert1}
\end{center}
\end{figure}

\newpage
\begin{figure}[ht]
\begin{center}
\begin{overpic}{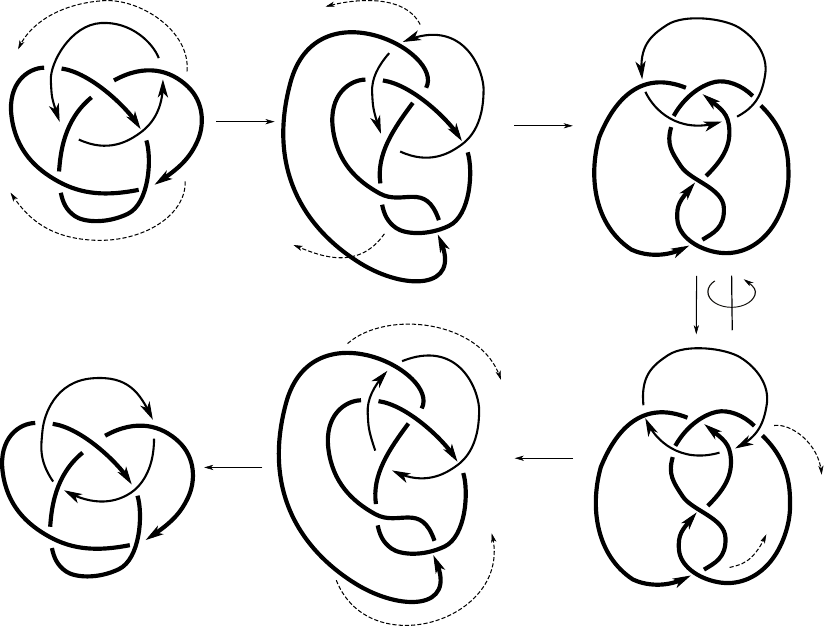}
\put(7,72){1}
\put(24,66){2}
\put(55,72){1}
\put(57,47){2}
\put(85,75){1}
\put(97,55){2}
\put(92,32){1}
\put(97,10){2}
\put(59,25){1}
\put(58,14){2}
\put(10,31){1}
\put(20,9){2}
\end{overpic}
\caption{$(7^2_6)^\gamma,~~\gamma=(1,-1,1,e)$}
\label{726invert1}
\end{center}
\end{figure}

\newpage
\begin{figure}[ht]
\begin{center}
\begin{overpic}{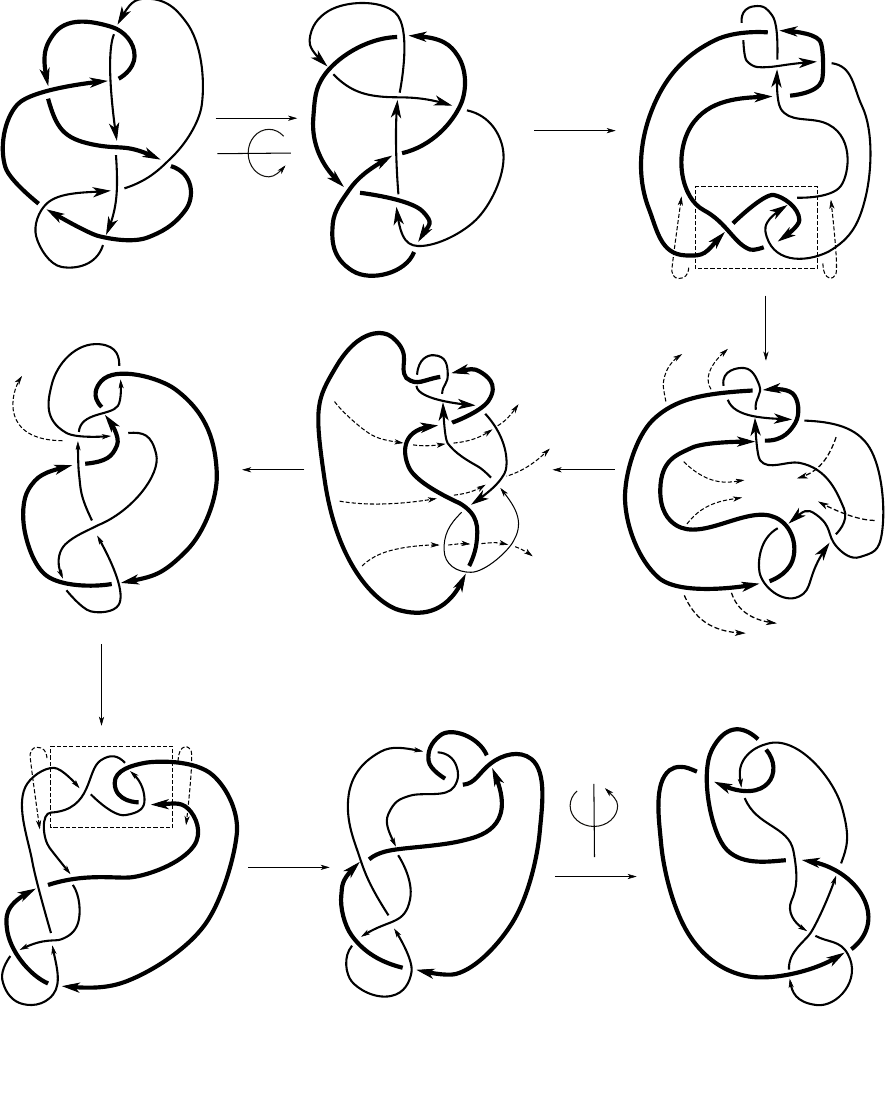}
\put(18,98){1}
\put(3,98){2}
\put(36.5,99){1}
\put(26,91){2}
\put(57,92){2}
\put(79,92){1}
\put(3.2,62){1}
\put(19,62){2}
\put(47,60){1}
\put(26,60){2}
\put(56,60){2}
\put(75,63){1}
\put(22,24){2}
\put(1,24){1}
\put(46,32.5){2}
\put(33,33){1}
\put(74,32){1}
\put(60,31.5){2}
\end{overpic}
\caption{$(8^2_8)^\gamma,~~\gamma=(-1,1,-1,e)$}
\label{828a}
\end{center}
\end{figure}

\newpage
\begin{figure}[ht]
\begin{center}
\begin{overpic}{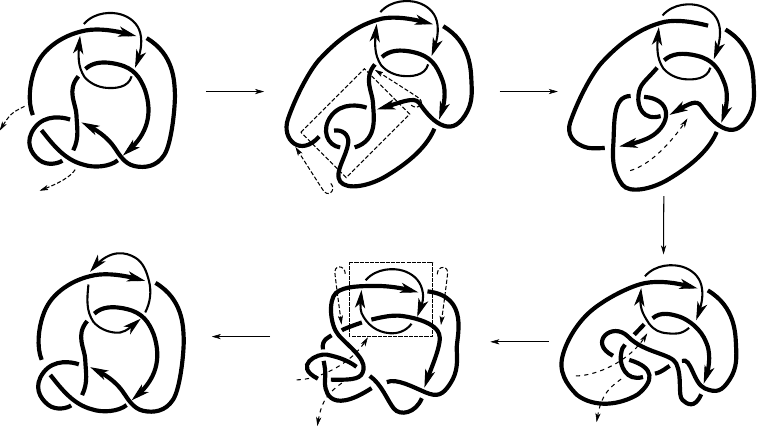}
\put(15,56){1}
\put(24,47){2}
\put(53,57){1}
\put(62,50){2}
\put(90,57){1}
\put(101,45){2}
\put(90,22){1}
\put(97,16){2}
\put(51,23){1}
\put(61,16){2}
\put(15,24){1}
\put(24,17){2}
\end{overpic}
\caption{$(8^2_{10})^\gamma,~~\gamma=(1,-1,1,e)$}
\label{8210invert1}
\end{center}
\end{figure}

\newpage
\begin{figure}[ht]
\begin{center}
\begin{overpic}{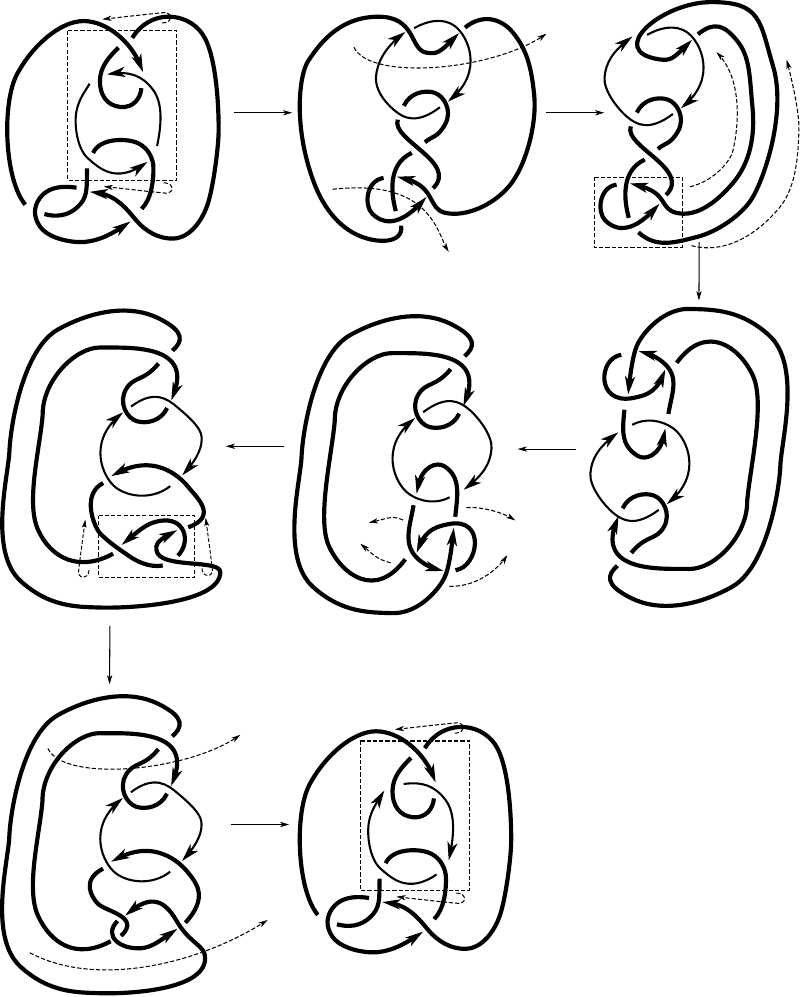}
\put(5,87){1}
\put(20,99){2}
\put(42,98){1}
\put(50,99){2}
\put(59,93){1}
\put(70,101){2}
\put(58,53){1}
\put(58,62){2}
\put(50,55){1}
\put(48,65){2}
\put(21,57){1}
\put(20,67){2}
\put(21,19){1}
\put(19,28){2}
\put(34,15){1}
\put(34,27){2}
\end{overpic}
\caption{$(8^2_{12})^\gamma,~~\gamma=(1,-1,1,e)$}
\label{8212invert1}
\end{center}
\end{figure}

\newpage
\begin{figure}[ht]
\begin{center}
\begin{overpic}{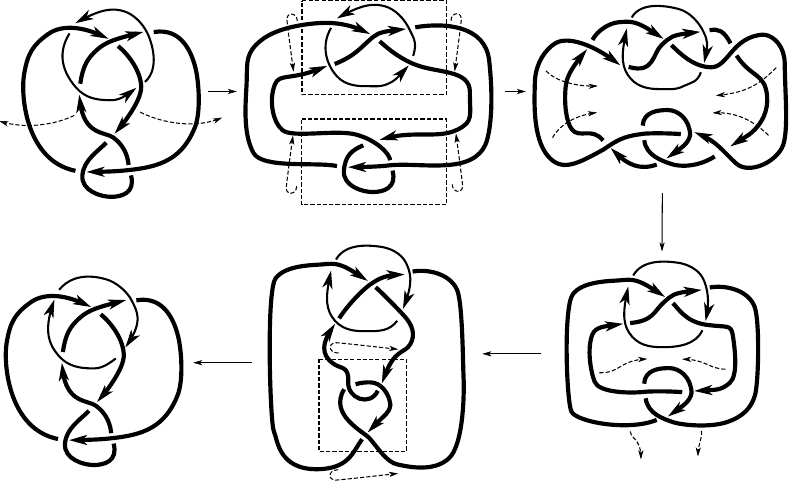}
\put(15,61){1}
\put(22,58){2}
\put(47,62){1}
\put(62,57){2}
\put(84,61){1}
\put(95,58){2}
\put(87,29){1}
\put(95,25){2}
\put(47,31){1}
\put(55,28){2}
\put(11,27){1}
\put(21,23){2}
\end{overpic}
\caption{$(8^2_{13})^\gamma,~~\gamma=(1,-1,1,e)$}
\label{8213invert1}
\end{center}
\end{figure}

\vspace*{0.75in}
\begin{figure}[ht]
\begin{center}
\begin{overpic}{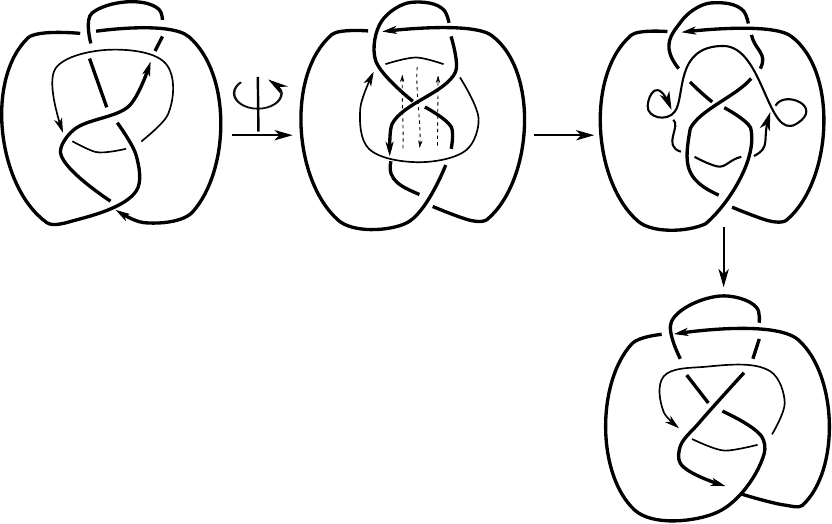}
\put(4,52){1}
\put(24,58){2}
\put(41.5,52){1}
\put(62,58){2}
\put(76,52){1}
\put(98,58){2}
\put(95,24){2}
\put(77,15){1}
\end{overpic}
\caption{$(8^2_{15})^\gamma,~~\gamma=(1,1,-1,e)$}
\label{8n2a}
\end{center}
\end{figure}

\newpage
\subsection{Isotopies showing pure exchange symmetries for two-component links}\label{app:pe}

Along with Appendix~\ref{app:rotate-pe}, Figures~\ref{fig:721pe}-\ref{fig:twisttype} demonstrate that each of the 17 two-component links listed in Lemma~\ref{lemma:pe} has pure exchange symmetry.  We demonstrate in section~\ref{sec2a} that the 13 remaining two-component links with $\leq 8$ crossings do not admit this symmetry.

\vspace*{0.5in}



\begin{figure}[ht]
\begin{center}
\begin{overpic}[scale=0.9]%
{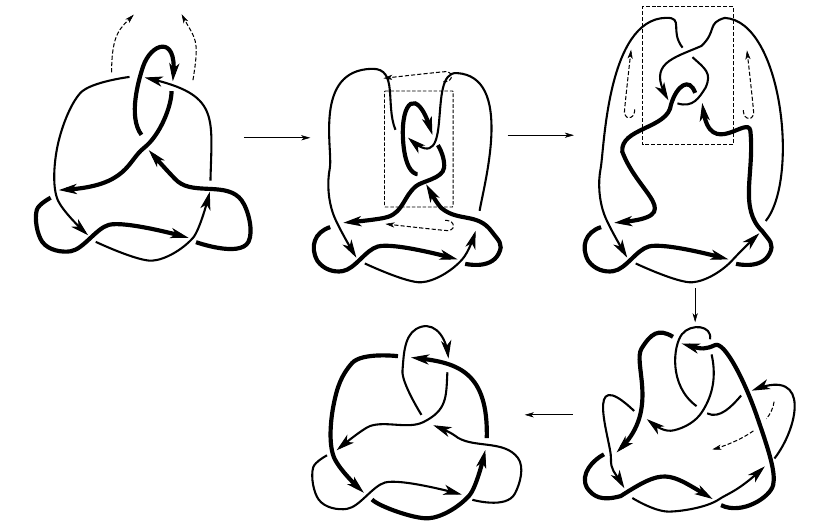}
\put(6,50){1}
\put(2,33){2}
\put(38,50){1}
\put(35,33){2}
\put(72,50){1}
\put(68,33){2}
\put(72,16){1}
\put(68,4){2}
\put(35,4){1}
\put(38,17){2}

\end{overpic}
\caption{$(7^2_1)^\gamma,~~\gamma=(1,1,1,(12))$}
\label{fig:721pe}
\end{center}
\end{figure}

\newpage
\begin{figure}[ht]
\begin{center}
\begin{overpic}[scale=0.9]%
{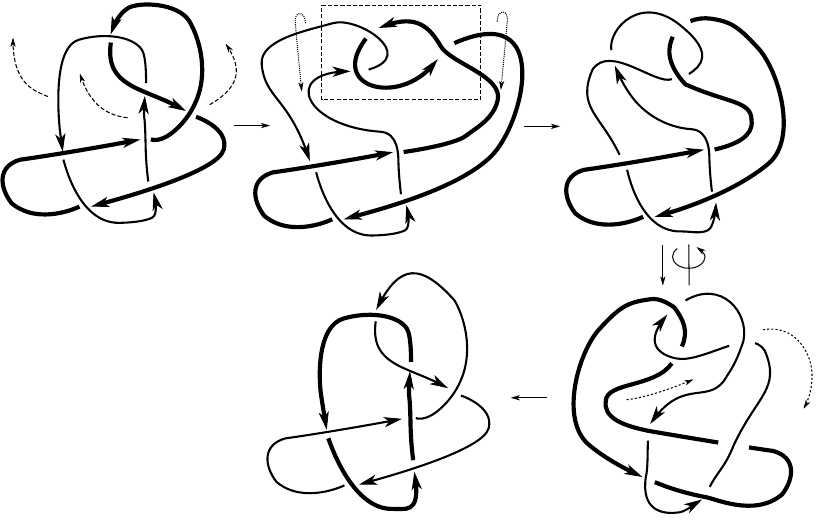}
\put(5,55){1}
\put(24,60){2}
\put(29.5,55){1}
\put(62,60){2}
\put(69,55){1}
\put(91,60){2}
\put(91,24){1}
\put(69,22){2}
\put(57,24){1}
\put(37,22){2}
\end{overpic}
\caption{$(7^2_2)^\gamma,~~\gamma=(1,1,1,(12))$}
\end{center}
\end{figure}


\vspace*{1in}
\begin{figure}[ht]
\begin{center}
\begin{overpic}[scale=0.9]%
{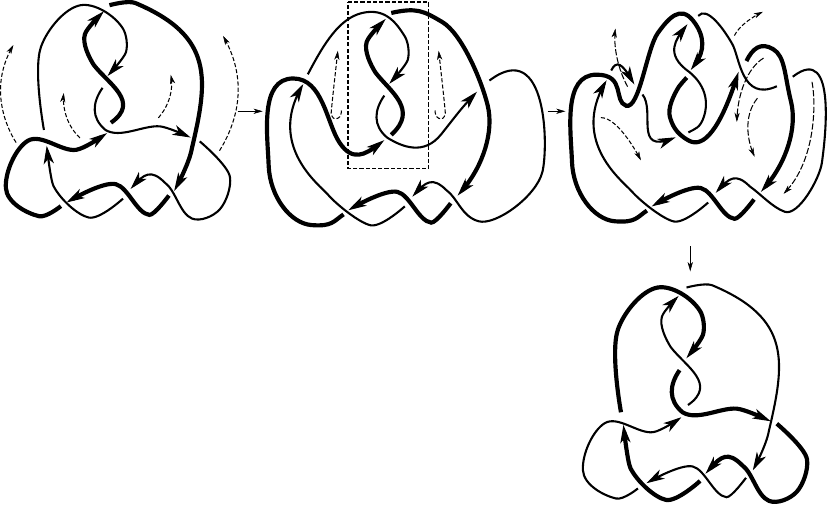}
\put(4,58){1}
\put(-1.5,37){2}
\put(31,37){2}
\put(38,58){1}
\put(67,37){2}
\put(87.5,58){1}
\put(68,3){1}
\put(73,22){2}
\end{overpic}
\caption{$(8^2_2)^\gamma,~~\gamma=(1,1,1,(12))$}
\end{center}
\end{figure}

\newpage
\vspace{1cm}
\begin{figure}[ht]
\begin{center}
\begin{overpic}[scale=0.9]%
{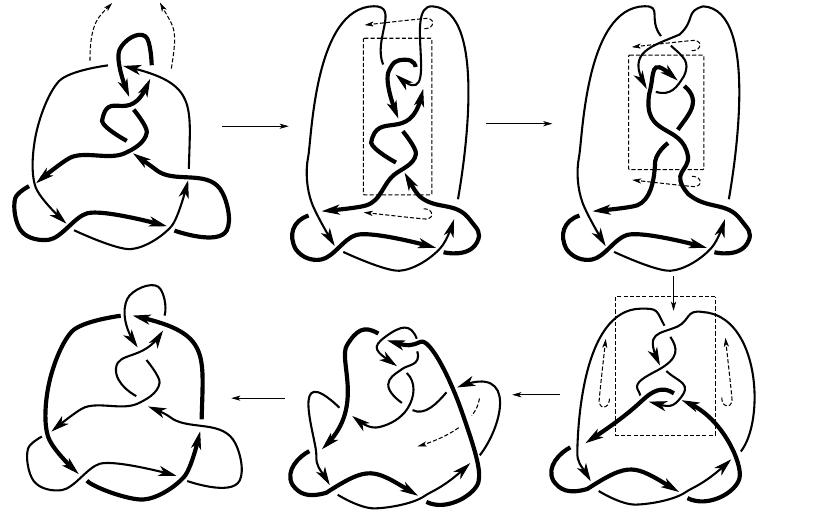}
\put(4,52){1}
\put(-0.5,35){2}

\put(36.5,52){1}
\put(33,35){2}

\put(70,52){1}
\put(67,35){2}

\put(71,22){1}
\put(65,4){2}

\put(37.5,17){1}
\put(33,4){2}
\put(3.5,17){2}
\put(1.5,4){1}
\end{overpic}
\caption{$(8^2_3)^\gamma,~~\gamma=(1,1,1,(12))$}
\end{center}
\end{figure}


\vspace*{0.5in}
\begin{figure}[ht]
\begin{center}
\begin{overpic}[scale=0.9]%
{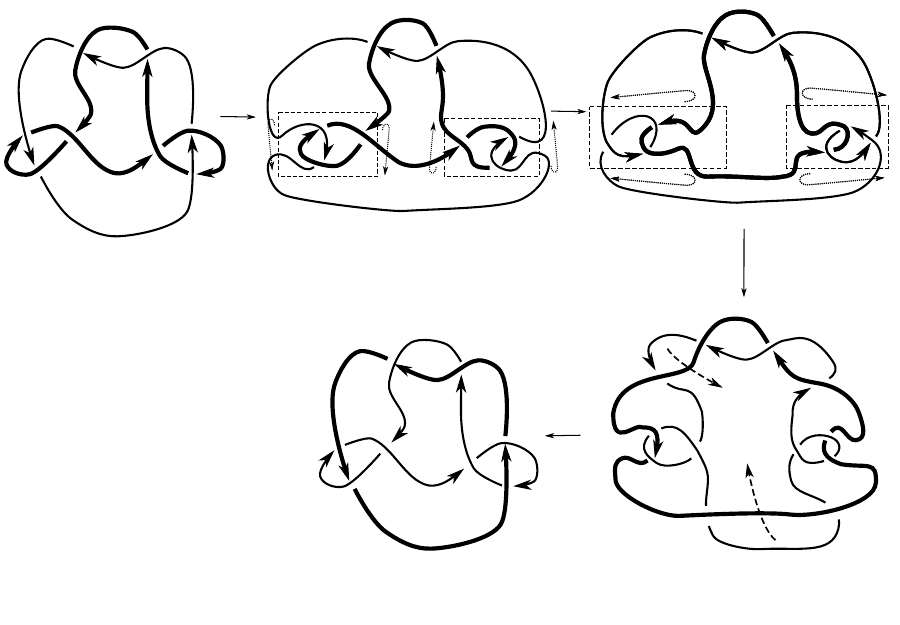}
\put(0,60){1}
\put(12,68){2}
\put(28,60){1}
\put(42,68){2}
\put(85,50){1}
\put(93,49){2}
\put(65,60){1}
\put(85,68){2}
\put(71,32){1}
\put(85,33){2}
\put(47,33){1}
\put(35.5,28){2}
\end{overpic}
\caption{$(8^2_7)^\gamma,~~\gamma=(1,1,1,(12))$}
\end{center}
\end{figure}

\newpage
\vspace{1cm}
\begin{figure}[ht]  
\begin{center}
\begin{overpic}[scale=0.9]%
{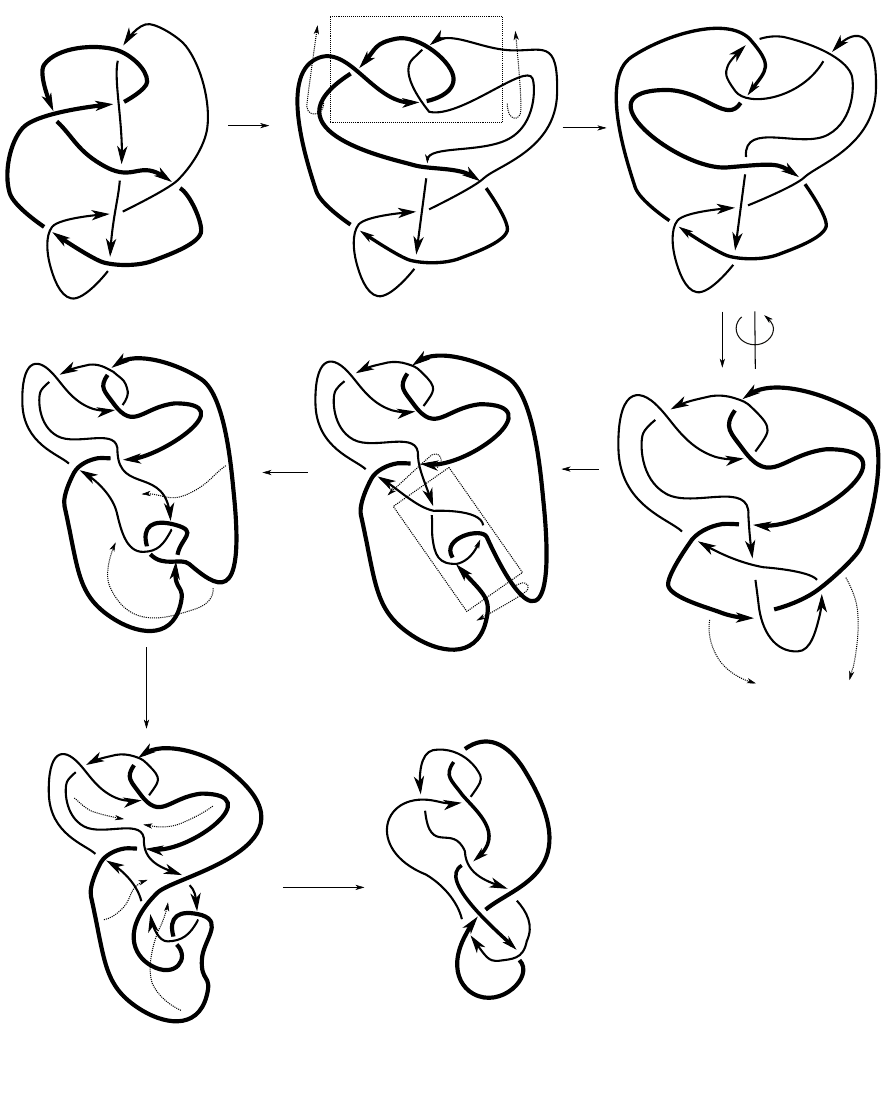}
\put(2,92){2}
\put(16,97){1}
\put(25,92){2}
\put(48,96){1}
\put(54,92){2}
\put(72,97){1}
\put(0.5,63){1}
\put(17,67){2}
\put(27,63){1}
\put(44,67.5){2}
\put(55,63){1}
\put(75,64.5){2}
\put(3,27){1}
\put(17,33){2}
\put(33,24){1}
\put(46,33){2}
\end{overpic}
\caption{$(8^2_8)^\gamma,~~\gamma=(1,1,1,(12))$}
\label{fig:828pe}
\end{center}
\end{figure}

\newpage
\vspace{1cm}
\begin{figure}[ht]  
\begin{center}
\begin{overpic}[scale=0.9]%
{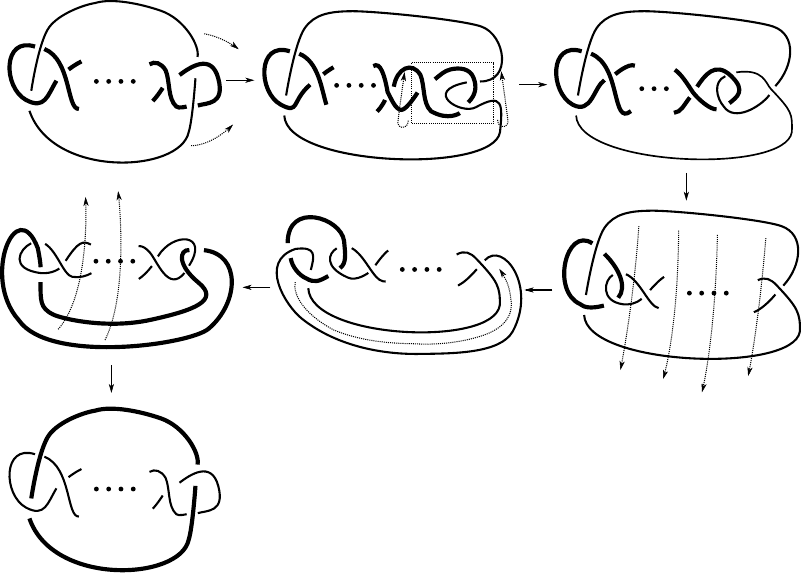}
\put(15,72){1}
\put(3,67){2}
\put(50,71){1}
\put(33,66){2}
\put(85,71){1}
\put(69,66){2}
\put(19,43){1}
\put(22,26){2}
\put(50,24){1}
\put(37,45){2}
\put(92,45){1}
\put(69,40){2}
\put(3,16){1}
\put(15,22){2}
\end{overpic}
\caption[Pure exchange isotopies for $5^2_1$, $6^2_3$, $7^2_3$ and $8^2_6$.]{The links $5^2_1$, $6^2_3$, $7^2_3$ and $8^2_6$ share the common form
above, with 1, 2, 3, and 4 crossings replacing the dots in the central ``twisted'' region of component 2 above. The pure exchange symmetry for each can be accomplished in a similar way, by shifting twists from
component 2 to component 1 as shown. Since the orientations vary between links, we do not show arrows above. Some examples require a final flip or twist to match
orientations, but in each case it is not difficult to figure out the
required move.}
\label{fig:twisttype}
\end{center}
\end{figure}

\newpage

\subsection{Isotopies showing pure invertibility for two-component links}  \label{app:pi}

Along with the 21 isotopies exhibited in Appendix~\ref{app:rotate-pi},  Figures~\ref{fig:622pi}-\ref{fig:8215pi} demonstrate that, as stated in Lemma~\ref{lemma:pi}, all 30 of the two-component links with eight or fewer crossings are purely invertible.  (To obtain invertibility for $7^2_8$, combine the results of Figures~\ref{fig:rotate-y} and \ref{fig:rotate-z}, which show that each of its components can be individually inverted.)

\vspace*{0.5in}

\begin{figure}[ht]
\begin{center}
\begin{overpic}[scale=0.9]%
{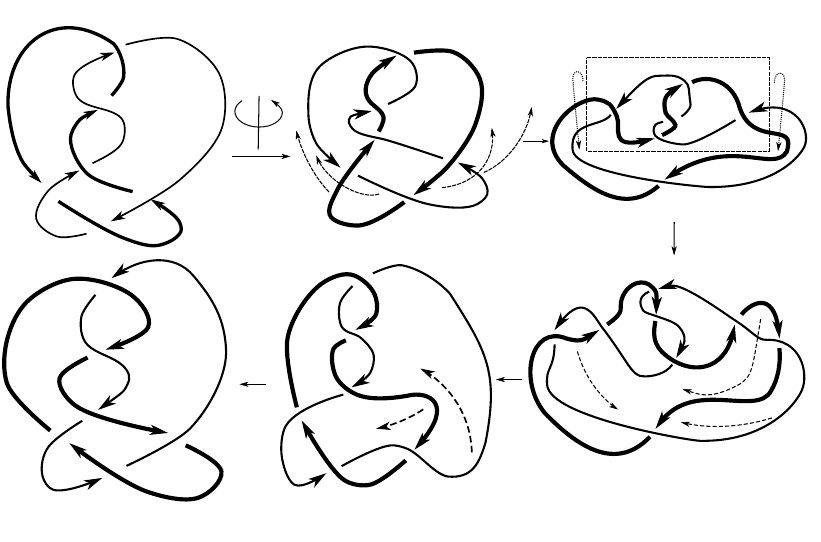}
\put(2,39){1}
\put(1,61){2}
\put(37,58){1}
\put(37,39){2}
\put(76,56){1}
\put(90,55){2}
\put(85,31){1}
\put(74,31){2}
\put(35,31){2}
\put(32,7){1}
\put(3,6){1}
\put(1,30){2}
\end{overpic}
\caption{$(6^2_2)^\gamma,~~\gamma=(1,-1,-1,e)$}
\label{fig:622pi}
\end{center}
\end{figure}

\newpage
\vspace{1cm}
\begin{figure}[ht]
\begin{center}
\begin{overpic}[scale=0.9]%
{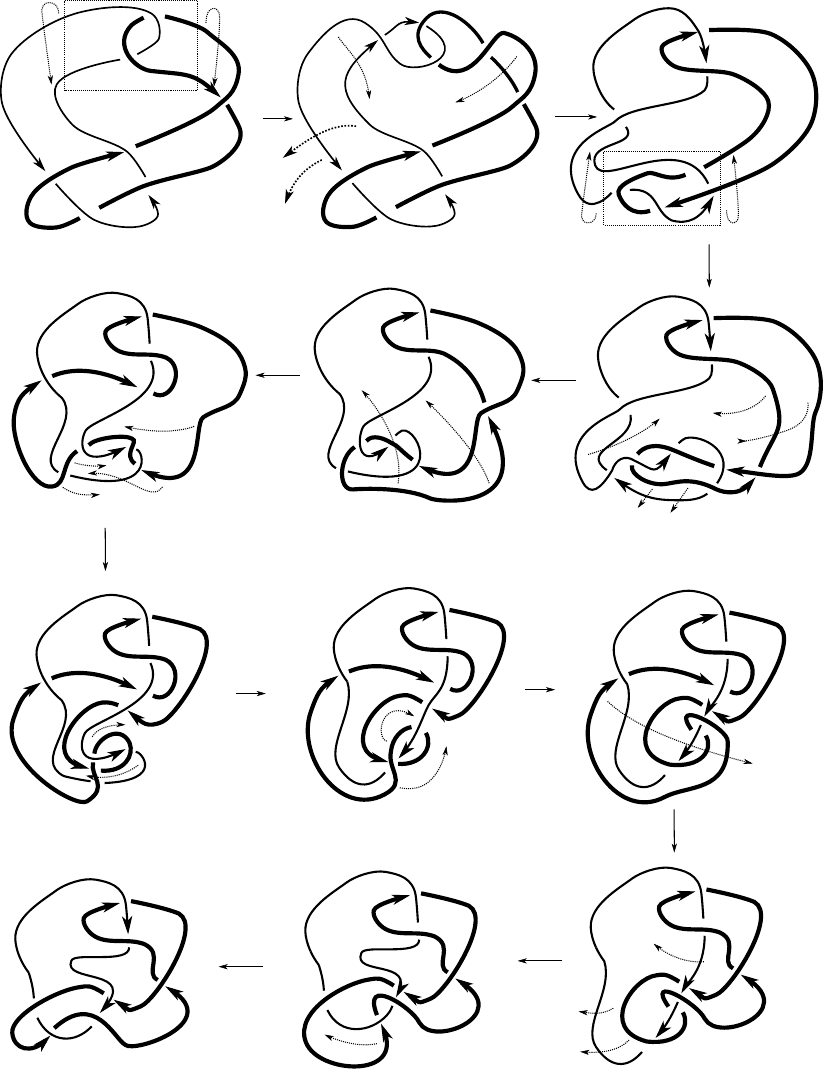}
\put(1,96.5){1}
\put(1,80){2}
\put(28,96.5){1}
\put(28,80){2}
\put(53,80){1}
\put(73.5,97){2}
\put(1,54){2}
\put(4,71){1}
\put(30,71){1}
\put(42,71){2}
\put(57,71){1}
\put(70,71){2}
\put(1,27){2}
\put(4,42.5){1}
\put(31,42.5){1}
\put(28,27){2}
\put(57.5,42.5){1}
\put(54,27){2}
\put(4,17){1}
\put(1,0){2}
\put(30,17){1}
\put(27.5,0){2}
\put(56,17){1}
\put(54,0){2}
\end{overpic}
\caption{$(7^2_2)^\gamma,~~\gamma=(1,-1,-1,e)$}
\end{center}
\end{figure}

\newpage
\begin{figure}[ht]
\begin{center}
\begin{overpic}[scale=0.9]%
{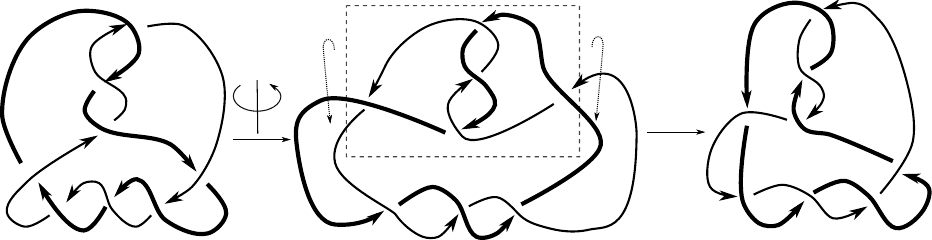}
\put(23,24){1}
\put(25,1){2}
\put(68,2){1}
\put(60,22){2}
\put(96,22){1}
\put(100,2){2}
\end{overpic}
\caption{$(8^2_2)^\gamma,~~\gamma=(1,-1,-1,e)$}
\end{center}
\end{figure}

\vspace*{0.75in}

\begin{figure}[ht]
\begin{center}
\begin{overpic}[scale=0.9]%
{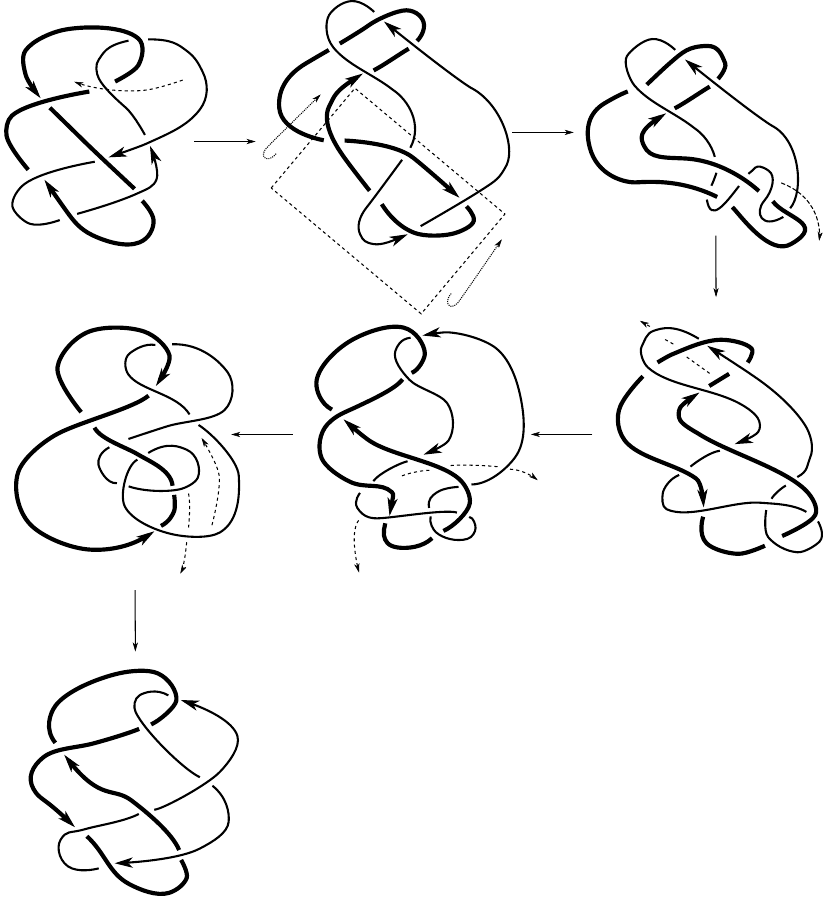}
\put(5,97){2}
\put(22,96){1}
\put(33,95){2}
\put(52,92){1}
\put(64,88){2}
\put(85,89){1}
\put(7,64){2}
\put(22,63){1}
\put(36,64){2}
\put(58,60){1}
\put(66,54){2}
\put(88,57){1}
\put(6,24){2}
\put(25,21){1}
\end{overpic}
\caption{$(8^2_5)^\gamma,~~\gamma=(1,-1,-1,e)$}
\end{center}
\end{figure}

\newpage
\begin{figure}[ht]
\begin{center}
\begin{overpic}[scale=0.9]%
{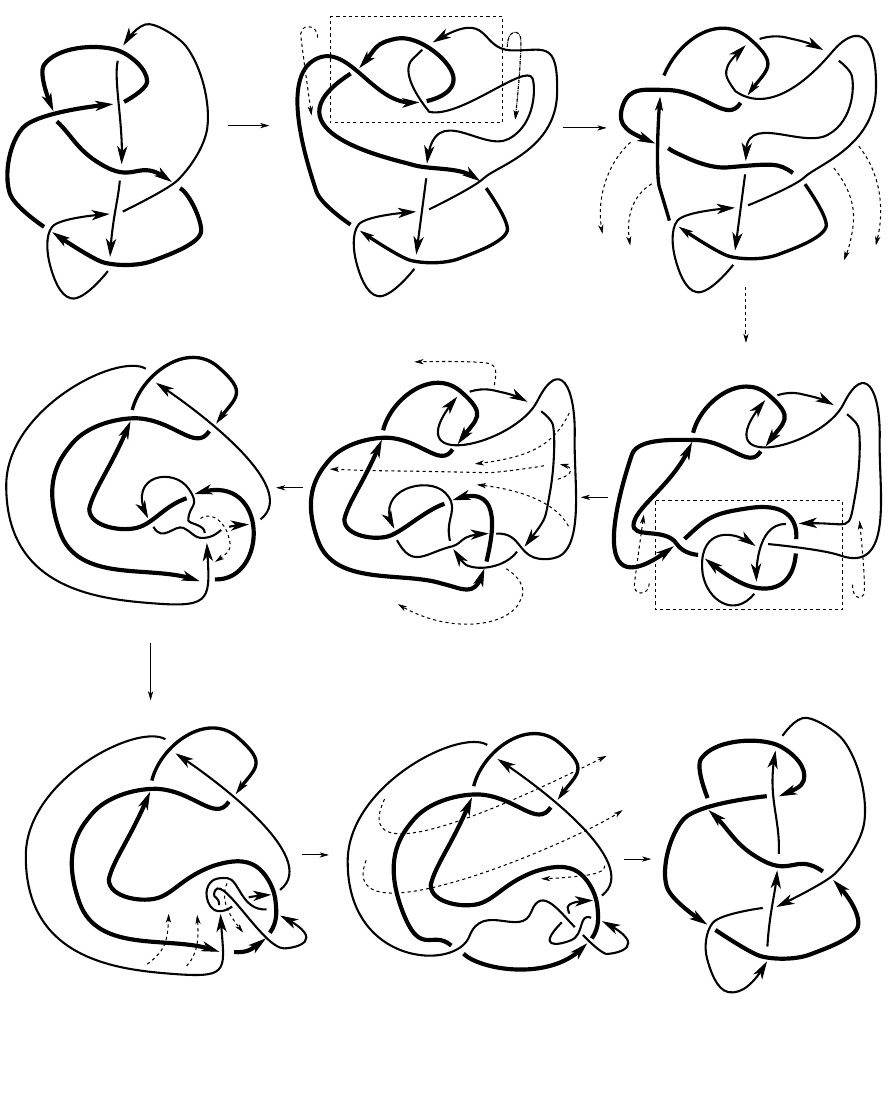}
\put(18,95){1}
\put(5,96){2}
\put(51,95){1}
\put(25,95){2}
\put(77,98){1}
\put(60,97){2}
\put(75,65){1}
\put(65,66){2}
\put(50,67){1}
\put(35,65){2}
\put(5,66){1}
\put(22,65){2}
\put(5,31){1}
\put(20,35){2}
\put(35,31){1}
\put(50,35){2}
\put(75,35){1}
\put(65,34){2}
\end{overpic}
\caption{$(8^2_8)^\gamma,~~\gamma=(1,-1,-1,e)$}
\end{center}
\end{figure}

\newpage
\begin{figure}[ht]
\begin{center}
\begin{overpic}[scale=0.9]%
{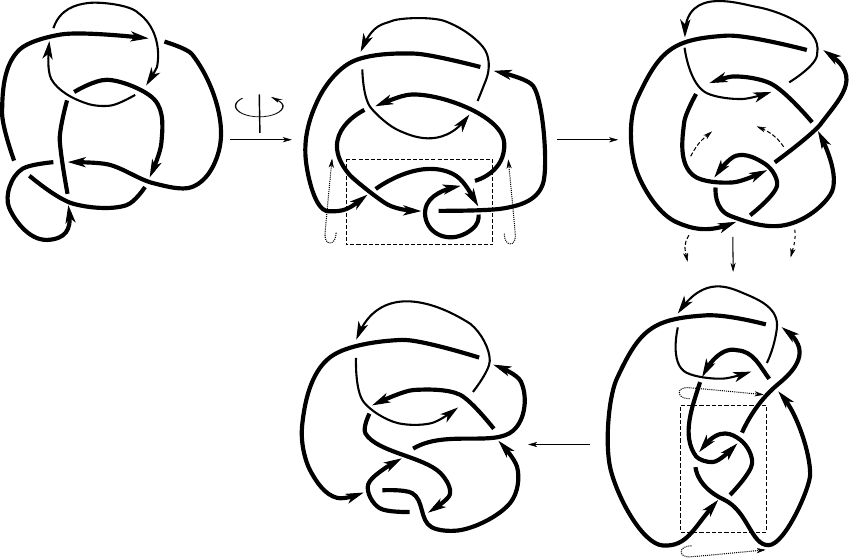}
\put(19,62){1}
\put(-2.5,55){2}
\put(56,62){1}
\put(35.5,55){2}
\put(96,62){1}
\put(73,55){2}

\put(50,31){1}
\put(35,22){2}
\put(87,32){1}
\put(70,22){2}
\end{overpic}
\caption{$(8^2_{10})^\gamma,~~\gamma=(1,-1,-1,e)$}
\end{center}
\end{figure}

\vspace*{1in}
\begin{figure}[ht]
\begin{center}
\begin{overpic}[scale=0.9]%
{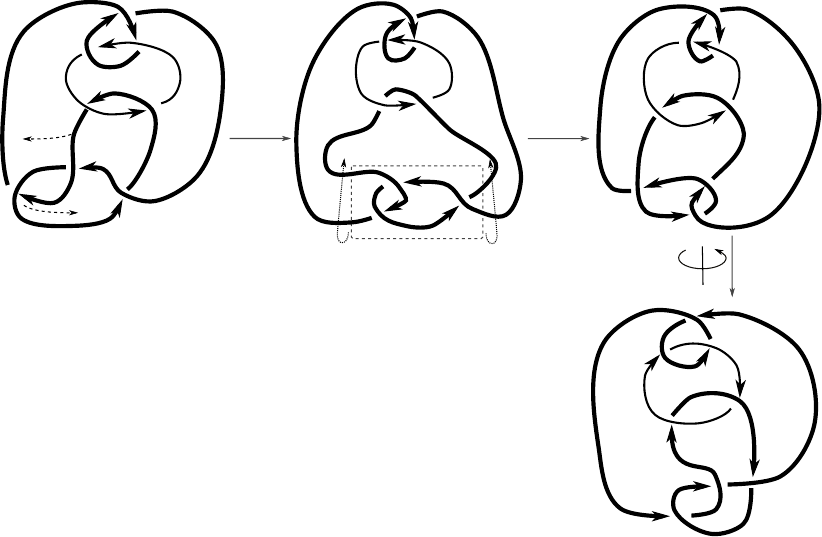}
\put(6,55){1}
\put(-1,60){2}
\put(41.5,55){1}
\put(37,60){2}
\put(76.5,55){1}
\put(72.5,60){2}
\put(76.5,18){1}
\put(72.5,26){2}
\end{overpic}
\caption{$(8^2_{12})^\gamma,~~\gamma=(1,-1,-1,e)$}
\end{center}
\end{figure}

\newpage
\begin{figure}[ht]
\begin{center}
\begin{overpic}[scale=0.9]%
{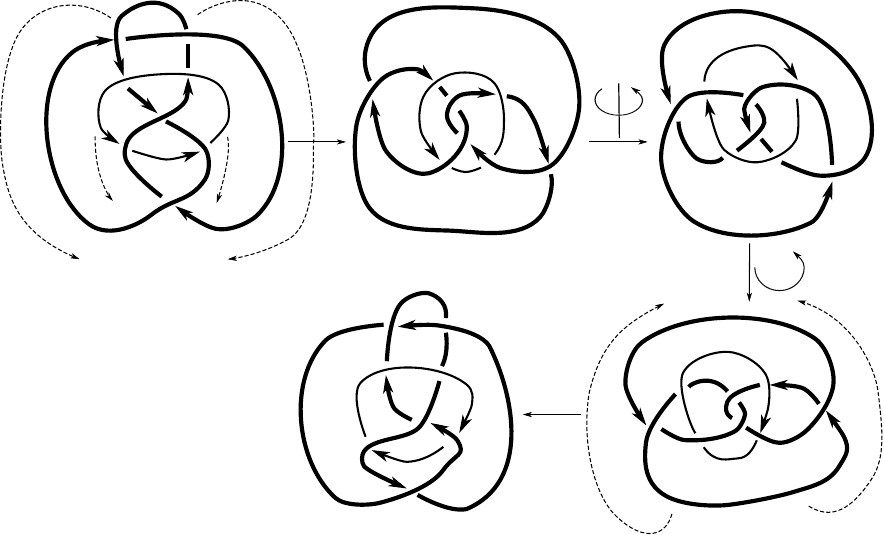}
\put(16.5,53){1}
\put(5,54){2}
\put(52,53){1}
\put(39,54){2}
\put(83,52){1}
\put(73,54){2}
\put(69.5,20){2}
\put(46,19.5){1}
\put(82,18){1}
\put(33,20){2}
\end{overpic}
\caption{$(8^2_{15})^\gamma,~~\gamma=(1,-1,-1,e)$}\label{fig:8215pi}
\end{center}
\end{figure}

\newpage
\section{Isotopy Figures for three-component links} \label{app:3isotopy}

This appendix contains all of the isotopy figures for three-component links that are not contained in Appendix~\ref{app:rotate}.

\vspace{0.125in}
\begin{figure}[ht]
\begin{center}
\begin{overpic}[scale=0.9]%
{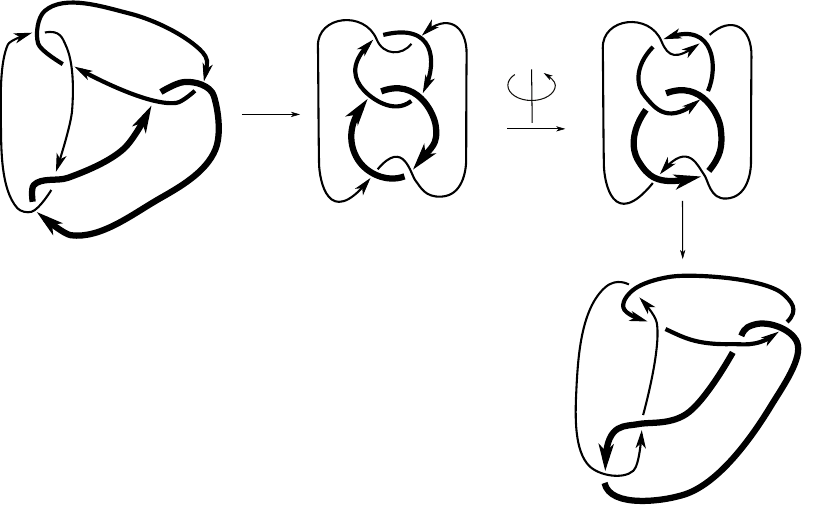}
\put(-2,53){1}
\put(15,63){2}
\put(18,36){3}
\put(36.5,53){1}
\put(50,55){2}
\put(50,46.5){3}
\put(71,53){1}
\put(84,54){2}
\put(85,46){3}
\put(68.5,22){1}
\put(85,31){2}
\put(88,4){3}
\end{overpic}
\caption{$(6^3_1)^\gamma,~~\gamma=(1,-1,-1,-1,e)$}
\label{631invert}
\end{center}
\end{figure}

\vspace{0.5in}
\begin{figure}[ht]
\begin{center}
\begin{overpic}[scale=0.9]{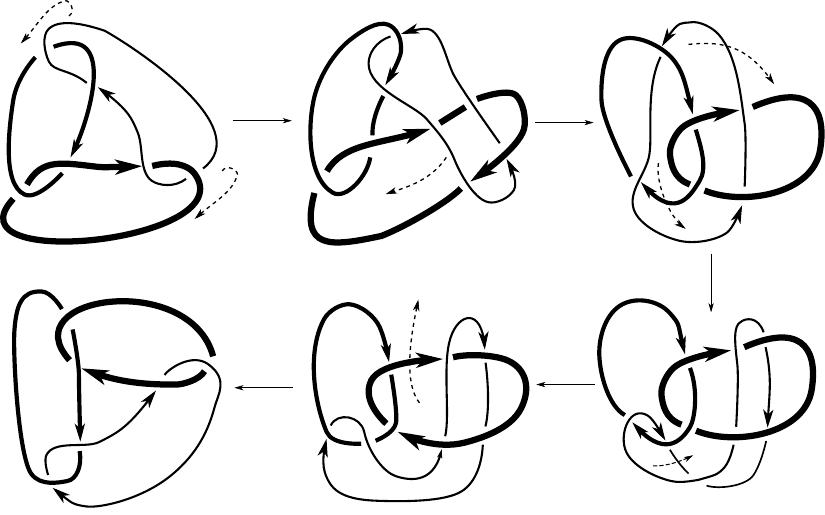}
\put(17,57){1}
\put(0,52){2}
\put(15,31){3}
\put(54,57){1}
\put(36,52){2}
\put(48,31){3}
\put(85,61){1}
\put(70.5,52){2}
\put(97,51.5){3}
\put(89,24.5){1}
\put(70.5,21){2}
\put(97,22){3}
\put(55,24.5){1}
\put(36,21){2}
\put(62,19.5){3}
\put(22,25){3}
\put(18.5,1){1}
\put(-1,15){2}
\end{overpic}
\caption{$(6^3_1)^\gamma,~~\gamma=(1,1,1,1,(23))$}
\label{631(23)}
\end{center}
\end{figure}

\newpage
\begin{figure}[ht]
\begin{center}
\begin{overpic}[scale=0.9]{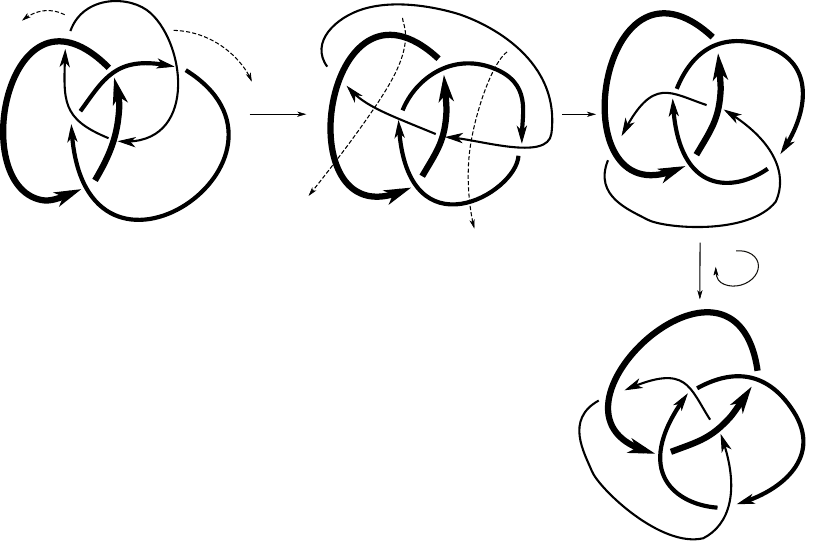}
\put(19,64.5){1}
\put(-1.5,54){3}
\put(24.5,40){2}

\put(62,62){1}
\put(61,40){2}
\put(38,54){3}

\put(95,40){1}
\put(93,62){2}
\put(71.5,54){3}

\put(72,4){1}
\put(97,17){2}
\put(79.5,27){3}
\end{overpic}
\caption{$(6^3_2)^\gamma,~~\gamma=(-1,-1,1,1,(13))$}\label{632_1}
\end{center}
\end{figure}

\vspace{1in}
\begin{figure}[ht]
\begin{center}
\begin{overpic}[scale=0.9]%
{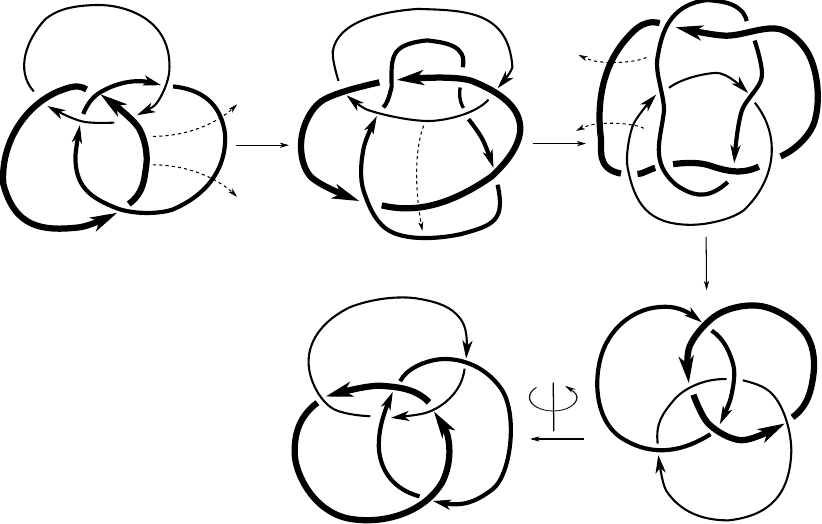}
\put(6,33){3}
\put(20,60){1}
\put(25.5,37.5){2}

\put(61,60){1}
\put(62,37.5){2}
\put(35,42){3}

\put(92,37){1}
\put(85,64.5){2}
\put(71,53){3}
\put(78.5,3){1}
\put(70.5,17){2}
\put(92,27.5){3}
\put(47,28){1}
\put(62,4){2}
\put(34,6){3}
\end{overpic}
\caption{$(6^3_2)^\gamma,~~\gamma=(-1,1,1,1,e)$}\label{632_5}
\end{center}
\end{figure}

\newpage
\begin{figure}[ht]
\begin{center}
\begin{overpic}[scale=.9]
{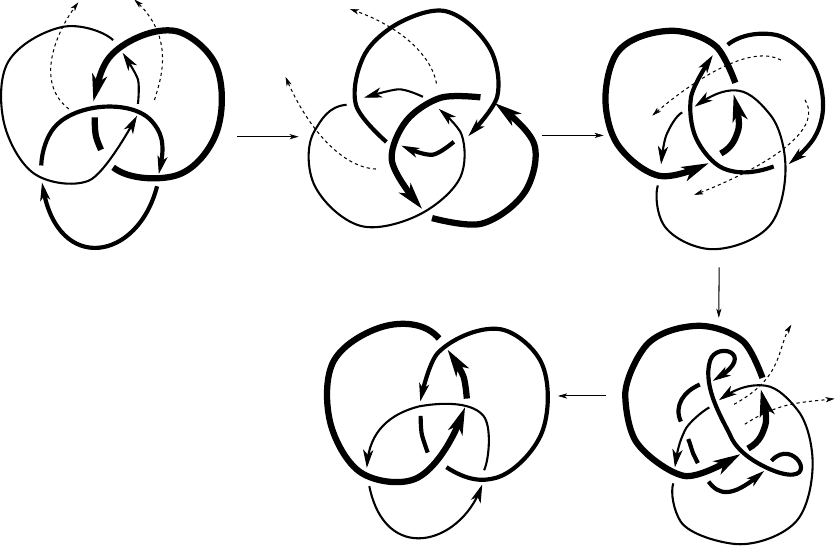}
\put(3,62){1}
\put(12,33){2}
\put(27,58){3}
\put(36,40){1}
\put(42.5,60){2}
\put(64,41){3}

\put(94,40){1}
\put(97,60){2}
\put(72,60){3}

\put(97,1.5){1}
\put(81,19.5){2}
\put(74.5,23){3}
\put(38,22){3}
\put(57,1.5){1}
\put(62,26){2}
\end{overpic}
\caption{$(6^3_3)^\gamma,~~\gamma=(1,1, -1,1, (132))$}\label{6n1_2}
\end{center}
\end{figure}

\vspace{1in}
\begin{figure}[ht]
\begin{center}
\begin{overpic}[scale=.9]
{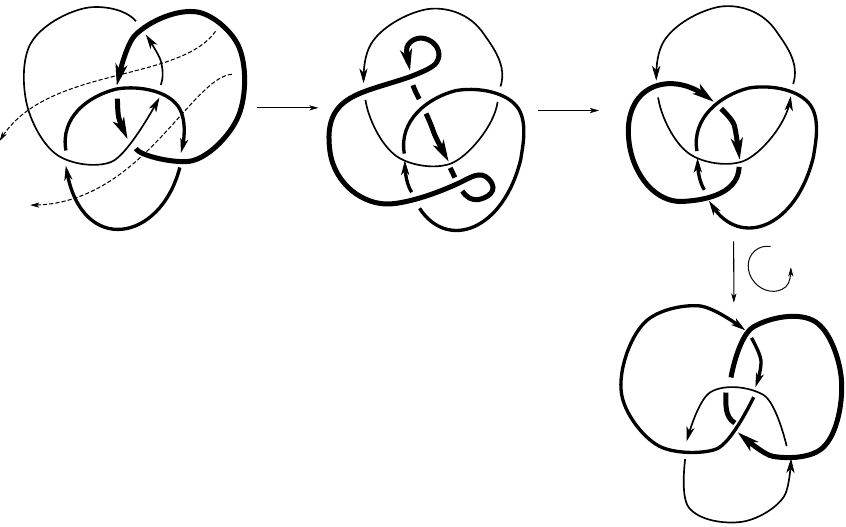}
\put(2,58){1}
\put(12,32){2}
\put(29,58){3}

\put(58,58){1}
\put(62.5,43){2}
\put(38,40){3}

\put(94,57){1}
\put(96,40){2}
\put(73,40){3}
\put(92,0){1}
\put(73,23){2}
\put(98,24){3}
\end{overpic}
\caption{$(6^3_3)^\gamma,~~\gamma=(1,-1, -1,-1, (12))$}\label{6n1_3}
\end{center}
\end{figure}

\newpage
\begin{figure}[ht]
\begin{center}
\begin{overpic}[scale=.9]
{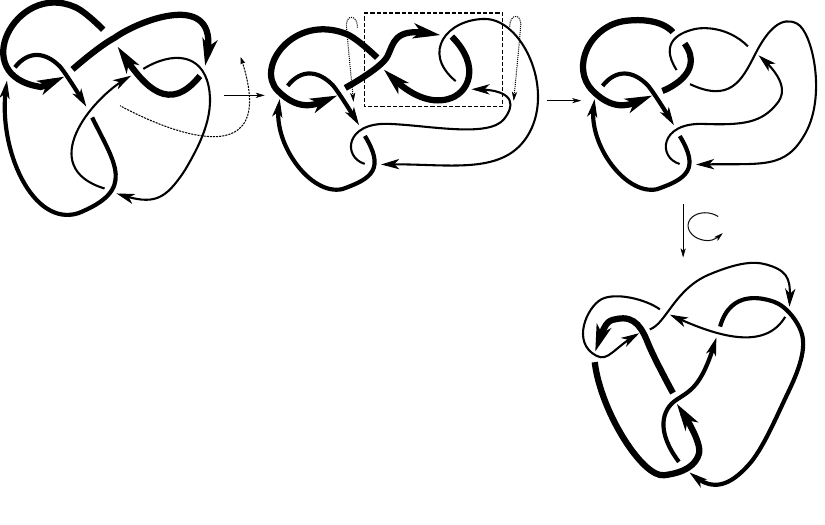}
\put(22,40){1}
\put(0,40){2}
\put(12,61.5){3}

\put(61.5,42){1}
\put(33,42){2}
\put(40,61){3}
\put(98,45){1}
\put(71,42){2}
\put(72,61){3}

\put(79,27.5){1}
\put(96,13){2}
\put(71,13){3}
\end{overpic}
\caption{$(7^3_1)^\gamma,~~\gamma=(1,1,-1,1,(123))$}\label{731+-+(132)}
\end{center}
\end{figure}

\vspace{1in}

\begin{figure}[ht]
\begin{center}
\begin{overpic}[scale=.9]
{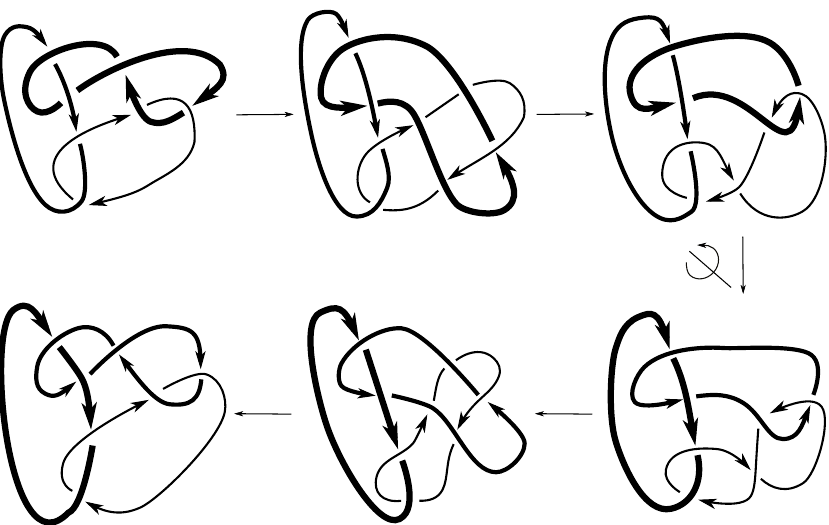}
\put(21,40){1}
\put(0,61){2}
\put(19.5,58){3}

\put(51,36){1}
\put(34.5,60){2}
\put(52.5,58){3}

\put(93,34.5){1}
\put(71,55){2}
\put(89,60){3}

\put(93,2){1}
\put(87,23){2}
\put(71.5,20){3}

\put(60.5,20){1}
\put(47,25){2}
\put(35,20){3}

\put(23.5,5){1}
\put(18,25){2}
\put(-2,20){3}
\end{overpic}
\caption{$(7^3_1)^\gamma,~~\gamma=(1,1,1,1,(23))$}\label{731+++(23)}
\end{center}
\end{figure}

\vspace{0.75in}
\begin{figure}[ht]
\begin{center}
\begin{overpic}[scale=.9]
{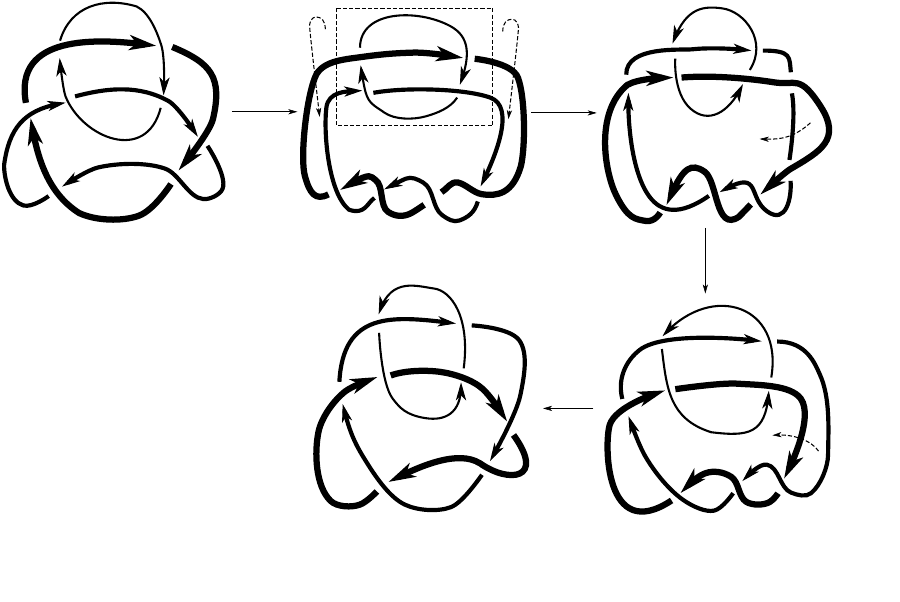}
\put(6,65){1}
\put(26,46){2}
\put(22,61){3}
\put(51,63.5){1}
\put(51,40){2}
\put(32,56){3}
\put(73,64){1}
\put(88.5,60){2}
\put(66,45){3}
\put(84,32){1}
\put(67,25){2}
\put(66,10){3}
\put(46,36){1}
\put(59,27){2}
\put(33,13){3}
\end{overpic}
\caption{$(8^3_1)^\gamma,~~\gamma=(1,-1,1,1,(23))$}\label{831(23)}
\end{center}
\end{figure}

\newpage
\begin{figure}[ht]
\begin{center}
\begin{overpic}[scale=.9]
{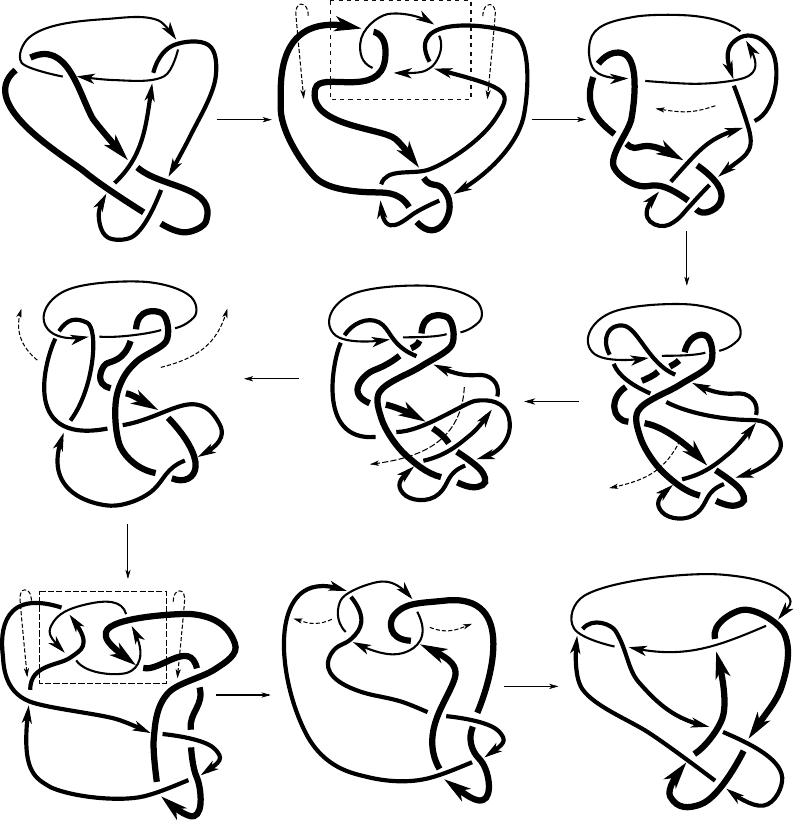}
\put(12,99){1}
\put(24,81){2}
\put(4,80){3}
\put(47,99){1}
\put(61,80){2}
\put(36,76){3}
\put(83,99){1}
\put(91,82){2}
\put(72,77){3}

\put(13,67){1}
\put(4,45){2}
\put(17,53.5){3}

\put(46,66.5){1}
\put(59,55){2}
\put(48,52.2){3}

\put(77,64.5){1}
\put(92,51.5){2}
\put(74,46){3}

\put(12,29){1}
\put(1,9){2}
\put(26,27){3}

\put(45,31){1}
\put(34,12){2}
\put(61,22){3}
\put(80,32){1}
\put(96,21){3}
\put(73,11){2}
\end{overpic}
\caption{$(8^3_2)^\gamma,~~\gamma=(1,1,1,1,(23))$}\label{832(23)}
\end{center}
\end{figure}

\newpage
\vspace{1cm}
\begin{figure}[ht]
\begin{center}
\begin{overpic}[scale=.9]
{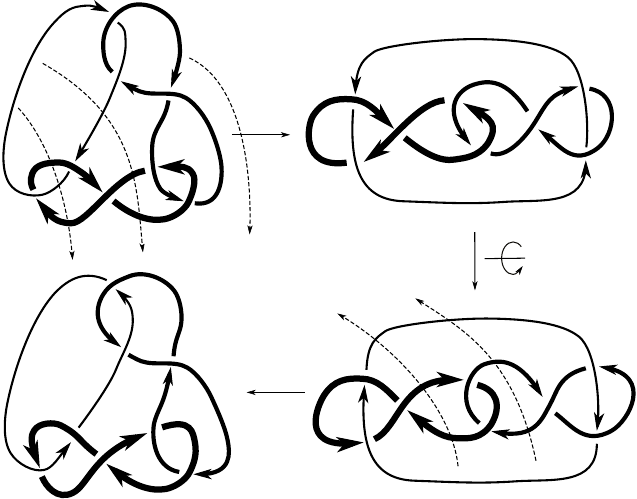}
\put(3,70){1}
\put(30,70){2}
\put(15,42){3}
\put(75,73){1}
\put(97,58){2}
\put(50,49){3}
\put(75,29){1}
\put(101,14){2}
\put(50,5){3}

\put(3,28){1}
\put(30,28){2}
\put(14,-1){3}
\end{overpic}
\caption{$(8^3_3)^\gamma,~~\gamma=(1,-1,-1,-1,e)$
\label{833invert}}
\end{center}
\end{figure}

\vspace{1in}
\begin{figure}[ht]
\begin{center}
\begin{overpic}[scale=.9]
{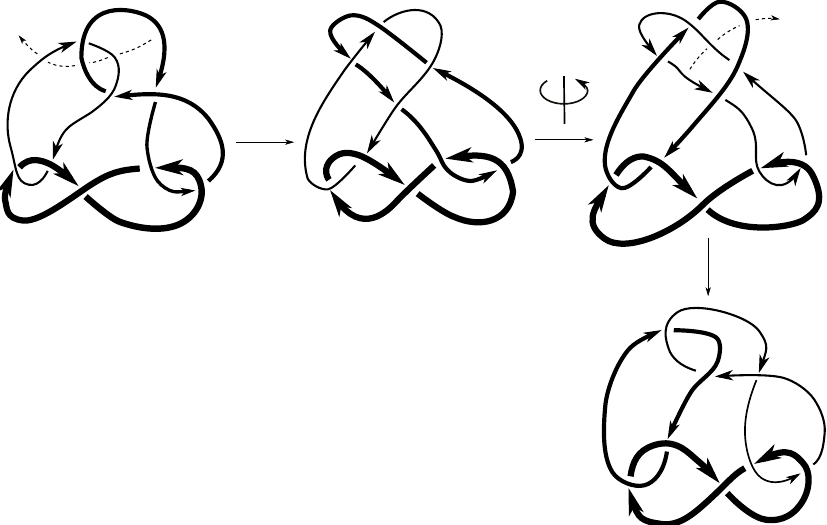}
\put(0,54){1}
\put(20,60){2}
\put(10,36){3}
\put(54,60){1}
\put(38,60){2}
\put(47,35){3}

\put(96,50){1}
\put(74.5,53){2}
\put(88,33){3}

\put(91,25){1}
\put(73,20){2}
\put(87.5,0){3}
\end{overpic}
\caption{$(8^3_3)^\gamma,~~\gamma=(1,1,1,1,(12))$}\label{833(12)}
\end{center}
\end{figure}

\newpage
\begin{figure}[ht]
\begin{center}
\begin{overpic}[scale=.9]
{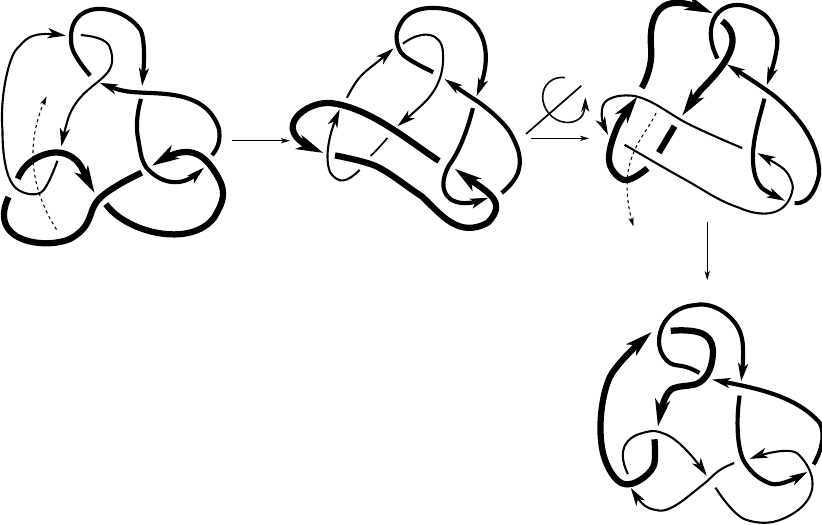}
\put(1,59){1}
\put(18,60){2}
\put(20,32){3}
\put(43,56){1}
\put(60,60){2}
\put(45,40){3}
\put(84,38){1}
\put(96,60){2}
\put(76,60){3}
\put(86,0){1}
\put(92,22){2}
\put(73,20){3}
\end{overpic}
\caption{$(8^3_3)^\gamma,~~\gamma=(1,1,1,1,(13))$}\label{833(13)}
\end{center}
\end{figure}

\newpage
\begin{figure}[ht]
\begin{center}
	\begin{overpic}[scale=.9]{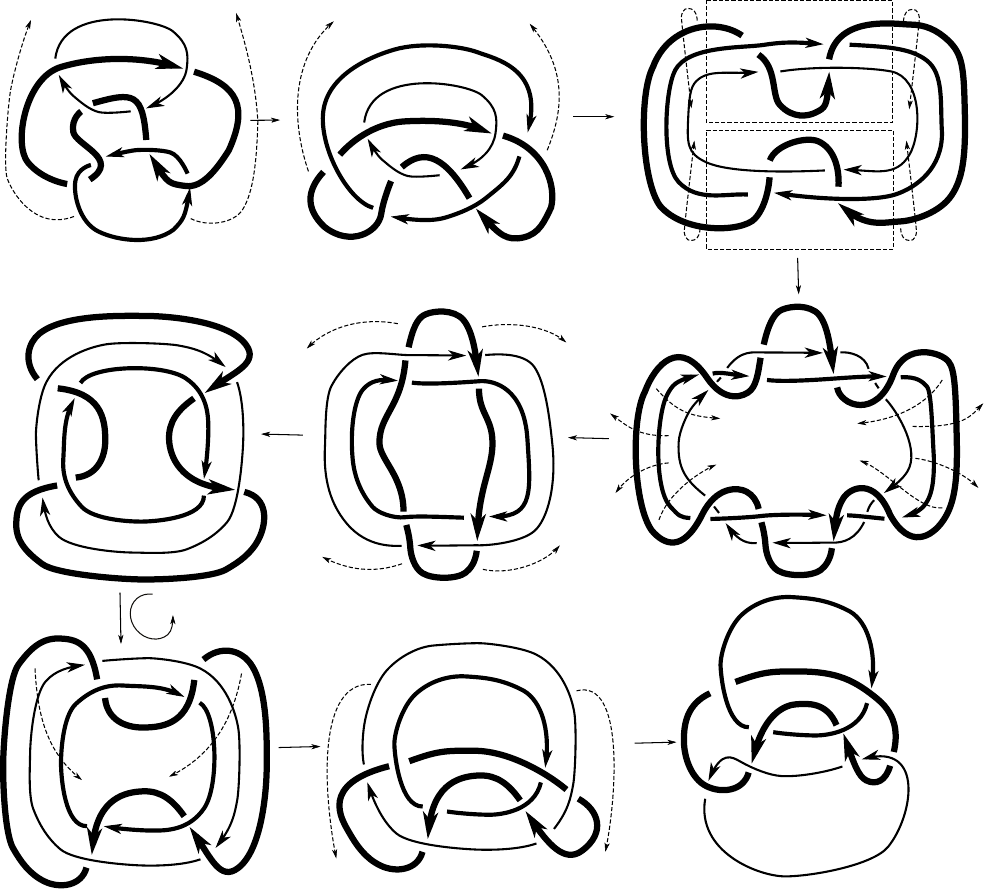}
	\put(11,89.5){1}
	\put(15,64){2}
	\put(4,75){3}
	\put(42,83){1}
	\put(54.5,82){2}
	\put(57,66){3}
	\put(72.5,80){1}
	\put(80,87){2}
	\put(96,88.5){3}
	\put(74,55){1}
	\put(81,49){2}
	\put(94,56){3}
	\put(35,55){1}
	\put(44,48){2}
	\put(45,60){3}
	\put(2,47){1}
	\put(14,50){2}
	\put(24,58){3}
	\put(13,0){1}
	\put(7,13){2}
	\put(25,25){3}
	\put(41,25){1}
	\put(47,18.5){2}
	\put(60.5,2){3}
	\put(72,1){1}
	\put(72,26){2}
	\put(92,18){3}
	\end{overpic}
\caption{$(8^3_4)^\gamma,~~\gamma=(-1,1,1,1,(12))$}\label{834}
\end{center}
\end{figure}

\newpage
\begin{figure}[ht]
\begin{center}
	\begin{overpic}[scale=0.9]{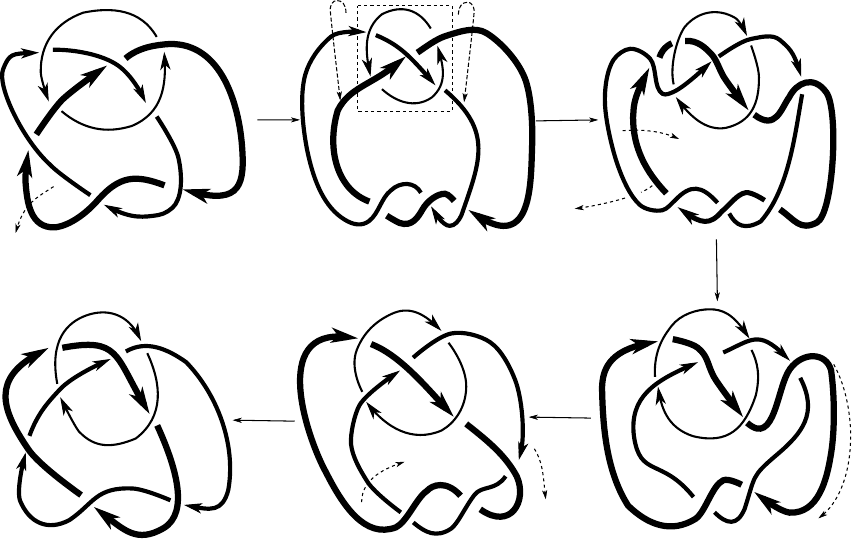}
	\put(5,62){1}
	\put(20,35.5){2}
	\put(3,33.5){3}
	\put(45,64){1}
	\put(35,58){2}
	\put(61,57){3}
	\put(85,63){1}
	\put(70,57){2}
	\put(98,52){3}
	\put(80,27.5){1}
	\put(91,24){2}
	\put(69,20){3}
	\put(47,28){1}
	\put(57,25){2}
	\put(34,22){3}
	\put(10,28){1}
	\put(23,22){2}
	\put(-2,18){3}
	\end{overpic}
\caption{$(8^3_5)^\gamma,~~\gamma=(1,-1,1,1,(23))$}\label{835}
\end{center}
\end{figure}

\vspace{1in}
\begin{figure}[ht]
\begin{center}
	\begin{overpic}[scale=0.9]{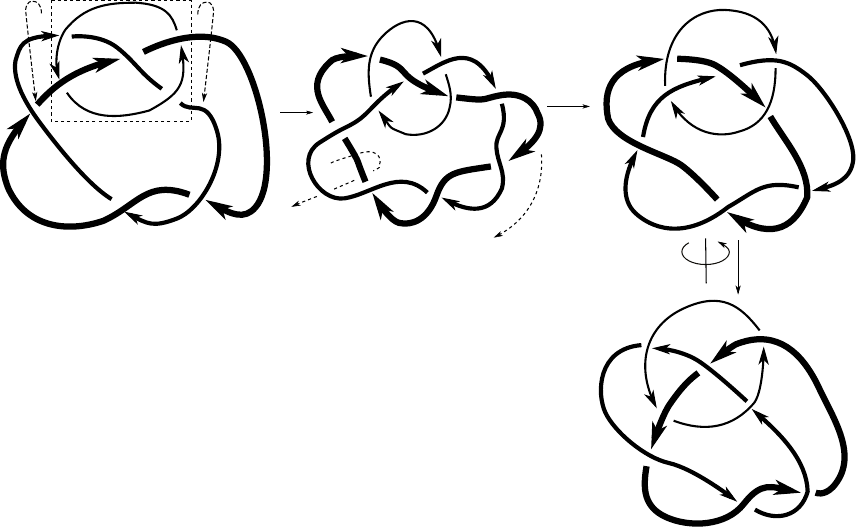}
	\put(10,63){1}
	\put(-1,56){2}
	\put(28.5,56){3}
	\put(45,60.5){1}
	\put(55,56.5){2}
	\put(35.5,55){3}
	\put(85,62){1}
	\put(96,54){2}
	\put(71,55){3}
	\put(76,25){1}
	\put(67,18){2}
	\put(96,19){3}
	\end{overpic}
\caption{$(8^3_5)^\gamma,~~\gamma=(1,1,-1,-1,e)$}\label{835b}
\end{center}
\end{figure}

\newpage
\begin{figure}[ht]
\begin{center}
	\begin{overpic}[scale=0.9]{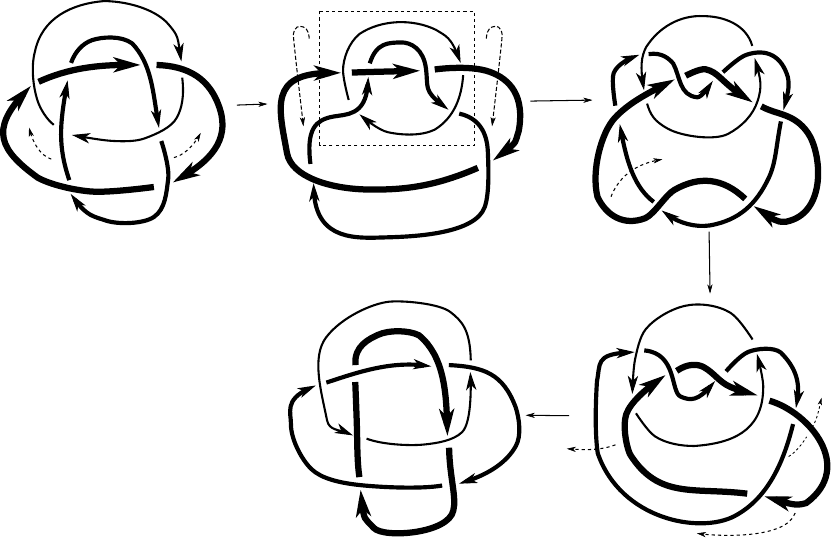}
	\put(12,66){1}
	\put(14,34){2}
	\put(-3,48){3}
	\put(45,66){1}
	\put(47,33){2}
	\put(32,54){3}
	\put(83,65){1}
	\put(96,57){2}
	\put(99.5,42){3}
	\put(78,27){1}
	\put(96,22){2}
	\put(101,7){3}
	\put(38,27){1}
	\put(33,12){2}
	\put(49,-2.5){3}
	\end{overpic}
\caption{$(8^3_6)^\gamma,~~\gamma=(1,-1,1,1,(23))$}\label{836}
\end{center}
\end{figure}

\vspace{1in}
\begin{figure}[ht]
\begin{center}
	\begin{overpic}[scale=0.9]{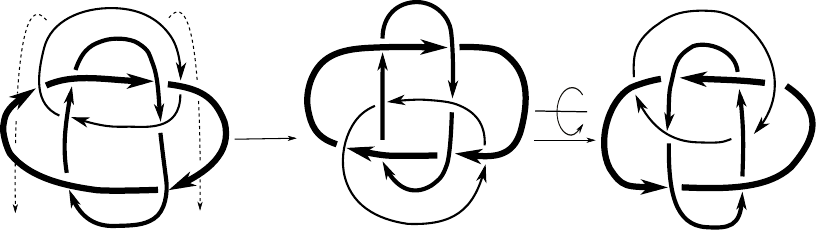}
	\put(12,28.5){1}
	\put(12,-3){2}
	\put(-2,11){3}
	\put(50,-3){1}
	\put(50,28.5){2}
	\put(38,22){3}
	\put(85,28){1}
	\put(87,-3){2}
	\put(101,12){3}
	\end{overpic}
\caption{$(8^3_6)^\gamma,~~\gamma=(-1,1,-1,-1,e)$}\label{836a}
\end{center}
\end{figure}

\newpage
\begin{figure}[ht]
\begin{center}
	\begin{overpic}[scale=0.9]{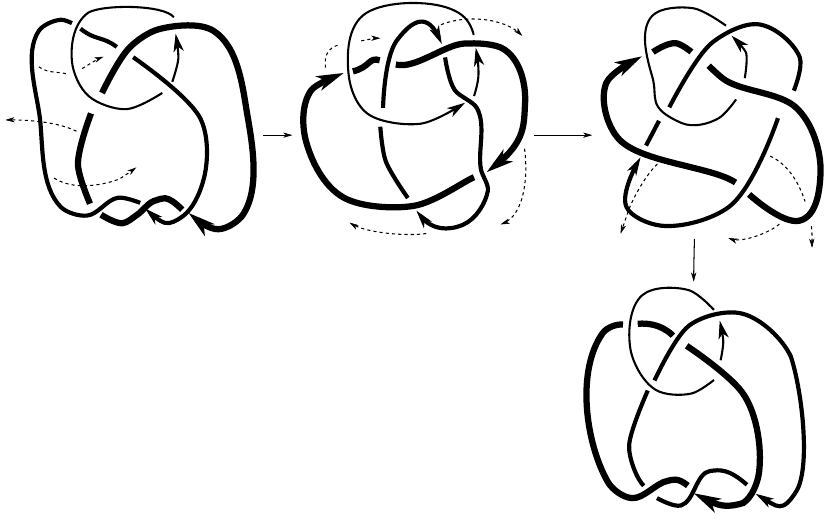}
	\put(15,64){1}
	\put(2,60){2}
	\put(28,60){3}
	\put(49,65){1}
	\put(55,33){2}
	\put(38,38){3}
	\put(81,65){1}
	\put(94,63){2}
	\put(70,56){3}
	\put(77,30){1}
	\put(94,26){2}
	\put(70,26){3}
	\end{overpic}
\caption{$(8^3_7)^\gamma,~~\gamma=(1, 1, 1,1,(23))$}\label{8n3_2}
\end{center}
\end{figure}

\vspace{1in}
\begin{figure}[ht]
\begin{center}
	\begin{overpic}[scale=0.9]{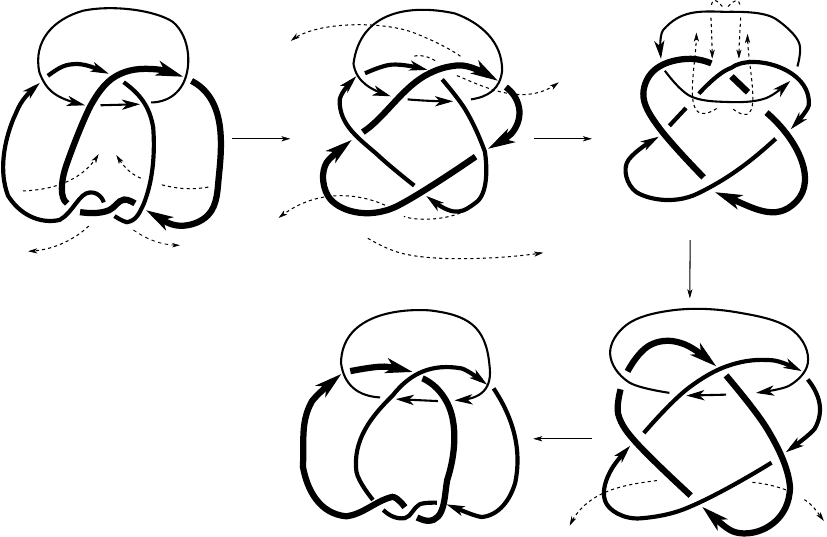}
	\put(12,65){1}
	\put(-3,47){2}
	\put(27,52){3}
	\put(51,65){1}
	\put(38,52){2}
	\put(64,49){3}
	\put(92,64){1}
	\put(98.5,54){2}
	\put(75,54){3}
	\put(87,28){1}
	\put(100,15){2}
	\put(71,15){3}
	\put(50,28){1}
	\put(62,16){2}
	\put(35,15){3}
	\end{overpic}
\caption{$(8^3_8)^\gamma,~~\gamma=(1,-1,1,1,(23))$}\label{8n4_1}
\end{center}
\end{figure}

\newpage
\begin{figure}[ht]
\begin{center}
	\begin{overpic}[scale=0.9]{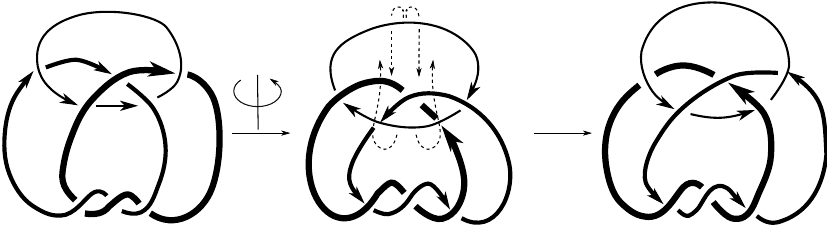}
	\put(12,28){1}
	\put(-3,10){2}
	\put(27,3){3}
	\put(55,25){1}
	\put(64,4){2}
	\put(34,3){3}
	\put(85,28){1}
	\put(101,10){2}
	\put(71,3){3}
	\end{overpic}
\caption{$(8^3_8)^\gamma,~~\gamma=(1, 1, -1,-1, (23))$}\label{8n4_2}
\end{center}
\end{figure}

\vspace{1in}
\begin{figure}[ht]
\begin{center}
	\begin{overpic}[scale=0.9]{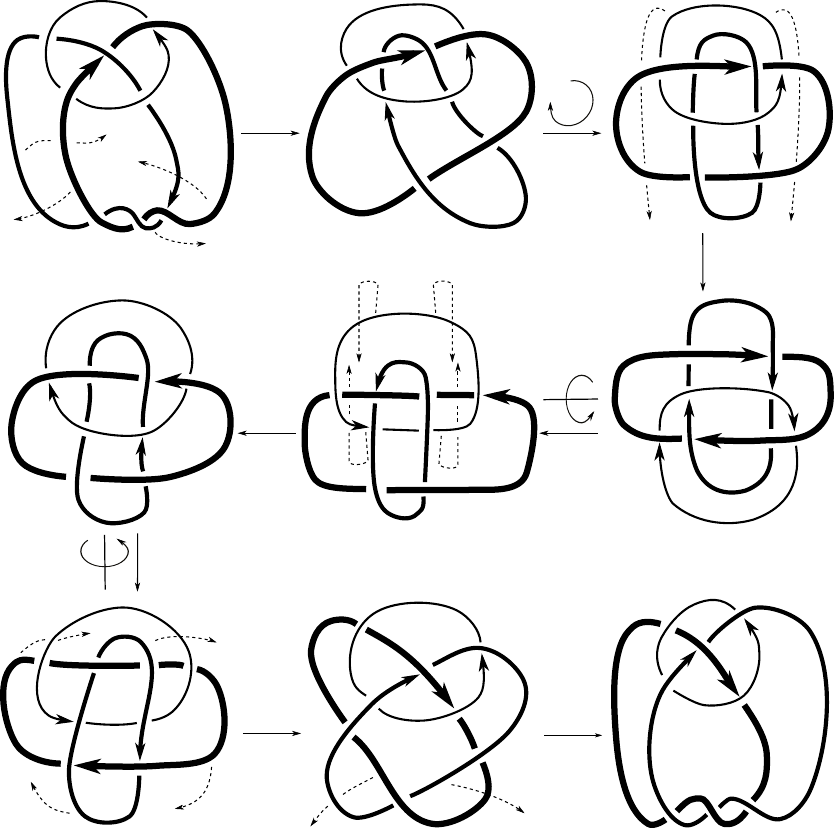}
	\put(10,100){1}
	\put(-3,92){2}
	\put(24,96){3}
	\put(49,100){1}
	\put(63,70){2}
	\put(37,71){3}
	\put(85,100){1}
	\put(91,71){2}
	\put(101,85){3}
	\put(85,33){1}
	\put(91,64){2}
	\put(100,52){3}
	\put(48,65){1}
	\put(48,34){2}
	\put(61,53){3}
	\put(14,65){1}
	\put(6,38){2}
	\put(-3,50){3}
	\put(8,27){1}
	\put(12,-3){2}
	\put(-2,15){3}
	\put(47,28){1}
	\put(60,22){2}
	\put(38,26){3}
	\put(85,28){1}
	\put(93,28){2}
	\put(73,24){3}
	\end{overpic}
\caption{$(8^3_9)^\gamma,~~\gamma=(1, 1, 1,1, (23))$}\label{8n5_2}
\end{center}
\end{figure}

\newpage
\vspace{1cm}
\begin{figure}[ht]
\begin{center}
	\begin{overpic}[scale=0.9]{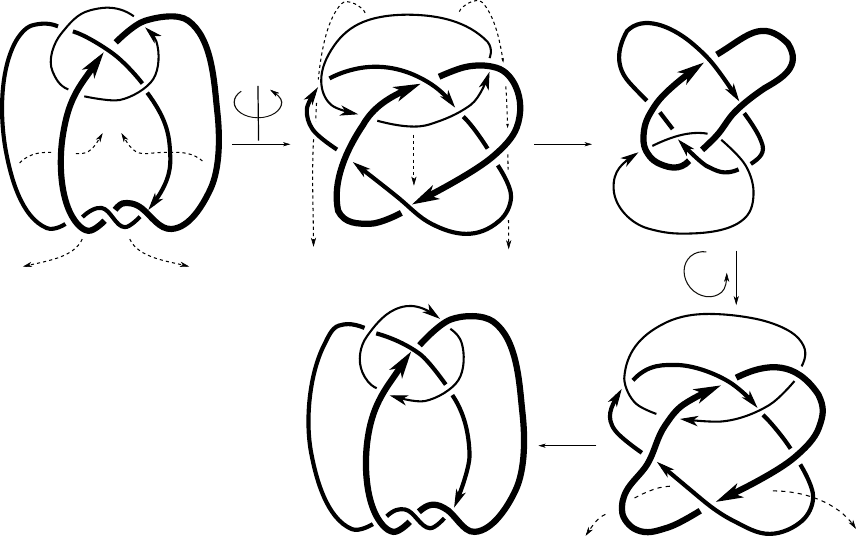}
	\put(11,63){1}
	\put(-3,51){2}
	\put(23,60){3}
	\put(49,63){1}
	\put(56,32){2}
	\put(38,32){3}
	\put(70,34){1}
	\put(70,60){2}
	\put(95,56){3}
	\put(92,25){1}
	\put(69,17){2}
	\put(97,17){3}
	\put(47,28){1}
	\put(35,24){2}
	\put(61,23){3}
	\end{overpic}
\caption{$(8^3_9)^\gamma,~~\gamma=(1,-1, 1,1, e)$}\label{8n5_3}
\end{center}
\end{figure}

\vspace{1in}
\begin{figure}[ht]
\begin{center}
	\begin{overpic}[scale=0.9]{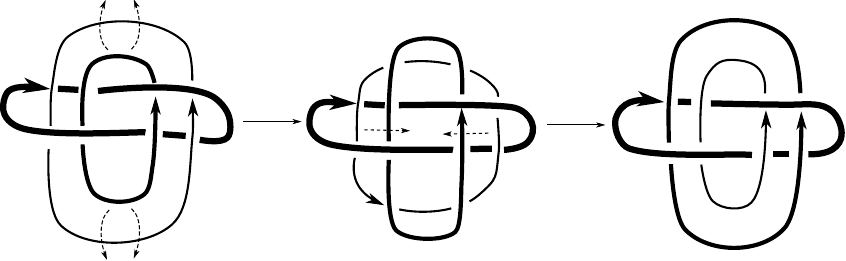}
	\put(23,26){1}
	\put(21,2){2}
	\put(1,12){3}
	\put(57,23){1}
	\put(55,2){2}
	\put(37,11){3}
	\put(85,24){1}
	\put(95,25){2}
	\put(74,10){3}
	\end{overpic}
\caption{$(8^3_{10})^\gamma,~~\gamma=(1, 1, 1,1, (12))$}\label{8n6_2}
\end{center}
\end{figure}

\clearpage
\newpage
\section{Isotopy Figures for four-component links} \label{app:4isotopy}

This appendix contains all of the isotopy figures for three-component links that are not contained in Appendix~\ref{app:rotate}.  

\vspace{0.5in}
\begin{figure}[ht]
\begin{center}
\begin{overpic}[scale=.9]{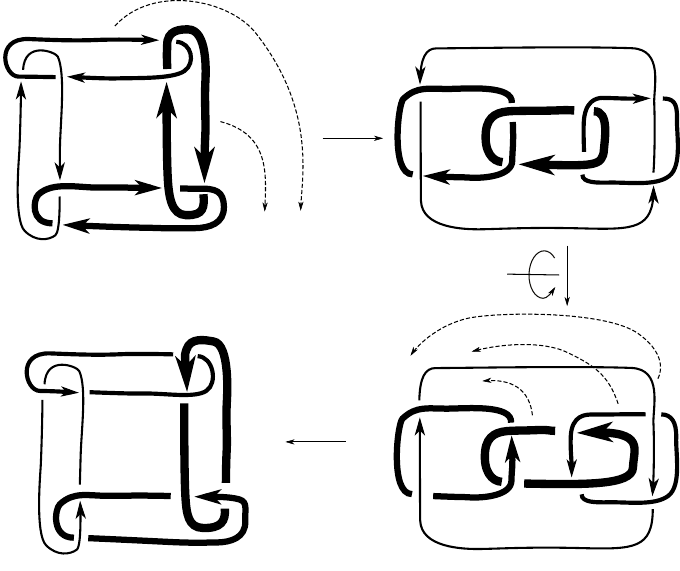}
\put(0,62){1}
\put(15,80){2}
\put(17,46){3}
\put(32.5,70){4}
\put(80,77){1}
\put(90,71){2}
\put(80,55){3}
\put(56,57){4}
\put(66,31){1}
\put(101,16){2}
\put(67,7){3}
\put(79,9){4}
\put(3.5,16){1}
\put(17,33){2}
\put(17,0){3}
\put(35,20){4}
\end{overpic}
\caption{$(8^4_1)^\gamma,~~\gamma=(1,-1,-1,-1,-1,e)$
\label{841invert}}
\end{center}
\end{figure}

\vspace{1in}
\begin{figure}[ht]
\begin{center}
	\begin{overpic}[scale=0.9]{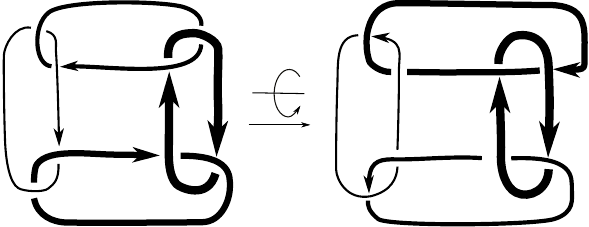}
	\put(-4,18){1}
	\put(15,29){2}
	\put(15,-4){3}
	\put(32,20){4}
	\put(69,16){1}
	\put(76,-4){2}
	\put(76,32){3}
	\put(97,15){4}
	\end{overpic}
\caption{$(8^4_1)^\gamma,~~\gamma=(1,-1,-1,-1,-1, (23))$}\label{841_2}
\end{center}
\end{figure}

\newpage
\vspace{1cm}
\begin{figure}[ht]
\begin{center}
\begin{overpic}[scale=.9]{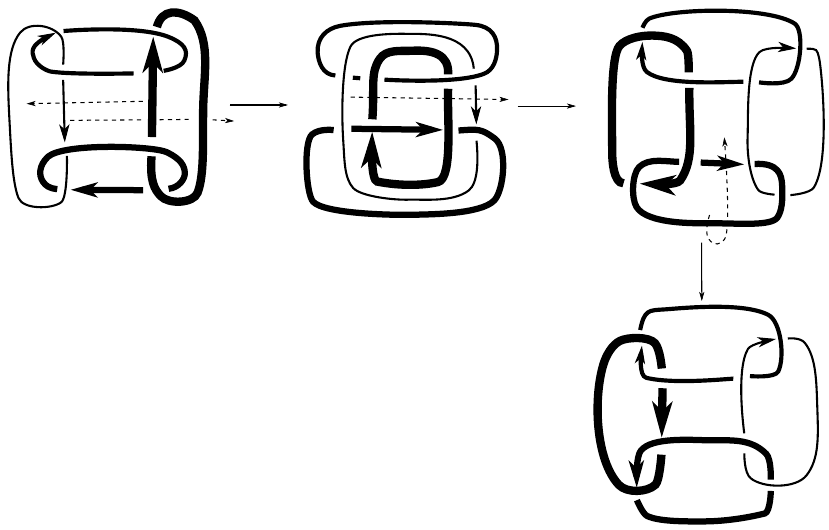}
\put(-1,50){1}
\put(12,62){2}
\put(12,37){3}
\put(26.5,55){4}
\put(39,51.5){1}
\put(50,62){2}
\put(45,35){3}
\put(49,43){4}
\put(100,54){1}
\put(95,62.5){2}
\put(92,34){3}
\put(73,39){4}
\put(99,22){1}
\put(80,28){2}
\put(85,-2){3}
\put(69,15){4}
\end{overpic}
\caption{$(8^4_{2})^\gamma,~~\gamma=(1,1,1,-1,1,(14))$}\label{842_2}
\end{center}
\end{figure}

\vspace{1in}
\begin{figure}[ht]
\begin{center}
\begin{overpic}[scale=.9]{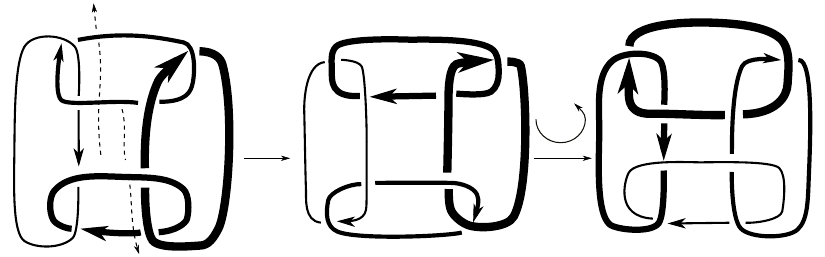}
\put(0,15){1}
\put(15,28){2}
\put(11.5,-1){3}
\put(29,16){4}
\put(35,16){1}
\put(50,-1){2}
\put(50,27.5){3}
\put(65,20){4}
\put(83,1){1}
\put(99,20){2}
\put(73,.5){3}
\put(95,27){4}
\end{overpic}
\caption{$(8^4_{2})^\gamma,~~\gamma=(1,1,1,1,1,(13)(24))$}\label{842_3}
\end{center}
\end{figure}

\newpage
\begin{figure}[ht]
\begin{center}
\begin{overpic}[scale=.9]{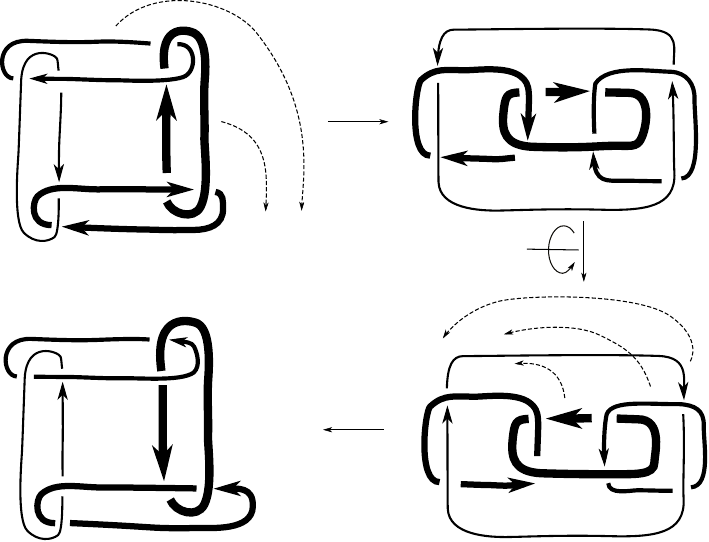}
\put(0,55){1}
\put(10,72){2}
\put(15,40){3}
\put(30.5,64){4}
\put(75,74){1}
\put(99,65){2}
\put(56.5,64){3}
\put(78,52){4}
\put(67,27.5){1}
\put(100,19){2}
\put(59,20){3}
\put(80,6){4}
\put(1,15){1}
\put(15,30){2}
\put(15,-1.5){3}
\put(31,20){4}
\end{overpic}
\caption{$(8^4_{3})^\gamma,~~\gamma=(1,-1,-1,-1,-1,e)$}\label{843invert}
\end{center}
\end{figure}

\vspace{1in}
\begin{figure}[ht]
\begin{center}
\begin{overpic}[scale=.9]{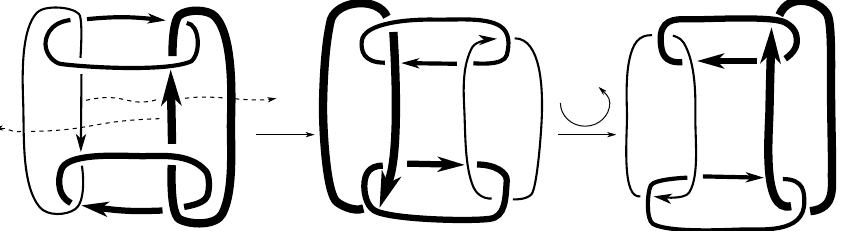}
\put(.5,15){1}
\put(15,27){2}
\put(15,-2){3}
\put(29,19){4}
\put(64,6){1}
\put(50,27){2}
\put(50,-3){3}
\put(35,4){4}
\put(72,22){1}
\put(86,-3){2}
\put(85,26){3}
\put(100,14.5){4}
\end{overpic}
\caption{$(8^4_{3})^\gamma,~~\gamma=(-1,1,1,1,1,(23))$}\label{843_1}
\end{center}
\end{figure}

\newpage
\begin{figure}[ht]
\begin{center}
\begin{overpic}[scale=.9]{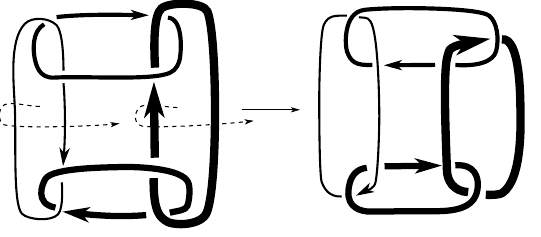}
\put(1,41){1}
\put(12,43){2}
\put(12,-1){3}
\put(38,0){4}
\put(59,41){1}
\put(70,43){2}
\put(86,0){3}
\put(99,35){4}
\end{overpic}
\caption{$(8^4_{3})^\gamma,~~\gamma=(-1,-1,1,1,-1,e)$}\label{843_2}
\end{center}
\end{figure}

\vspace{1in}
\begin{figure}[ht]
\begin{center}
\begin{overpic}[scale=.9]{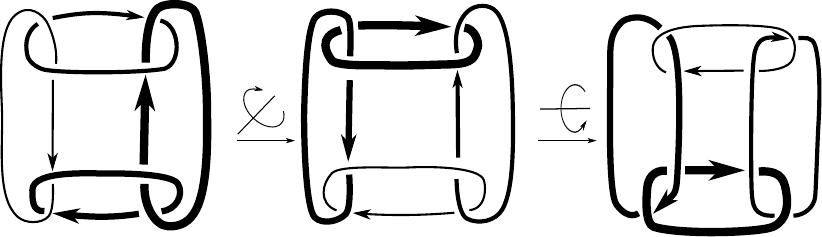}
\put(-2,15){1}
\put(10,28){2}
\put(10,-1){3}
\put(26,22){4}
\put(46,-.5){1}
\put(63,22){2}
\put(35,23){3}
\put(47,28){4}
\put(86,26.5){1}
\put(101,20){2}
\put(71,23){3}
\put(86,-3){4}
\end{overpic}
\caption{$(8^4_{3})^\gamma,~~\gamma=(-1,1,1,1,1,(1342))$}\label{843_3}
\end{center}
\end{figure}

\end{document}